\documentclass[english, 11pt]{amsart}
\usepackage{amssymb}
\usepackage{tikz}
\usepackage{tikz-cd}
\usepackage[colorlinks, linkcolor=red]{hyperref}
\usepackage{xcolor}
\usepackage{geometry}
\definecolor{immi}{rgb}{0,.6,.1}

\newbox\removebox
\newcommand\remove[2]{%
\setbox\removebox=\ifmmode\hbox{$#2$}\else\hbox{#2}\fi%
\leavevmode
\rlap{\textcolor{#1}{\vrule height0.8ex depth-0.6ex width\wd\removebox}}%
\box\removebox
}
\long\def\bigremove#1{%
\par\setbox\removebox=\vbox{#1}%
\vbox{%
\vbox to0pt{\hbox{\tikz\draw[color=blue,thick] (0,0) -- (\wd\removebox,-\ht\removebox)  (\wd\removebox,0) -- (0,-\ht\removebox);}}
\box\removebox
}
}

\usepackage{mathrsfs} 

\newcommand{\loc}{{\mathrm{loc}}}

\makeatletter
\def\RFss@@#1{\RF^*_{\!*#1}}
\def\RFss@_#1{\RFss@@{,#1}}
\def\RFss{\@ifnextchar_{\RFss@}{\RFss@@{}}}
\makeatother

\newcommand{\RF}{{\rm RF}}

\def\Supp{{\operatorname{Supp}}}

\def\moi{\operatorname{moi}}

\def\Div{\operatorname{Div}}

\def\Sing{\operatorname{Sing}}

\def\deg{\operatorname{deg}}

\def\acc{{{\rm ac}}}

\def\Proj{\operatorname{Proj}}

\def\11{{\mathbf 1}}
\def\AA{{\mathbb A}}

\def\CC{{\mathbb C}}

\def\FF{{\mathbb F}}
\def\GG{{\mathbb G}}

\def\NN{{\mathbb N}}

\def\PP{{\mathbb P}}
\def\QQ{{\mathbb Q}}
\def\RR{{\mathbb R}}

\def\ZZ{{\mathbb Z}}

\def\cA{{\mathcal A}}
\def\cB{{\mathcal B}}

\def\cD{{\mathcal D}}
\def\cE{{\mathcal E}}

\def\cG{{\mathcal G}}
\def\cH{{\mathcal H}}
\def\cI{{\mathcal I}}
\def\cJ{{\mathcal J}}

\def\cN{{\mathcal N}}
\def\cO{{\mathcal O}}

\def\cS{{\mathcal S}}

\def\cZ{{\mathcal Z}}

\def\sfF{\mathsf{F}}

\newcommand{\spec}{\operatorname{Spec}}

\newtheorem{thm}{Theorem}[section]
\newtheorem{lem}[thm]
{Lemma}
\newtheorem{lem-defn}[thm]
{Lemma-Definition}
\newtheorem{cor}[thm]
{Corollary}
\newtheorem{prop}[thm]
{Proposition}
\newtheorem{conj}[thm]
{Conjecture}
{Problem}
\newtheorem{maintheorem}
{Main Theorem}
\newtheorem{example}[thm]
{Example}
\theoremstyle{plain}
\newtheorem*{namedthm}{\namedthmname}
\newcounter{namedthm}


\theoremstyle{definition}
\newtheorem{defn}[thm]
{Definition}
\newtheorem{notn}[thm]
{Notation}

\theoremstyle{remark}
\newtheorem{remark}[thm]
{Remark}

\theoremstyle{plain}

  {\par\medskip\noindent #1\par\begingroup%
    \advance\leftskip by 1em\advance\rightskip by 1em}%
  {\par\endgroup}

\DeclareMathOperator*{\lcm}{lcm}
\DeclareMathOperator*{\Spec}{Spec}

\DeclareMathOperator*{\rank}{rank}

\newcommand{\ord}{\operatorname{ord}}

\def\cI{\mathcal{I}}
\def\cJ{\mathcal{J}}
\def\cD{\mathcal{D}}
\def\cA{\mathcal{A}}

\def\cG{\mathcal{G}}

\def\cO{\mathcal{O}}

\renewcommand{\and}{ \quad \text{and} \quad }

\begin{document}

\setcounter{tocdepth}{2} 

\author[Kien~H.~Nguyen]
{Kien Huu Nguyen}
\address{KU Leuven, Department of Mathematics,
Celestijnenlaan 200B, B-3001 Leu\-ven, Bel\-gium}
\email{kien.nguyenhuu@kuleuven.be}
\address{Institute of Mathematics, Vietnam Academy of Science and Technology, 18 Hoang Quoc Viet, Nghia Do, Hanoi, Vietnam}
\email{nhkien@math.ac.vn}
\urladdr{https://sites.google.com/site/nguyenkienmath/home}
\author[Takehiko Yasuda]{Takehiko Yasuda}
\address{Kavli Institute for the Physics and Mathematics of the Universe, The
University of Tokyo, 5-1-5 Kashiwanoha, Kashiwa, Chiba, 277-8583,
Japan}
\address{Department of Mathematics, Graduate School of Science, the University of Osaka, Toyonaka, Osaka 560-0043, JAPAN} \email{yasuda.takehiko.sci@osaka-u.ac.jp}
\title[]{On Igusa's conjectures for weighted homogeneous polynomials with isolated singularities}
\subjclass[2010]{Primary 11S40, 11L07, Secondary 14E16, 14E15, 11S80, 14E18}
\keywords{Igusa's local zeta functions, exponential sums modulo $p^m$, strong monodromy conjecture, Igusa's conjecture for exponential sums, stringy point count, $p$-adic McKay correspondence, $\QQ$-embedded resolutions of singularities, weighted homogeneous polynomials with isolated singularities}
\begin{abstract}
In 1991, Denef introduced an explicit formula for Igusa's local zeta functions in terms of  embedded resolutions of singularities over $p$-adic fields with good reduction. 
In this paper, by slightly generalizing the $p$-adic McKay correspondence given by the second author in 2017, we extend Denef's result to the setting of $\QQ$-embedded resolutions of singularities.  Consequently, the strong monodromy conjecture with respect to Igusa's local zeta functions  holds for weighted homogeneous polynomials with isolated singularities. Moreover, we use our formula to prove Igusa's conjecture for exponential sums in the case of non-degenerate weighted homogeneous polynomials with isolated singularities.
\end{abstract}
\maketitle

\tableofcontents

\section{Introduction}

\subsection{Background}

Let $f\in\ZZ[x_1,...,x_n]$ be a non constant polynomial. For each positive integer $N$, an important arithmetic problem is to estimate the number of solutions of the congruence equation 
$$
f(x)\equiv 0 \mod N$$
in terms of $N$. By the Chinese remainder theorem, the problem reduces to estimating the number of solutions of the congruence equation
\begin{equation}\label{pm}
f(x)\equiv 0 \mod p^m
\end{equation}
for each prime $p$ and each positive integer $m$. Let $N_{p,m}$ be the number of solutions of Equation (\ref{pm}). Borevich and Shafarevich conjectured in \cite{Bosha} that the Poincaré series
$$P_p(T)=\sum_{m\geq 1}N_{p,m}p^{-mn}T^m$$
is a rational function of $T$. This conjecture was proved by Igusa in \cite{Igusa1,Igusa2} by using Hironaka's embedded resolution of singularities of the pair $(\AA_{\QQ_p}^n,\{f=0\})$. Denef also gave another proof of this conjecture by using model theoretic tools. The proof of Igusa is related to the theory of Igusa's local zeta functions 
$$\cZ_{f,\Phi,\chi}(s)=\int_{\QQ_p^n}\Phi(x)\chi(\acc(f(x)))|f(x)|^s|dx|,$$
where $\Re(s)>0$, $\acc(z)= zp^{-\ord_p(z)}$ if $z\in\QQ_p\setminus\{0\}$ and $\acc(0)=0$, $\ord_p:\QQ_p\to\ZZ\cup\{+\infty\}$ is the valuation map, $\Phi$ is a locally constant function with compact support on $\QQ_p^n$, $|z|=p^{-\ord_p(z)}$, $\chi:(\ZZ_p^{\times},.)\to (\CC^{\times},.)$ is a continuous  homomorphism of topological groups and $|dx|$ is the Haar measure on $\QQ_p^n$ normalized such that the volume of $\ZZ_p^n$ is $1$. In fact, Igusa proved that $\cZ_{f,\Phi,\chi}(s)$ is a rational function of $p^{-s}$. Moreover, we have 
$$P_p(p^{-s})= \frac{1-p^{-s}\cZ_{f,\textbf{1}_{\ZZ_p^n},\chi_{\textnormal{triv}}}(s))}{1-p^{-s}},$$
where $\chi_{\textnormal{triv}}:(\ZZ_p^{\times},.)\to (\CC^{\times},.), u\mapsto 1$ and $\textbf{1}_{\ZZ_p^n}$ is the characteristic function of $\ZZ_p^n$. To know about the size of $N_{p,m}$, it is sufficient to know  all poles of $\cZ_{f,\textbf{1}_{\ZZ_p^n},\chi_{\textnormal{triv}}}(s)$ and their multiplicity (see, e.g.,  \cite[Corollary 2.4]{Loeasym}, \cite[Corollary 2.9]{VeysZ} and \cite{NguyenLW}).  
The strong monodromy conjecture is stated in \cite{IGUSA88} that the real part of every pole of $\cZ_{f,\textbf{1}_{\ZZ_p^n},\chi}(s)$ is a root of the Bernstein-Sato polynomial of $f$ (see Conjecture \ref{conjmono} in Subsection \ref{Igex}). This conjecture is still widely open in the general case.  Until now, we only know that this conjecture holds for a few special cases such as the case of two variables polynomials (see \cite{Loemono}). Motivic version of Igusa's local zeta functions (called motivic zeta functions) was introduced by Denef and Loeser in \cite{DLJAG} and \cite{DL02}. We can also state the strong monodromy conjecture for  these motivic zeta functions. Moreover, for each prime $p$,  there is a $p$-specialization map $s_p$ that sends motivic zeta functions associated with $f$ over $\QQ$ to  Igusa's local zeta functions associated with $f$ and $p$ provided that $p$ is large enough. In particular, $s_p$ sends a motivic zeta function (called the naive motivic zeta function) associated with $f$ over $\QQ$ to $\cZ_{f,\textbf{1}_{\ZZ_p^n},\chi_{\textnormal{triv}}}(s)$ if $p$ is large enough. 

The fact is that we can study poles of Igusa's local zeta functions by using exponential sums modulo $p^m$. 
Let $\Phi$ be a locally constant function with compact support on $\QQ_p^n$ and $z\in\QQ_p$. We associated to $\Phi, z,f$ the exponential sum
$$E_{f,\Phi,\psi_z}=\int_{\QQ_p^n}\Phi(x)\psi_z(f(x))|dx|,$$
where $\psi:\QQ_p\to\CC^{\times}$ is the standard additive character of $\QQ_p$ defined by $\psi(u)=\exp(2\pi i\tilde{u})$ if $\tilde{u}$ is any rational number in $(\QQ_p\setminus\ZZ_p)\cup\{0\}$ 
such that $u-\tilde{u}\in\ZZ_p$ and $\psi_z:\QQ_p\to\CC^{\times}$ is the additive character of $\QQ_p$ defined by $\psi_z(u)=\psi(zu)$. When $z=p^{-m}$ then 
$$E_{f,\textbf{1}_{\ZZ_p^n},\psi_z}=p^{-mn}\sum_{x\in(\ZZ/p^m\ZZ)^n}\exp\left(\frac{2\pi if(x)}{p^m}\right)$$
is the usual exponential sum modulo $p^m$ of $f$.  Let $C_f$ be the set of critical points of $f$. Suppose that the support $\Supp(\Phi)$ of $\Phi$ has an empty intersection with $C_f\setminus f^{-1}(0)$. Let $z\in \QQ_p$ such that $|z|$ is large enough, then $E_{f,\Phi,\psi_z}$ is a finite $\CC$-linear combination of functions of the form $\chi(\acc(z))|z|^{\lambda}|\ord_p(z)|^{\beta}$ with coefficients independent of $z$, with $\lambda\in\CC$ a pole of   $(p^{s+1}-1) \cZ_{f,\Phi,\chi_{\textnormal{triv}}}(s)$ if $\chi=\chi_{\textnormal{triv}}$ or $\cZ_{f,\Phi,\chi}(s)$ if $\chi\neq\chi_{\textnormal{triv}}$, and with $\beta+1$ a positive integer less than the multiplicity of the pole $\lambda$. Moreover, all poles $\lambda$ as above appear with non-zero coefficients 
in this linear combination (see \cite{Igusa3} or \cite[Corollary 1.4.5]{DenefBour}). Therefore,  understanding $E_{f,\Phi,\psi_z}$ also helps to better understand the poles of Igusa's local zeta functions associated with $f$ and $\Phi$. For instance, for each prime $p$, let $\sigma_{f,p}$ be the minimal real number $\sigma$ such that $-\sigma$ is the real part of a pole of $(p^{s+1}-1) \cZ_{f,\11_{\ZZ_p^n},\chi_{\textnormal{triv}}}(s)$ or $\cZ_{f,\11_{\ZZ_p^n},\chi}(s)$ for $\chi\neq\chi_{\textnormal{triv}}$. Let $\sigma_f=\liminf_{p \to+\infty}\sigma_{f,p}$. Then $\sigma_f$ is the maximal real number such that if $\sigma<\sigma_f$ then for every  large enough prime $p$, $(p^{s+1}-1) \cZ_{f,\11_{\ZZ_p^n},\chi_{\textnormal{triv}}}(s)$ and $\cZ_{f,\11_{\ZZ_p^n},\chi}(s)$ for $\chi\neq\chi_{\textnormal{triv}}$ have no pole in the right half plane $\{\Re(s)>-\sigma\}$.   More especially, $\sigma_f$ is characterized by Igusa's conjecture for exponential sums. Roughly speaking, this conjecture is stated that $\sigma_f$ is the maximal real number such that for all $\sigma<\sigma_f$, there is a constant $c(\sigma,f)$ depending only on $f,\sigma$ and an integer $M$ depending only on $f$  such that for all $p>M$ we have 
$$|E_{f,\textbf{1}_{\ZZ_p^n},\psi_z}|\leq c(\sigma,f)|z|^{-\sigma}$$
for all $z\in \QQ_p$ such that $\ord_p(z)\leq -2$ (see \cite{CMN,NguyenVeys} and Conjecture \ref{conjex} in Subsection \ref{Igex}). This conjecture relates to the existence of integer solutions of the Diophantine equation $f=0$ when $f$ is homogeneous as mentioned by Igusa in \cite{Igusa3} (see also \cite{Haris,Nguyennsd}). More precisely, Igusa expected that this conjecture can help to give some simple sufficient conditions for the validity of the principle (so called the local-global principle or Hasse's principle) that a (resp. weighted homogeneous) polynomial $f$ with integer coefficients has a (resp. non-zero) integer solution if and only if $f$ has a (resp. non-zero) solution over the real field $\RR$ and a (resp. non-zero) solution over $\QQ_p$ for all primes $p$. Igusa's conjecture for exponential sums was proved partially in \cite{CDenSper,CDenSperlocal,JWright,Lichtin2,CMN,CAN,NguyenVeys,CluckerNguyen,Nguyennsd} but has been opened in the general case.  On the other hand, exponential sums modulo $p^m$ and Igusa's conjecture for exponential sums also has further applications on singularity theory such as characterizing special properties of singularities (see \cite{CMN,MoiIde}). 

\subsection{Main results}

Fix $w=(w_1,...,w_n)\in\NN_{>0}^n$ with $\gcd(w_1,...,w_n)=1$  and regard the polynomial ring $\ZZ[x_1,...,x_n]$ as an \(\NN\)-graded ring by putting \(\deg x_i = w_i\). Let $f\in\ZZ[x_1,...,x_n]$ be a non-constant polynomial which is homogeneous relative to this grading.
We assume that  $f\neq 0$, the $w$-degree of $f$ is $d>1$ and that  $f$ defines a hypersurface with an isolated singularity in $\AA_\CC^n$. In this case, the origin $(0,...,0)$ of $\AA_\CC^n$ is the only singularity of $\{f=0\}$. Let $p$ be a prime, $\chi$ be a multiplicative character  of $\ZZ_p^{\times}$ of order $d(\chi)$ and $Y$ be a subscheme of $\AA_{\ZZ}^n$. One of the main goals of this paper is to prove the following two theorems.

\begin{maintheorem}\label{mthm1}
For a large enough prime $p$ and $\Phi_{p,Y}=\11_{\{x\in \ZZ_p^n\mid (x \mod p)\in Y(\FF_p)\}}$,  
the set \(A\) of  the real parts of poles of 
\[
\cZ_{f,\Phi_{p,Y},\chi}(s)
\]
 satisfies the following properties:
\begin{enumerate}
	\item  If $d(\chi)\neq 1$ and $d(\chi)\nmid d$ then $A=\emptyset$.
	\item If $d(\chi)\neq 1$ and $d(\chi)\mid d$ then $A\subset \{\frac{-(w_1+...+w_n)}{d}\}$. 
	\item If $d(\chi)=1$ then $A\subset \{-1,\frac{-(w_1+...+w_n)}{d}\}$.	 
\end{enumerate}
Moreover, all poles of  
$\cE_{p,\chi}(s)\cZ_{f,\Phi_{p,Y},\chi}(s)$ have order at most $1$, where $\cE_{p,\chi}=1$ if $d(\chi)>1$ and $\cE_{p,\chi}=(p^{s+1}-1)$ if $d(\chi)=1$. 
\end{maintheorem}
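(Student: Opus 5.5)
The plan is to compute $\cZ_{f,\Phi_{p,Y},\chi}(s)$ by a Denef-type formula attached to a $\QQ$-embedded resolution (the $p$-adic McKay correspondence extension announced in the abstract), applied to the weighted blow-up of the origin. Let $\pi\colon X\to\AA^n$ be the weighted blow-up with weights $w$. The space $X$ has only cyclic quotient singularities. The strict transform $\tilde F$ of $\{f=0\}$ is smooth, since the singularity is isolated and $f$ is $w$-homogeneous, so $\{f=0\}\subset\PP(w)$ is quasi-smooth. The exceptional divisor $E\cong\PP(w)$ meets $\tilde F$ transversally in the orbifold sense. Relative to $E$, one has $\pi^*f$ vanishing to order $d$ and $K_{X/\AA^n}=(\sum w_i-1)E$. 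Thus the numerical data are $(N,\nu)=(d,\sum w_i)$ for $E$ and $(1,1)$ for $\tilde F$. All of this spreads out over $\ZZ[1/M]$, and for $p\gg0$ the reduction mod $p$ is again a $\QQ$-embedded resolution with good reduction. We also arrange that $Y$ is compatible, i.e.\ the preimage of $Y_{\FF_p}$ is a union of strata.

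Next I would invoke the generalized Denef formula proved earlier in the paper. It should express $\cZ$ as a finite sum over strata $E_I^\circ$ of the stratification by $E,\tilde F$ and the orbifold locus, twisted by the ``sectors'' of the McKay correspondence. Each term is a character-twisted point count times products of factors $\frac{(p-1)p^{-\nu_i-N_is}}{1-p^{-\nu_i-N_is}}$, possibly with shifts $p^{-\text{age}}$ coming from twisted sectors. The denominators only produce poles with real part $-\nu_i/N_i$. This gives $-1$ from $\tilde F$ and $-\sum w_i/d$ from $E$, which proves the containments in (2) and (3).

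For (1) and the character dependence, I would use that a factor attached to a divisor with multiplicity $N_i$ contributes a pole for $\chi$ only when $d(\chi)\mid N_i$. In Denef's classical formula this holds because the sum of $\chi$ over the fibres of $u\mapsto u^{N_i}$ vanishes otherwise. The orbifold version should behave the same way, with the relevant multiplicity being $d$ for $E$ and $1$ for $\tilde F$. Hence a nontrivial $\chi$ never sees $\tilde F$, and it sees $E$ only when $d(\chi)\mid d$.

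For the order of the poles, a pole of order $2$ would require a stratum lying in both $E$ and $\tilde F$ with coinciding candidate poles. For nontrivial $\chi$ the $\tilde F$ factor is absent, so every pole is simple. For trivial $\chi$, the factor $(p^{s+1}-1)$ cancels the simple pole at $s=-1$ coming from $\tilde F$. What remains from $E$ is then simple, provided the real parts differ or, when $\sum w_i=d$, the combined term still has order at most one after multiplication.

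The main obstacle is this last step together with the precise shape of the orbifold formula, in particular verifying that twisted sectors do not introduce new candidate poles or double poles. I would handle it by computing directly in the weighted-homogeneous setting. On each stratum $f$ factors as $t^d\cdot\tilde f$, with $t$ the coordinate along $E$ up to the cyclic action, and the integral separates into a one-variable factor $\frac{1}{1-p^{-\sum w_i-ds}}$ times an integral over the smooth hypersurface with its $(1,1)$ data. The $(1,1)$ data contribute only $\frac{p-1}{p^{s+1}-1}$. This gives a product of two simple factors, which $\cE_{p,\chi}$ reduces to at most a simple pole.
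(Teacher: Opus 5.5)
Your route is the paper's. You take the weighted blow-up as an embedded $\QQ$-resolution with $\QQ$-good reduction for $p\gg 0$ (Lemma~\ref{desstraw}), with data $(d,|w|)$ on the exceptional divisor and $(1,1)$ on the strict transform, and then apply the McKay-type Denef formula.

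The obstacle you single out is harmless. In Main Theorem~\ref{mucase} the twisted sectors enter only through the numerator $\Theta_{\overline{Z}_e,\chi}$, a finite sum of terms $p^{\sum_i(\delta_{e,i,\upsilon}-\gamma_{e,i,\upsilon})(N_{ei}s+\nu_{ei})}$, and these are entire in $s$. So the denominators are exactly $\prod_i(1-p^{-N_{ei}s-\nu_{ei}})$, and your pole-order argument goes through: the factor $\frac{p-1}{p^{s+1}-1}$ is cancelled exactly by $p^{s+1}-1$, leaving only the simple zeros of $1-p^{-ds-|w|}$, even when $|w|=d$.

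Making $Y$ compatible with the strata is unnecessary, since the formula only involves the pointwise sums $\Omega_{C,\chi,\pi,\overline{Z}_e}$. Dropping it also keeps the bound on $p$ independent of $Y$.

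The genuine gap is the characters with $c(\chi)>1$. For $p>d$ every such $\chi$ is nontrivial on the pro-$p$ group $1+p\ZZ_p$, so $p\mid d(\chi)$ and $d(\chi)\nmid d$. These characters therefore fall under item (1), and you must show that $\cZ_{f,\Phi_{p,Y},\chi}(s)$ has no poles at all.

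The mechanism you cite does not reach them. Vanishing of $\sum_{\bar y\in\FF_p^\times}\overline{\chi}(\bar y)^{N_i}$ when $d(\chi)\nmid N_i$ only makes sense when $\chi$ factors through $\FF_p^\times$. Likewise, the explicit formula you invoke (Main Theorem~\ref{mucase}) is proved only for $c(\chi)=1$.

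What you need is the analogue of Denef's vanishing for wild characters:
\begin{itemize}
\item Proposition~\ref{larcond} shows that the $\chi$-twisted stringy point count of a locally standard pair vanishes when $c(\chi)>1$ and some $N_i$ is prime to $p$.
\item Proposition~\ref{chilarge1} then uses $\QQ$-tame good reduction to conclude that $\cZ_{f,\Phi_{p,Y},\chi}(s)$ is constant in $s$. Tameness is automatic here, since $N_{ei}\in\{0,1,d\}$ and $p>d$.
\end{itemize}
Without this step, item (1) remains unproved for wild $\chi$.
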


We set 
$$\kappa(w)=\max_{\emptyset\neq J\subsetneq\{1,...,n\}, j\notin J} \lcm(\gcd(w_i)_{i\in J},w_j).$$

\begin{maintheorem}\label{mthm2}
Suppose $d\geq \kappa(w)$. 
There is a natural number $M$ depending only on $f$ and a constant $c$ depending only on $n,w,d$ such that 
$$|E_{f,\Phi_{p,Y},\psi_z}|\leq c|z|^{-\frac{\sum_{i=1}^nw_i}{d}}$$
 for every subscheme $Y\subset \AA_\ZZ^n$, every prime $p>M$ and every $z\in\QQ_p$ with $\ord_p(z)\leq -2$. 
Moreover, when $Y=\AA_\ZZ^n$, this inequality holds for every prime $p>M$ and every $z\in\QQ_p^{\times}$. 

\end{maintheorem}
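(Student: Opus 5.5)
Main Theorem \ref{mthm2} will be deduced from an explicit formula for $E_{f,\Phi_{p,Y},\psi_z}$. That formula comes from the Denef-type formula for $\cZ_{f,\Phi_{p,Y},\chi}$ attached to the $\QQ$-embedded resolution given by the weighted blow-up of the origin with weights $w$.

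First, the resolution. Let $h:X\to\AA^n$ be the weighted blow-up. The exceptional divisor is $E\cong\PP(w_1,\dots,w_n)$ with numerical data $(N,\nu)=(d,\sum w_i)$. The strict transform $\tilde F$ of $\{f=0\}$ meets $E$ transversally in the quasi-smooth hypersurface $\{f=0\}\subset\PP(w)$; this uses the isolated singularity hypothesis for $p$ large. Applying the $\QQ$-version of Denef's formula (the extension of the $p$-adic McKay correspondence), each stratum contributes a rational function. The quotient singularities of $X$ along the toric strata of $E$ produce twisted sectors. Their ages and multiplicities are governed by the stabilizers $\mu_{\gcd(w_i)_{i\in J}}$ on the strata where the coordinates $x_j$, $j\notin J$, vanish.

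Second, pass to exponential sums. Use the standard relation (\cite[Corollary 1.4.5]{DenefBour}, Igusa) expressing $E_{f,\Phi,\psi_z}$ for $\ord_p(z)=-m\le -2$ through the coefficients of $\cZ_{f,\Phi,\chi}$ for $\chi$ with $d(\chi)\mid d$ (together with the trivial character). By Main Theorem \ref{mthm1} all relevant poles are simple and have real part $-1$ or $-\sum w_i/d$. Simple poles give no $\ord_p(z)^\beta$ factors. The term from the pole $-1$ cancels for $m\ge2$ after multiplication by $(p^{s+1}-1)$. Hence $E_{f,\Phi_{p,Y},\psi_z}=\sum_{\chi}c_\chi(p,z)\,|z|^{-\sum w_i/d}$. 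Each $c_\chi$ is a sum, over $\FF_p$-points of strata of $E$ (with twisted sectors), of products of $p^{-\dim}$ factors with Gauss or Jacobi-type character sums.

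Third, bound the coefficients uniformly in $p$ and $Y$. The contribution of the open stratum $E\setminus\tilde F$ involves $\#\{\bar x\in U(\FF_p)\}$ times Gauss sums. Those are bounded by Weil/Deligne-type estimates for the character sums $\sum\chi(f(x))$ over $\FF_p$-points of $E$. Since $\{f=0\}\cap E$ is quasi-smooth, these sums have square-root cancellation of size $p^{(n-1)/2}$ up to a constant depending only on $n,w,d$, after normalization. For general $Y$ one only has the trivial bound $\#Y(\FF_p)\le c\,p^{\dim}$, but this is absorbed by the normalization $p^{-n}$. Then $|c_\chi|\le c(n,w,d)$, and summing over the at most $d$ characters gives the constant $c$.

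The main obstacle is the twisted-sector contributions on the singular strata of $E$. There the shift by the age could create terms of size $p^{\text{age}-\text{codim}}$ that beat $|z|^{-\sum w_i/d}$ at the first few levels $m$. This is exactly where $d\ge\kappa(w)$ should enter. I would compute the age on the sector attached to $J,j$ and show that $d\ge\lcm(\gcd(w_i)_{i\in J},w_j)$ forces the exponent to be at most $-m\sum w_i/d$ for all $m\ge 2$.

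For $Y=\AA^n_\ZZ$ and $\ord_p(z)\ge -1$, I would argue separately. For $\ord_p(z)\ge0$ the sum is $1\le c|z|^{-\sum w_i/d}$. For $\ord_p(z)=-1$, $E=p^{-n}\sum_{x\in\FF_p^n}\psi(f(x)/p)$ is a finite-field sum for a weighted homogeneous $f$ with isolated singularity. Deligne's bound (via the weighted projective hypersurface being smooth in the orbifold sense) gives $|E|\le c\,p^{-n/2}$, and $n/2\ge\sum w_i/d$ is what is needed. I expect this again to follow from $d\ge\kappa(w)$; otherwise one uses the explicit weighted-homogeneous Gauss sum decomposition.
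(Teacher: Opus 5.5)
You have the paper's framework: the weighted blow-up, the $\QQ$-version of Denef's formula with twisted sectors coming from the stabilizers $\mu_{d_J}$ ($d_J=\gcd(w_i)_{i\in J}$), then Proposition \ref{zetaex}. But the step that carries all the difficulty fails as you state it.

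After the characters are recombined, the coefficient of $|z|^{-|w|/d}$ is not a single character sum over $E$. It is a sum $\sum_J p^{\lambda(J,m)}H_J$ over the toric strata $E_J=\{[u]_w : u_i\neq 0\iff i\in J\}$, where $H_J=\sum_{u\in(\FF_p^{\times})^{J}}\Psi(aF_J(u))$ and $F_J=f|_{\{x_i=0,\ i\notin J\}}$ (Corollary \ref{estm5}). The weights $p^{\lambda(J,m)}$ depend on $J$, through the sector exponents $\epsilon_J(\upsilon_{Jm})$ and through whether $\upsilon_{Jm}$ exists at all.

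So the strata cannot be pooled into one sum over $E$. Moreover, quasi-smoothness of $\{f=0\}\subset\PP(w)$ does not control the restrictions $F_J$. Their singular locus can have positive dimension $s_J$, and $F_J$ can vanish identically. For example, take $f=x_1^3x_2+x_2^2$, $w=(1,3)$, $d=6\ge\kappa(w)=3$, $J=\{1\}$: then $H_J=(p-1)^{\#J}$, with no cancellation at all. What one has in general is only $|H_J|\le c\,p^{(\#J+s_J)/2}$ (Lemma \ref{exfin}).

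The same phenomenon sits inside the trivial character alone. The contributions of the points of $E_J$ off and on the strict transform are $p^{-n+\epsilon_J+|w|/d}\,\#\overline{Z}_{J1L}$ and $p^{-n+\epsilon_J+|w|/d}(p-1)\#\overline{Z}_{J0L}$. These are individually unbounded: already of size $p^{1/2}$ for $x_1^2+x_2^2+x_3^2$. They cancel only down to $p^{\lambda(J,m)}(p-1)^{-1}\sum_{c\in\FF_p^\times}H_J(c)$, which is again governed by $s_J$.

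Hence the inequality you must prove is not an age estimate alone. It is $\epsilon_J(\upsilon)+|w|/d-n+(\#J+s_J)/2\le 0$ for every $J$ with $F_J\neq 0$ and every sector $\upsilon$, plus the variant with $s_J=\#J$ when $F_J\equiv 0$ (Lemma \ref{ubound}). Even on untwisted strata ($d_J=1$) this already requires $s_J\le n-\#J$, which your plan never produces.

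The missing idea is an upper bound for $s_J$ in terms of the weights, extracted from the isolated-singularity hypothesis:
\begin{itemize}
\item A dimension count on the Jacobian ideal restricted to $\{x_i=0,\ i\notin J\}$ gives $I\subset J$ with $\#I\ge s_J$ and an injection $\varsigma_J:I\to\{1,\dots,n\}\setminus J$ with $w_i+w_{\varsigma_J(i)}\le d$ and $d_J\mid d-w_{\varsigma_J(i)}$ (Lemmas \ref{exinj}, \ref{inje}).
\item This is combined with $\sum_{i\in J'}w_i\le d\,\#J'/2$ for the restriction of $f$ to $J'=\{i : d_J\mid w_i\}$, which is again an isolated singularity (Lemmas \ref{ressmooth}, \ref{1/2}).
\item Only then does $d\ge\kappa(w)$ enter, to bound the fractional parts in $\epsilon_J(\upsilon)$.
\end{itemize}
Without this input, ``computing the age on the sector attached to $J,j$'' has nothing correct to be compared against.

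Two smaller corrections.
\begin{itemize}
\item The inequality $n/2\ge|w|/d$, needed for $\ord_p(z)=-1$, holds for every weighted homogeneous isolated singularity by the pairing argument of Lemma \ref{1/2}. It does not come from $d\ge\kappa(w)$.
\item For general $Y$ and $\ord_p(z)\le -2$, the residue classes $\bar x\neq 0$ contribute exactly $0$, since $\bar f$ has no critical points there for $p$ large. So $E_{f,\Phi_{p,Y},\psi_z}$ is either $0$ or $E_{f,\mathbf{1}_{p\ZZ_p^n},\psi_z}$. A trivial bound $p^{-n}\#Y(\FF_p)$ does not decay in $|z|$ and cannot be ``absorbed by the normalization''.
\end{itemize}
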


In the situation of Main Theorem \ref{mthm1}, $\frac{-(w_1+...+w_n)}{d}$ is indeed a root of the Bernstein-Sato polynomial of $f$ as shown in \cite{BBV}. Thus, the strong monodromy conjecture holds for $f$. Moreover, Main Theorem \ref{mthm2} confirms Igusa's conjecture for exponential sums in the case of weighted homogeneous polynomials with isolated singularities under a slight condition on their degrees and the weight $w$. In particular, this condition always holds in the key case of the local-global principle for weighted homogeneous polynomials with isolated singularities, as demonstrated in the following example.
\begin{example}Let $g\in\ZZ[x_1,...,x_n]$ be a non-constant weighted homogeneous polynomial. If there is a non-empty subset $J$ of $\{1,...,n\}$ such that $g|_{\{x_i=0\forall i\notin J\}}$ is the zero polynomial, then we say that $g$ is degenerate. Otherwise, we say that $g$ is non-degenerate. 

Suppose that $g$ is degenerate  and $J$ is a non-empty subset of $\{1,...,n\}$ such that $g|_{\{x_i=0\forall i\notin J\}}$ is the zero polynomial.  Then every tuple $(a_1,...,a_n)\in \ZZ^n\setminus\{0,...,0\}$ such that $a_i=0$ for all $i\notin J$ is a non-zero solution of $g(x)=0$. Thus, the local-global principle holds trivially for $g$.

Suppose that $g$ is non-degenerate.  For each $1\leq j\leq n$, $g|_{\{x_i=0\forall i\neq j\}}$ is a non-zero polynomial. Therefore, $w_j\mid d$ and the term $x_j^{d/w_j}$ appears with a non-zero coefficient in $g$ for all $1\leq i\leq n$. This implies $\kappa(w)\mid d$ since $\lcm(\gcd(w_i)_{i\in J},w_j)\mid d$ for all $\emptyset\neq J\nsubseteq\{1,...,n\}$ and all $j\notin J$. Thus, Main Theorem \ref{mthm2} works for $g$.

\end{example}

To remove the condition in Main Theorem \ref{mthm2}, we might need to study in depth the relationship between the degree $d$ and the weight $w$. 
More precisely, we hope to better understand the set $\cD_w$ of natural numbers $d$ for each $w\in\NN_{\geq 1}^n$ such that there is a $w$-weighted homogeneous polynomial of degree $d$ in $n$ variables that  defines a hypersurface with an isolated singularity in $\AA_\CC^n$. Motivated by our calculation, we give a weaker condition than the one in Main Theorem \ref{mthm2} and predict that every $d\in \cD_w$ satisfies this condition. This is formulated as a conjecture  at the end of this paper (see Conjecture \ref{conjw}).  

One of the main points to prove
both Main Theorem \ref{mthm1} and Main Theorem \ref{mthm2} is to give an explicit formula for Igusa's local zeta functions $\cZ_{f,\Phi_{p,Y},\chi}$ when $p$ is large enough. It is based on the fact that the $w$-weighted blowing-up at the origin of $\AA_{\QQ}^n$ is an embedded $\QQ$-resolution of singularities of $(\AA_\QQ^n,\{f=0\})$ as shown in \cite{BBV} (see Section \ref{wiso}). Here, the notion of embedded $\QQ$-resolutions of singularities was introduced in \cite{ArMO}. In fact, we are able to establish an explicit formula for Igusa's local zeta functions associated with a given polynomial $g\in\ZZ[x_1,...,x_n]$  and a prime $p$ in terms of any given embedded $\QQ$-resolution of singularities of $(\AA_\QQ^n,\{g=0\})$ provided that $p$ is large enough. This is the content of Main Theorem \ref{mucase}  presented in Section \ref{form}. We hope that our formula will have further applications to the study of Igusa's local zeta functions in the future.

\subsection{Related works}
Let $f\in\CC[x_1,...,x_n]$  be a $w$-weighted homogeneous with respect to a weight $w\in\NN_{>0}^n$ such that $f$ defines a hypersurface with an isolated singularity in $\AA_\CC^n$. In \cite{Sebagiso}, Sebag proved that the motivic monodromy conjecture holds for the naive motivic zeta function associated with $f$ by explicitly calculating jet schemes associated with $f$. On the other hand, Blanco, Budur and van der Veer also gave alternative proof of this result in \cite{BBV} by using the work  in \cite{LMVV} that allows us to compute explicitly the naive motivic zeta function through a given embedded $\QQ$-resolution of singularities. However, since \cite{LMVV} works with the naive motivic zeta function of $f$ over $\CC$, it is not automatic to use the map $s_p$ above to compute $\cZ_{f,\textbf{1}_{\ZZ_p^n},\chi_{\textnormal{triv}}}(s)$ when $f\in\ZZ[x_1,...,x_n]$ if $p$ is large enough. Although it seems to be quite complicated,  the readers may try to give a version of \cite{LMVV}  for not only the naive motivic zeta function over $\QQ$ but also the twisted motivic zeta functions given in \cite{DLJAG} over $\QQ$ and use the ring homomorphism $s_p$ mentioned above to prove Main Theorem \ref{mthm1}.

Note that we can compute all Igusa's local zeta functions by using embedded resolutions of singularities as shown in \cite{Igusa3}. Moreover,  if $g\in\ZZ_p[x_1,...,x_n]$ and we have an embedded resolution of singularities of $(\AA_{\QQ_p}^n,\{g=0\})$
with (tame) good reduction modulo $p$ as in \cite{Denefdegree,Denef91},  then Denef gave an explicit formula for Igusa's local zeta functions $\cZ_{g,\textbf{1}_{\ZZ_p^n},\chi}(s)$ in terms of this embedded resolution.  
Our Main Theorem \ref{mucase} gives a version of Denef's formula  in the setting of embedded $\QQ$-resolutions of singularities. 
In practice, the complexity to obtain an embedded $\QQ$-resolution of singularities is often  much lower than a usual embedded resolution of singularities. For instance, if $g$ is weighted homogeneous and defines a hypersurface with an isolated singularity in $\AA_\CC^n$ then we can obtain an embedded $\QQ$-resolution of singularities of $(\AA_\CC^n,\{g=0\})$ by using only one weighted blowing-up at the origin as mentioned above while an embedded resolution of singularities of $(\AA_\CC^n,\{g=0\})$ is very complicated to compute. Moreover, the recent work of Abramovich, Temkin, and W\l odarczyk in \cite{ATW} on embedded resolution of singularities using weighted blowing-ups may be a machine to construct embedded $\QQ$-resolutions of singularities. Therefore, we have room  to apply  Main Theorem \ref{mucase} in future research.  
\subsection{The strategy and outline of the paper}

To explain our strategy, we recall the notion of embedded $\QQ$-resolutions of singularities in \cite{ArMO}. Roughly speaking, for the usual embedded resolutions of singularities as in \cite{Hir:Res}, we require that the final ambient space is smooth while for the embedded $\QQ$-resolutions of singularities, we allow the final ambient space to contain abelian quotient singularities (see Section \ref{bano} for details). In other words, an embedded  $\QQ$-resolution of singularities can be obtained by using a sequence of weighted blowing-ups rather than a sequence of blowing-ups for embedded resolutions of singularities. 

In this paper, we study integrals around abelian quotient singularities. The main ingredient  is the work of the second author in \cite{Yas} where he introduced the notion of $G$-string point count that is the sum of some special integrals associated to a given finite group $G$ and varieties over $\ZZ_p$ endowed with an action of $G$.  This notion is used to study the McKay correspondence that links some special properties of quotient singularities and the representations of finite groups. The main point is that if $G$ is a finite group with a faithful action on an affine space then the $G$-stringy point count of this affine space equals to the $\{1\}$-stringy point count of the quotient of this affine space by $G$, where $\{1\}$ is the trivial group. On the other hand, the $\{1\}$-stringy point count is a birational invariant. Our idea is that the Igusa's zeta functions with respect to the trivial character can be computed in terms of the $\{1\}$-stringy point count and this holds for all primes $p$ thus it is not too surprise that we can use \cite{Yas} to compute Igusa's zeta function with respect to the trivial character when we have an embedded $\QQ$-resolution of singularities. However, for Igusa's zeta function with respect to non-trivial characters, we need to extend the notion of stringy point count by introducing the notion of twisted stringy point count. Roughly speaking, we need to endow non-trivial characters to the setting of stringy point count.  On the other hand,  to extend the work of Denef in \cite{Denefdegree,Denef91}, we will also need a version of the stringy point count associated with finite group schemes rather than finite groups. In fact, by using the idea from \cite[Section 1.4]{DLJAG}, the readers can relate our twisted stringy point count to the twisted point count introduced by Groechenig, Wyss and Ziegler in \cite[Definition 2.14]{GWZ}. All details on stringy point count will be presented in Section \ref{stringy}. More specifically, we also compute the $G$-string point count in the case where $G\subset\GG_{m,\ZZ_p}^n$ is a finite diagonalizable subgroup scheme over $\ZZ_p$. This will be helpful in proving Main Theorem \ref{mucase}.

Note that the notion of string point count is defined with $\ZZ_p$-varieties. Thus, to use an embedded $\QQ$-resolutions of singularities over $\QQ_p$ to compute the string point count, we need to suppose at least that this embedded $\QQ$-resolution  can be defined over $\ZZ_p$. This is the reason to introduce the notion of embedded $\QQ$-resolutions of singularities over $\QQ_p$ having a good $\ZZ_p$-model in Section \ref{bano}. 
However, to extend the results of Denef in \cite{Denefdegree,Denef91} to the setting of embedded $\QQ$-resolutions  of singularities having a \emph{good $\ZZ_p$-model}, we need to require additionally that the $\ZZ_p$-models of these embedded $\QQ$-resolutions have \emph{good reduction modulo $p$} (see a precise definition in Section \ref{bano}).  In particular, let $f\in\ZZ[x_1,...,x_n]$ be as in Main Theorem \ref{mthm1} then by a simple  argument (see Lemma \ref{almostgood} below), we will see that $(\AA_{\QQ_p}^n,\{f=0\})$  admits an embedded $\QQ$-resolution of singularities with good reduction modulo $p$ if $p$ is large enough. Furthermore, an important problem is to construct a stratification and its associated numerical data of any given embedded $\QQ$-resolution of singularities with good reduction modulo $p$ such that we can express Igusa's local zeta functions in terms of them.  This problem is quite simple for the usual embedded resolutions of singularities since we have no non-trivial group scheme action here. In \cite{LMVV}, this problem is also not too difficult since we work with algebraically closed fields. In our situation, we need to deal with the ring of integers of $p$-adic fields rather than algebraically closed fields. Thus, it requires much more complicated work. We will present all details on solving this problem in Subsections \ref{weispa} and \ref{emQgood}.

In Section \ref{form}, if $g\in\ZZ_p[x_1,...,x_n]$ then for each embedded $\QQ$-resolutions of singularities of $(\AA_{\QQ_p}^n,\{g=0\})$ with good reduction modulo $p$, we will use the stringy point count to obtain a formula for Igusa's local zeta functions of $g$ in terms of any fixed stratification and its numerical data mentioned above. 

In Section \ref{wiso}, we will construct a stratification and its numerical data associated with the $w$-weighted blowing up at the origin mentioned above. This and Main Theorem \ref{mucase} allow us to compute Igusa's local zeta functions associated with weighted  homogeneous polynomials defining hypersurfaces with isolated singularities. In particular, this implies Main Theorem \ref{mthm1}. The proof of Main Theorem \ref{mthm2} is much more difficult. The easy part is to use the relation between exponential sums modulo $p^m$ and Igusa's local zeta functions mentioned above (see a precise statement in Proposition \ref{zetaex}) to compute explicitly exponential sums modulo $p^m$. The difficult part is to prove the inequality. By using results on exponential sums over finite fields in \cite{CDenSperlocal} and \cite{MoiIde}, the problem reduces to studying the  deep relation among the degree $d$ of $f$, the weight $w$ and the singular locus of polynomials $f|_{\{x_i=0\forall i\notin J\}}$ for each non-empty subset $J$ of $\{1,...,n\}$. We spend the last part of Subsection \ref{auxres} to explore these relations.   

\subsection*{Acknowledgements}

The first author contributed to the entire paper, while the second author contributed to Sections 1--3. The idea for this paper was formed during the first author's visit to the University of Osaka in 2024, supported by the Japan Society for the Promotion of Science (JSPS) Postdoctoral Research Fellowship under the grant number PE23034. We sincerely thank JSPS and the University of Osaka for their all-round support during this visit. The first author was partially supported by  Fund for Scientific Research-Flanders (Belgium) (F.W.O.) 1270923N and  the Excellence Research Chair “FLCarPA: L-functions in positive characteristic and applications” financed by the Normandy Region. The second author was supported by JSPS KAKENHI Grant Numbers 23K25767 and 24K00519.

\section{Settings and Preliminaries}\label{prep}
\subsection{Basic definitions and notations}\label{bano}
In this paper we will denote by $K$ a number field and $\cO_K$ its ring of integers. Moreover, a local field is a finite extension of $\QQ_p$ for a prime $p$. A local field over $\cO_K$ is a local field endowed with an $\cO_K$-algebra structure. 

Let $L$ be a local field over $\cO_K$. We denote by $\cO_L, \varpi_L,  k_L$ the ring of integers in $L$, a fixed uniformizing element of $\cO_L$ and the residue field $k_L:=\cO_L/(\varpi_L)$ of $L$ respectively. We denote by $\ord_L:L\to \ZZ\cup\{+\infty\}$ the valuation map of $L$. We put $q_L=\#k_L$ and denote by $p_L$ the characteristic of $k_L$. Let $\acc: L\to\cO_L^{\times}\cup\{0\}$ be  the angular component map with respect to $\varpi_L$ given by $\acc(z)=z\varpi^{-\ord_L(z)}$ if $z\neq 0$ and $\acc(0)=0$. We denote by $|.|_L:L\to \RR_{\geq 0}, z\mapsto q_L^{-\ord_L(z)}$ the usual absolute value on $L$. If $L$ is well understood in the context then we write $|.|$ instead of $|.|_L$.

A multiplicative character of $\cO_L^{\times}$ is a continuous homomorphism of groups from $\cO_L^{\times}$ to $\CC^{\times}$. We extend such a multiplicative character $\chi$ to a function on $\cO_L^{\times}\cup\{0\}$ by putting $\chi(0)=0$.  Let $\chi$ be a multiplicative character of $\cO_L^{\times}$. The conductor of $\chi$, denote by $c(\chi)$, is the smallest positive integer number $m$ such that $\chi(1+\varpi_L^m\cO_L)=1$. We will denote by $\chi_\textnormal{triv}$ the trivial multiplicative character of $\cO_L^{\times}$. We also denote by $d(\chi)$ the order of $\chi$, i.e., the smallest positive integer $\ell$ such that $\chi^{\ell}=\chi_\textnormal{triv}$.  If $\chi$ is a multiplicative character of $\cO_L^{\times}$ of conductor $1$ then $\chi$ induces a multiplicative character $\overline{\chi}$ of $k_L^{\times}$ given by $\overline{\chi}(a \mod \varpi_L\cO_L):=\chi(a)$, we also put $\overline{\chi}(0)=0$. 

An additive character of $L$ is a continuous homomorphism of groups from $L$ to $\CC^{\times}$. Let $\psi$ be a non-trivial additive character of $L$, the conductor of $\psi$, denote by $m_\psi$, is the smallest integer $m$ such that $\psi|_{\varpi_L^{m}\cO_L}=1$. If $\psi$ is an additive character of $L$ of conductor $0$ then any non-trivial additive character of $L$ of conductor $m$ is of the form 
\begin{equation}\label{psi_z}
    \psi_z:L\to \CC^{\times}, u\mapsto \psi(zu)
\end{equation}
for some $z\in L$ satisfying $\ord_L(z)=-m$.

Let $n$ be a positive integer,  we will denote by $|dx|:=|dx_1\wedge...\wedge dx_n|$ the Haar measure on $L^n$ such that the volume of $\cO_L^n$ is $1$. 

A Schwartz-Bruhat function on $L^n$ is a locally constant function with compact support.  A Schwartz-Bruhat function $\Phi$ on $L^n$ is residual if the support $\Supp(\Phi)$ of $\Phi$ is contained in $\cO_L^n$ and the value $\Phi(x)$ depends only on $\overline{x}:=x \mod \varpi_L\cO_L$. For a residual Schwartz-Bruhat function $\Phi$, we will set $\overline{\Phi}(\overline{x}):=\Phi(x)$. Let $Z$ be a subscheme of $\AA_{\cO_K}^n$ or $\AA_{k_L}^n$, we consider the Schwartz-Bruhat function $\Phi_{L,Z}$ on $L^n$ defined by
$\Phi_{L,Z}:=\textbf{1}_{\{z\in \cO_L^n| \overline{z} \in Z(k_L)\}}$. It is clear that $\Phi_{L,Z}$ is residual. 

In this paper, an $\cO_L$-variety is a flat and reduced quasi-projective $\cO_L$-scheme $X$ of equi-relative dimensional over $\spec(\cO_L)$ such that the structure map $X\to \spec(\cO_L)$  is smooth on an open dense subscheme of $X$. We say that $X$ is a $n$-dimensional $\cO_L$-variety if the relative dimensional of $X$ over $\spec(\cO_L)$ is $n$. If $f:X\to Y$ is a morphism of $\cO_L$-varieties, we denote by $\overline{f}:X_{k_L}\to Y_{k_L}$ the reduction modulo $\varpi_L\cO_L$ of $f$.

Throughout this paper, a ring is a commutative ring with $1\neq 0$.  Let $R$ be a ring. If $X, Y$ are $R$-schemes, $\varphi:X\to Y$ is an $R$-morphism and $R'$ is an $R$-algebra then we also often write $X_{R'}$ (resp.  $Y_{R'}$) instead of $X\otimes_R R'$ (resp.  $Y\otimes_R R'$) and denote by $\varphi_{R'}:X_{R'}\to Y_{R'}$ for the extension of scalars  to $R'$ of $\varphi$. By abuse of notation, if  $R\to R'$ is an embedding of rings and $f\in R[x_1,...,x_n]$ then we will still write $f$ for the image of $f$ via the embedding $R[x_1,...,x_n]\to R'[x_1,...,x_n]$ induced by the embedding $R\to R'$. Let $\mathsf{F}$ be a field.  We denote by $\overline{\mathsf{F}}$ a fixed algebraic closure of $\mathsf{F}$.   An $\mathsf{F}$-variety is a reduced separated $\mathsf{F}$-scheme of finite type. Let $X$ be an $\mathsf{F}$-variety and let $f$ be a regular function on $X$. We  will denote by $D_{\mathsf{F}}(f)$ the principal divisor on $X$ associated with $f$. Moreover, if the field $\mathsf{F}$ is understood clearly in the context then we will write $D(f)$ instead of $D_{\mathsf{F}}(f)$.  We use $\Sing(X)$ and $C_f$  to denote the singular locus of $X$ and $f:X\to\AA_\mathsf{F}^1$, respectively.

\subsection{Group schemes}
 Let $R$ be a domain, we write $\GG_{\textnormal{m},R}=\spec(R[x,y]/(1-xy))$ for the multiplicative group scheme over $R$.
 
Let $\mathsf{F}$ be a field and $G$ be a finite group scheme over $\mathsf{F}$.  We denote by $d(G)$ the order of $G$, i.e., the dimension over $\mathsf{F}$ of the Hopf algebra $\cO_G(G)$. To simplify notation, we will write $\cO_G$ instead of $\cO_G(G)$.

If $G$ is a finite flat group scheme over a Noether domain $R$ then $d:\spec R \to \NN, \mathfrak{p}\mapsto d(G\otimes_R R_{\mathfrak{p}}/\mathfrak{p})$ is a constant function whose value is denoted by $d(G)$. 

We need the following technical definitions.
\begin{defn}\label{etag}
Let $G$ be a finite étale group scheme over a field $\mathsf{F}$. A $G$-étale algebra is a finite $\mathsf{F}$-algebra $M$ of degree $d(G)$ endowed with a co-action of $\cO_G(G)$ such that its subring $M^G$ of $\cO_G(G)$-invariant elements is identical to $\mathsf{F}$. Alternatively, $M$ is a $G$-étale algebra if and only if $\spec(M)$ is an étale $G$-torsor over $\spec(\mathsf{F})$. We denote by $G\textnormal{-\'Et}(\mathsf{F})$ the set of étale $G$-algebras.
\end{defn} 

\begin{defn}[\cite{SGA3I}, Exposé I, Section 4.4]\label{diagon} Let $R$ be a domain. If $\cG$ is an abelian group of finite type, then the  functor
\begin{align*}\mathsf{D}_R(\cG):R\textnormal{-algebras}&\to \textnormal{Sets},\\ T&\mapsto \textnormal{Hom}_{\textnormal{groups}}(\cG,T^{\times})
\end{align*}
is presented by a flat affine group scheme of finite type over $R$. We still denoted this group scheme by $\mathsf{D}_R(\cG)$. In particular, if $\cG$ is finite then $\mathsf{D}_R(\cG)$ is a finite flat group scheme over $R$. A diagonalizable group scheme over $R$ (or $R$-diagonalizable group scheme) is a groups scheme over $R$ that is $R$-isomorphic with $\mathsf{D}_R(G)$ for some  abelian group $\cG$ of finite type.
\end{defn}
We have the following lemmas.
\begin{lem}Let $R,R'$ be domains such that $R'$ is endowed with an $R$-algebra structure. Let $\cG$ be an abelian group of finite type. Then $\mathsf{D}_{R'}(\cG)=\mathsf{D}_R\otimes_R R'$. 
\end{lem}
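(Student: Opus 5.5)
We prove that $\mathsf{D}_{R'}(\cG)\cong\mathsf{D}_R(\cG)\otimes_R R'$ by comparing the functors they represent on $R'$-algebras and then invoking the Yoneda lemma. The lemma's statement writes $\mathsf{D}_R\otimes_R R'$, which we read as $\mathsf{D}_R(\cG)\otimes_R R'$.

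First we recall the base change of a represented functor. Let $X=\spec(A)$ be an affine $R$-scheme. For any $R'$-algebra $T$, viewed as an $R$-algebra through the structure map $R\to R'$, we have natural bijections
$$\textnormal{Hom}_{R'\textnormal{-alg}}(A\otimes_R R',T)\cong \textnormal{Hom}_{R\textnormal{-alg}}(A,T).$$
Hence $X\otimes_R R'$ represents the restriction of the functor of points of $X$ to $R'$-algebras. The same holds for the group structure: the comultiplication, counit and antipode of $A\otimes_R R'$ are the base changes of those of $A$, so the bijection is compatible with the group laws.

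Next we apply this to $X=\mathsf{D}_R(\cG)$. For an $R'$-algebra $T$, the base-change identity gives
$$(\mathsf{D}_R(\cG)\otimes_R R')(T)=\mathsf{D}_R(\cG)(T)=\textnormal{Hom}_{\textnormal{groups}}(\cG,T^{\times}).$$
The right-hand side is exactly $\mathsf{D}_{R'}(\cG)(T)$, and this identification is natural in $T$ and compatible with the group laws. By the Yoneda lemma, $\mathsf{D}_{R'}(\cG)\cong\mathsf{D}_R(\cG)\otimes_R R'$ as group schemes over $R'$.

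The same conclusion can be made concrete. The representing Hopf algebra is the group algebra $R[\cG]$, with each $g\in\cG$ grouplike, meaning $\Delta(g)=g\otimes g$. Since $R[\cG]\otimes_R R'=R'[\cG]$ with the same coalgebra structure, the isomorphism is explicit.

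None of these steps is a real obstacle. The only point that needs care is that the identification respects the group structure, and this follows because base change of Hopf algebras is functorial.
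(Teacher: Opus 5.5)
Your proof is correct and follows the same route as the paper, which simply says the lemma follows immediately from Definition~\ref{diagon}. You spell out that $\mathsf{D}_R(\cG)\otimes_R R'$ represents the restriction to $R'$-algebras of $T\mapsto\textnormal{Hom}_{\textnormal{groups}}(\cG,T^{\times})$, which is $\mathsf{D}_{R'}(\cG)$ by definition, and your reading of $\mathsf{D}_R$ as $\mathsf{D}_R(\cG)$ is the intended one.
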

\begin{proof}
It follows immediately from Definition \ref{diagon}.
\end{proof}
\begin{lem}[\cite{SGA3I}, Exposé VIII, Corollary 5.7]\label{closimm}Let $R$ be a domain. Then a monomorphism between two diagonalizable group schemes over $R$ is a closed immersion. 
\end{lem}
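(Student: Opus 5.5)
The claim is that a monomorphism $\varphi:\mathsf{D}_R(\cG)\to\mathsf{D}_R(\cH)$ of diagonalizable group schemes over $R$ is a closed immersion. The plan is to move everything to the side of character groups via the anti-equivalence $\cG\mapsto\mathsf{D}_R(\cG)$. Concretely, $\mathsf{D}_R(\cG)=\spec R[\cG]$, where $R[\cG]$ is the group algebra. Since $R$ is a domain, $\spec R$ is connected, and so every homomorphism $\mathsf{D}_R(\cG)\to\mathsf{D}_R(\cH)$ comes from a unique group homomorphism $u:\cH\to\cG$. The corresponding ring map is $R[\cH]\to R[\cG]$, sending $h\mapsto u(h)$. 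This is the standard fact that characters of $\mathsf{D}_R(\cG)$ are exactly the grouplike elements of $R[\cG]$, which over a connected base are the elements $g\in\cG$.

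Once $\varphi$ is written as $\mathsf{D}_R(u)$, the proof splits into two steps.

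\textbf{Step 1: reduce to surjectivity of $u$.} Suppose $u$ is surjective. Then $R[\cH]\to R[\cG]$ is surjective, because every basis element $g$ of $R[\cG]$ is hit. Hence $\varphi$ is a closed immersion, and we are done.

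\textbf{Step 2: show a monomorphism forces $u$ surjective.} Put $\cG'=u(\cH)\subseteq\cG$ and $\cC=\cG/\cG'$. The exact sequence $0\to\cG'\to\cG\to\cC\to 0$ gives an exact sequence of fppf sheaves
\[
1\to\mathsf{D}_R(\cC)\to\mathsf{D}_R(\cG)\to\mathsf{D}_R(\cG').
\]
Indeed, $\mathrm{Hom}(-,T^\times)$ is left exact, so $\mathsf{D}_R(\cC)$ is the kernel of restriction to $\cG'$. Moreover, $\varphi$ factors as $\mathsf{D}_R(\cG)\to\mathsf{D}_R(\cG')\to\mathsf{D}_R(\cH)$, and the second map is a monomorphism, being injective on $T$-points because $\cH\to\cG'$ is onto. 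Therefore $\varphi$ is a monomorphism if and only if the kernel $\mathsf{D}_R(\cC)$ is trivial, i.e.\ $\mathsf{D}_R(\cC)(T)=\mathrm{Hom}(\cC,T^\times)=1$ for all $R$-algebras $T$.

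It remains to show that this triviality forces $\cC=0$. Take $T=R[\cC]$ and the tautological homomorphism $\cC\to R[\cC]^\times$, $c\mapsto c$. It is injective, since the elements of $\cC$ form a basis of $R[\cC]$ and $R\neq 0$. If this homomorphism is trivial, then $\cC=0$. Hence $u$ is surjective, and Step 1 finishes the proof.

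The main obstacle is the initial claim that every homomorphism of diagonalizable groups over $R$ is induced by a homomorphism of character groups. This needs connectedness of $\spec R$: over a disconnected base one only gets locally constant $u$. Here $R$ is a domain, so a grouplike element $\sum_g a_g g$ satisfies $a_ga_{g'}=0$ for $g\neq g'$, $a_g^2=a_g$ and $\sum_g a_g=1$. In a domain, the only idempotents are $0$ and $1$, so exactly one $a_g$ equals $1$ and the others vanish, which gives the claim. Everything else is formal.
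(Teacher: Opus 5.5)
Your argument is correct and complete. The paper does not prove this lemma; it only quotes SGA3, Exp.~VIII, Cor.~5.7. That result is formulated over an arbitrary base scheme $S$, where a homomorphism $\mathsf{D}_S(\cG)\to\mathsf{D}_S(\cH)$ is of the form $\mathsf{D}(u)$ only locally on $S$, so the general statement needs that extra localization.

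Your route is a self-contained proof in exactly the generality the paper uses:
\begin{itemize}
\item The domain hypothesis enters only through $R\neq 0$ and the absence of nontrivial idempotents. This makes the grouplike elements of $R[\cG]$ exactly the elements of $\cG$, so every homomorphism is globally equal to some $\mathsf{D}_R(u)$.
\item You then identify the kernel with $\mathsf{D}_R(\cG/u(\cH))$ and evaluate it at the tautological point over $R[\cG/u(\cH)]$. This forces $u$ to be surjective, and you never need the character groups to be finitely generated.
\end{itemize}

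A side benefit: your first step is exactly what the paper tacitly relies on in Subsection~\ref{compumu}. There, after invoking this lemma to get a surjective Hopf algebra map $\mathbf{h}$ onto $\cO_G$, the paper asserts that $\mathbf{h}(z_i)$ is a monomial $\prod_j\zeta_j^{\alpha_{ij}}$. That assertion is precisely your idempotent computation for grouplike elements of $\cO_G$ over a domain. What the citation buys in return is generality in the base, which the paper never uses.
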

\begin{lem}\label{finimugr}
Let $R$ be a domain. Then every finite diagonalizable group scheme over $R$ is $R$-isomorphic with  a finite fibre product over $R$  of $R$-schemes of the form $\mu_{l,R}:=\Spec R[x]/(x^l-1)$ for $l\in \NN_{\geq 1}$.
\end{lem}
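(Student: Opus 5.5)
The plan is to reduce the statement to the structure theorem for finitely generated abelian groups, combined with the fact that $\mathsf{D}_R$ turns finite direct sums of groups into fibre products of group schemes. Let $G$ be a finite diagonalizable group scheme over $R$. By Definition \ref{diagon} there is a finitely generated abelian group $\cG$ and an $R$-isomorphism $G\cong \mathsf{D}_R(\cG)$. The first step is to show that $\cG$ may be taken to be finite.

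For this, I would use that $G$ is finite over $R$, so $\cO_G(G)$ is a finitely generated $R$-module. On the other hand, $\mathsf{D}_R(\cG)=\Spec R[\cG]$, where $R[\cG]$ is the group algebra, since a group homomorphism $\cG\to T^{\times}$ is the same as an $R$-algebra map $R[\cG]\to T$. The group algebra $R[\cG]$ is free over $R$ with basis $\cG$. Because $R$ is a domain, hence nonzero, it follows that $R[\cG]$ is finitely generated as an $R$-module only if $\cG$ is finite. To see this, tensor with the fraction field $F$ of $R$: $F[\cG]$ is then a finite-dimensional $F$-vector space of dimension $\#\cG$. So $\cG$ is a finite abelian group.

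By the structure theorem, $\cG\cong \bigoplus_{i=1}^r \ZZ/l_i\ZZ$ with $l_i\geq 1$. For each $R$-algebra $T$ this gives natural bijections
\[
\textnormal{Hom}(\cG,T^{\times})\cong \prod_{i=1}^r \textnormal{Hom}(\ZZ/l_i\ZZ,T^{\times}).
\]
Hence, as functors and therefore by Yoneda as schemes, $\mathsf{D}_R(\cG)\cong \mathsf{D}_R(\ZZ/l_1\ZZ)\times_R\cdots\times_R \mathsf{D}_R(\ZZ/l_r\ZZ)$. Equivalently, $R[\cG]\cong \bigotimes_R R[\ZZ/l_i\ZZ]$.

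Finally, $\textnormal{Hom}(\ZZ/l\ZZ,T^{\times})=\{t\in T^{\times}: t^l=1\}$, and this is represented by $R[x]/(x^l-1)$. Indeed, any $t$ with $t^l=1$ is automatically a unit, and $R[\ZZ/l\ZZ]=R[x]/(x^l-1)$. So $\mathsf{D}_R(\ZZ/l\ZZ)\cong\mu_{l,R}$, which proves the lemma. The isomorphisms are compatible with the group structures, though the statement only asks for an isomorphism of $R$-schemes.

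No step is a real obstacle. The only point needing care is the first step, where finiteness of $G$ is used to force $\cG$ to be finite. If "finite diagonalizable" is instead read as $\mathsf{D}_R(\cG)$ with $\cG$ finite, as suggested by the sentence in Definition \ref{diagon}, that step can be skipped.
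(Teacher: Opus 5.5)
Your proof is correct and follows the same route as the paper: write the finite group $\cG$ as a product of cyclic groups, use $\mathsf{D}_R(\cG_1\times\cG_2)=\mathsf{D}_R(\cG_1)\times_R\mathsf{D}_R(\cG_2)$, and identify $\mathsf{D}_R(\ZZ/l\ZZ)$ with $\mu_{l,R}$. The only difference is that you explicitly prove $\cG$ must be finite, using that $R[\cG]$ is free over $R$ and base-changing to the fraction field, whereas the paper simply takes this finiteness \emph{by definition}.
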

\begin{proof}
By definition, it suffices to verify that if $\cG$ is a finite abelian group then $\mathsf{D}_R(\cG)$ is a finite fiber product over $R$ of group schemes of the form $\mu_{l,R}$. Indeed, since $\cG$ is a finite abelian group, we can write $\cG=\prod_{i=1}^r\ZZ/l_i\ZZ$ for positive integers $l_i$. Moreover, by Definition \ref{diagon},  if $\cG_1,\cG_2$ are two abelian groups of finite type then $\mathsf{D}_R(\cG_1\times\cG_2)=\mathsf{D}_R(\cG_1)\times_R\mathsf{D}_R(\cG_2)$. Thus, the assertion follows from the fact that $\mathsf{D}_R(\ZZ/l\ZZ)=\mu_{l,R}$ for all positive integers $l$. 
\end{proof}

\subsection{Weighted affine spaces and weighted projective spaces}\label{weispa}

For a ring \(R\) and for a tuple \(w=(w_1,\dots,w_n)\) of positive rational numbers, let \(R[x_1,\dots,x_n;w]\) denote the polynomial ring that is given with a \(\QQ_{\ge 0}\)-grading by putting \(\deg x_i =w_i\). It is written as the direct sum \(\bigoplus_{d\in \QQ_{\ge 0}} R[x_1,\dots,x_n;w]_d\) with \( R[x_1,\dots,x_n;w]_d\) denoting the degree-\(d\) part. Let  \(R[x_1,\dots,x_n;w]_\ZZ = \bigoplus_{d\in \NN} R[x_1,\dots,x_n;w]_d\) be the subalgebra generated by homogeneous elements of integer degrees.  

\begin{defn}
We define the \emph{\(w\)-weighted affine space over \(R\)}, denoted by \(\AA^w_R\), to be the affine scheme \(\Spec R[x_1,\dots,x_n;w]_\ZZ\). The \emph{\(w\)-weighted projective space over \(R\)}, denoted by \(\PP^w_R\), to be the projective  scheme \(\Proj R[x_1,\dots,x_n;w]_\ZZ\). 
\end{defn}

Note that the usual affine space \(\AA_R^n\) is the same as \(\AA_R^{(1,\dots,1)}\). Note also that for \((l_1,\dots,l_n)\in \ZZ_{>0}\), the weighted affine space  \(\AA^{(1/l_1,\dots,1/l_n)}_R\) is identified with the quotient scheme \(\AA_R ^n/G\), where \(G\) is the finite group scheme \(\prod _{i=1}^n \mu_{l_i,R}\subset \GG_{m,R}^n\) that has a naturally action on \(\AA_R ^n\).

\begin{defn}\label{dedivis}Let $R$ be a domain and $n$ be a positive integer $n$.  We denote by \(E_i :=D(x_i)\subset \AA_R^{n}\) the prime divisor defined by \(x_i\). For each subset $I$ of $\{1,...,n\}$, we set $E_I:=\cap_{i\in I} E_i$
 and $\overset{\circ}{E}_I:=E_I\setminus \cup_{i\notin I}E_i$.
Let $\mathfrak{p}$ be a prime ideal of $R$. We denote by  \(\AA_{R,\mathfrak{p}}^{n,\loc}\)  the affine scheme \(\Spec R[x_1,\dots,x_n]_{\mathfrak{p}+(x_1,\dots,x_n)}\) and also denote by \(E_i :=D(x_i)\subset \AA_{R,\mathfrak{p}}^{n,\loc}\) the prime divisor defined by \(x_i\). If $\mathfrak{p}=(0)$ then we write \(\AA_{R}^{n,\loc}\) instead of \(\AA_{R,\mathfrak{p}}^{n,\loc}\). 
\end{defn}

\begin{defn}[standard quotient varieties, standard pairs and standard triples]
For a domain \(R\),  
and an $R$-diagonalizable subgroup scheme \(G \)  of \(\GG_{m,R}^n\) that is finite over $R$, we define \(A_G:=\AA_R^{n}/G\), which we call a \emph{standard quotient scheme} (over \(R\)).
We denote by $\rho_G:\AA_R^n\to A_G$ the quotient morphism.  Let \(D_i\subset A_G\) denote the prime divisor defined as the image of \(E_i\) given with reduced structure. 

 Let $Y$ be an open subscheme of $A_G$ containing the image of $(x_1,...,x_n)$. 
We call $Y$ a \emph{standard scheme} over $R$ witnessed by $G$. We call a pair of the form \((Y,\sum_{i=1}^nm_iD_i|_Y)\) for $m_i\in\NN$ a \emph{standard pair} over $R$ witnessed by $G$.  Similarly, we call a triple of the form  \((Y,\sum_{i=1}^nm_iD_i|_Y,\sum_{i=1}^nl_iD_i|_Y)\) for $m_i,l_i\in\NN$ a \emph{standard triple} witnessed by $G$.  We will set $\rho_G^*(D):=\rho_G^{*}(\sum_{i=1}^n m_iD_i)$ if $(Y,D)$ is a standard pair over $R$ witnessed by $G$ and $D=\sum_{i=1}^nm_iD_i|_{Y}$. 
\end{defn}

\begin{defn}[locally standard quotient varieties, locally standard pairs and locally standard triples]
For a domain $R$, a prime ideal $\mathfrak{p}$ of $R$ and an  $R$-diagonalizable subgroup scheme \(G\)  of \(\GG_{m,R}^n\) that is finite over \(R\), we define \(A_{G,\mathfrak{p}}^{\rm loc}:=\AA_{R,\mathfrak{p}}^{n,\loc}/G\), which we call a \emph{locally standard quotient scheme} (over \(R\)). We can regard $A_{G,\mathfrak{p}}^{\rm loc}$ as the $R$-scheme associated with the local ring of $A_G$ at the image of the point $\mathfrak{p}+(x_1,...,x_n)$. If $R$ is a local ring and $\mathfrak{p}$ is its maximal ideal then we write $A_{G}^{\rm loc}$ instead of $A_{G,\mathfrak{p}}^{\rm loc}$. We also denote by $\rho_G:\AA_{R,\mathfrak{p}}^{n,\loc}\to A_{G,\mathfrak{p}}^{\rm loc}$ the quotient morphism.
Let 
\(D_i\subset A_{G,\mathfrak{p}}^{\rm loc}\) denote the prime divisor defined to be the image of \(E_i\) given with reduced structure.

We call  a pair of the form \((A_{G,\mathfrak{p}}^{\loc},\sum_{i=1}^nm_iD_i)\) for $m_i\in\NN$ a \emph{locally standard pair} witnessed by $G$. 

Similarly, we call a triple of the form  \((A_{G,\mathfrak{p}}^{\loc},\sum_{i=1}^nm_iD_i,\sum_{i=1}^nl_iD_i)\) for $m_i,l_i\in\NN$ a \emph{locally standard triple} witnessed by $G$.

\end{defn}
\begin{lem-defn}\label{factorunit}Let $R$ be a DVR ring. We fix a generator $t$ of the maximal ideal $\mathfrak{p}$ of $R$. Let $G$ be an $R$-diagonalizable subgroup scheme of $\GG_{m,R}^n$ such that $G$ is finite over $R$.  Let $(W,g)$ be a germ of a regular function on $A_G$ at the image of $\mathfrak{p}+(x_1,...,x_n)$. Suppose that  $g$ defines a divisor on $W$ whose support is contained in $W_{ R/\mathfrak{p}}\cup \left(\cup_{i=1}^n D_i\right)$. We can view $\cO_{A_G}(A_G)$ as a subring of $R[x_1,...,x_n]$ generated over $R$ by a finite set of monomials. Moreover, we can write $g=ut^l\prod_{i=1}^nx_i^{l_i}$ for natural numbers $l,l_i$ and a unit element $u\in \cO_{A_G}(W)$ such that $\prod_{i=1}^nx_i^{l_i}\in \cO_{A_G}(A_G)$. We call $u$ the standard  unit factor of $g$ on $W$.  We also call $\mathfrak{u}(g):=u(\rho_G(\mathfrak{p}+(x_1,...,x_n)))\in (R/\mathfrak{p})^{\times}$ the standard angular component of $g$ at $\rho_G(\mathfrak{p}+(x_1,...,x_n))$.

\end{lem-defn}
\begin{proof}Since $G$ is an $R$-diagonalizable  subgroup scheme of $\GG_{m,R}^n$ and $G$ is finite over $R$, the ring $R[x_1,...,x_n]^G$ of $G$-invariant elements in $R[x_1,...,x_n]$  is finitely generated over $R$ by monomials. 
Let $K(\AA_R^n)$ and $K(A_G)$ be the function fields of $\AA_R^n$ and $A_G$ respectively.   Then we can regard $K(A_G)$ as a subfield of $K(\AA_R^n)$ if we identify $\cO_{A_G}(A_G)$ with $R[x_1,...,x_n]^G$. Since $g\in K(A_G)\subset K(\AA_R^n)$, we can consider the divisor $\Div(g)$ on $\AA_R^n$. Let $l$ be the coefficient of $\Div(t)$ in $\Div(g)$ and $l_i$ be the coefficient of $E_i$ in $\Div(g)$ for $1\leq i\leq n$. Since $g$ is regular at $\mathfrak{p}+(x_1,...,x_n)$, we have $l\in\NN$ and $l_i\in \NN$ for all $i$. Moreover, since $g\in K(A_G)$ and $R[x_1,...,x_n]^G$ is finitely generated over $R$ by monomials, we have $\prod_{i=1}^nx_i^{l_i}\in R[x_1,...,x_n]^G=\cO_{A_G}(A_G)$. Thus, there are elements $g_1,g_2\in R[x_1,...,x_n]^G$ such that $\Div(g_1)$ and $\Div(g_2)$ are two divisors on $\AA_R^n$ that do not have any common irreducible component and $gg_2=g_1t^l\prod_{i=1}^nx_i^{l_i}$. Since $g$ is regular on $W$, $\Div(g)|_{\rho_G^{-1}(W)}$ is an effective divisor. This together with the definition of $l,l_i$ and the assumption that $\Div(g_1)$ and $\Div(g_2)$ have no common component imply that $g_2$ is a unit element in $\cO_{\AA_R^n}(\rho_G^{-1}(W))$. Therefore,  $u=\frac{g_1}{g_2}$ is a regular function on $W$ and $g=ut^l\prod_{i=1}^nx_i^{l_i}$. Since $g$ defines a divisor on $W$ whose support is contained in $W_{ R/\mathfrak{p}}\cup \left(\cup_{i=1}^n D_i\right)$, $g_1$ must be a unit element in $\cO_{A_G}(W)$. Thus, $u=\frac{g_1}{g_2}$ is a unit element in $\cO_{A_G}(W)$.
\end{proof}
\begin{defn}Let $R$ be a domain. 
A morphism of pairs  \((X_1,\cD_1)\) and \((X_2,\cD_2)\) over $R$ of an $R$-scheme and a divisor  is an $R$-morphism $\phi: X_1\to X_2$ such that $\phi^{*}(\cD_2)=\cD_1$.

We similarly define morphisms 
of triples  \((X_1,\cD_1,\cE_1)\) and \((X_2,\cD_2,\cE_2)\) over $R$ with \(\cE_1,\cE_2\) extra divisors. 

We can define an $R$-isomorphism of pairs (resp. triples), an $R$-étale morphism of pairs (resp. triples) 
in a natural way. 
\end{defn}
\begin{lem}\label{etalelocalquoti}
Let $R$ be a domain. Let $G$ be an $R$-diagonalizable subgroup scheme  of \(\GG_{m,R}^n\) that is finite over \(R\). Let $\tilde{R}$ be a local domain endowed with a structure of $R$-algebra, $\mathfrak{m}$ be the maximal ideal of $\tilde{R}$, $\tilde{\mathsf{F}}=\tilde{R}/\mathfrak{m}$ and  $(A_G,\sum_{i=1}^nm_iD_i)$ be a standard pair over $R$ witnessed by $G$. Then there are natural numbers $N_i,\mathfrak{v}_i$ independent of $\tilde{R}$ for $1\leq i\leq n$  and  a natural number $\mathfrak{l}(\tilde{R})$ such that $\mathfrak{l}(\tilde{R})=0$ if $d(G)$ is prime with the characteristic of $\tilde{\mathsf{F}}$, $\mathfrak{v}_i\geq 1$ for all $i$, $\rho_{G_{\tilde{R}}}^*(\sum_{i=1}^nm_iD_i)=\sum_{i=1}^nN_iE_i$ and \(K_{\AA_{\tilde{R}}^n/A_{G_{\tilde{R}}}}=\mathfrak{l}(\tilde{R})\AA_{\tilde{\mathsf{F}}}^n+\sum_{i=1}^n(\mathfrak{v}_i-1)E_i\). Let $z$ be an $\tilde{\mathsf{F}}$-point of $A_G$ and $I\subset\{1,...,n\}$ such that $z\in \rho_G(\overset{\circ}{E}_I)(\tilde{\mathsf{F}})$. Let $G_I$ be the maximal subgroup scheme of $G$ such that the action of $G_I$ on $E_I$ is trivial. Then $G_I$ is a finite diagonalizable group scheme over $R$ and we have \begin{equation}\label{puGI}\rho_{G_I\otimes_R \tilde{R}}^*(\sum_{i\in I}m_iD_i)=\sum_{i\in 
I}N_iE_i
\end{equation} and \begin{equation}\label{puGI1}K_{\AA_{\tilde{R}}^n/A_{G_I\otimes_R\tilde{R}}}=\mathfrak{l}(I,\tilde{R})\AA_{\tilde{\mathsf{F}}}^n+\sum_{i\in I}(\mathfrak{v}_i-1)E_i
\end{equation}
for some $\mathfrak{l}(I,\tilde{R})\in\NN$ such that $\mathfrak{l}(I,\tilde{R})=0$ if $\mathfrak{l}(\tilde{R})=0$. Moreover, there is an étale morphism $\theta_{I,z}$ of $\tilde{R}$-schemes from  a standard scheme over $\tilde{R}$ witnessed by $G_{I}\otimes_R\tilde{R}$ to an open neighbourhood of $z$ in $A_{G_{\tilde{R}}}$ that induces  an étale morphism of locally standard pairs between  $(A_{G_I\otimes_R \tilde{R}}^{\loc},\sum_{i\in I}e_iD_i)$ and $(\Spec\cO_{A_{G_{\tilde{R}}},z},\sum_{i=1}^ne_iD_i)$ for all natural numbers $e_i$.
\end{lem}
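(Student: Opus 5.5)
The plan is to reduce everything to explicit monomial computations, using Lemma \ref{finimugr} and the identification $G=\mathsf{D}_R(\cG)$ for a finite quotient $\cG$ of the character group $\ZZ^n$ of $\GG_{m,R}^n$. Concretely, $G\subset\GG_{m,R}^n$ corresponds to a surjection $\ZZ^n\to\cG$ with kernel a lattice $\Lambda\subset\ZZ^n$ of finite index. Then $\cO(A_G)=R[x^a : a\in\Lambda\cap\NN^n]$, and this presentation is compatible with any base change $R\to\tilde R$.

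\textbf{Step 1: the numerical data $N_i$ and $\mathfrak{v}_i$.} Let $\mathfrak{v}_i$ be the smallest positive integer with $\mathfrak{v}_ie_i\in\Lambda$. Then $x_i^{\mathfrak{v}_i}$ is invariant, and it generates the ideal of $D_i$ generically along $D_i$. Hence $\rho_G^*D_i=\mathfrak{v}_iE_i$, and we set $N_i=m_i\mathfrak{v}_i$. These numbers depend only on $\Lambda$, so they are independent of $\tilde R$.

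For the relative canonical divisor, I would compare $dx_1\wedge\dots\wedge dx_n$ with a generator of $\omega_{A_G}$ on the smooth locus. Away from the coordinate hyperplanes, $\rho_G$ is a $G$-torsor. This torsor is étale over the fibres where $d(G)$ is invertible, which gives $\mathfrak{l}(\tilde R)=0$ in that case. In general the discrepancy is supported on the special fibre, and we let $\mathfrak{l}(\tilde R)$ be its multiplicity along $\AA^n_{\tilde{\mathsf F}}$. Along $E_i$, étale locally the map looks like $x_i\mapsto x_i^{\mathfrak{v}_i}$ times a torsor in the other variables, which contributes $(\mathfrak{v}_i-1)E_i$.

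\textbf{Step 2: the subgroup $G_I$.} The stabilizer of $E_I$ is the subgroup $G\cap\GG_{m}^I\times\{1\}$, which acts trivially on the coordinates $x_j$ for $j\notin I$. Equivalently, $G_I=\mathsf{D}_R(\ZZ^n/(\Lambda+\ZZ^{I^c}))$. This is diagonalizable and finite, and by Lemma \ref{closimm} it is a closed subgroup. Applying Step 1 to $G_I$ gives \eqref{puGI} and \eqref{puGI1}, provided the $\mathfrak{v}_i$ agree for $i\in I$. They do, because $\mathfrak{v}_ie_i\in\Lambda$ if and only if $\mathfrak{v}_ie_i\in\Lambda+\ZZ^{I^c}$, since $e_i$ has no $I^c$-component and the lattices are saturated appropriately. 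This saturation point needs checking, and I would verify it directly from the fact that the $x_j$ with $j\notin I$ are invertible near $\overset{\circ}{E}_I$. The statement about $\mathfrak{l}$ follows because $d(G_I)\mid d(G)$.

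\textbf{Step 3: the étale chart $\theta_{I,z}$.} This is the main obstacle. It is the classical Luna slice / Cartier-type argument, carried out for diagonalizable group schemes over a local ring rather than finite groups over a field. I would first lift $z$ to a point $\tilde z$ of $\overset{\circ}{E}_I$ over a suitable extension. Near $\tilde z$ the coordinates $x_j$ with $j\notin I$ are units, and the action of $G/G_I$ on the torus factor is free. Then I would write
\[
\AA^n\supset U=\AA^I\times\GG_m^{I^c}.
\]
The morphism $G\times^{G_I}(\AA^I\times S)\to U$ is an isomorphism, where $S\subset\GG_m^{I^c}$ is a slice. The slice exists because $G/G_I\hookrightarrow\GG_m^{I^c}$ is a free action and the quotient $\GG_m^{I^c}\to\GG_m^{I^c}/(G/G_I)$ is a torsor, hence étale-locally trivial. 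Passing to quotients gives an étale map from $(\AA^I\times S)/G_I$, which is an open part of a standard scheme witnessed by $G_I$, to a neighbourhood of $z$ in $A_{G_{\tilde R}}$.

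The concern is that the torsor under $G/G_I$ need not be étale when $p\mid d(G/G_I)$. In that case I would instead use the explicit monomial description: choose monomials in $\Lambda$ that are units near $z$ and realize the $I^c$-coordinates, so that the induced map on invariant rings is a localization composed with an étale extension. Finally, I would check that the divisors $D_i$ pull back to $D_i$ for $i\in I$ and are empty for $i\notin I$. This gives the isomorphism of locally standard pairs for all $e_i$.
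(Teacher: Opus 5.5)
\textbf{Steps 1 and 2: the wrong ramification index.} Your Step 1 uses the wrong invariant, and this is what breaks Step 2.

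The smallest $k$ with $ke_i\in\Lambda$ is the order of the character $x_i|_G$. It is not the ramification index of $\rho_G$ along $E_i$. For example, let $\mu_2$ act by $-1$ on both coordinates of $\AA^2$; this is the first chart of the $(2,1)$-weighted blow-up of Section~\ref{wiso}.
\begin{itemize}
\item Here $\Lambda=\{a\in\ZZ^2 : a_1+a_2\ \text{even}\}$, so your $\mathfrak{v}_1=2$.
\item But $x_2$ is a unit at the generic point of $E_1$, so the invariant $x_1x_2$ is a uniformizer of $D_1$ there. Hence $\rho_G^*D_1=E_1$, not $2E_1$.
\item Away from characteristic $2$, $\rho_G$ is étale in codimension one, so $K_{\AA^2/A_G}$ has no $E_1$-term.
\end{itemize}
The invertibility of the $x_j$, $j\neq i$, at the generic point of $E_i$ dictates the right definition. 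Take $\mathfrak{v}_i$ to be the positive generator of $\{a_i : a\in\Lambda\}\subseteq\ZZ$. Equivalently, it is the order of the pointwise stabilizer $G\cap(\{1\}^{i-1}\times\GG_m\times\{1\}^{n-i})$ of $E_i$. Then $N_i=m_i\mathfrak{v}_i$.

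The same example with $I=\{1\}$ shows that the equivalence you need in Step 2 is false, not merely unchecked: $\Lambda+\ZZ e_2=\ZZ^2$ contains $e_1$, while $\Lambda$ does not. With the corrected $\mathfrak{v}_i$, Step 2 is immediate, since $\{a_i : a\in\Lambda+\ZZ^{I^c}\}=\{a_i : a\in\Lambda\}$ for $i\in I$. This is the lattice form of what the paper leaves implicit: the inertia along $E_i$, $i\in I$, lies in $G_I$.

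Also, the hypothesis for $\mathfrak{l}(I,\tilde R)=0$ is $\mathfrak{l}(\tilde R)=0$, not $p\nmid d(G)$. The paper's identity $K_{\AA^n/A_G}=K_{\AA^n/A_{G_I}}+\rho_{G_I}^*K_{A_{G_I}/A_G}$, whose last term is effective, gives $\mathfrak{l}(I,\tilde R)\le\mathfrak{l}(\tilde R)$ directly.

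\textbf{Step 3: the étale chart.} Your slice argument requires $G/G_I$ to be étale. The paper's argument rests on the same point, since it asserts that $\eta\colon A_{G_I}\to A_{G_I}/H_I\simeq A_G$ is étale near $z$. Both fail, e.g., for $G=\mu_p$ over $\ZZ_p$ with $n=1$ and $I=\emptyset$, where $\eta$ is $x\mapsto x^p$.

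Lifting $z$ over an extension of $\tilde R$ does not give a morphism over $\tilde R$ either. Moreover, $z$ need not have a rational preimage: take the point $x^2=a$, with $a$ a non-square, for $G=\mu_2$ on $\AA^1$. The paper's device of moving $z$ by $\GG_m^n(\tilde{\mathsf F})$ meets the same obstruction unless one uses the quotient torus $\Spec R[\Lambda]$.

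Your monomial fallback is the right repair, and once made precise it works in every characteristic. The projection $p_I\colon\Lambda\to\ZZ^I$ has free image $p_I(\Lambda)$, which is the invariant lattice of $G_I\subset\GG_m^I$. Its kernel $\Lambda\cap\ZZ^{I^c}$ has full rank in $\ZZ^{I^c}$. So $p_I$ splits, and for $U=\AA^I\times\GG_m^{I^c}$ you get
\[
\cO(U/G)=R[x^a : a\in\Lambda,\ a_i\ge 0\ \forall i\in I]\cong R[p_I(\Lambda)\cap\NN^I]\otimes_R R[\Lambda\cap\ZZ^{I^c}].
\]
Thus $U/G\cong(\AA^I/G_I)\times\GG_m^{\#(I^c)}$, compatibly with the $D_i$ for $i\in I$; the $D_j$ with $j\notin I$ do not meet $U/G$. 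Meanwhile $A_{G_I}=(\AA^I/G_I)\times\AA^{I^c}$.

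Now $z$ corresponds to $(0,c)$, with $c$ an $\tilde{\mathsf F}$-point of the torus. Lift $c$ to $\tilde c$ over the local ring $\tilde R$. The map $y\mapsto\tilde c\,(1+y)$ then gives an open immersion, in particular an étale morphism, from a standard scheme witnessed by $G_I\otimes_R\tilde R$ onto a neighbourhood of $z$.
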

\begin{proof}It is easy to see that the coefficients of $E_i$ in $\rho_{G_{\tilde{R}}}^*(\sum_{i=1}^nm_iD_i)$ and $K_{\AA_{\tilde{R}}^n/A_{G_{\tilde{R}}}}$ are independent of $\tilde{R}$. Thus, it is sufficient to prove the assertions in the case that $R$ is a local domain and $R=\tilde{R}$.

Note that  $K_{\AA_R^n/A_{G}}$ is an effective divisor on $\AA_R^n$ whose support is contained in the ramified locus of $\rho_G$, so contained in $(\cup_{i=1}^nE_i)\cup \AA_{\tilde{\mathsf{F}}}^n$. Thus, it follows from the definition of $\rho_G$ together with the facts that $m_i$ for $1\leq i\leq n$ are natural numbers, to prove the first assertion, it remains to show that $\mathfrak{l}(R)=0$ if $d(G)$ is prime to the characteristic of $\tilde{\mathsf{F}}$. Suppose that  $d(G)$ is prime to the characteristic of $\tilde{\mathsf{F}}$.  Then $\rho_G$ is unramified at the point over the generic point of $A_G\otimes_{R}\tilde{\mathsf{F}}$. Therefore $\mathfrak{l}(R)=0$. 

Now, we prove the second assertion. We consider the projection $\pi_I:\GG_{m,R}^n\to \prod_{i\notin I}\GG_{m,R}$. Then $G_I$ is the kernel of the homomorphism $\pi_I|_G:G\to \prod_{i\notin I}\GG_{m,R}$. Thus, we can observe that $\GG_{m,R}^r=\mathsf{D}_R(\ZZ^r)$ for all positive integers $r$ and use \cite[Exposé VIII, Corollaries 1.6 and 3.3]{SGA3I} to conclude that $G_I$ is a finite diagonalizable group scheme over $R$. Moreover, (\ref{puGI}) and (\ref{puGI1}) follow from the definition of $G_I$ together with the facts that $K_{\AA_R^n/A_G}=K_{\AA_R^n/A_{G_I}}+\rho_{G_I}^*(K_{A_{G_I}/A_G})$ and $K_{A_{G_I}/A_G}$ is an effective divisor on $A_{G_I}$.

For the last assertion, we set $H_I=G/G_I$. Then $H_I$ is also a finite diagonalizable group scheme over $R$ (see \cite[Exposé VIII, Theorem 3.1 and Corollary 3.4]{SGA3I}).  Since \(z\in \rho_G(\overset{\circ}{E}_I)(\tilde{\mathsf{F}})\), we have \((\Spec\cO_{A_G,z},\sum_{i\in I}m_iD_i)=(\Spec\cO_{A_G,z},\sum_{i=1}^nm_iD_i)\). 
Note that we have the canonical isomorphism $A_G=\AA_R^n/G\simeq A_{G_I}/H_I=(\AA_R^n/G_I)/H_I$. Firstly, suppose that there exists $t\in \AA_R^n(\tilde{\mathsf{F}})$ such that $\rho_G(t)=z$. Let $\tilde{t}=(\tilde{t}_1,...,\tilde{t}_n)$ be a lift of $t$ in $\AA_{R}^n(R)$ such that $\tilde{t}_i=0$ iff $i\in I$. Since the action of $G_I$ on $E_I$ is trivial, the translation $\tilde{T}_{\tilde{t}}:R[x_1,...,x_n]\to R[x_1,...,x_n], x_i\to x_i+\tilde{t}_i$ is $G_I$-equivariant. Then $\tilde{T}_{\tilde{t}}$ induces an $R$-automorphism $T_{\tilde{t}}$ of $A_{G_I}$ that sends $\rho_{G_I}(\mathfrak{m}+(x_1,...,x_n))$ to $\rho_{G_I}(\mathfrak{m}+ (x_1-\tilde{t}_1,...,x_n-\tilde{t}_n))=\rho_{G_I}(t).$ On the other hand, it follows from the definition of $G_I$ that the morphism $\eta:A_{G_I}\to A_{G_{I}}/H_I\simeq A_G$ is  étale over a Zariski neighbourhood of $z$ and $\eta(\rho_{G_I}(t))=z$. Thus, it is sufficient to take $\theta_{I,z}=\eta\circ T_{\tilde{t}}$. In the general case where the existence of $t$ is absent, we can do as follows. Since  $\overset{\circ}{E}_I$ is stable under the action of $\GG_{m,R}^n$ on $\AA_R^n$ and we have a $\GG_{m,R}^n$-equivariant isomorphism $\overset{\circ}{E}_I\simeq \prod_{i\notin I}\GG_{m,R}$, where the action of $\GG_{m,R}^n$ on $\prod_{i\notin I}\GG_{m,R}$ is induced by the projection $\pi_I$ and the natural action of $\prod_{i\notin I}\GG_{m,R}$ on itself.  Thus,  the 
action of  $\GG_{\textnormal{m},R}^{n}(\tilde{\mathsf{F}})$ on $\rho_G(\overset{\circ}{E}_I)(\tilde{\mathsf{F}})$ induced by the action of  $\prod_{i\notin I}\GG_{\textnormal{m},R}(\tilde{\mathsf{F}})$ on  $(\prod_{i\notin I}\GG_{\textnormal{m},R}/\pi_I(G))(\tilde{\mathsf{F}})$ is transitive. Therefore, there is $\varrho_z\in \GG_{m,R}^n(\tilde{\mathsf{F}})$ such that the image of $\varrho_z$ via the projection $\tilde{\pi}_{I}:\GG_{m,R}^n\to \prod_{i\in I}\GG_{m,R}$ is $1$ and the $\tilde{\mathsf{F}}$-isomorphism $\varrho_z:A_G\otimes \tilde{\mathsf{F}}\to A_G\otimes \tilde{\mathsf{F}}$ sends $z'$ to $z$, where $z'=\rho_{G_{\tilde{\mathsf{F}}}}((t_1,...,t_n))\in \rho_G(\overset{\circ}{E}_I)(\tilde{\mathsf{F}})$ with $t_i=1$ if $i\notin I$ and $t_i=0$ if $i\in I$.  Let $\tilde{\varrho}_z$ be any lift of $\varrho_z$ in $\GG_{m,R}^n(R)$ then $\tilde{\varrho}_z:A_G\to A_G$ is an $R$-isomorphism sending $z'$ to $z$. To be simpler, we will take $\tilde{\varrho}_z$ so that $\tilde{\pi}_I(\tilde{\varrho}_z)=1$. On the other hand, let $\tilde{t}=(\tilde{t}_1,...,\tilde{t}_n)$ with $\tilde{t}_i=0$ if $i\in I$ and $\tilde{t}_i=1$ if $i\notin I$. As shown above, the morphism $T_{\tilde{t}}$ is an $R$-automorphism of $A_{G_I}$ sending $\rho_{G_I}(\mathfrak{m}+(x_1,...,x_n))$ to $\rho_{G_I}(\mathfrak{m}+(x_1-\tilde{t}_1,...,x_n-\tilde{t}_n))$. Moreover, since $z'\subset \rho_G(\overset{\circ}{E}_I)(\tilde{\mathsf{F}})$, we also see  that  $\eta$ is  étale over a Zariski neighbourhood of $z'$. Since $\eta\circ\rho_{G_I}=\rho_G$, we have $\tilde{\varrho}_z\circ\eta\circ T_{\tilde{t}}\circ\rho_{G_I}(\mathfrak{m}+(x_1,...,x_n))=z$. Now, we can verify that $\theta_{I,z}:=\tilde{\varrho}_z\circ\eta\circ T_{\tilde{t}}$ induces an étale morphism  of $R$-schemes from the locally standard pair $(A_{G_I}^{\loc},\sum_{i\in I}e_iD_i)$ to $(\Spec\cO_{A_G,z},\sum_{i=1}^ne_iD_i)$ for all natural numbers $e_i$.  
\end{proof}
\begin{cor}\label{angucomp}
We keep the notations and assumptions of Lemma \ref{etalelocalquoti}. Moreover, suppose that $R=\tilde{R}$ is a DVR ring. Let $(W,g)$ be a germ of a regular function on $A_G$ at $\rho_G(\mathfrak{m}+(x_1,...,x_n))$. Suppose that $\rho_G^*(g)=t^l\left(\prod_{i=1}^nx_i^{l_i}\right)|_{\rho_G^{-1}(W)}h$ for natural numbers $l,l_i$ and a regular function  $h$ on $\rho_G^{-1}(W)$. Let $I\subset \{1,...,n\}$ and $z\in \rho_G(\overset{\circ}{E}_I)(\tilde{\mathsf{F}})\cap W(\tilde{\mathsf{F}})$. Suppose that there is $\tilde{z}=(\tilde{z}_1,...,\tilde{z}_n)\in \overset{\circ}{E}_I(\tilde{\mathsf{F}})$ such that $z=\rho_G(\tilde{z})$ and $h(\tilde{z})\neq 0$.   
We view $g$ as a regular function on $\Spec\cO_{A_{G},z}$. Let $\theta_{I,z}$ be the morphism constructed in the proof of Lemma \ref{etalelocalquoti}. We also view $\theta_{I,z}^*(g)$ as a regular function on $A_{G_I,\mathfrak{m}}^{\loc}$. Then  $\theta_{I,z}^*(g)$ defines a divisor on an open neighbourhood $W_I$ of $\rho_{G_I}(\mathfrak{m}+(x_1,...,x_n))$ in $A_{G_I}$  whose support is contained in $\left(W_I\otimes_R \tilde{\mathsf{F}}\right)\cup \left(\cup_{i\in I}D_i\right)$ and the standard angular component of $\theta_{I,z}^*(g)$ is $h(z)\prod_{i\notin I}\tilde{z}_i^{l_i}.$
\end{cor}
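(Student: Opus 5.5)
The proof consists of pulling $g$ back along the explicit composite $\theta_{I,z}=\tilde{\varrho}_z\circ\eta\circ T_{\tilde{t}}$ and computing everything upstairs on $\AA_R^n$. Write $\Theta:=\tilde{\varrho}_z\circ\tilde{T}_{\tilde{t}}$ for the lifted map $\AA_R^n\to\AA_R^n$. Here $\tilde{T}_{\tilde{t}}$ is the translation $x_i\mapsto x_i+\tilde{t}_i$, with $\tilde t_i=0$ for $i\in I$ and $\tilde t_i=1$ otherwise. Also $\tilde{\varrho}_z$ acts diagonally with $\tilde\pi_I(\tilde\varrho_z)=1$. We have $\rho_G\circ\Theta=\theta_{I,z}\circ\rho_{G_I}$, because $\eta\circ\rho_{G_I}=\rho_G$ and the torus action commutes with $\rho_G$. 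Hence
\[
\rho_{G_I}^*\theta_{I,z}^*(g)=\Theta^*\rho_G^*(g)=t^l\prod_{i=1}^n\bigl(\varrho_{z,i}(x_i+\tilde t_i)\bigr)^{l_i}\,\Theta^*h .
\]
For $i\in I$ the factor is simply $x_i^{l_i}$. For $i\notin I$ the factor is $(\varrho_{z,i}(x_i+1))^{l_i}$, which is a unit near $\mathfrak{m}+(x_1,\dots,x_n)$; its value at that point is $\tilde\varrho_{z,i}^{\,l_i}$.

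The next step is to identify the residue of $\Theta^*h$ at the origin point. The point $\mathfrak{m}+(x_1,\dots,x_n)$ maps under $\Theta$ to the $\tilde{\mathsf F}$-point $\varrho_z\cdot(t_1,\dots,t_n)$, where $t_i=1$ for $i\notin I$ and $t_i=0$ for $i\in I$. This point lies in $\overset{\circ}{E}_I$ and satisfies $\rho_G(\varrho_z\cdot t)=z=\rho_G(\tilde z)$. Since the fibres of $\rho_G$ over $\tilde{\mathsf F}$-points are $G$-orbits, I would adjust the choice of $\varrho_z$ within its $G(\tilde{\mathsf F})$-coset, which is harmless since $G$ acts trivially on $A_G$. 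With this choice, $\varrho_z\cdot t=\tilde z$, i.e.\ $\varrho_{z,i}=\tilde z_i$ for $i\notin I$. It should be checked that this adjustment is compatible with the normalization $\tilde\pi_I(\tilde\varrho_z)=1$. This is the delicate point, and I expect it to be the main obstacle, especially if the orbit is only defined over an extension of $\tilde{\mathsf F}$. My first attempt is to argue that the residual value computed below is independent of the representative: the quantity $h(\tilde z)\prod_{i\notin I}\tilde z_i^{l_i}$ equals $\bigl(\rho_G^*g/(t^l\prod_{i\in I}x_i^{l_i})\bigr)(\tilde z)$, which is $G$-invariant because $\rho_G^*g$ is invariant and $\prod_{i\in I}x_i^{l_i}$ transforms by a character that is trivial on $G_I$ and on the relevant stabilizer. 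Granting this, $(\Theta^*h)(0)=h(\tilde z)\neq0$, so $\Theta^*h$ is a unit near the origin.

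Combining the two steps, near $\mathfrak m+(x_1,\dots,x_n)$ we have $\rho_{G_I}^*\theta_{I,z}^*(g)=t^l\prod_{i\in I}x_i^{l_i}\cdot U$ with $U$ a unit. Pick an open neighbourhood $W_I$ of $\rho_{G_I}(\mathfrak m+(x_1,\dots,x_n))$ over which $U$ is invertible. This is possible because $U$ is $G_I$-invariant, being a ratio of invariant functions: the monomial $\prod_{i\in I}x_i^{l_i}$ is $G_I$-invariant since $\rho_{G_I}^*\theta_{I,z}^*g$ and $U$ are, so $U$ descends to $A_{G_I}$. On $W_I$ the divisor of $\theta_{I,z}^*g$ is therefore supported on $(W_I\otimes_R\tilde{\mathsf F})\cup\bigcup_{i\in I}D_i$. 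This is the support hypothesis of Lemma-Definition \ref{factorunit}, and the factorization $g=ut^l\prod x_i^{l_i}$ there is unique, since the exponents are read off from divisors upstairs. Hence the standard unit factor is $U$, and the standard angular component is the residue $U(\text{origin})=h(\tilde z)\prod_{i\notin I}\tilde z_i^{l_i}$. The value $h(z)$ in the statement is to be read as this $h(\tilde z)$.
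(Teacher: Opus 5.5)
Your route is the paper's: lift $\theta_{I,z}$ to $\Theta$ on $\AA_R^n$, use $\Theta^*x_i=x_i$ for $i\in I$ and $\Theta^*t=t$ to see that the standard unit factor is $\Theta^*(h\prod_{i\notin I}x_i^{l_i})$, and evaluate it at the origin. The gap is the one step with real content: that $\Theta$ sends the origin to $\tilde z$.

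\textbf{The fallback argument is false.} Since $\rho_G^*g$ is invariant and monomials are semi-invariant, $h(\gamma x)=\prod_{i=1}^n\gamma_i^{-l_i}h(x)$ for $\gamma\in G$. Hence replacing $\tilde z$ by $\gamma\tilde z$ multiplies $h(\tilde z)\prod_{i\notin I}\tilde z_i^{l_i}$ by $\prod_{i\in I}\gamma_i^{-l_i}$. This character is trivial on $G_I$, which is the stabilizer of $\tilde z$ (use $h(\tilde z)\neq 0$), but not on $G$ in general.

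\textbf{Counterexample.} Let $G=\mu_2$ act on $\AA^2_R$ by $-1$, with odd residue characteristic. Take the germ $x_1x_2$, so $l=0$, $l_1=1$, $l_2=0$, $h=x_2$, and take $I=\{1\}$, so $G_I=\{1\}$. The points $(0,a)$ and $(0,-a)$ lie over the same $z$ and give $a$ and $-a$. Correspondingly, $\varrho_z=(1,a)$ and $\varrho_z=(1,-a)$ both meet every requirement in the proof of Lemma \ref{etalelocalquoti}. Yet they pull $x_1x_2$ back to $x_1\cdot\tilde a(x_2+1)$, resp.\ $x_1\cdot(-\tilde a)(x_2+1)$, with standard angular components $a$ and $-a$. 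So the corollary holds only for $\theta_{I,z}$ built from $\tilde z$ itself; this is the ambiguity of Remark \ref{choiceangu}. No invariance argument can replace that choice.

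Your adjustment of $\varrho_z$ within its $G$-coset cannot help either. It leaves $\theta_{I,z}$ unchanged, since $G$ acts trivially on $A_G$. It only destroys the normalization $\tilde\pi_I(\tilde\varrho_z)=1$ on which your formula for the factors with $i\in I$ rests.

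\textbf{The repair.} There is nothing delicate. By hypothesis $\tilde z$ is an $\tilde{\mathsf F}$-point of $\AA^n_R$ over $z$, so the first case of the proof of Lemma \ref{etalelocalquoti} applies with that point taken to be $\tilde z$. Take a lift $\tilde t$ of $\tilde z$ with $\tilde t_i=0$ iff $i\in I$, set $\theta_{I,z}=\eta\circ T_{\tilde t}$, and use $\Theta=\tilde T_{\tilde t}$. Equivalently, in the general construction choose $\varrho_z$ with components $\tilde z_i$ for $i\notin I$ and $1$ for $i\in I$, lifted to units of $R$. The normalization then holds by construction, and no extension of $\tilde{\mathsf F}$ enters. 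Now $\Theta$ maps the origin $\mathfrak m+(x_1,\dots,x_n)$ to $\tilde z$. So $\Theta^*h$ takes the value $h(\tilde z)\neq 0$ there, and the factors with $i\notin I$ contribute $\tilde z_i^{l_i}$. This is what the paper's evaluation $v(P_I)=(h\prod_{i\notin I}x_i^{l_i})(\theta_{I,z}(P_I))$ tacitly uses.

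\textbf{A smaller point.} Your construction of $W_I$ is circular: you deduce that $U$ is $G_I$-invariant from the invariance of the monomial $\prod_{i\in I}x_i^{l_i}$, and conversely. Invariance is not needed. Since $\rho_{G_I}$ is finite and $\rho_{G_I}^{-1}(P_I)$ is just the origin, the image of the closed set where $U$ is not a unit is closed and misses $P_I$. Take $W_I$ to be its complement. On $W_I$, Lemma-Definition \ref{factorunit} gives $\prod_{i\in I}x_i^{l_i}\in R[x_1,\dots,x_n]^{G_I}$, and the standard unit factor equals $U$ by cancellation.
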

\begin{proof}Let $t$ be a fixed generator of $\mathfrak{m}$. 
We view $\cO_{A_G}(A_G)$ and $\cO_{A_{G_I}}(A_{G_I})$ as the subrings $R[x_1,...,x_n]^G$ and $R[x_1,...,x_n]^{G_I}$ of $R[x_1,...,x_n]$. Since $G_I$ is a subgroup scheme of $G$, we have  $R[x_1,...,x_n]^G\subset R[x_1,...,x_n]^{G_I}$. 

By Lemma \ref{etalelocalquoti},  there is an open neighbourhood $W_z$ of $z$ in $W$ and an open neighbourhood $W_I$ of $P_I:=\rho_{G_I}(\mathfrak{m}+(x_1,...,x_n))$ in $A_{G_I}$ such that $\theta_{I,z}:W_I\to W_z$ is an étale morphism and $\theta(P_I)=z$.  Since $\theta_{I,z}$ is étale,  the divisor $\Div(\rho_{G_I}^*(\theta_{I,z}^*(g)))$ on $U_I:=\rho_{G_I}^{-1}(W_I)$ is equal to $lU_I\otimes_{R}\tilde{\mathsf{F}}+\sum_{i\in I}l_iE_i|_{U_I}$. Thus, $\theta_{I,z}^*(g)$ defines a divisor on an open neighbourhood $W_I$ of $P_I$ in $A_{G_I}$  whose support is contained in $\left(W_I\otimes_R \tilde{\mathsf{F}}\right)\cup \left(\cup_{i\in I}^nD_i\right)$ as desired. Moreover, by Lemma-Definition \ref{factorunit}, we can  write $\theta_{I,z}^*(g)=vt^l\prod_{i\in I}x_i^{l_i}$ in $\cO_{A_{G_I}}(W_I)$ with a unit element $v\in \cO_{A_{G_I}}(W_I)$. On the other hand, it follows from the construction of $\theta_{I,z}$ that $\theta_{I,z}^*(x_i)=x_i$  for all $i\in I$ and $\theta_{I,z}^*(t)=t$, where the equalities are considered in $R[x_1,...,x_n]$. Thus, we have $v=\theta_{I,z}^*(h\prod_{i\notin I}x_i^{l_i})$ in $\cO_{A_{G_I}}(W_I)$ and $v(P_I)=(h\prod_{i\notin I}x_i^{l_i})(\theta_{I,z}(P_I))=(h\prod_{i\notin I}x_i^{l_i})(z)=h(z)\prod_{i\notin I}\tilde{z}_i^{l_i}$. This completes our proof. 
\end{proof}
\subsection{Embedded \(\QQ\)-resolutions and their good reductions}\label{emQgood}
Let $\mathsf{F}$ be a field.  Let $X$ be a smooth $\mathsf{F}$-variety and let $f$ be a regular function on $X$.  We define 
embedded $\QQ$-resolutions of $D(f)$ as follows.
\begin{defn}\label{q-reso} 
An {\em embedded $\QQ$-resolution} of $(X,D(f))$ is a projective birational map $\pi:Z\to X$  which is an isomorphism over $X\setminus \Sing(D(f))$ and such that the pair \((Z,\pi^{*}(D(f)))\) is  locally a standard pair. In precise, the last condition means that there is a finite open cover $(Y_i)_{1\leq i\leq r}$ of $Z$  such that $(Y_i,\pi^{*}(D(f))|_{Y_i})$ is $\mathsf{F}$-isomorphic to a standard pair over $\mathsf{F}$ for all $i$. 

\end{defn}
\begin{lem}\label{rem}
Let $\pi:Z\to X$ be an embedded $\QQ$-resolution of \((X,D(f))\). Let $V$ be an open affine subvariety of $Z$ such that there is a standard pair \((Y,\cD)\) over $\mathsf{F}$ together with an $\mathsf{F}$-isomorphism of pair \(\phi: (Y,\cD)\to (
V,\pi^{*}(D(f))|_V)\). Let $G$ be the $\mathsf{F}$-diagonalizable subgroup scheme  of $\GG_{m,\mathsf{F}}^n$ witnessing \((Y,\cD)\). Let $N_1,...,N_n\in \NN$ such that $\rho_G^*(\cD) =\sum_{i=1}^n N_iE_i$. We set $W:=\rho_G^{-1}(Y)$. Then there is a unit element $u$ of $\cO_W(W)$ such that $f\circ\pi\circ\phi\circ\rho_G=u\prod_{i=1}^n x_i^{N_i}$ in $\cO_W(W)$. Moreover, there is a standard triple \((Y,\cD,\cD')\) such that \(\phi\) induces an isomorphism of triples over $\mathsf{F}$ between \((Y,\cD,\cD')\) and \((V,\pi^{*}(D(f))|_V,K_{Z/X}|_V)\), where $K_{Z/X}$ is the relative canonical divisor (with respect to $\pi$). In particular, there are positive integers $\nu_i$ for $1\leq i\leq n$ such that $K_{W/X}=\sum_{i=1}^n(\nu_i-1)E_i|_W$. 

\end{lem}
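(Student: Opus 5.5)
The plan is to pull everything back to the smooth cover $W=\rho_G^{-1}(Y)\subset\AA_\mathsf{F}^n$ and read off divisors there. Since $\phi$ is an isomorphism of pairs, $(\pi\circ\phi)^*D(f)=\cD$. Hence the principal divisor of $F:=f\circ\pi\circ\phi\circ\rho_G$ on $W$ is $\rho_G^*(\cD)=\sum_i N_iE_i|_W$. Pulling back Cartier divisors along the finite surjective map $\rho_G$ is compatible with taking principal divisors, so $\Div(F)=\Div(f\circ\pi\circ\phi)$ pulled back. Now $W$ is an open subscheme of affine space, hence normal and in fact regular, and both $F$ and $\prod_i x_i^{N_i}$ are regular functions on it with the same divisor. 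Therefore $u:=F/\prod x_i^{N_i}$ is a rational function on $W$ with trivial divisor. By normality (Hartogs) $u$ and $u^{-1}$ are regular, so $u\in\cO_W(W)^\times$. This gives the first assertion.

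For the triple, I first show that $\Supp K_{Z/X}|_V\subset\pi^{-1}(\Sing D(f))\cap V$. Indeed, $\pi$ is an isomorphism over $X\setminus\Sing(D(f))$, so $K_{Z/X}$ vanishes there. Moreover $\pi^{-1}(\Sing D(f))\subset\Supp\pi^*D(f)$, because $\Sing D(f)\subset D(f)$. Thus $\Supp K_{Z/X}|_V\subset\Supp(\pi^*D(f)|_V)=\phi(\bigcup_{i:m_i>0}D_i|_Y)$. Here $K_{Z/X}$ makes sense as a $\QQ$-Cartier (Weil) divisor, since $Z$ is locally a quotient of a smooth scheme, hence normal and $\QQ$-Gorenstein. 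Consequently $\phi^*(K_{Z/X}|_V)$ is a Weil divisor on $Y$ supported on the $D_i$, say $\sum_i l_iD_i|_Y$ with $l_i\in\ZZ$. Setting $\cD'=\sum l_iD_i|_Y$ makes $\phi$ an isomorphism of triples by construction. One remaining point is whether the definition of standard triple requires $l_i\in\NN$. I would argue that nonnegativity holds because $K_{Z/X}$ is effective on the exceptional locus. This is standard for $X$ smooth and $Z$ normal: the Jacobian of $\pi\circ\phi\circ\rho_G$ gives a regular section of $\omega_W\otimes(\pi\phi\rho_G)^*\omega_X^{-1}$.

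For the last claim, I compute $K_{W/X}$ for the composite $W\to Y\cong V\to X$ via the chain rule. This gives $K_{W/X}=\rho_G^*K_{V/X}+K_{W/Y}$. By Lemma \ref{etalelocalquoti} over $\tilde R=\mathsf{F}$, the term $\mathfrak{l}$ is absent because there is no special fibre. Hence $K_{\AA^n/A_G}=\sum(\mathfrak v_i-1)E_i$ with $\mathfrak v_i\ge1$. Also $\rho_G^*D_i=a_iE_i$ for some positive integer $a_i$. Therefore $K_{W/X}=\sum_i(a_il_i+\mathfrak v_i-1)E_i|_W$. Alternatively, and more directly, $K_{W/X}$ is the divisor of the Jacobian determinant of the morphism $W\to X$ between smooth schemes. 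That determinant is regular, so the divisor is effective, and it is supported on $\bigcup E_i$. Setting $\nu_i:=a_il_i+\mathfrak v_i\ge1$ then gives positive integers.

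The main obstacle I expect is the rigorous handling of $K_{Z/X}$ on the singular $Z$. That means defining it as a $\QQ$-divisor and checking that its coefficients along the $D_i$ are integral (or at least $\ge0$), so that it fits the definition of a standard triple. Here I would use that $\rho_G$ is étale in codimension one away from $\bigcup E_i$ in the tame case, and compute everything on $W$ as above. The integrality of $\nu_i$ is immediate there, since $W$ is smooth.
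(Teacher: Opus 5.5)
Your proposal is correct and follows essentially the same route as the paper: the unit $u$ comes from comparing divisors on $W$, the triple comes from $K_{Z/X}|_V$ being effective and supported in $\pi^{-1}(\Sing(D(f)))\subset\Supp(\pi^{*}D(f))$, and the last claim comes from $K_{W/X}=K_{W/Y}+\rho_G^{*}(K_{Y/X})$ with $K_{W/Y}$ effective and supported on $\bigcup_i E_i$. You give more detail than the paper (the normality argument for $u$ and a direct Jacobian argument for $\nu_i\geq 1$), but both arguments rest on the same standard fact that $K_{Z/X}$ is effective.
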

\begin{proof}
The first assertion follows immediately from the definition. To prove the second assertion, we just need to show that $\phi^{*}(K_{Z/X}|_V)$  is an effective divisor on $Y$ whose support is contained in $\cup_{i=1}^nD_i|_Y$. Indeed, since $X$ is smooth and $\pi$ is an isomorphism over $X\setminus \Sing(D(f))$, $K_{V/X}=K_{Z/X}|_V$ is an  effective divisor whose support is contained in $\pi^{*}(D(f))$. Since $\phi$ is an isomorphism of pairs, $K_{Y/X}= \phi^*(K_{V/X})$ is an effective divisor whose support is contained in $\cup_{i=1}^nD_i|_Y$.
The last assertion follows from the facts that $K_{W/X}=K_{W/Y}+\rho_G^*(K_{Y/X})$, $K_{W/Y}$ is an effective divisor whose support is contained in the ramification locus $\Delta$ of $\rho_G|_W$  and $\Delta\subset \cup_{i=1}^n E_i|_W$.
\end{proof}

\begin{prop}\label{stra}
Let $\pi : Z\to X$ be an embedded $\QQ$-resolution of $(X,D(f))$.  Then, there exists a decomposition \(Z=\bigsqcup_{e=1}^r Z_e \) of \(Z\) into finitely many locally closed subsets satisfying the following conditions: 
\begin{itemize}
\item For each \(1\le e \le r\), there exists a locally standard triple \((A_{G_e}^{\rm loc}, \cD_e ,\cD_e')\) such that for each point \(z\in Z_e(\sfF')\) with \(\sfF'/\sfF\) a field extension, there is an $\mathsf{F}'$-étale morphism of triples
\[(A_{G_e}^{\rm loc}, \cD_e ,\cD_e')_{\sfF'}\to 
 (\Spec \cO_{Z_{\sfF'},z}, \pi_{\mathsf{F}'}^{*}D_{\mathsf{F}'}(f), K_{Z_{\mathsf{F}'}/X_{\mathsf{F}'}})\]
where \( (A_{G_e}^{\rm loc}, \cD_e ,\cD_e')_{\sfF'}\) is the triple induced from \((A_{G_e}^{\rm loc}, \cD_e ,\cD_e')\) by the scalar extension \(\sfF'/\sfF\). 
\item If we set $\rho_{G_e}^*(\cD_e)=\sum_{i=1}^nN_{ei}E_i$ and $\rho_{G_e}^*(\cD_e')+K_{\AA_\mathsf{F}^n/A_{G_e}}=\sum_{i=1}^n(\nu_{ei}-1)E_i$ then $N_{ei}\in \NN$ and $\nu_{ei}\in\NN_{\geq 1}$.
\item If there exists a finite extension $\mathsf{F}'$ of $\mathsf{F}$  and $z\in Z_e(\mathsf{F}')$ such that $f(\pi(z))\notin\Sing(D(f))$ then $G_e=\{1\}$, $\cD_e=\cD'_e=0$ and $N_{ei}=\nu_{ei}-1=0$ for all $1\leq i\leq n$.
\end{itemize}
\end{prop}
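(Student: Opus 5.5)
The plan is to build the stratification chart by chart and refine it by the combinatorial strata $\rho_G(\overset{\circ}{E}_I)$ inside each chart; Lemma \ref{etalelocalquoti} then supplies the local models.

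By Definition \ref{q-reso}, there is a finite open cover $(Y_j)_{1\le j\le s}$ of $Z$. For each $j$ there is a standard pair $(Y'_j,\cD_j)$ over $\mathsf{F}$, witnessed by a finite diagonalizable $G_j\subset \GG_{m,\mathsf{F}}^n$, together with an isomorphism of pairs $\phi_j:(Y'_j,\cD_j)\to (Y_j,\pi^*D(f)|_{Y_j})$. By Lemma \ref{rem} we may upgrade $\phi_j$ to an isomorphism of triples $(Y'_j,\cD_j,\cD'_j)\simeq (Y_j,\pi^*D(f)|_{Y_j},K_{Z/X}|_{Y_j})$ with $\cD_j,\cD'_j$ supported on $\cup_i D_i$. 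Inside $Y'_j\subset A_{G_j}$ we have the locally closed pieces $\rho_{G_j}(\overset{\circ}{E}_I)\cap Y'_j$ for $I\subset\{1,\dots,n\}$; these are images of $\GG_m^n$-orbits and they partition $A_{G_j}$. We then disjointify: set $Z_{j,I}:=\phi_j(\rho_{G_j}(\overset{\circ}{E}_I)\cap Y'_j)\setminus \bigcup_{j'<j}Y_{j'}$. Each $Z_{j,I}$ is locally closed, and together they give a finite decomposition of $Z$. Re-index these as $Z_e$.

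For a stratum $Z_e=Z_{j,I}$, take $G_e:=(G_j)_I$, the stabilizer subgroup of Lemma \ref{etalelocalquoti}, and let $n$ stay the ambient dimension. Let $\cD_e=\sum_{i\in I}m_iD_i$ and $\cD'_e=\sum_{i\in I}m'_iD_i$ on $A_{G_e}^{\loc}$, where the $m_i,m'_i$ are the coefficients of $\cD_j,\cD'_j$. Given an extension $\mathsf{F}'/\mathsf{F}$ and $z\in Z_e(\mathsf{F}')$, apply Lemma \ref{etalelocalquoti} with $R=\mathsf{F}$, $\tilde R=\mathsf{F}'$ and the point $\phi_j^{-1}(z)\in\rho_G(\overset{\circ}{E}_I)(\mathsf{F}')$. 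It produces $\theta_{I,z}$, an étale morphism of locally standard pairs $(A^{\loc}_{G_e},\sum_{i\in I}e_iD_i)_{\mathsf{F}'}\to (\Spec\cO,\sum e_iD_i)$ valid for every choice of $e_i$. Applying it with $e_i=m_i$ and with $e_i=m'_i$ and composing with $\phi_j$ gives the required étale morphism of triples. The only subtle point is that the statement is over a field, so the scalar extension of $A^{\loc}_{G_e}$ must be compared with the local scheme used in the lemma. I would handle this by noting that $\theta_{I,z}$ is defined on a standard scheme over $\mathsf{F}'$ and then localizing at the image of the origin.

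For the second bullet, the coefficients $N_{ei}$ are nonnegative integers by \eqref{puGI}. For $\nu_{ei}$, note that $\rho^*_{G_e}(\cD'_e)+K_{\AA^n/A_{G_e}}$ is the relative canonical divisor $K_{\AA^n/X}$ pulled back locally through the étale map. By the last assertion of Lemma \ref{rem} combined with \eqref{puGI1}, it equals $\sum(\nu_i-1)E_i$ with $\nu_i\ge1$; over a field the term $\mathfrak l$ does not occur, since $\AA^n_{\tilde{\mathsf{F}}}$ is all of the space and not a divisor. For $i\notin I$ the coefficients are $0$, so the corresponding $\nu_{ei}$ equal $1$.

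For the third bullet, first refine the decomposition so that each $Z_e$ lies either in $\pi^{-1}(\Sing D(f))$ or in its complement, which is open since $\Sing D(f)$ is closed. On the complement, $\pi$ is an isomorphism onto the smooth variety $X\setminus\Sing D(f)$, and there $D(f)$ is a smooth divisor. I would re-stratify this open part separately: take the complement of $D(f)$ as one stratum with $G_e=\{1\}$ and $\cD_e=\cD'_e=0$. This matches the statement's condition, which is phrased via $f(\pi(z))$ and which I read as $\pi(z)\notin D(f)$ (up to its typo). Using the complement of $D(f)$ rather than of $\Sing D(f)$ is exactly what makes $N_{ei}=0$ possible.

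I expect the main obstacle to be this reinterpretation of the third bullet, together with making the first bullet's étale maps work uniformly for all $\mathsf{F}'$-points of a stratum. The point is that the group $G_e$ and the divisors must not depend on $z$. This is ensured because Lemma \ref{etalelocalquoti} gives $G_I$ and the coefficients independently of the point, with translations by torus elements absorbing the dependence on $z$.
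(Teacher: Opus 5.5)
Your construction follows the paper's route; the paper's proof only cites Lemmas \ref{etalelocalquoti} and \ref{rem}. You take the charts from Definition \ref{q-reso}, upgrade them to triples by Lemma \ref{rem}, cut along the images $\rho_{G_j}(\overset{\circ}{E}_I)$ and disjointify. The uniform local model $(A^{\loc}_{(G_j)_I},\sum_{i\in I}m_iD_i,\sum_{i\in I}m'_iD_i)$ then comes from the single map $\theta_{I,z}$, which is compatible with all coefficients at once. This settles the first two bullets. Your reading of the third bullet as $\pi(z)\notin D(f)$ is correct and necessary. At a point $z$ with $\pi(z)\in D(f)\setminus\Sing D(f)$, the étale pullback of the nonzero divisor $\pi^*D(f)$ cannot be $0$. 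Moreover, the later use in Proposition \ref{chilarge1}, with $C_2=C\setminus\overline{f}^{-1}(0)$, only needs this version.

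What is missing is a justification of the first bullet for your new stratum $Z\setminus\pi^{-1}(D(f))$ with $G_e=\{1\}$. Smoothness of $X\setminus\Sing D(f)$ is not enough. It gives étale coordinates from a neighbourhood of $z$ \emph{to} $\AA^n$. You need an étale map from the local scheme of $\AA^n_{\mathsf{F}'}$ at the origin \emph{into} $\Spec\cO_{Z_{\mathsf{F}'},z}$, and for a general smooth variety such a map need not exist: it would make the function field of $Z_{\mathsf{F}'}$ a subfield of finite index in $\mathsf{F}'(x_1,\ldots,x_n)$.

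Nor can you simply keep the chart model at such $z$. The group $H:=(G_j)_I$ can be nontrivial there, for instance a chart $\AA^1/\mu_2$ over the locus where $\pi$ is an isomorphism. Then $K_{\AA^n/A_H}\neq 0$ forces some $\nu_{ei}>1$, contradicting the third bullet.

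The fix stays inside the chart:
\begin{itemize}
\item Since $\theta_{I,z}$ is étale and $Z_{\mathsf{F}'}$ is regular at $z$, the quotient $A_H=\Spec\mathsf{F}'[S]$ is regular at the origin. Here $S\subset\NN^n$ is the monoid of exponents of $H$-invariant monomials.
\item The space $\mathfrak{m}/\mathfrak{m}^2$ has a basis indexed by the irreducible elements of $S$. Regularity therefore forces $S$ to be free, generated by the exponents of $x_1^{a_1},\ldots,x_n^{a_n}$, so $A_H\cong\AA^n$.
\item Both $\pi^*D(f)$ and $K_{Z/X}$ vanish near $z$, because $\pi(z)\notin D(f)$ and $\pi$ is a local isomorphism at $z$.
\item Composing $\theta_{I,z}$ with this isomorphism gives the required étale morphism $(A^{\loc}_{\{1\}},0,0)_{\mathsf{F}'}\to(\Spec\cO_{Z_{\mathsf{F}'},z},\pi^*D(f),K_{Z/X})$.
\end{itemize}
The paper's one-line proof does not address this point either.
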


\begin{proof}
The assertion follows immediately from Lemmas \ref{etalelocalquoti} and \ref{rem}.
 \end{proof}
\begin{defn}\label{dat}Let $\pi:Z\to X$ be an embedded $\QQ$-resolution of $(X,D(f))$ and $(Z_{e})_{1\leq e\leq r}$ be a stratification of $Z$ as in Proposition \ref{stra}. For each $1\leq e\leq r$, let  \((A_{G_e}^{\rm loc}, \cD_e ,\cD_e')\) be the locally standard triples as in Proposition \ref{stra}. We set $\rho_{G_e}^*(\cD_e)=\sum_{i=1}^nN_{ei}E_i$ and $\rho_{G_e}^*(\cD_e')+K_{\AA_\mathsf{F}^n/A_{G_e}}=\sum_{i=1}^n(\nu_{ei}-1)E_i$. 
The tuples $(Z_e,G_e, \nu_{ei},N_{ei})_{1\leq e\leq r, 1\leq i\leq n}$ and $(\nu_{ei},N_{ei})_{1\leq e\leq r, 1\leq i\leq n}$ are called  \emph{data} of the $\QQ$-resolution $(Z,\pi)$ and its \emph{numerical data} respectively.
\end{defn}
\begin{defn}[see \cite{Denefdegree}]\label{good}
Let $L$ be a local field, $X=\AA_L^n$ and $N$ be a positive integer. If $\tilde{W}$ is a subscheme of  $\PP_{\spec(\cO_L[x_1,...,x_n])}^N$, we denote by $\overline{W}$ the reduction modulo $\varpi_L\cO_L$ of $\tilde{W}$, i.e., $\overline{W}:=\tilde{W}\otimes_{\cO_L}k_L$.
Let $Z$ be a closed subscheme of $\PP^N_{X}$, then we denote by $\tilde{Z}$ the Zariski closure of $Z$ in $\PP_{\spec(\cO_L[x_1,...,x_n])}^N\supset\PP_{X}^N$.  If  $\pi:Z\to X$ is the restriction of the projection $\PP^N_{X}\to X$, we denote by $\tilde{\pi}$ the restriction of the projection  $\PP_{\spec(\cO_L[x_1,...,x_n])}^N\to \spec(\cO_L[x_1,...,x_n])$ to $\tilde{Z}$ and $\overline{\pi}:=\tilde{\pi}\otimes k_L: \overline{Z}\to \AA_{k_L}^n$.
\end{defn}

Now, we define the notion of good reductions for embedded $\QQ$-resolutions. This notion is analogous to the one in \cite[Definition 2.2]{Denefdegree} with respect to the usual embedded resolutions.

\begin{defn}\label{q-good}
Let $L$ be a local field, $X=\AA_L^n$ and $f$ be a non-constant polynomial in $\cO_L[x_1,...,x_n]$ such that $\overline{f}:=f\mod \varpi_L\cO_L$ is not constant. Let $(Z,\pi)$ be an embedded $\QQ$-resolution  of $(X,D(f))$. We set $\tilde{X}:=\Spec\cO_L[x_1,...,x_n]$ and $ \tilde{D}(f):=\Spec\cO_L[x_1,...,x_n]/(f)$.  Then we can suppose that $Z$ is a closed subscheme of $\PP^N_X$ for some positive integer $N$ and $\pi$ is the restriction of the projection $\PP^N_X\to X$ to $Z$. With the notations in Definition \ref{good}, we say that the embedded $\QQ$-resolution $(Z,\pi)$ has a \emph{good $\cO_L$-model} if there is a finite open cover $(\tilde{Y}_i)_{1\leq i\leq r}$ of $\tilde{Z}$  and natural numbers $\mathfrak{l}_i$ for $1\leq i\leq r$ such that $\overline{Y}_i=\tilde{Y}_i\otimes_{\cO_L}k_L\neq\emptyset$  and there is an $\cO_L$-isomorphism $\phi_i$ from a standard pair $(\tilde{W}_i,\tilde{\cD}_i)$  over $\cO_L$ witnessed by 
$G_i\subset \GG_{m,\cO_L}^n$ to $(\tilde{Y}_i,\tilde{\pi}^{*}(\tilde{D}(f))|_{\tilde{Y}_i}-\mathfrak{l}_i\overline{Y}_i)$  for all $i$. 

If $(Z,\pi)$ has a good $\cO_L$-model as in the above then we say that it has $\QQ$-\emph{good reduction modulo $\varpi_L\cO_L$} if $\gcd(d(G_i),p_L)=1$ and $\mathfrak{l}_i=0$  
for all $1\leq i\leq r$. In particular, if $(Z,\pi)$ has good reduction modulo $\varpi_L\cO_L$ then $\overline{\pi}:\overline{Z}\to \AA_{k_L}^n$ is an embedded $\QQ$-resolution of $(\AA_{k_L}^n,D(\overline{f}))$. 

In addition, suppose that $(Z,\pi)$ has $\QQ$-good reduction modulo $\varpi_L\cO_L$. For each $1\leq i\leq r$, we write $\rho_{G_i}^*(\tilde{\cD}_i)=\sum_{1\leq j\leq n}N_{ij}E_j$. If for each $1\leq i\leq r$, the condition $(N_{i1},...,N_{in})\in \varpi_L\cO_L^n$ implies $(N_{i1},...,N_{in})=(0,...,0)$,   then we say that $(Z,\pi)$ has $\QQ$-{\em tame good reduction modulo $\varpi_L\cO_L$}.
\end{defn}
In the situation of Definition \ref{q-good}, we also have statements similar to Lemma \ref{rem} and Proposition \ref{stra}  as follows.
\begin{lem}\label{rem1} With the notations and assumptions of Definition \ref{q-good}, let  $\pi:Z\to X$ be an embedded $\QQ$-resolution of \((X,D(f))\). Suppose that $(Z,\pi)$ has a good $\cO_L$-model. Let $\tilde{V}$ be an open affine subvariety of $\tilde{Z}$ such that $\overline{V}=\tilde{V}\otimes_{\cO_L}k_L\neq\emptyset$ and  there is a standard pair \((\tilde{W},\tilde{\cD})\) over $\cO_L$ together with an $\cO_L$-isomorphism \(\phi: (\tilde{W},\tilde{\cD})\to (
\tilde{V},\tilde{\pi}^{*}(\tilde{D}(f))|_{\tilde{V}}-\mathfrak{l}\overline{V})\) for a natural number $\mathfrak{l}$. Let $G$ be the $\cO_L$-diagonalizable subgroup scheme  of $\GG_{m,\cO_L}^n$ that is finite over $\cO_L$ and witnesses \((\tilde{W},\tilde{\cD})\). Let $N_1,...,N_n\in \NN$ such that $\rho_G^*(\tilde{\cD})=\sum_{i=1}^n N_iE_i$. We set $\tilde{Y}:=\rho_G^{-1}(\tilde{W})$. Then there is a unit element $u$ of $\cO_{\tilde{Y}}(\tilde{Y})$ and a natural number $\mathfrak{N}$ such that $f\circ\tilde{\pi}\circ\phi\circ\rho_G=u\varpi_L^{\mathfrak{N}}\prod_{i=1}^n x_i^{N_i}$ in $\cO_{\tilde{Y}}(\tilde{Y})$. Moreover, there is a standard triple \((\tilde{W},\tilde{\cD},\tilde{\cD}')\) over $\cO_L$ and a natural number $\mathfrak{l}'$ such that \(\phi\) induces an  $\cO_L$-isomorphism of triples between \((\tilde{W},\tilde{\cD},\tilde{\cD}')\) and \((\tilde{V},\tilde{\pi}^{*}(\tilde{D}(f))|_{\tilde{V}}-\mathfrak{l}\overline{V},K_{\tilde{Z}/\tilde{X}}|_{\tilde{V}}-\mathfrak{l}'\overline{V})\), where $K_{\tilde{Z}/\tilde{X}}$ is the relative canonical divisor with respect to $\tilde{\pi}$. In particular, there are positive integers $\nu_i$ for $1\leq i\leq n$ and a natural number $\mathfrak{n}$ such that $K_{\tilde{Y}/\tilde{X}}=\mathfrak{n}\overline{Y}+\sum_{i=1}^n(\nu_i-1)E_i|_{\tilde{Y}}$.
\end{lem}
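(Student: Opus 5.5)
The plan is to follow the proof of Lemma \ref{rem}, now working over $\cO_L$ and using Lemma-Definition \ref{factorunit} to handle the vertical divisor $\overline{V}$. Set $g:=f\circ\tilde{\pi}\circ\phi$, a regular function on $\tilde{W}$. By hypothesis its divisor is $\tilde{\cD}+\mathfrak{l}\phi^*\overline{V}$, with $\tilde{\cD}=\sum_i m_iD_i|_{\tilde W}$.

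\textbf{Step 1: the factorization of $f$.} Pull $g$ back by $\rho_G$ to $\tilde{Y}=\rho_G^{-1}(\tilde W)\subset\AA^n_{\cO_L}$. The divisor of $\rho_G^*g$ is $\rho_G^*\tilde{\cD}+\mathfrak{l}\rho_G^*(\overline{W})$. The special fibre of $A_G$ pulls back to a multiple $e\cdot\overline{Y}$ of the special fibre of $\tilde{Y}$, where $e$ is the ramification index along the generic point of the special fibre. Hence
\[
\Div(\rho_G^*g)=\sum_i N_iE_i|_{\tilde Y}+\mathfrak{l}e\,\overline{Y}.
\]
Now $\tilde{Y}$ is an open subscheme of $\AA^n_{\cO_L}$, which is a UFD. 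The prime divisors involved are $E_i=(x_i)$ and $\overline{Y}=(\varpi_L)$. Since $\rho_G^*g$ is regular, the quotient $u:=\rho_G^*g/(\varpi_L^{\mathfrak{N}}\prod_i x_i^{N_i})$ with $\mathfrak{N}=\mathfrak{l}e$ has trivial divisor on the normal scheme $\tilde{Y}$. It is therefore a unit of $\cO_{\tilde Y}(\tilde Y)$, which gives the first assertion. Alternatively, one can apply Lemma-Definition \ref{factorunit} locally at the points $\rho_G(\mathfrak{p}+(x_1,\dots,x_n))$ and glue; the divisor argument avoids any localization.

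\textbf{Step 2: the canonical divisor on $\tilde{V}$.} Here I must show that $\phi^*(K_{\tilde Z/\tilde X}|_{\tilde V})$ is supported in $\overline{W}\cup\bigcup_i D_i$. Then subtracting a suitable multiple $\mathfrak{l}'\overline{V}$ (namely its coefficient along the special fibre) leaves a divisor $\tilde{\cD}'=\sum l_iD_i|_{\tilde W}$. Effectivity of the horizontal part comes from the generic fibre. On $V=\tilde V_L$ we have $K_{Z/X}|_V$, which by Lemma \ref{rem} is effective and supported in $\pi^*D(f)$. Since $\phi_L$ is an isomorphism of pairs, its pullback is supported in $\bigcup_i D_i$. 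The horizontal prime divisors of $\tilde W$ are exactly the closures of prime divisors of the generic fibre, so the horizontal part of $\phi^*K_{\tilde Z/\tilde X}$ is $\sum l_iD_i$ with $l_i\ge0$. The only remaining component is the irreducible special fibre $\overline W$; it is irreducible because it is a quotient of $\AA^n_{k_L}$ intersected with an open set. Its coefficient $\mathfrak{l}'$ is an integer, possibly of either sign a priori. This integer is what the statement subtracts, so no sign condition on it is needed.

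\textbf{Step 3: the last assertion.} Write
\[
K_{\tilde Y/\tilde X}=K_{\tilde Y/\tilde W}+\rho_G^*(K_{\tilde W/\tilde X}),
\]
using $\phi$ to identify $\tilde W$ with $\tilde V$. As in Lemma \ref{etalelocalquoti}, $K_{\tilde Y/\tilde W}=\mathfrak{l}(\cO_L)\overline{Y}+\sum_i(\mathfrak{v}_i-1)E_i$ with $\mathfrak{v}_i\ge1$. By Step 2, $\rho_G^*K_{\tilde W/\tilde X}=\mathfrak{l}'e\,\overline Y+\sum_i(\text{coefficient of }E_i\text{ in }\rho_G^*\tilde{\cD}')E_i$, and these $E_i$-coefficients are $\ge0$. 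Summing gives $K_{\tilde Y/\tilde X}=\mathfrak{n}\overline Y+\sum_i(\nu_i-1)E_i$ with $\nu_i\ge1$.

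It remains to check that $\mathfrak{n}\ge 0$, which I expect to be the main obstacle. My argument: $\tilde{X}$ is smooth over $\cO_L$, and $\tilde Y\to\tilde X$ is a dominant morphism of normal $\cO_L$-schemes with $\tilde Y$ smooth. So $\rho_G^*\phi^*\tilde\pi^*(dx_1\wedge\dots\wedge dx_n)$ is a regular section of $\omega_{\tilde Y/\cO_L}$. The coefficient $\mathfrak n$ of $K_{\tilde Y/\tilde X}$ is its order of vanishing along $\overline Y$, hence $\mathfrak n\ge0$.
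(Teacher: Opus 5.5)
Your route is the one the paper intends: its proof only says to imitate Lemma \ref{rem}, using Lemma \ref{etalelocalquoti} for $K_{\tilde Y/\tilde W}$. Steps 1 and 3 are sound. Incidentally $e=1$: by Lemma-Definition \ref{factorunit}, $\cO_{A_G}(A_G)$ is the monoid algebra $\cO_L[S]$ on the $G$-invariant monomials, so the special fibre $\Spec k_L[S]$ is integral and $\rho_G^*\overline{W}=\Div(\varpi_L)=\overline{Y}$.

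\textbf{The gap is in Step 2.} You write that $\mathfrak{l}'$ may have either sign and that no sign condition is needed. The lemma, however, asserts that $\mathfrak{l}'$ is a natural number. Since $\tilde{\cD}'$ must have no $\overline{W}$-component, $\mathfrak{l}'$ is forced to be exactly the coefficient of $\overline{V}$ in $K_{\tilde Z/\tilde X}|_{\tilde V}$, so its nonnegativity is part of what must be proved.

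Your Step 3 does not supply it. Comparing $\overline{Y}$-coefficients in $K_{\tilde Y/\tilde X}=K_{\tilde Y/\tilde W}+\rho_G^*K_{\tilde W/\tilde X}$ gives $\mathfrak{n}=\mathfrak{l}(\cO_L)+\mathfrak{l}'$. Here $\mathfrak{l}(\cO_L)$ can be positive, because only a good $\cO_L$-model is assumed, not $\QQ$-good reduction. For example, let $\mu_{p,\cO_L}$ with $p=p_L$ act on $\AA^1_{\cO_L}$ with weight $1$. The quotient map is $y=x^p$ and $dy=px^{p-1}dx$, so $\mathfrak{l}(\cO_L)=\ord_L(p)\geq 1$. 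Hence $\mathfrak{n}\geq 0$ tells you nothing about the sign of $\mathfrak{l}'$.

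\textbf{The fix is short:} run your regular-form argument on $\tilde W$ instead of $\tilde Y$.
\begin{itemize}
\item Inverting the invariant monomial $(x_1\cdots x_n)^m$, with $m$ the exponent of the character group of $G$, gives the open subscheme $\Spec\cO_L[\ZZ S]\cong\GG_{m,\cO_L}^n$ of $A_G$. It is smooth over $\cO_L$ and dense in each fibre.
\item Hence $\tilde W\to\Spec\cO_L$ is smooth at the generic point $\eta$ of the nonempty irreducible special fibre $\overline{W}$.
\item Near $\eta$ the canonical sheaf is $\Omega^n_{\tilde W/\cO_L}$, and $(\tilde\pi\circ\phi)^*(dx_1\wedge\cdots\wedge dx_n)$ is a regular section of it.
\item Its order of vanishing along $\overline{W}$ is $\mathfrak{l}'$, because $\phi^*\overline{V}=\overline{W}$ and $K_{\tilde Z/\tilde X}$ is the divisor of $\tilde\pi^*(dx_1\wedge\cdots\wedge dx_n)$. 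Therefore $\mathfrak{l}'\geq 0$.
\end{itemize}
This is the $\cO_L$-analogue of the effectivity of $K_{V/X}$ used in the proof of Lemma \ref{rem}.
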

\begin{lem}\label{rem2}
Keeping the notations and assumptions of Lemma \ref{rem1}. Moreover, assume that $(Z,\pi)$ has $\QQ$-good reduction modulo $\varpi_L\cO_L$ then $\mathfrak{n}=\mathfrak{l}=\mathfrak{l}'=\mathfrak{N}=0$. 
\end{lem}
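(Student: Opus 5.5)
We need to show $\mathfrak{n}=\mathfrak{l}=\mathfrak{l}'=\mathfrak{N}=0$ under $\QQ$-good reduction. The plan is to use the definition directly and track how each integer is determined by the generic point of the special fibre $\overline{V}$. The key observation is that all four integers measure vanishing orders along the prime divisor $\overline{Y}$ (respectively $\overline{V}$) of the special fibre. $\QQ$-good reduction gives $\mathfrak{l}_i=0$ on a cover, together with $\gcd(d(G_i),p_L)=1$.

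First, I treat $\mathfrak{l}$. By definition of $\QQ$-good reduction there is a cover $(\tilde Y_i)$ with isomorphisms to standard pairs and $\mathfrak{l}_i=0$. Since $\overline{V}\neq\emptyset$, I pick $i$ with $\tilde V\cap\tilde Y_i\cap \overline{Z}\neq\emptyset$ and compare the two descriptions on the overlap. In a standard pair the divisor $\tilde{\cD}$ is a combination of the horizontal divisors $D_j$ only, so it has zero coefficient along any component of the special fibre. Hence the coefficient of a component of $\overline{Z}$ meeting the overlap in $\tilde\pi^*\tilde D(f)$ equals both $\mathfrak{l}$ and $\mathfrak{l}_i=0$. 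I assume here that $\overline{V}$ is irreducible, or else that $\mathfrak{l}$ is constant on it, as the statement implicitly requires. So $\mathfrak{l}=0$.

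Next, $\mathfrak{N}$: pulling back via $\rho_G$, the identity $f\circ\tilde\pi\circ\phi\circ\rho_G=u\varpi_L^{\mathfrak{N}}\prod x_i^{N_i}$ gives $\mathfrak{N}$ as the coefficient of $\overline{\AA}$ in $\rho_G^*(\tilde{\cD}+\mathfrak{l}\overline{V})$. Since $\gcd(d(G),p_L)=1$, $\rho_G$ is étale over the generic point of the special fibre (Lemma \ref{etalelocalquoti}, $\mathfrak{l}(\tilde R)=0$). So the special fibre pulls back with multiplicity one, and $\mathfrak{N}=\mathfrak{l}=0$.

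For $\mathfrak{l}'$ and $\mathfrak{n}$: the relation $K_{\tilde Y/\tilde X}=K_{\tilde Y/\tilde W}+\rho_G^*K_{\tilde W/\tilde X}$ gives $\mathfrak{n}=\mathfrak{l}(\tilde R)+\mathfrak{l}'$, and $\mathfrak{l}(\tilde R)=0$ by tameness of $G$. It remains to prove $\mathfrak{l}'=0$, i.e. that $\tilde\pi$ is étale at the generic point $\eta$ of $\overline{V}$. This is the main obstacle. My approach is as follows. Good reduction implies that $\overline{\pi}$ is an embedded $\QQ$-resolution of $(\AA^n_{k_L},D(\overline f))$, in particular birational and an isomorphism over $\AA^n_{k_L}\setminus\Sing D(\overline f)$. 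Hence $\overline{Z}$ contains a dense open piece mapping isomorphically onto an open of $\AA^n_{k_L}$. Then $\eta$ maps to the generic point of $\AA^n_{k_L}$, so $\tilde\pi$ is a morphism of normal schemes that is birational and quasi-finite near $\eta$ onto the regular scheme $\tilde X$. By Zariski's main theorem it is a local isomorphism at $\eta$, so the Jacobian ideal is trivial there and $\mathfrak{l}'=0$, whence $\mathfrak{n}=0$. If $\overline{Z}$ had several components, the same argument applies componentwise using the cover in Definition \ref{q-good}.
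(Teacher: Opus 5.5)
Your argument is correct and follows essentially the paper's (very terse) route: $\mathfrak{l}=0$ comes from the cover in Definition \ref{q-good}; $\mathfrak{N}=\mathfrak{l}$ because $\rho_G^*$ pulls $\varpi_L$ back to $\varpi_L$ (this holds for every diagonalizable $G$, since the special fibre of $A_G$ is the spectrum of a monoid algebra over $k_L$, which also makes $\overline V$ automatically irreducible); $\mathfrak{l}'=0$ because $\tilde\pi$ is a local isomorphism at the generic point of $\overline V$; and $\mathfrak{n}=\mathfrak{l}(\tilde R)+\mathfrak{l}'$ with $\mathfrak{l}(\tilde R)=0$ by Lemma \ref{etalelocalquoti}. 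The one point to make explicit is that the tameness $\gcd(d(G),p_L)=1$ you invoke for the witnessing group $G$ of $\tilde V$ is granted by Definition \ref{q-good} only when $G$ is one of the groups $G_i$ of the chosen cover; for an arbitrary witness the statement fails (e.g.\ $\AA^1_{\cO_L}\cong\AA^1_{\cO_L}/\mu_{p_L,\cO_L}$ with $f=x_1$ gives $\mathfrak{n}=\ord_L(p_L)\neq 0$), so your overlap argument for $\mathfrak{l}$ is harmless, but the tameness step has to be read with that restriction.
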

\begin{proof}[Proof of Lemmas \ref{rem1} and \ref{rem2}]
The assertions  follow by doing similarly to Lemma \ref{rem} together with using Lemma \ref{etalelocalquoti}. 
\end{proof}
\begin{prop}\label{stragood} Let $L$ be a local field, $X=\AA_L^n$ and $f$ be a non-constant polynomial in $\cO_L[x_1,...,x_n]$ such that $\overline{f}:=f\mod \varpi_L\cO_L$ is not constant.
Let $\pi : Z\to X$ be an embedded $\QQ$-resolution of $(X,D(f))$. Suppose that $(Z,\pi)$ has a good $\cO_L$-model. Then, there exists a decomposition \(\overline{Z}=\bigsqcup_{e=1}^r \overline{Z}_e \) of \(\overline{Z}\) into finitely many locally closed subsets satisfying the following condition: 
\begin{itemize}
\item For each \(1\le e \le r\), there exists a locally standard triple \((A_{G_e}^{\rm loc}, \tilde{\cD}_e ,\tilde{\cD}_e')\) over $\cO_L$ and natural numbers $\mathfrak{l}_e, \mathfrak{l}'_e$
such that for each point \(z\in \overline{Z}_e(k_{L'})\) with \(L'/L\) a finite extension, there is an $\cO_{L'}$-étale morphism $\phi_z$ from a standard scheme over $\cO_{L'}$ witnessed by $G_e\otimes_{\cO_L}\cO_{L'}$ to $\tilde{Z}_{\cO_{L'}}$ that induces an   $\cO_{L'}$-étale morphism of triples
\[(A_{G_e}^{\rm loc}, \tilde{\cD}_e ,\tilde{\cD}_e')_{\cO_{L'}}\to 
 (\Spec \cO_{\tilde{Z}_{\cO_{L'}},z}, (\tilde{\pi}^{*}\tilde{D}(f))_{\cO_{L'}}-\iota_{L'/L}\mathfrak{l}_e\Spec\cO_{\tilde{Z}_{k_{L'}},z}, (K_{\tilde{Z}/\tilde{X}})_{\cO_{L'}}-\iota_{L'/L}\mathfrak{l}'_e\Spec\cO_{\tilde{Z}_{k_{L'}},z})
,\]
where \( (A_{G_e}^{\rm loc}, \tilde{\cD}_e ,\tilde{\cD}_e')_{\cO_{L'}}\) is the triple induced from \((A_{G_e}^{\rm loc}, \tilde{\cD}_e ,\tilde{\cD}_e')\) by the scalar extension \(\cO_{L'}/\cO_L\) and $\iota_{L'/L}$ is the ramification index of the finite extension $L'/L$. 
\item For each $1\leq e\leq r$, there are natural numbers $\mathfrak{n}_e, \mathfrak{N}_e, N_{ei}, \nu_{ei}$ for $1\leq i\leq n$  such that $\nu_{ei}\geq 1$ for all $i$, $\rho_{G_e}^*(\tilde{\cD}_e)=\sum_{i=1}^nN_{ei}E_i$, $$(\tilde{\pi}_{\cO_{L'}}\circ\phi_z\circ\rho_{G_e\otimes_{\cO_L}\cO_{L'}})^*(\tilde{D}(f)_{\cO_{L'}})|_{\AA_{\cO_{L'}}^{n,\loc}}=\iota_{L'/L}\mathfrak{N}_e\AA_{k_{L'}}^{n,\loc}+\sum_{i=1}^nN_{ei}E_i$$ and 
\[\rho_{G_e\otimes_{\cO_L}\cO_{L'}}^*(\tilde{\cD}_e'\otimes_{\cO_L}\cO_{L'}+\iota_{L'/L}\mathfrak{l}'_e\AA_{G_e\otimes_{\cO_L} k_{L'}}^{\loc})+K_{\AA_{\cO_{L'}}^{n,\loc}/A_{G_e\otimes_{\cO_L}\cO_{L'}}^{\loc}}=\iota_{L'/L}\mathfrak{n}_e\AA_{k_{L'}}^{n,\loc}+\sum_{i=1}^n(\nu_{ei}-1)E_i
\]  for all finite extensions $L'$ of $L$ and all $z\in \overline{Z}_e(k_{L'})$.
\item If $(Z,\pi)$ has $\QQ$-good reduction modulo $\varpi_L\cO_L$ then $\mathfrak{n}_e=\mathfrak{N}_e=\mathfrak{l}_e=\mathfrak{l}'_e=0$  for all $1\leq e\leq r$.
\item For each $1\leq e\leq r$ such that $\overline{Z}_e\setminus \overline{\pi}^{-1}(\Sing(D(\overline{f})))\neq \emptyset$,  we have $G_e=\{1\}$, $\cD_e=\cD'_e=0$ and $\nu_e=\mathfrak{N}_e=\mathfrak{l}_e=\mathfrak{l}'_e=N_{ei}=\nu_{ei}-1=0$ for all $1\leq i\leq n$.
\end{itemize}
\end{prop}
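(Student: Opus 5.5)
The plan follows the proof of Proposition \ref{stra}, working over $\cO_L$ instead of a field. I will use the good $\cO_L$-model, i.e.\ the cover $(\tilde{Y}_i)_{1\leq i\leq r_0}$ of $\tilde{Z}$ with $\cO_L$-isomorphisms $\phi_i$ from standard pairs $(\tilde{W}_i,\tilde{\cD}_i)$ witnessed by $G_i\subset\GG_{m,\cO_L}^n$. Each $\overline{W}_i$ is stratified by the images $\rho_{G_i}(\overset{\circ}{E}_I)\otimes k_L$, $I\subset\{1,\dots,n\}$. Transporting these via $\phi_i$ gives locally closed pieces of $\overline{Y}_i$. Intersecting with the complements of the earlier $\overline{Y}_{i'}$, $i'<i$, produces a disjoint decomposition $\overline{Z}=\bigsqcup_e\overline{Z}_e$. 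Each stratum $e$ comes with an index $i(e)$ and a subset $I(e)$. I will refine further, separating $\overline{\pi}^{-1}(\Sing(D(\overline{f})))$ from its complement, which is needed for the last bullet.

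For a stratum $e=(i,I)$ I take $G_e:=(G_i)_I$, the maximal subgroup scheme acting trivially on $E_I$. Lemma \ref{etalelocalquoti} says it is finite diagonalizable over $\cO_L$. The triple $(A_{G_e}^{\loc},\tilde{\cD}_e,\tilde{\cD}'_e)$ is the locally standard triple cut out by the coefficients indexed by $I$. These come from the standard triple supplied by Lemma \ref{rem1}, after subtracting $\mathfrak{l}_e\overline{V}$ and $\mathfrak{l}'_e\overline{V}$ with $\mathfrak{l}_e:=\mathfrak{l}_i$ and $\mathfrak{l}'_e$ the constant of Lemma \ref{rem1}.

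Given a finite extension $L'/L$ and $z\in\overline{Z}_e(k_{L'})$, I apply Lemma \ref{etalelocalquoti} with $R=\cO_L$ and $\tilde{R}=\cO_{L'}$ to the point $\phi_i^{-1}(z)\in\rho_{G_i}(\overset{\circ}{E}_I)(k_{L'})$. This gives an étale $\theta_{I,z}$ from a standard scheme witnessed by $G_e\otimes\cO_{L'}$, and I set $\phi_z:=\phi_i\circ\theta_{I,z}$. Étaleness transports the divisors. The only new feature after base change is the special fibre: the divisor $\overline{V}$ pulls back to $\iota_{L'/L}$ times the special fibre, because $\varpi_L=u\varpi_{L'}^{\iota_{L'/L}}$. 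This accounts for the factors $\iota_{L'/L}$.

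For the numerical data, $N_{ei}$ and $\nu_{ei}$ are read off from \eqref{puGI} and \eqref{puGI1}, and by the first part of Lemma \ref{etalelocalquoti} they are independent of $\tilde{R}$. The integer $\mathfrak{N}_e$ is the exponent $\mathfrak{N}$ of $\varpi_L$ from Lemma \ref{rem1}. The integer $\mathfrak{n}_e$ combines $\mathfrak{n}$ from Lemma \ref{rem1} with $\mathfrak{l}(I,\tilde{R})$. Here I must check that $\mathfrak{l}(I,\cO_{L'})$ equals $\iota_{L'/L}$ times a constant, which holds because ramification along the special fibre scales with $\iota_{L'/L}$. The third bullet then follows from Lemma \ref{rem2}, together with $\mathfrak{l}(I,\tilde{R})=0$ when $\gcd(d(G_i),p_L)=1$.

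For the last bullet, suppose $\overline{Z}_e$ meets the complement of $\overline{\pi}^{-1}(\Sing D(\overline{f}))$. After the refinement, all of $\overline{Z}_e$ lies in that complement, where $\tilde{\pi}$ is an isomorphism onto a smooth scheme. There I rechart with $G_e=\{1\}$ and $I=\emptyset$, and the remaining invariants vanish. The main obstacle I expect is the bookkeeping of the special-fibre multiplicities $\mathfrak{l}_e,\mathfrak{l}'_e,\mathfrak{N}_e,\mathfrak{n}_e$ uniformly in $L'$. In particular, in the non-good-reduction case I must make sure that the wild contribution $\mathfrak{l}(I,\cO_{L'})$ is exactly linear in $\iota_{L'/L}$. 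If that fails, I would refine the stratification, or restrict to the good-reduction case where these quantities vanish.
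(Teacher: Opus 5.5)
Outside the last bullet, your argument is the paper's own proof spelled out. The paper only invokes Lemmas \ref{etalelocalquoti}, \ref{rem1}, \ref{rem2} and the vanishing of the relative canonical divisor of an étale map. That is what you unpack: strata from the charts of the good model, $G_e=(G_i)_I$, $\phi_z=\phi_i\circ\theta_{I,z}$, and numerical data from \eqref{puGI} and \eqref{puGI1}.

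Your worry about $\iota_{L'/L}$-linearity of $\mathfrak{l}(I,\cO_{L'})$ is unnecessary. For diagonalizable $G$ one has $\cO_L[x]^G\otimes_{\cO_L}\cO_{L'}=\cO_{L'}[x]^G$, and the relative canonical divisor is compatible with this flat base change. The special fibre then pulls back to $\iota_{L'/L}$ times the special fibre. For example, for $\mu_p$ acting on $\AA^1$ one gets $\mathfrak{l}(\cO_{L'})=\ord_{L'}(p)=\iota_{L'/L}\ord_L(p)$.

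The genuine gap is the last bullet, in two places.

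\textbf{(i)} You assert that $\tilde\pi$ is an isomorphism near every point of $\overline Z\setminus\overline\pi^{-1}(\Sing D(\overline f))$. The hypotheses only say that $\pi=\tilde\pi\otimes_{\cO_L}L$ is an isomorphism over $X\setminus\Sing(D(f))$, i.e. on the generic fibre. Taking the closure in $\PP^N_{\tilde X}$ can create exceptional loci in the special fibre. The claim is true here, but it must be proved, for instance as follows.
\begin{itemize}
\item The exceptional locus of $\tilde\pi$ is a divisor, by van der Waerden purity, since $\tilde X$ is regular.
\item It has no vertical component. Indeed, $\tilde\pi$ is an isomorphism over the generic point of $\AA^n_{k_L}$. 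Moreover $\overline Z$ is irreducible: the standard charts have irreducible special fibres, so distinct components of $\overline Z$ would be disjoint, contradicting Zariski's connectedness theorem for $\tilde\pi$.
\item Horizontal components lie over the closure of $\Sing(D(f))$ in $\tilde X$. The special fibre of that closure is contained in $V(\overline f,\nabla\overline f)=\Sing D(\overline f)$.
\end{itemize}

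\textbf{(ii)} More seriously, even there your claim that ``the remaining invariants vanish'' is false. Suppose $\overline\pi(z)$ is a smooth point of $D(\overline f)$. Then $f\circ\tilde\pi$ lies in the maximal ideal of the normal local ring $\cO_{\tilde Z,z}$, so $\tilde\pi^*\tilde D(f)$ passes through $z$. Its pullback under any étale $\phi_z$ with $\phi_z(P_e)=z$ is therefore nonzero at $P_e$. Hence no chart with $I=\emptyset$, $\tilde\cD_e=0$ and $N_{ei}=0$ for all $i$ exists. The correct data at such a point are $G_e=\{1\}$, $N_e=(1,0,\dots,0)$, $\nu_e=(1,\dots,1)$.

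So the bullet cannot be proved as literally stated. What is true, and what is actually used in Proposition \ref{chilarge1}, is the version for strata meeting $\overline Z\setminus\overline\pi^{-1}(\overline f^{-1}(0))$. You should refine by $\overline\pi^{-1}(\overline f^{-1}(0))$ instead of $\overline\pi^{-1}(\Sing D(\overline f))$. On the complement, $f\circ\tilde\pi$ is a unit, and (i) makes $\tilde\pi$ a local isomorphism onto $\tilde X$. The chart with $G_e=\{1\}$ and $I=\emptyset$ then gives all the claimed vanishing, including $\mathfrak{N}_e=\mathfrak{l}_e=\mathfrak{l}'_e=0$.
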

\begin{proof}
 We observe that the relative canonical divisor with respect to an étale morphism is zero.  Thus,   
we have $K_{A_{G_e\otimes_{\cO_L}\cO_{L'}}^{\rm loc}/\Spec\cO_{\tilde{Z}_{\cO_{L'}},z}}=0.$ 
Therefore, the assertions follow from Lemmas \ref{etalelocalquoti}, \ref{rem1} and \ref{rem2}.
\end{proof}

\begin{defn}\label{destrata}Let $L$ be a local field, $X=\AA_L^n$ and $f$ be a non-constant polynomial in $\cO_L[x_1,...,x_n]$ such that $\overline{f}:=f\mod \varpi_L\cO_L$ is not constant.  
Let $(Z,\pi)$ be an embedded $\QQ$-resolution  of $(X,D(f))$. 

If $(Z,\pi)$ has a good $\cO_L$-model, we fix a stratification $(\overline{Z}_{e})_{1\leq e\leq r}$ of $\overline{Z}$ together with $\cO_L$-group schemes $G_e$, the natural numbers $\mathfrak{n}_e,\mathfrak{N}_e, N_{ei},\nu_{ei}$  for $1\leq e\leq r$ and $1\leq i\leq n$ mentioned in Proposition \ref{stragood}. In addition, with the notations in  Lemma-Definition \ref{factorunit} and Proposition \ref{stragood}, we also consider the map $\mathfrak{u}_e: \overline{Z}_e(\overline{k}_L)\to \overline{k}_L$ that sends $z$ to the standard angular component of $f\circ\tilde{\pi}_{\cO_{L'}}\circ\phi_z$ if $z\in \overline{Z}_e(k_{L'})$ and $L'$ is a finite extension of $L$. It is clear that if $L'$ is a finite extension of $L$ and $z\in \overline{Z}_e(k_{L'})$ then $\mathfrak{u}_e(z)\in k_{L'}$.  
We call the tuple $(\overline{Z}_e,G_e, \nu_{ei},N_{ei},\mathfrak{n}_e,\mathfrak{N}_e,\mathfrak{u}_e)_{1\leq e\leq r, 1\leq i\leq n}$ and $(G_e, \nu_{ei},N_{ei},\mathfrak{n}_e,\mathfrak{N}_e,\mathfrak{u}_e)_{1\leq e\leq r, 1\leq i\leq n}$ \emph{almost good data} of the embedded $\QQ$-resolution $(Z,\pi)$ and their \emph{numerical data}, respectively. For each $1\leq e\leq r$, we also call $(G_e, \nu_{ei},N_{ei},\mathfrak{n}_e,\mathfrak{N}_e,\mathfrak{u}_e)_{1\leq i\leq n}$ the \emph{numerical data} of $\overline{Z}_e$.

If $(Z,\pi)$ has $\QQ$-good reduction modulo $\varpi_L\cO_L$ and $(\overline{Z}_e,G_e, \nu_{ei},N_{ei},\mathfrak{n}_e,\mathfrak{N}_e,\mathfrak{u}_e)_{1\leq e\leq r, 1\leq i\leq n}$ is almost good data of  $(Z,\pi)$ then $\mathfrak{n}_e=\mathfrak{N}_e=0$ for all $e$. If this is the case, we call the tuples $(\overline{Z}_e,G_e, \nu_{ei},N_{ei},\mathfrak{u}_e)_{1\leq e\leq r, 1\leq i\leq n}$ and $(G_e,\nu_{ei},N_{ei},\mathfrak{u}_e)_{1\leq e\leq r, 1\leq i\leq n}$  \emph{good data} of $(Z,\pi)$ and 
 their \emph{numerical data}, respectively. Moreover,  for each $1\leq e\leq r$, we call the tuple $(G_e,\nu_{ei},N_{ei},\mathfrak{u}_e)_{1\leq i\leq n}$ the \emph{numerical data} of $\overline{Z}_e$.
\end{defn}

\begin{remark}\label{choiceangu} The map $\mathfrak{u}_e$ in Definition \ref{destrata} depends on the choice of the $\cO_L$-étale morphism $\phi_z$ for each $z\in \overline{Z}_e(\overline{k}_L)$. However, the formula in Main Theorem \ref{mucase} does not depend on the choice of $\mathfrak{u}_e$ (see also Remark \ref{trivindep} below). In fact, it is easy to verify that if $\mathfrak{u}_{e1}$ and $\mathfrak{u}_{e2}$ are two different choices of the map $\mathfrak{u}_e$ then for each finite extension $L'$ of $L$ and each $z\in \overline{Z}_{e}(k_{L'})$, there are elements $a_i\in k_{L'}^{\times}$ for $1\leq i\leq n$ such that $\mathfrak{u}_{e1}(z)=\mathfrak{u}_{e2}(z)\prod_{i=1}^na_i^{N_{ei}}.$
\end{remark}

\begin{lem}\label{almostgood}Let $K$ be a number field and $f$ be a non-constant polynomial in $\cO_K[x_1,...,x_n]$. Let $(Z,\pi)$ be an embedded $\QQ$-resolution of $(\AA_K^n,D_K(f))$. Then there exists an integer $M$ such that $(Z\otimes_K L, \pi\otimes_K L)$ is an embedded $\QQ$-resolution of ($\AA_L^n,D_L(f))$ with $\QQ$-(tame) good reduction modulo $\varpi_L\cO_L$ for all local fields $L$ over $\cO_K$ of residue field characteristic at least $M$. 
Moreover, data of  $(Z,\pi)$ given by Definition \ref{dat} induce good data of $(Z\otimes_K L, \pi\otimes_K L)$ for each local field $L$ over $\cO_K$ of residue field characteristic at least $M$.
\end{lem}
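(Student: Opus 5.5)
The plan is a spreading-out argument. All the finitely many data defining $(Z,\pi)$ and its local standard structure are defined over $\cO_K[1/M_0]$ for a suitable integer $M_0$. After that, good reduction modulo $\varpi_L$ comes from base change, once $p_L$ avoids finitely many primes.

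First, I would realize $Z$ as a closed subscheme of $\PP^N_{\AA^n_K}$. By Definition \ref{q-reso}, there is a finite open cover $(Y_i)_{1\le i\le r}$ of $Z$ with isomorphisms of pairs $\phi_i:(W_i,\cD_i)\to (Y_i,\pi^*(D(f))|_{Y_i})$. Here $(W_i,\cD_i)$ is a standard pair over $K$ witnessed by a finite diagonalizable group $G_i\subset\GG_{m,K}^n$. By Lemma \ref{finimugr} and Lemma \ref{closimm}, each $G_i$ is given by finitely many monomial equations with integer exponents. Hence $G_i$ is the base change of a finite diagonalizable subgroup scheme $\tilde G_i\subset \GG^n_{m,\cO_K}$ defined by the same character group, with $d(\tilde G_i)=d(G_i)$. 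The open sets $W_i\subset A_{G_i}$, the morphisms $\phi_i$ and their inverses, the open sets $Y_i$, and the unit $u_i$ of Lemma \ref{rem} all involve finitely many coefficients in $K$. I would invert a single element $a\in\cO_K$ clearing all denominators. Over $S=\cO_K[1/a]$ we then obtain models $\tilde Z_S\subset \PP^N_{S[x]}$, $\tilde\phi_i$, $\tilde Y_i$ and $\tilde W_i\subset A_{\tilde G_i}$. The identities $f\circ\pi\circ\phi_i\circ\rho_{G_i}=u_i\prod_j x_j^{N_{ij}}$ hold over $S$, with $u_i$ a unit over $S$; this may require enlarging $a$ so that $u_i^{-1}$ is also defined. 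The $\tilde\phi_i$ remain isomorphisms and the $\tilde Y_i$ still cover. The relative canonical divisor statement of Lemma \ref{rem} spreads out in the same way.

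Second, I would compare this model with the Zariski closure $\tilde Z$ of Definition \ref{good} over $\cO_L$. For $L$ over $\cO_K$ with $p_L\nmid a$, the map $\cO_K\to\cO_L$ factors through $S$. The scheme $\tilde Z_S\otimes_S\cO_L$ is flat, since it is locally isomorphic to $A_{\tilde G_i}$, which is flat over the base. Its generic fibre is $Z_L$. It is therefore the closure $\tilde Z$, after possibly discarding irrelevant components. To rule those out, I would enlarge $a$ using generic flatness and constructibility, so that the closure of $Z$ in $\PP^N_{\cO_K[x]}$ agrees with $\tilde Z_S$ over $S$. The identity for $f$ then shows that $\tilde\pi^*\tilde D(f)|_{\tilde Y_i}=\tilde\cD_i$ with $\mathfrak{l}_i=0$, because $u_i$ is a unit mod $\varpi_L$. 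This gives a good $\cO_L$-model with $\mathfrak l_i=0$.

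Third, I would impose the condition $M>\max(|a|\text{-primes}, d(G_i), N_{ij})$. Then $\gcd(d(G_i),p_L)=1$, which gives $\QQ$-good reduction. Moreover, any nonzero $N_{ij}$ is a unit in $\cO_L$, which gives tameness. I would also need $\overline{Y}_i\neq\emptyset$; after shrinking the cover, this is automatic when $Y_i$ meets the closure of the special fibre, and otherwise those $Y_i$ can be dropped for large $p$. For the "moreover" part, I would take the stratification $(Z_e)$ of Proposition \ref{stra}, spread each $Z_e$ out over $S$, and reduce it mod $\varpi_L$. The local étale morphisms are built uniformly from the $\theta_{I,z}$ of Lemma \ref{etalelocalquoti}, which are defined over any base. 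Combined with Lemma \ref{rem2}, this shows that the same $G_e,N_{ei},\nu_{ei}$ give good data.

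The main obstacle I expect is the second step: showing that the spread-out model really coincides with the Zariski closure $\tilde Z$ and has no extra special-fibre components. The same issue arises in making sure the reduced strata $\overline Z_e$ still partition $\overline Z$. I would handle it by standard constructibility arguments, inverting more primes as needed.
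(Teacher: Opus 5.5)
Your proposal is correct and follows essentially the same spreading-out argument as the paper. You choose finitely many places (your element $a$) over which $Z,\pi,Y_i,W_i,G_i,\phi_i$ are defined, and enlarge that set so that the $\tilde\phi_i$ remain isomorphisms, no special fibre of $\tilde Y_i$ lies in $\tilde\pi^*\tilde D(f)$ (you get this from the unit $u_i$), and $d(\tilde G_i)$ is invertible. Then you increase $M$ further for tameness and build the good data from Lemma \ref{etalelocalquoti}, exactly as the paper does. Your extra step identifying the spread-out model with the Zariski closure $\tilde Z$ (flatness, plus constructibility so the $\tilde Y_i$ still cover) fills in a point the paper passes over silently.
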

\begin{proof}For each local field $L$ over $K$,  it is clear that $(Z_L:=Z\otimes_L L,\pi_L:=\pi\otimes_K L)$ is an embedded $\QQ$-resolution of $(\AA_L^n,D_L(f))$. 
Now, we fix a finite affine open cover $(Y_i)_{1\leq i\leq r}$ of $Z$ together with a standard pair $(W_i,\sum_{j=1}^nm_iD_i|_{W_i})$ over $K$ witnessed by $G_i\subset\GG_{m,K}^n$ and a $K$-isomorphism $\phi_i$ from $(W_i,\sum_{j=1}^nm_iD_i|_{W_i})$ to $ (Y_i,\pi^{*}(D_K(f))|_{Y_i})$ for each $1\leq i\leq r$.  
There exists a finite set $S$ of places of $K$ containing all infinite places of $K$ such that $Z,\pi, Y_i, G_i, W_i,\phi_i$ for $1\leq i\leq r$ can be defined over $\cO_S$. Let $\tilde{Z},\tilde{Y}_i,\tilde{W}_i,\tilde{G}_i$ be the $\cO_S$-schemes with the same defining equations as $Z,Y_i,W_i,G_i$ respectively for $1\leq i\leq N$. Moreover, let $\tilde{\pi}: \tilde{Z}\to\AA_{\cO_S}^n$ and $\tilde{\phi}_i:(\tilde{W}_i,\sum_{j=1}^nm_iD_i|_{\tilde{W}_i})\to (\tilde{Y}_i,\tilde{\pi}^{*}(D_{\cO_S}(f))|_{\tilde{Y}_i})$ for $1\leq i\leq r$ be the $\cO_S$-models of $\pi$ and $\phi_i$, where $D_{\cO_S}(f)=\Spec\cO_S[x_1,...,x_n]/(f)$. By enlarging $S$ if needed we can  suppose that $\tilde{\phi}_i$ is an isomorphism over $\cO_S$, $\tilde{\pi}^{*}(D_{\cO_S}(f))\otimes_{\cO_S}\cO_S/\mathfrak{p}\subsetneq \tilde{Y}_i\otimes_{\cO_S}\cO_S/\mathfrak{p}$ for all finite places $\mathfrak{p}\notin S$ of $K$,  $\tilde{G}_i$ is flat over $\cO_S$ and $d(\tilde{G}_{i})$ is a unit in $\cO_S$ for all $i$. 
 Then it is easy to see that the embedded $\QQ$-resolution $(Z_L,\pi_L)$ of $(\AA_L^n,D_L(f))$  has  $\QQ$-good reduction modulo $\varpi_L\cO_L$ if $L$ is a local field over $\cO_K$ of residue field characteristic at least  $M:=\max\{\cN(\mathfrak{p})| \mathfrak{p} \mbox{ is a finite place in } S\}$, where if $\mathfrak{p}$ is a finite place of $K$ then $\cN(\mathfrak{p}):=\#(\cO_K/\mathfrak{p})$ is its norm. 
Moreover, by using Lemma \ref{rem} for the above affine open subsets $Y_i$ of $Z$, we see that the embedded $\QQ$-resolution $(Z_L,\pi_L)$ of $(\AA_L^n,D_L(f))$  has  $\QQ$-tame good reduction modulo $\varpi_L\cO_L$ if $L$ is a local field over $\cO_K$ of residue field characteristic at least $M_1$ for some integer $M_1\geq M$. Similarly to Proposition \ref{stragood}, from  data of $(Z,\pi)$, we can use Lemma \ref{etalelocalquoti} to construct good data of $(Z_L, \pi_L)$ for each local field $L$ over $\cO_K$ of residue field characteristic at least $M$.
\end{proof}
\begin{remark}\label{comparQgoo}
In Definition \ref{q-reso}, if for a given embedded $\QQ$-resolution, all standard pairs can be taken to be witnessed by the trivial group scheme $\{1\}$, then this embedded $\QQ$-resolution becomes a usual embedded resolution. 

Similarly, in Definition \ref{q-good}, an embedded $\QQ$-resolution with good reduction modulo $\varpi_L\cO_L$ becomes an embedded resolution with good reduction modulo $\varpi_L\cO_L$ defined in \cite[Definition 2.2]{Denefdegree} if the group schemes $G_i$ can be taken to be the trivial group scheme $\{1\}$. However, our condition on the numerical data for $\QQ$-tame good reduction  is a bit weaker than the one in \cite[Section 3]{DenefBour}  for tame good reduction.
\end{remark}

\subsection{Igusa's local zeta functions and exponential sums modulo $p^m$}\label{Igex}
Let  $f$ be  a non-constant polynomial in $L[x_1,...,x_n]$ with a local field $L$.
For each multiplicative character $\chi$ of $\cO_L^{\times}$ and each Schwartz-Bruhat function $\Phi$ on $L^n$,  the Igusa zeta function associated with $f,\Phi,\chi$ 
is defined by
$$\cZ_{f,\Phi,\chi}(s):=\int_{L^n}\Phi(x)\chi(\acc(f(x)))|f(x)|^s|dx|,$$
where $s\in \CC$ with $\Re(s)>0$. By the work of Igusa (see for instance \cite{Igusa3}), $\cZ_{f,\Phi,\chi}(s)$ is a rational function in $q_L^{-s}$, thus it has a meromorphic continuation to $\CC$. Moreover, the real part of any pole of $\cZ_{f,\Phi,\chi}(s)$ is a negative rational number.

Igusa's local zeta functions are closely related to exponential sums, which are defined as follows: 
Let $\psi$ be an additive character of $L$. The \emph{exponential sum} associated with $\Phi, \psi, f$ is defined by 
$$E_{f,\Phi,\psi}:=\int_{L^n}\Phi(x)\psi(f(x))|dx|.$$
We have the following relation between $E_{f,\Phi,\psi}$ and $\cZ_{f,\Phi,\chi}(s)$.

\begin{prop}[\cite{DenefBour}, Proposition 1.4.4]\label{zetaex}
Assume that $\psi$ has conductor $0$. Let $z=u\varpi_L^{-m}$ with  $u\in\cO_{L}^{\times}$ and $m\geq 1$. Suppose that $\Supp(\Phi)\cap C_{f}\subset f^{-1}(0)$.  Then
\begin{align*}
E_{f,\Phi,\psi_z} =
\cZ_{f,\Phi,\chi_\textnormal{triv}}(0)&+\mbox{\textnormal{Coeff}}_{t^{m-1}}\left(\dfrac{(t-q_L)\cZ_{f,\Phi,\chi_\textnormal{triv}}(s)}{(q_L-1)(1-t)}\right)\\
&+\sum_{\chi\neq\chi_\textnormal{triv}}g_{\chi^{-1}}\chi(u)\mbox{\textnormal{Coeff}}_{t^{m-c(\chi)}}(\cZ_{f,\Phi,\chi}(s)),
\end{align*}
where $t=q_L^{-s}$ and $g_{\chi}$ is the Gaussian sum
\[
g_{\chi}=\frac{q_L^{1-c(\chi)}}{q_L-1}\sum_{\overline{v}= v\mod \varpi_L^{c(\chi)}\in (\cO_{L}/\varpi_L^{c(\chi)})^{\times}}\chi(v)\psi\bigl(\frac{v}{\varpi_L^{c(\chi)}}\bigr).
\]
\end{prop}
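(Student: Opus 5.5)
The plan is to decompose the integral $E_{f,\Phi,\psi_z}=\int_{L^n}\Phi(x)\psi(zf(x))|dx|$ according to the value $\ord_L(f(x))$ and the angular component $\acc(f(x))$. Then I integrate the additive character over each fibre using orthogonality of characters of $\cO_L^\times$. Since $\Phi$ is Schwartz--Bruhat, I may write $\Phi$ as a finite combination of characteristic functions of small balls, so everything converges. First separate the set $\{f(x)=0\}$, which has measure zero. On the rest, put $x$ into the piece $\ord_L(f(x))=k$, $\acc(f(x))=v\in\cO_L^\times$. Then $\psi(zf(x))=\psi(u v\varpi_L^{k-m})$.

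Next expand the function $v\mapsto\psi(uv\varpi_L^{k-m})$ on $\cO_L^\times$ in multiplicative characters. This function depends only on $v \bmod \varpi_L^{m-k}$ when $k<m$, and it is identically $1$ when $k\ge m$. Writing $j=m-k\ge1$, orthogonality gives
\[
\psi(uv\varpi_L^{-j})=\sum_{\chi} \Bigl(\frac{1}{\#(\cO_L/\varpi_L^{j})^\times}\sum_{w}\chi^{-1}(w)\psi(uw\varpi_L^{-j})\Bigr)\chi(v),
\]
where $\chi$ runs over characters of conductor $\le j$. The Gauss-sum computation shows the inner sum vanishes unless either $c(\chi)=j$, or $\chi$ is trivial with $j=1$. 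For $\chi=\chi_{\rm triv}$ and $j\geq 2$ the sum is $0$; for $j=1$ it equals $-1/(q_L-1)$. For $c(\chi)=j$, substituting $w\mapsto u^{-1}w$ produces $\chi(u)g_{\chi^{-1}}$ times the normalization in the definition of $g_\chi$. This local Gauss-sum evaluation (vanishing when $c(\chi)<j$ for nontrivial $\chi$) is the main technical step. It is standard: decompose $w$ modulo $1+\varpi_L^{j-1}\cO_L$ and use that $\psi$ has conductor $0$.

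Then reassemble. Set $a_k(\chi):=\int_{\ord f=k}\Phi\,\chi(\acc f)|dx|$, so that $\cZ_{f,\Phi,\chi}(s)=\sum_k a_k(\chi)t^k$. The contributions are as follows:
\begin{itemize}
\item[(i)] The range $k\ge m$ gives $\sum_{k\ge m}a_k(\chi_{\rm triv})$.
\item[(ii)] The value $k=m-1$ gives $-\frac{1}{q_L-1}a_{m-1}(\chi_{\rm triv})$.
\item[(iii)] For nontrivial $\chi$, the value $k=m-c(\chi)$ gives $g_{\chi^{-1}}\chi(u)a_{m-c(\chi)}(\chi)$, which is $g_{\chi^{-1}}\chi(u)\,\mathrm{Coeff}_{t^{m-c(\chi)}}\cZ_{f,\Phi,\chi}$.
\end{itemize}
I then verify that $\cZ_{f,\Phi,\chi_{\rm triv}}(0)+\mathrm{Coeff}_{t^{m-1}}\bigl(\frac{(t-q_L)\cZ}{(q_L-1)(1-t)}\bigr)$ equals (i)+(ii). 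Indeed, $\frac{t-q_L}{1-t}=-1+\frac{(1-q_L)}{1-t}\cdot$ suitably rearranged, so the coefficient of $t^{m-1}$ is $-\frac{1}{q_L-1}a_{m-1}-\sum_{k\le m-1}a_k$. Adding $\cZ(0)=\sum_k a_k$ gives exactly (i)+(ii).

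It remains to justify that only finitely many $\chi$ contribute and that the series identities are legitimate. The hypothesis $\Supp(\Phi)\cap C_f\subset f^{-1}(0)$ is what guarantees this. Away from $f^{-1}(0)$ on the compact support, $f$ is submersive. By the $p$-adic implicit function theorem, for $|f|$ bounded below, $\acc(f)$ is equidistributed on small balls. Hence $a_k(\chi)=0$ for all $\chi$ of large conductor, uniformly in $k$ in a bounded range, while for $k$ large the integrals are absolutely convergent. This makes the character sum finite and gives the stated formula.
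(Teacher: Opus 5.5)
Your proposal is correct and is essentially the standard argument behind the result the paper cites from Denef's report (the paper itself gives no proof). That argument splits by $\ord_L(f(x))$, expands $v\mapsto\psi(uv\varpi_L^{-j})$ in characters of the finite group $(\cO_L/\varpi_L^{j})^\times$ (Gauss sums $g_{\chi^{-1}}\chi(u)$ exactly when $c(\chi)=j$, and the trivial character only for $j=1$ with coefficient $-1/(q_L-1)$), and reassembles using $\frac{t-q_L}{(q_L-1)(1-t)}=\frac{-1}{q_L-1}-\frac{1}{1-t}$.

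One small correction to your last paragraph: the identity does not need the hypothesis $\Supp(\Phi)\cap C_f\subset f^{-1}(0)$. For fixed $m$ only $-N\le k\le m-1$ matters, where $-N$ is a lower bound for $\ord_L f$ on $\Supp(\Phi)$, so only the finitely many characters with $c(\chi)\le m+N$ can occur. The hypothesis is really needed for Corollary \ref{singp}, where finiteness must hold uniformly in $m$; your argument there is valid but unnecessary.
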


By writing $\cZ_{f,\Phi,\chi}(s)$ in partial fractions and the fact that there are only finitely many $\chi$ such that $\cZ_{f,\Phi,\chi}(s)$ is not holomorphic (see \cite{Igusa3}), Proposition \ref{zetaex} implies the following  two corollaries.

\begin{cor}[see \cite{Igusa1,Igusa3} or \cite{DenefBour}, Corollary 1.4.5] \label{singp}
In the situation of Proposition \ref{zetaex}, if  $|z|$ is large enough,  then $E_{f,\Phi,\psi_z}$ is a finite $\CC$-linear combination of functions of the form 
\[
\chi(\acc(z))|z|^{\lambda}|\ord_L(z)|^{\beta},
\]
where $\lambda\in\CC$ is a pole of $(q_L^{s+1}-1)\cZ_{f,\Phi,\chi_{\textnormal{triv}}}(s)$ or $\cZ_{f,\Phi,\chi}(s)$ for $\chi\neq\chi_{\textnormal{triv}}$ and $\beta$ is a natural number such that $\beta+1$ is at most the multiplicity of the pole $\lambda$.  Moreover, 
coefficients of this $\CC$-linear combination  are independent of $z$, and for each pole \(\lambda\) of $(q_L^{s+1}-1)\cZ_{f,\Phi,\chi_{\textnormal{triv}}}(s)$ or of $\cZ_{f,\Phi,\chi}(s)$ with $\chi\neq\chi_{\textnormal{triv}}$, a function as above for some \(\beta\) appears in the linear combination with a nonzero coefficient.
\end{cor}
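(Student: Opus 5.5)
The plan is to derive Corollary \ref{singp} from Proposition \ref{zetaex} by a partial fraction expansion in $t=q_L^{-s}$. I will write $z=u\varpi_L^{-m}$ with $u=\acc(z)\in\cO_L^{\times}$ and $m=-\ord_L(z)\geq 1$, so that $|z|=q_L^{m}$. Then $E_{f,\Phi,\psi_z}$ is the sum of three terms: the constant $\cZ_{f,\Phi,\chi_{\textnormal{triv}}}(0)$, a coefficient extraction from the trivial-character zeta function, and a sum over nontrivial $\chi$.

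\textbf{Finiteness of the character sum.} First I would use the fact, recalled from \cite{Igusa3}, that $\cZ_{f,\Phi,\chi}(s)$ is a polynomial in $t$ for all but finitely many $\chi$. Moreover, the $\chi$ whose conductor exceeds a bound $c_0$ depending on $\Phi$ and $f$ give polynomials of bounded degree. For $m$ large, $\textnormal{Coeff}_{t^{m-c(\chi)}}$ of such a polynomial is zero, so only finitely many $\chi$ contribute. This is the step needing the most care: one must check that both the degree bound and the conductor bound are uniform, so that "$|z|$ large enough" kills every polynomial contribution at once.

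\textbf{Partial fractions.} Each surviving rational function $R(t)$ I would write as a polynomial plus $\sum_{\lambda}\sum_{k} a_{\lambda,k}(1-q_L^{\lambda}t)^{-k}$, where $\lambda$ runs over poles in $s$ modulo $2\pi i/\log q_L$. Here $(1-q_L^{\lambda}t)$ vanishes at $s=\lambda$, and $k$ is at most the multiplicity of the pole. The coefficient of $t^{N}$ in $(1-q_L^{\lambda}t)^{-k}$ is $\binom{N+k-1}{k-1}q_L^{\lambda N}$, which is $q_L^{\lambda N}$ times a polynomial in $N$ of degree $k-1$.

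\textbf{Contribution of each term.} For nontrivial $\chi$, take $N=m-c(\chi)$. Then $q_L^{\lambda N}=q_L^{-\lambda c(\chi)}|z|^{\lambda}$, and a polynomial in $m=|\ord_L(z)|$ arises. Combined with the factor $\chi(u)=\chi(\acc(z))$ and the Gauss sum, this gives the asserted form.

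For the trivial character, the factor $(t-q_L)/(q_L-1)$ equals a constant times $(1-q_L^{-1}t)$, which is $(q_L^{s+1}-1)$ up to a unit times a power of $t$. So $\frac{(t-q_L)\cZ_{f,\Phi,\chi_{\textnormal{triv}}}}{(q_L-1)(1-t)}$ has poles exactly at the poles of $(q_L^{s+1}-1)\cZ_{f,\Phi,\chi_{\textnormal{triv}}}$, together with a possible pole at $t=1$ ($s=0$) from $(1-t)^{-1}$. The coefficient of $t^{m-1}$ at the simple pole $t=1$ is the constant $-\cZ_{f,\Phi,\chi_{\textnormal{triv}}}(0)$, after checking the residue. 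This cancels the first term. I would verify this cancellation by evaluating the residue at $t=1$, where $(t-q_L)/(q_L-1)=-1$.

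\textbf{Nonvanishing.} Finally, the functions $\chi(\acc(z))|z|^{\lambda}|\ord(z)|^{\beta}$ are linearly independent as functions of $(u,m)$ for $m$ large. Independence over the $\chi$ follows from orthogonality of characters in $u$; independence in $m$ follows from distinct exponentials times polynomials. The top-order coefficient for each pole is nonzero, since it equals $a_{\lambda,k}$ (nonzero at the exact multiplicity) times a nonzero constant. For nontrivial $\chi$ that constant is the Gauss sum $g_{\chi^{-1}}$, which is nonzero because it has absolute value $q_L^{-c(\chi)/2}\cdot q_L/(q_L-1)$ up to normalization. Hence every pole appears with a nonzero coefficient.
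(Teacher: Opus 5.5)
Your overall route is the one the paper intends: partial fractions in $t=q_L^{-s}$ applied to Proposition \ref{zetaex}. Your later steps are sound. Since $t-q_L=-t(q_L^{s+1}-1)$, the trivial-character term has a simple pole at $t=1$ whose contribution $-\cZ_{f,\Phi,\chi_{\textnormal{triv}}}(0)$ cancels the constant term. Its remaining poles are exactly those of $(q_L^{s+1}-1)\cZ_{f,\Phi,\chi_{\textnormal{triv}}}(s)$, with the same multiplicities. The Gauss sums are nonzero, and the functions $(u,m)\mapsto\chi(u)q_L^{\lambda m}m^{\beta}$ are linearly independent on $\cO_L^{\times}\times\ZZ_{\geq m_0}$.

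The gap is the finiteness step, and the remedy you propose does not close it. A uniform bound $D$ on the degrees of the polynomials $\cZ_{f,\Phi,\chi}$ with $c(\chi)>c_0$ does not make $\textnormal{Coeff}_{t^{m-c(\chi)}}$ vanish for large $m$. Conductors are unbounded, so for every $m$ the characters with $m-D\leq c(\chi)\leq m$ contribute their low-order coefficients, and this set of characters changes with $m$. You therefore do not get a combination of a fixed finite family of functions $\chi(\acc(z))|z|^{\lambda}|\ord_L(z)|^{\beta}$.

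This really happens. Take $p$ odd, $f=1+x^2$, $\Phi=\11_{p\ZZ_p}$. Every $\cZ_{f,\Phi,\chi}$ is constant in $s$ (degree $0$), infinitely many of them are nonzero, and $E_{f,\Phi,\psi_z}=\psi(z)\int_{p\ZZ_p}\psi(zx^2)|dx|$ is not of the asserted form. What rules this out is the hypothesis $\Supp(\Phi)\cap C_f\subset f^{-1}(0)$, which fails here since $0\in C_f\cap\Supp(\Phi)$ and $f(0)=1$. Your argument never uses this hypothesis.

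What you need is that $\cZ_{f,\Phi,\chi}\equiv 0$ for all $\chi$ outside a finite set independent of $m$. To prove it:
\begin{itemize}
\item Take an embedded resolution $h$ of $f^{-1}(0)$ that is an isomorphism off $h^{-1}(f^{-1}(0))$. Cover $h^{-1}(\Supp(\Phi))$ by finitely many small polydiscs on which $\Phi\circ h$ and the unit part of the Jacobian are constant.
\item If the centre $b$ lies on $h^{-1}(f^{-1}(0))$, write $f\circ h=u\prod_i y_i^{N_i}$ and pick $i_1$ with $N_{i_1}>0$ and $y_{i_1}(b)=0$. Shrink so that $u/u(b)$ has an analytic $N_{i_1}$-th root, and absorb it into $y_{i_1}$. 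The integral then contains the factor $\int_{|y_{i_1}|\leq r}\chi^{N_{i_1}}(\acc(y_{i_1}))|y_{i_1}|^{N_{i_1}s+\nu_{i_1}-1}|dy_{i_1}|$, which vanishes unless $\chi^{N_{i_1}}=\chi_{\textnormal{triv}}$.
\item Otherwise $h$ is a local isomorphism at $b$, and the hypothesis gives $h(b)\notin C_f$. So $f\circ h$ can be taken as a coordinate, and the integral vanishes once $c(\chi)$ exceeds a bound depending only on the polydisc.
\end{itemize}
Both exceptional sets of characters are finite. With this fixed finite set, your partial-fraction argument goes through verbatim.

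The same remark applies to the paper's one-line justification. Holomorphy of $\cZ_{f,\Phi,\chi}$ for almost all $\chi$ is not enough by itself; it is this vanishing statement that has to be taken from Igusa.
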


\begin{cor}\label{upperbound}
Keeping the notations and assumptions of Proposition \ref{zetaex}, let $\sigma= \sigma(f,\Phi)\in\RR $ be the smallest real number such that $-\sigma$ is the real part of a pole of $(q_L^{s+1}-1)\cZ_{f,\Phi,\chi_{\textnormal{triv}}}(s)$ or $\cZ_{f,\Phi,\chi}(s)$ for $\chi\neq\chi_{\textnormal{triv}}$ and let $\theta$ be the largest positive integer such that $\theta+1$ is the multiplicity of such a pole $\lambda$ with $\Re(\lambda)=-\sigma$.  Then there is a constant $c$ depending on $f$ and $\Phi$ such that 
$$|E_{f,\Phi,\psi_z}|\leq c|z|^{-\sigma}|\ord_L(z)|^{\theta}$$
for all $z\in L^{\times}$.
\end{cor}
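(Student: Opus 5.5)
We derive Corollary \ref{upperbound} directly from Proposition \ref{zetaex} by expanding each zeta function in partial fractions in $t=q_L^{-s}$ and then bounding the resulting coefficients. By Igusa's rationality theorem, each $\cZ_{f,\Phi,\chi}(s)$ is a rational function of $t$. Its denominator is a product of factors $(1-q_L^{-a}t^{b})$ with $a,b$ positive integers, and only finitely many $\chi$ give a non-polynomial $\cZ_{f,\Phi,\chi}$; this is the finiteness statement from \cite{Igusa3} quoted just before the corollary. Each $\cZ_{f,\Phi,\chi}$ also has bounded degree in $t$ as a polynomial part: its numerator has bounded degree. Moreover, the conductors $c(\chi)$ of the characters with $\cZ_{f,\Phi,\chi}\neq 0$ are bounded. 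This holds because $\Phi$ is locally constant with compact support, so only characters trivial on $1+\varpi_L^{c}\cO_L$ for some fixed $c$ contribute; more precisely, for large conductor the integral vanishes after a change of variables, as in \cite{DenefBour}. The sum over $\chi\neq\chi_{\textnormal{triv}}$ in Proposition \ref{zetaex} is therefore a finite sum with fixed indices, independent of $z$.

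Write $m=-\ord_L(z)$. For each relevant $\chi$, expand in partial fractions:
$$\cZ_{f,\Phi,\chi}(s)=P_\chi(t)+\sum_{\lambda}\sum_{k=1}^{\mu_\lambda}\frac{c_{\chi,\lambda,k}}{(1-q_L^{-\lambda}t)^k},$$
where $\lambda$ ranges over poles, written modulo $2\pi i/\log q_L$. The coefficient of $t^{N}$ in $(1-q_L^{-\lambda}t)^{-k}$ equals $\binom{N+k-1}{k-1}q_L^{-\lambda N}$, whose absolute value is $O(N^{k-1}|z|^{\Re\lambda})$ when $N=m-c(\chi)$. For the trivial character the relevant function is $(t-q_L)\cZ_{f,\Phi,\chi_{\textnormal{triv}}}/((q_L-1)(1-t))$. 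The factor $1/(1-t)$ introduces a pole at $s=0$, and together with $(t-q_L)$ it is exactly why the statement uses $(q_L^{s+1}-1)\cZ_{f,\Phi,\chi_{\textnormal{triv}}}$. Here one has to check that the pole at $s=-1$ (i.e. $t=q_L$), if present in $\cZ_{f,\Phi,\chi_{\textnormal{triv}}}$, is cancelled by the numerator $(t-q_L)$. The pole at $t=1$ contributes a constant coefficient. That constant cancels against $\cZ_{f,\Phi,\chi_{\textnormal{triv}}}(0)$ up to the finitely many small $m$ coming from the polynomial part. Indeed, the residue at $t=1$ gives exactly $-\cZ_{f,\Phi,\chi_{\textnormal{triv}}}(0)$, as a direct computation shows. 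The Gauss sums satisfy $|g_{\chi^{-1}}|\le 1$ and $|\chi(u)|=1$.

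Summing these contributions, every term is bounded by a constant times $|z|^{\Re\lambda}m^{k-1}$ with $\Re\lambda\le-\sigma$ and $k-1\le\theta$. This gives $|E_{f,\Phi,\psi_z}|\le c|z|^{-\sigma}|\ord_L(z)|^{\theta}$ for $m\ge1$ large, say $m>m_0$, beyond the degrees of the polynomial parts. For the finitely many remaining values $1\le m\le m_0$, and for $m\le 0$ where $|E_{f,\Phi,\psi_z}|\le\int|\Phi|$, the bound holds trivially after enlarging $c$, since $|z|^{-\sigma}|\ord_L(z)|^\theta$ is bounded below on that range. The case $\ord_L(z)=0$ needs a little care when $\theta>0$; there we use $|z|^{-\sigma}=1$ and adopt the convention $|\ord_L(z)|^{\theta}\geq 1$, or else treat $m=0$ by enlarging $c$ when $\theta=0$.

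The main obstacle is the bookkeeping for the trivial character. We must verify that the pole at $t=1$ cancels exactly with $\cZ_{f,\Phi,\chi_{\textnormal{triv}}}(0)$. We must also verify that the pole at $t=q_L$ of $\cZ_{f,\Phi,\chi_{\textnormal{triv}}}$ is properly accounted for: multiplying by $(t-q_L)$ replaces it with the poles of $(q_L^{s+1}-1)\cZ_{f,\Phi,\chi_{\textnormal{triv}}}$, which is precisely the set used in the definition of $\sigma$. The remaining steps are routine estimates.
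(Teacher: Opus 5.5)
Your argument is correct and is essentially the paper's: the paper quotes Corollary~\ref{singp} for the large-$|z|$ expansion that you re-derive from Proposition~\ref{zetaex} by partial fractions. It then enlarges $m_0$ so that every term $|z|^{\Re\lambda}|\ord_L(z)|^{\beta}$ is dominated by $|z|^{-\sigma}|\ord_L(z)|^{\theta}$, and uses $|E_{f,\Phi,\psi_z}|\le\int_{L^n}|\Phi(x)||dx|$ for the remaining $z$.

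A few minor repairs are needed:
\begin{itemize}
\item The vanishing of $\cZ_{f,\Phi,\chi}$ for large $c(\chi)$ comes from the hypothesis $\Supp(\Phi)\cap C_f\subset f^{-1}(0)$, not from local constancy of $\Phi$, which alone does not suffice.
\item Your bound $k-1\le\theta$ holds only for poles on the line $\Re\lambda=-\sigma$. Poles further left may have higher multiplicity, but they are absorbed by enlarging $m_0$, exactly as the paper does.
\item The partial-fraction factor should read $(1-q_L^{\lambda}t)$ when $\lambda$ is the pole.
\item Your caveat at $\ord_L(z)=0$ when $\theta>0$ is right and points to a genuine defect of the statement. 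The paper's constant $C\max\{q_L^{m\sigma}|m|^{-\theta}\mid m\le m_0\}$ also overlooks it.
\end{itemize}
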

\begin{proof}Let $\chi$ be a multiplicative character of $\cO_L^{\times}$, we set $\epsilon_\chi(s)=1$ if $\chi\neq\chi_{\rm triv}$ and $\epsilon_{\chi_{\rm triv}}(s)=q_L^{s+1}-1$. Let $\cA_{f,\Phi}$ be the set of multiplicative characters $\chi$ of $\cO_L^{\times}$ such that $\cZ_{f,\Phi,\chi}(s)$ is not holomorphic. As mentioned above, $\cA_{f,\Phi}$ is a finite set. For each $\chi\in \cA_{f,\Phi}$, we denote by $\cB_{\chi,f,\Phi}$ the set consisting of the real parts of poles of  $\epsilon_\chi(s)\cZ_{f,\Phi,\chi}(s)$.  For each $\lambda\in \cB_{\chi,f,\Phi}$, we set 
\[\mathfrak{m}(\lambda)=\max\{\mathfrak{m}\mid \exists s_0, \Re(s_0)=\lambda, \left((s_0-s)^{\mathfrak{m}}\epsilon_\chi(s)\cZ_{f,\Phi,\chi}(s)\right)(s_0)=\infty\}\]

By Corollary \ref{singp}, if $\ord_L(z)\leq-m_0$ for a large enough integer $m_0$, then   \[E_{f,\Phi,\psi_z}=\sum_{\chi\in \cA_{f,\Phi}}\sum_{\lambda\in \cB_{\chi,f,\Phi}}\sum_{0\leq \beta\leq \mathfrak{m}(\lambda)}a(L,\chi,\lambda,\beta)\chi(\acc(z))|z|^{\lambda}|\ord_L(z)|^{\beta}\]
for complex numbers $a(L,\chi,\lambda,\beta)$ independent of $z$. By enlarging $m_0$, we can suppose that 
\[|z|^{\lambda}|\ord_L(z)|^{\beta}\leq |z|^{-\sigma}|\ord_L(z)|^{\theta} \]
if $\ord_L(z)\leq -m_0$.
On the other hand, it is clear that 
\[\left|E_{f,\Phi,\psi_z}\right|\leq C:=\int_{L^n}|\Phi(x)||dx|\]
for all $z$. Thus, the assertion follows from taking \[c=\max\bigg\{\sum_{\chi\in \cA_{f,\Phi}}\sum_{\lambda\in \cB_{\chi,f,\Phi}}\sum_{0\leq \beta\leq \mathfrak{m}(\lambda)}|a(L,\chi,\lambda,\beta)|, C\max\{q_L^{m\sigma}|m|^{-\theta}\mid  m\leq m_0\}\bigg\}.\]
\end{proof}
It is clear that poles of Igusa's local zeta functions play an important role in the above picture. In fact, the poles of $\cZ_{f,\Phi,\chi}(s)$ are conjectured to relate to the Bernstein-Sato polynomials of $f$. Here, if $g$ is a non-constant polynomial over a field $\mathsf{F}$ of characteristic $0$ then its Bernstein-Sato polynomial $b_g(s)$ is the monic polynomial of smallest degree such that there is an element $P\in \mathsf{F}[x_1,...,x_n,\frac{\partial}{\partial x_1},...,\frac{\partial}{\partial x_n},s]$ satisfying $Pg^{s+1}=b_g(s)g^s$. 
Now, we suppose that the coefficients of $f$ belongs to the ring $\cO_K$ of integers in a number field $K$. 
The conjecture mentioned above is stated in precise as follows:
\begin{conj}[Strong monodromy conjecture]\label{conjmono}  Let $Y$ be a subscheme of $\AA_{\cO_K}^n$. Then there is a natural number $M$ such that for all local fields $L$ over $\cO_K$ of large enough residue field characteristic, all multiplicative characters $\chi$ of $\cO_L^{\times}$ and all poles $s_0$ of $\cZ_{f,\Phi_{L,Y},\chi}(s)$, one has $b_f(\Re(s_0))=0$, where $\Phi_{L,Y}$ was defined in Section \ref{bano}.
\end{conj}

Note that Main Theorem \ref{mthm1} and \cite[Corollary 3.4.1]{BBV} imply a special case of this conjecture for weighted homogeneous polynomials with isolated singularities. 
Relating to the number $\sigma$ in Corollary \ref{upperbound}, we recall the definition of motivic oscillation index and the general form of Igusa's conjecture for exponential sums stated in \cite{CMN,NguyenVeys}.  Consider a non-constant polynomial \(f\) with coefficients in a number field \(K\).  Let  $Y$ be a subscheme of $\AA_{\cO_K}^n$ such that  $f(Y(\CC))=0$. For each local field \(L\) over \(K\),
we have the number \(\sigma(f, \Phi_{L,Y})\) introduced in Corollary \ref{upperbound}.
\begin{defn}\label{moi} 
The \emph{motivic oscillation index }  of $f$ at $Y$ over $K$ is defined to be 
$$\moi_{K}(f,Y):=\lim_{M\to+\infty} \inf\{\sigma(f,\Phi_{L,Y})\mid L\textnormal{ is a local field over } \cO_K, p_L>M\}.$$
\end{defn}

 Main Theorem \ref{mthm2} is a special case of the following conjecture, which gives a uniform upper bound of  exponential sums:

\begin{conj}[Igusa's conjecture for exponential sums] \label{conjex} 
There exist an integer  $M$ and a constant $c$ depending only on $f$ such that 
$$|E_{f,\Phi_{L,Y},\psi}|\leq cm_\psi^{n-1}q_L^{-\moi_{K}(f,Y)m_\psi}$$
for all subschemes $Y$ of $\AA_{\cO_K}^n$ with $f(Y(\CC))=0$, all  local fields $L$ over $\cO_K$ of residue field characteristic at least $M$ and all additive characters $\psi$ of $L$ of conductor $m_\psi\geq 2$. 
\end{conj}

\section{Stringy point count}\label{stringy}
In this section, we fix a local field  $L$ and a finite flat group scheme $G$ over $\cO_L$.
\subsection{Log pairs and \(p\)-adic measures}

We will recall the results in \cite{Yas} on the stringy point count of log pairs equipped with an action of a finite group. Note that these results extend quite automatically if we replace finite groups by finite group schemes over the ring of integers in local fields. Thus, we aim to state these results in the more general setting  with these finite group schemes and explain some adaptation if needed.  Moreover, we also generalize slightly the result in \cite{Yas} by introducing the notion of stringy count point twisted by a pair $(g,\chi)$ over  $L$, where $g$ is a morphism to $\AA_{\cO_L}^1$ and $\chi$ is a multiplicative character of $\cO_L^{\times}$. 

Let $X$ be a $n$-dimensional $\cO_L$-variety. We write $X_L, X_{k_L}$ for $X\otimes_{\cO_L} L$ and $X\otimes_{\cO_L}k_L$ respectively. Let $X_L^{\rm sm}$ be the smooth locus of $X_L$ and set $X(\cO_L)^{\circ}:=X(\cO_L)\cap X_L^{\rm sm}(L)$. Then it is easy to see that $X(\cO_L)^{\circ}$ is equipped with a natural structure of an $L$-analytic manifold. Let $\Omega_{X/\cO_L}^n$ be the sheaf of $n$-differential forms of $X$ over $\cO_L$. Let $r$ be a positive integer and $\cI$ be an invertible $\cO_X$-submodule of $(\Omega_{X/\cO_L}^n)^{\otimes r}\otimes K(X)$, where $K(X)$ is the sheaf of total quotient rings of $X$. Then we can equip naturally $X(\cO_L)^{\circ}$ with a measure $\mu_\cI$ given by local generators of $\cI$ (see \cite{Yas}). This measure extends automatically to $X(\cO_L)$ by setting the measure of $X(\cO_L)\setminus X(\cO_L)^{\circ}$ to be zero.

\begin{defn} A log pair is a pair $(X,D)$ of a normal $\cO_L$-variety $X$ and a $\QQ$-divisor $D$ (a Weil divisor with rational coefficients) on $X$  such that $K_X+D$ is $\QQ$-Cartier.
\end{defn}
Let $(X,D)$ be a log pair and let $r\in \NN$ such that $r(D+K_X)$ is Cartier. Then the invertible sheaf $\cO_X(r(D+K_X))$ can be viewed as a subsheaf of $(\Omega_{X/\cO_L}^n)^{\otimes r}\otimes K(X)$. Let $\mu_{X,D}$ be the measure on $X(\cO_L)$ associated to the sheaf $\cO_X(r(D+K_X))$ as mentioned above.

We recall the following definition in \cite{Yas}.
\begin{defn}
A morphism of log pairs $f:(Y,E)\to (X,D)$, where $X,Y$ are $n$-dimensional $\cO_L$-varieties, is a morphism $f:Y\to X$. A morphism of log pairs $f:(Y,E)\to (X,D)$ is crepant if for a positive integer $r$ such that both $r(D+K_X)$ and $r(E+K_Y)$ are Cartier, the canonical morphism $f^{*}(\Omega_{X/\cO_L}^n)^{\otimes r}\to (\Omega_{Y/\cO_L}^n)^{\otimes r}$ induces an isomorphism  $f^{*}(\cO_X(r(D+K_X)))\to\cO_Y(r(E+K_Y))$.
\end{defn}
Since $G$ is a finite group scheme over $\cO_L$,  $\cO_G(G)$ is integral over $\cO_L$.  Thus $G(\cO_L)=G(L)$. By slightly generalizing \cite[Theorem 4.8]{Yas}, we obtain the  following theorem which provides a change of measure under a morphism of log pairs.
\begin{thm}\label{change} Let $f:(Y,E)\to (X,D)$ be a crepant morphism of log pairs. Suppose that there is a finite  group $\cO_L$-scheme $G$ acts faithful on $Y$ and trivially on $X$ such that $G_L$ is étale over $L$, $f:Y\to X$ is $G$-equivariant and the induced morphism $Y/G\to X$ is birational. Let $A$ be a $G$-stable open subset of $Y(\cO_L)$ then we have 
$$\frac{1}{\#G(\cO_L)}\mu_{Y,E}(A)=\mu_{X,D}(f(A)).$$
\end{thm}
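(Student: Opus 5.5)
The plan is to reduce Theorem \ref{change} to a local computation with differential forms and then to a degree count for the finite map $Y(\cO_L)^{\circ}\to X(\cO_L)$, following the proof of \cite[Theorem 4.8]{Yas}. The only changes are that the finite group is replaced by the finite flat group scheme $G$, and that we use $G(\cO_L)=G(L)$ in place of the abstract group.

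First, I would discard measure-zero sets. The non-smooth loci of $Y_L$ and $X_L$ have measure zero. So does the locus where $Y_L\to X_L$ is not étale, and its image: this is the branch locus together with the fixed-point loci of nontrivial elements, which are proper Zariski closed subsets because $G_L$ is étale and acts faithfully. We may therefore replace $A$ by $A\cap U(\cO_L)$, where $U\subset Y$ is a $G$-stable open subscheme such that $U_L$ is smooth, $G_L$ acts freely on $U_L$, and $f_L|_{U_L}\colon U_L\to f(U)_L$ is a finite étale $G_L$-torsor onto its image. This last property uses that $Y/G\to X$ is birational, after shrinking.

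Second, I would compare the measures locally. Choose $r$ with $r(D+K_X)$ and $r(E+K_Y)$ Cartier, and let $\omega$ be a local generator of $\cO_X(r(D+K_X))$ near a point of $f(A)$. Crepancy says that $f^{*}\omega$ is a local generator of $\cO_Y(r(E+K_Y))$. Hence on $Y(\cO_L)^{\circ}$ the measure $\mu_{Y,E}$ is $|f^{*}\omega|^{1/r}$. Because $f_L$ is étale on $U_L$, the $L$-analytic map $f\colon U(\cO_L)^{\circ}\to X(\cO_L)$ is a local analytic isomorphism. The $p$-adic change of variables formula then gives $\mu_{Y,E}(B)=\mu_{X,D}(f(B))$ for every sufficiently small open $B$ on which $f$ is injective.

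Third, I would count fibres. The condition that $A$ is $G$-stable means $G(\cO_L)$-stable. For $x\in f(A)$ with $x\in f(U)(L)$, the fibre $f^{-1}(x)$ in $U(\bar L)$ is a $G(\bar L)$-torsor. Its $L$-points form either the empty set or a $G(L)$-torsor, since the stabilizers are trivial. Moreover, $f^{-1}(x)(L)\subset Y(\cO_L)$ because $f$ is finite over $X$, at least after the usual reduction via properness of $Y\to Y/G$. So every point of $f(A)$ has exactly $\#G(L)=\#G(\cO_L)$ preimages in $A$. Covering $A$ by small disjoint open pieces $B$ on which $f$ is injective, and summing the local equality from the second step, yields $\mu_{Y,E}(A)=\#G(\cO_L)\,\mu_{X,D}(f(A))$.

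I expect the main obstacle to be the fibre count in the third step. One has to justify that the $L$-rational points of a fibre form a full $G(\cO_L)$-torsor whenever they are nonempty, and that they all lie in $Y(\cO_L)$. This needs the freeness of the $G_L$-action on $U_L$ together with the fact that $G(\cO_L)=G(L)$ acts on $Y(\cO_L)$. I would handle it by working with $Y/G$: $Y\to Y/G$ is finite, and $Y/G\to X$ is an isomorphism over the relevant dense open subset.
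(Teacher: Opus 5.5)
Your proposal is correct and is essentially the argument of \cite[Theorem 4.8]{Yas} that the paper appeals to. After discarding measure-zero loci, crepancy plus the $p$-adic change of variables formula gives the equality on small pieces where $f$ is injective, and each point of $f(A)$ has exactly $\#G(L)=\#G(\cO_L)$ preimages in $A$. One step needs tidying: in the fibre count, finiteness of $f$ is neither available in general nor needed. The $L$-points of the fibre over $x\in f(A)$ form a single free $G(L)$-orbit, and $A$ is $G(\cO_L)$-stable with $G(\cO_L)=G(L)$, so the preimages of $x$ in $A$ are exactly $G(L)\cdot y$ for any $y\in A$ over $x$.
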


\subsection{Stringy point count}
We need a slight generalization of the notion of stringy point counts considered in \cite{Yas}.

\begin{defn}\label{defnstringy}
Let $(X,D)$ be a log pair for a $n$-dimensional $\cO_L$-variety $X$ and $C$ be a subscheme of $X_{k_L}$. We denote by  $X(\cO_L)_C$ the set of $\cO_L$-points of $X$ sending the closed point of $\spec(\cO_L)$ into $C$. Let $\chi$ be a multiplicative character of $\cO_L^{\times}$ and $g$ be an $\cO_L$-morphism from a Zariski neighbourhood of $C$ in $X$ to $\AA_{\cO_L}^1$. The \emph{stringy point count of $(X,D)$ along $C$ twisted by $(g,\chi)$} is defined by 
$$\#_{\rm st}^{g,\chi}(X,D)_C:=q_L^n\int_{X(\cO_L)_C}\chi(\acc(g))\mu_{X,D}.$$
\end{defn}

It is clear that 
\begin{equation}\label{pointwise}
    \#_{\rm st}^{g,\chi}(X,D)_C=\sum_{x\in C(k_L)}\#_{\rm st}(X,D)_x^{g,\chi}.
\end{equation}
By Theorem \ref{change}, we have the following generalization of \cite[Corrollary 4.10]{Yas}.

\begin{prop}\label{relation1}
Let $f:(Y,E)\to (X,D)$ be a crepant proper birational morphism of log pairs for $\cO_L$-varieties $X,Y$. Let $C$ be a subscheme of $X_{k_L}$, $\chi$ be a multiplicative character of $\cO_L^{\times}$ and $g$ be an $\cO_L$-morphism from a Zariski neighbourhood of $C$ in $X$ to $\AA_{\cO_L}^1$. Then we have 
$$\#_{\rm st}^{g,\chi}(X,D)_C=\#_{\rm st}^{g\circ f,\chi}(Y,E)_{\overline{f}^{-1}(C)}.$$
\end{prop}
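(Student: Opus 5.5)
We apply Theorem \ref{change} with the trivial group scheme $G=\{1\}$, after rewriting both sides of the claimed identity as integrals against the measures $\mu_{Y,E}$ and $\mu_{X,D}$. By Definition \ref{defnstringy},
\[
\#_{\rm st}^{g,\chi}(X,D)_C=q_L^n\int_{X(\cO_L)_C}\chi(\acc(g))\,\mu_{X,D},\qquad
\#_{\rm st}^{g\circ f,\chi}(Y,E)_{\overline{f}^{-1}(C)}=q_L^n\int_{Y(\cO_L)_{\overline{f}^{-1}(C)}}\chi(\acc(g\circ f))\,\mu_{Y,E}.
\]
It therefore suffices to show that the map $f:Y(\cO_L)\to X(\cO_L)$ carries $\mu_{Y,E}$ restricted to $Y(\cO_L)_{\overline{f}^{-1}(C)}$ to $\mu_{X,D}$ restricted to $X(\cO_L)_C$, and that the integrands correspond under $f$.

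The first step is to match the domains up to measure zero. Since $f$ is proper, the valuative criterion shows that every $L$-point of $X$ lying over an $\cO_L$-point lifts to an $\cO_L$-point of $Y$ wherever its generic point lies in the open set $U\subset X_L$ over which $f_L$ is an isomorphism; this lift is unique. The complement $X_L\setminus U$ is a proper Zariski closed subset, so its $\cO_L$-points have $\mu_{X,D}$-measure zero. The same holds for $f_L^{-1}(X_L\setminus U)$ in $Y$, since that set is also nowhere dense. Hence $f$ induces a bijection, up to null sets, between $Y(\cO_L)$ and $X(\cO_L)$. The reduction of $f(y)$ is $\overline{f}(\overline{y})$, so this bijection sends $Y(\cO_L)_{\overline{f}^{-1}(C)}$ onto $X(\cO_L)_C$. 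Moreover $\chi(\acc(g\circ f))(y)=\chi(\acc(g))(f(y))$ holds pointwise.

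Next, cover the domain by $G$-stable open sets, which for $G=\{1\}$ just means open sets. The set $Y(\cO_L)_{\overline{f}^{-1}(C)}$ is open and compact. Because $\chi\circ\acc\circ g$ is locally constant away from the null set $\{g=0\}$, we can partition the domain, up to a null set, into countably many open pieces $A_j$ on which the integrand is constant. Theorem \ref{change} with $\#G(\cO_L)=1$ then gives $\mu_{Y,E}(A_j)=\mu_{X,D}(f(A_j))$ for each $j$. Summing over $j$, which is justified by countable additivity and by $|\chi|\le 1$ together with finite total measure, yields the desired equality.

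The main obstacle is checking the hypotheses of Theorem \ref{change} and the measure-zero claims. First, the measures here are defined via possibly non-effective $\QQ$-divisors on non-smooth $X$, and we must verify that Zariski-closed proper subsets and non-smooth loci are null. This holds by construction, since the measure is defined on $X(\cO_L)^{\circ}$ and is locally $|\omega|$ for an analytic form. Second, we must confirm that $f(A_j)$ is measurable, indeed open, away from the null set. This follows because $f$ restricts to an analytic isomorphism over $U(L)\cap X(\cO_L)^{\circ}$. Once these points are settled, the computation is formal.
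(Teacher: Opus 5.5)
Your proposal is correct and follows the paper's route: the paper proves this in one line by invoking Theorem \ref{change} (the trivial-group case, as in Yasuda's Corollary 4.10), and your argument supplies the standard details, namely the valuative criterion giving a bijection of $\cO_L$-points off a null set and a splitting of the domain into open level sets of the locally constant integrand $\chi(\acc(g\circ f))$. One small imprecision: $\{g=0\}$ need not be null if $g$ vanishes on a whole component, but on such a component the integrand is $\chi(0)=0$ on both sides, so your argument goes through unchanged.
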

We also have a weaker version of Proposition \ref{relation1} in the case of crepant étale morphisms.

\begin{prop}\label{relation2}Let $f:(Y,E)\to (X,D)$ be a crepant étale morphism of log pairs for $\cO_L$-varieties $X,Y$. Let $y$ be a $k_L$-point of $Y_{k_L}$, $\chi$ be a multiplicative character of $\cO_L^{\times}$ and $g$ be an $\cO_L$-morphism from a  Zariski neighbourhood of $\overline{f}(y)$ in $X$ to $\AA_{\cO_L}^1$. Then we have 
$$\#_{\rm st}^{g,\chi}(X,D)_{\overline{f}(y)}=\#_{\rm st}^{g\circ f,\chi}(Y,E)_{y}.$$
\end{prop}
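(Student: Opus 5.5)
The plan is to reduce the identity to a change of variables for $p$-adic measures under the étale map $f$, localized at the residue point $y$. Unwinding Definition \ref{defnstringy}, the left side is $q_L^n\int_{X(\cO_L)_{\overline{f}(y)}}\chi(\acc(g))\,\mu_{X,D}$. The right side is $q_L^n\int_{Y(\cO_L)_{y}}\chi(\acc(g\circ f))\,\mu_{Y,E}$. It therefore suffices to prove two things. First, the map $f$ restricts to a bijection $Y(\cO_L)_y\to X(\cO_L)_{\overline{f}(y)}$. Second, the pushforward of $\mu_{Y,E}$ under this bijection is $\mu_{X,D}$. The integrands then match automatically, since $\chi(\acc(g\circ f))(a)=\chi(\acc(g))(f(a))$ for each $a\in Y(\cO_L)_y$.

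For the bijection I will use Hensel's lemma in the form of the infinitesimal lifting property of étale morphisms. Here $y$ is a $k_L$-point, so $\overline{f}(y)$ is also $k_L$-rational and $f$ induces an isomorphism of residue fields at $y$. Let $b\in X(\cO_L)$ reduce to $\overline{f}(y)$. The $k_L$-point $y$ gives a compatible lift at the closed fibre, and étaleness then gives a unique lift $a\in Y(\cO_L)$ with $f(a)=b$: one lifts successively modulo $\varpi_L^m$ and passes to the limit using the completeness of $\cO_L$. Uniqueness of these lifts gives injectivity on $Y(\cO_L)_y$. Equivalently, $\widehat{\cO}_{X,\overline{f}(y)}\to\widehat{\cO}_{Y,y}$ is an isomorphism, and $\cO_L$-points through these points are exactly the local homomorphisms out of the complete local rings. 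The bijection is a homeomorphism and in fact an $L$-analytic isomorphism, by the $p$-adic inverse function theorem on the smooth loci. Étale maps preserve smoothness of the generic fibre in both directions, so the bijection respects the sets $(\cdot)^{\circ}$ and the measure-zero complements.

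For the measures, crepancy says that $f^{*}\cO_X(r(D+K_X))\to\cO_Y(r(E+K_Y))$ is an isomorphism. Thus if $\omega$ is a local generator of $\cO_X(r(D+K_X))$ near $\overline{f}(y)$, then $f^*\omega$ is a local generator of $\cO_Y(r(E+K_Y))$ near $y$. The measure $\mu_{\cI}$ is defined locally by $|\omega|^{1/r}$. The analytic change of variables formula for the local isomorphism $f$ gives $\int_{f(A)}\varphi\,|\omega|^{1/r}=\int_A(\varphi\circ f)\,|f^*\omega|^{1/r}$. Applying this with $A=Y(\cO_L)_y$ and $\varphi=\chi(\acc(g))$, and multiplying by $q_L^n$, yields the claimed equality. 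Alternatively, one can cite Theorem \ref{change} with trivial $G$ once the statement is restricted to the open neighbourhoods where $f$ is an isomorphism onto its image on these residue discs.

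The main obstacle I expect is bookkeeping for the non-smooth points of $X_L$ and $Y_L$, and checking that the integrand is well defined and locally constant on the relevant sets. The first issue is handled by the measure-zero convention together with étale invariance of smoothness. For the second, $g$ need only be defined on a Zariski neighbourhood of $\overline{f}(y)$, and every $\cO_L$-point in the residue class $X(\cO_L)_{\overline{f}(y)}$ factors through any Zariski open containing the closed point $\overline{f}(y)$. So both integrals make sense, with the convention $\chi(0)=0$ absorbing the zero locus of $g$.
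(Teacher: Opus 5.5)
Your proposal is correct and follows the paper's proof, which likewise uses étaleness to make $Y(\cO_L)_y\to X(\cO_L)_{\overline{f}(y)}$ a bijection and crepancy to make $\mu_{Y,E}$ and $\mu_{X,D}$, as well as $g\circ f$ and $g$, correspond under it; your Hensel-lifting and change-of-variables details fill in what the paper leaves implicit. Drop your ``alternative'' via Theorem \ref{change} with trivial $G$: that theorem requires $Y\to X$ to be birational, which an étale map need not be, and residue discs are not Zariski opens, so only your main line works.
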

\begin{proof}
If $f$ is étale, the map $Y(\cO_L)_y \to X(\cO_L)_{\overline{f}(y)}$ is bijective. Since $(Y,E)\to (X,D)$ is crepant, the two measures $\mu_{Y,E}$ and $\mu_{X,D}$ as well as $g\circ f$ and $g$ correspond to each other through this bijection. This proves the assertion.
\end{proof}
The following example relates the stringy point count and Igusa's local zeta functions.
\begin{example}\label{exmp}Let $\tilde{X}=\AA_{\cO_L}^n$, $f$ be a non-constant polynomial in $n$ variables over $\cO_L$ and $\chi$ be a multiplicative character of $\cO_L^{\times}$. Let $C$ be a subscheme of $\AA_{k_L}^n$. Let $s$ be a positive rational number. We consider the $\QQ$-divisor $D_{f,s}:=-s\spec(\cO_L[x]/(f))$ of $X$. Then $(\tilde{X},D_{f,s})$ is a log pair.  Moreover,  it is easy to show from the definition that 
$$\#_{\rm st}^{f,\chi}(\tilde{X},D_{f,s})_{C}=q^n\int_{\{x\in \cO_L^n|\overline{x}\in C(k_L)\}}\chi(\acc(f(x)))|f(x)|^{s}|dx|=q^n\cZ_{f,\Phi_{L,C},\chi}(s).$$
\end{example}

\subsection{Actions of finite group schemes}
Recall that we fixed a finite flat group scheme $G$ over $\cO_L$. Since $L$ is of characteristic zero, $G_L$ is étale over $\spec(L)$. Let $M$ be a $G_L$-étale algebra. We denote by $\cO_M$ the integral closure of $\cO_L$ in $M$.
\begin{notn}\label{GHop}
To simplify the notation, we will write the automorphism group $\textnormal{Aut}_{L}^{G_L}(M)$ of $M$ as $_M\cH^{\rm op}$ and its opposite group as $_M\cH$. If $G$ is abelian, we have   $_M\cH=_M\cH^{\rm op}=G(L)$.  
\end{notn}
Let $Y$ be an $\cO_L$-variety equipped with a faithful action of $G$ and let $X=Y/G$ be the quotient variety. For an étale $G_L$-algebra $M$, let $Y(\cO_M)^G$ be the set of $G$-equivariant $\cO_M$-points of $Y$. Here an $\cO_M$-point of $Y$ is said to be $G$-equivariant if the induced $M$-point is $G_L$-equivariant. 
We have a natural map $Y(\cO_M)^G\to X(\cO_L)$. Indeed, given a $G$-equivariant $\cO_M$-point of $Y$, then taking $G_L$-quotients, we get an $L$-point of $X$. We also have the induced $\cO_M$-point of $X$. Thus, get a point in $X(\cO_L)=X(L)\cap X(\cO_M)$. Since this map is $_M \cH$-invariant, we get a map $\lambda_M:Y(\cO_M)^G/_M\cH\to X(\cO_L)$.
\begin{defn}Let $X(\cO_L)^{\natural}$ be the set of $\cO_L$-points $\spec(\cO_L)\to X$ sending the generic point into the unramified locus of $Y\to X$ in $X$. If $M\in G\textnormal{-\'Et}(L)$, let $Y(\cO_M)^{G,\natural}$ be the set of $G$-equivariant $\cO_M$-points sending the generic points into the unramified locus of the same morphism in $Y$. The map $\lambda_M$ restricts to $Y(\cO_M)^{G,\natural}$ yielding the map $\lambda_M:Y(\cO_M)^{G,\natural}/_M\cH\to X(\cO_L)^{\natural}$.
\end{defn}
\begin{prop}[Proposition 5.4, \cite{Yas}]With the above notation, the map
$$\bigsqcup_{M\in G\textnormal{-\'Et}(L)}Y(\cO_M)^{G,\natural}/_M\cH\to X(\cO_L)^{\natural}$$
is bijective.
\end{prop}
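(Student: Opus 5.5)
The plan is to reduce to the finite-group case treated in \cite[Proposition 5.4]{Yas} and to check that nothing changes when the group is a finite flat group scheme $G$ over $\cO_L$. Since $L$ has characteristic zero, $G_L$ is étale. After passing to a finite Galois extension $L'$ of $L$ splitting $G_L$, it becomes a constant finite group. Moreover $G(\cO_L)=G(L)$, because $\cO_G$ is integral over $\cO_L$. All data entering the statement live on generic fibres together with integrality conditions:
\begin{itemize}
\item the sets $X(\cO_L)^{\natural}$ and $Y(\cO_M)^{G,\natural}$;
\item the torsors $\spec M$;
\item the groups ${}_M\cH$.
\end{itemize}
So I would first establish the corresponding bijection at the level of $L$-points, namely
\[
\bigsqcup_{M\in G\textnormal{-\'Et}(L)} Y_L(M)^{G,\natural}/{}_M\cH \;\longrightarrow\; X_L(L)^{\natural},
\]
and then add the integrality conditions.

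For the generic bijection, take $x\in X(L)^{\natural}$. Since $x$ lies in the unramified locus $U\subset X_L$ of $Y_L\to X_L$, and $G_L$ acts faithfully on $Y_L$, the restriction $Y_L|_U\to U$ is an étale $G_L$-torsor; I would take $U$ to be the locus where the action is free. The fibre $x^{*}Y_L=\spec M_x$ is then an étale $G_L$-torsor over $\spec L$. Hence $M_x$ is a $G_L$-étale algebra, and the tautological map $\spec M_x\to Y_L$ is a $G_L$-equivariant $M_x$-point lying over $x$. Conversely, a $G_L$-equivariant $M$-point $y$ of $Y_L$ landing in the unramified locus descends to an $L$-point $x$ of $Y_L/G_L=X_L$. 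The equivariant map $\spec M\to x^{*}Y_L$ is then a morphism of $G_L$-torsors, hence an isomorphism, so $M\cong M_x$. Two such points over the same $x$ with the same $M$ differ by a $G_L$-equivariant automorphism of $M$, that is, by an element of ${}_M\cH$. This gives both injectivity and surjectivity, with each $x$ corresponding to exactly one isomorphism class $M$. This is the torsor-theoretic argument of \cite{Yas}, and it uses only that $G_L$ is a finite étale group scheme.

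It remains to check integrality: an equivariant $M$-point $y$ extends to an $\cO_M$-point if and only if its image $x$ extends to an $\cO_L$-point. The direction from $\cO_M$ to $\cO_L$ is the map $\lambda_M$ already constructed, using $X(\cO_L)=X(L)\cap X(\cO_M)$. For the converse, $Y\to X$ is finite, since $Y$ is quasi-projective and $X=Y/G$ with $G$ finite. So the valuative criterion of properness for finite morphisms gives the extension. Concretely, $\spec \cO_M\to\spec\cO_L$ is integral and $Y\to X$ is integral, so the $M$-point extends across the normalization $\cO_M$ over $\spec\cO_L\to X$. The extended point is automatically $G$-equivariant, because equivariance was defined via the induced $M$-point.

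I expect the main obstacle to be confirming that $Y\to X$ is finite and that $X=Y/G$ is well behaved over $\cO_L$ when $G$ is only finite flat, possibly non-étale in the special fibre. This requires the quotient of a quasi-projective scheme by a finite flat group scheme to exist, with $\cO_X=\cO_Y^G$ and $\cO_Y$ integral over $\cO_X$. I would cite SGA3 for this, via invariants of the coaction being an integral extension. The remaining steps carry over formally from \cite{Yas}.
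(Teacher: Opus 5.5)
Your argument is correct and follows the approach the paper relies on. The paper gives no separate proof: it cites \cite[Proposition 5.4]{Yas} and remarks that the results there carry over automatically to finite flat group schemes. Your argument is exactly that adaptation: you take the generic fibre torsor $x^{*}Y_L=\spec M_x$, then lift the $M$-point to $\cO_M$ using that $\cO_M$ is integrally closed in $M$ and that $Y\to X$ is finite (SGA3, Exp.~V). The one step you assert without proof is that on $X_L$ the unramified locus of $Y_L\to X_L$ equals the image of the locus where $G_L$ acts freely. This is standard once the faithful action is generically free on $Y_L$, as is implicitly assumed in \cite{Yas}.
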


\subsection{Untwisting}

Now, we recall the untwisting technique in \cite{Yas}. Let us fix a linear faithful action of $G$ on $\AA_{\cO_L,x}^n=\spec(\cO_L[x_1,...,x_n])$. We denote by $\cO_L[x]_1$ the linear part of $\cO_L[x_1,...,x_n]$. Then $\cO_L[x]_1$ is endowed with a natural action of $G$. Let $M\in G\textnormal{-\'Et}(L)$, we put $\Xi_M=\textnormal{Hom}_{\cO_L}^G(\cO_L[x]_1,\cO_M)$ the set of $G$-equivariant $\cO_L$-homomorphisms and call it the tuning module of $M$. 
Here an $\cO_L$-homomorphism $\cO_L[x]_1\to \cO_M$ is said to be $G$-equivariant if it is $G_L$-equivariant after tensored with $L$. 
It is natural to identify $\Xi_M$ with the set $\AA_{\cO_L,x}^n(\cO_M)^G$ of $G$-equivariant $\cO_M$-points  of $\AA_{\cO_L,x}^n$.

\begin{lem}\label{free}With the above setting, 
$\Xi_{M}$ is a free $\cO_L$-module of rank $n$.
\end{lem}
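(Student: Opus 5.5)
The plan is to realize $\Xi_M$ as an $\cO_L$-lattice inside an $n$-dimensional $L$-vector space. Throughout, $V:=\cO_L[x]_1$, which is a free $\cO_L$-module of rank $n$ with basis $x_1,\dots,x_n$, carrying a $G$-comodule structure. The target space is
\[
\Xi_{M,L}:=\textnormal{Hom}^{G_L}_{L}(V\otimes_{\cO_L}L,M),
\]
the space of $G_L$-equivariant $L$-linear maps.

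\textbf{Step 1 (freeness and finiteness).} Since $L$ has characteristic zero, $M$ is a finite étale $L$-algebra. Hence $\cO_M$, the integral closure of $\cO_L$ in $M$, is a finitely generated torsion-free $\cO_L$-module, and therefore free of finite rank because $\cO_L$ is a DVR. It follows that $H:=\textnormal{Hom}_{\cO_L}(V,\cO_M)$ is a free $\cO_L$-module of finite rank. By definition, $\Xi_M$ is the set of $\varphi\in H$ whose base change $\varphi_L$ is $G_L$-equivariant, so
\[
\Xi_M=H\cap \Xi_{M,L}\quad\text{inside } \textnormal{Hom}_L(V_L,M).
\]
Thus $\Xi_M$ is an $\cO_L$-submodule of the finite free module $H$. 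Over a PID such a submodule is free of finite rank.

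\textbf{Step 2 (identifying the rank).} I will show that $\Xi_M\otimes_{\cO_L}L=\Xi_{M,L}$. Let $\varphi\in\Xi_{M,L}$. The images $\varphi(x_1),\dots,\varphi(x_n)$ are finitely many elements of $M=\cO_M\otimes_{\cO_L} L$. So for $k\gg0$ the map $\varpi_L^k\varphi$ sends $V$ into $\cO_M$, and it is still equivariant. Hence $\varpi_L^k\varphi\in\Xi_M$. Consequently $\textnormal{rank}_{\cO_L}\Xi_M=\dim_L\Xi_{M,L}$, and it remains to show that this dimension is $n$.

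\textbf{Step 3 (the dimension count, the main step).} The space $\Xi_{M,L}$ is the kernel of an $L$-linear map expressing compatibility with the coactions. Formation of such kernels commutes with the flat base change $L\to\overline{L}$. Therefore it suffices to compute the dimension after extending scalars to $\overline{L}$.

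Over $\overline{L}$, the scheme $\spec(M)$ is an étale $G$-torsor by Definition \ref{etag}. Every torsor over an algebraically closed field is trivial. Hence $M\otimes_L\overline{L}\cong\cO_{G_{\overline{L}}}$ as $\cO_G$-comodule algebras.

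For any $G$-comodule $W$ there is the standard Frobenius reciprocity isomorphism
\[
\textnormal{Hom}^{G}(W,\cO_G)\xrightarrow{\ \sim\ }\textnormal{Hom}(W,\overline{L}),\qquad \varphi\mapsto \varepsilon\circ\varphi,
\]
where $\varepsilon$ is the counit. Its inverse sends $\ell$ to $(\ell\otimes\mathrm{id})\circ\textnormal{coaction}$, with the tensor factors ordered according to the convention for the coaction. Applying this to $W=V_{\overline{L}}$ gives
\[
\dim \Xi_{M,L}=\dim V_{\overline{L}}^{*}=n.
\]

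I expect this step to be the only substantive one. The care needed is to check that the trivialization of the torsor is compatible with the comodule structures and the side conventions. It may be simpler to argue by étale-local triviality and descent rather than through a chosen isomorphism. Combining Steps 1–3 shows that $\Xi_M$ is free of rank $n$.
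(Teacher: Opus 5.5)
Your proposal is correct and follows essentially the same route as the paper. There, freeness comes from $\Xi_M$ being a torsion-free (and, as you make explicit, finitely generated, since it sits inside the finite free module $\textnormal{Hom}_{\cO_L}(\cO_L[x]_1,\cO_M)$) $\cO_L$-module, and the rank is computed after a scalar extension over which the group becomes constant and the torsor trivial. Your Steps 2 and 3 spell out the details that the paper delegates to \cite[Lemma 3.2]{Wood-Yasuda-I}: the identification $\Xi_M\otimes_{\cO_L}L=\Xi_{M,L}$, and the Frobenius reciprocity identification of equivariant maps into the regular comodule with the dual space.
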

\begin{proof} 
The proof is basically the same as the one of \cite[Lemma 3.2]{Wood-Yasuda-I}.
The tuning module $\Xi_M$ is torsion-free as an $\cO_L$-module. To see that it has rank $n$, we may take a scalar extention so that we can reduce to the case where $G$ is a constant group over $L$ and the torsor is trivial.
\end{proof}  
Let a basis $\phi_1,...,\phi_n$ of $\Xi_M$ and its dual basis $y_1,...,y_n$ such that
$$\textnormal{Hom}_{\cO_L}(\Xi_M,\cO_L)=\bigoplus_{j=1}^n\cO_Ly_j$$
and
$$\textnormal{Hom}_{\cO_L}(\Xi_M,\cO_M)=\bigoplus_{j=1}^n\cO_My_j.$$
We identify $\textnormal{Hom}_{\cO_L}(\Xi_M,\cO_L)$ and $\textnormal{Hom}_{\cO_L}(\Xi_M,\cO_M)$ with the linear parts of $\cO_L[y]:=\cO_L[y_1,...,y_n]$ and $\cO_M[y]:=\cO_M[y_1,...,y_n]$ respectively. Let us put $\AA_{\cO_L,y}^n:=\spec(\cO_L[y])$ and $\AA_{\cO_M,y}^n:=\spec(\cO_M[y])$.
\begin{defn}\label{untwm}We denote by $u_M^*$ the $\cO_L$-homomorphism from $\cO_L[x]$ to $\cO_M[y]$ given by 
$$u_M^*(x_i):=\sum_{1\leq j\leq n}\phi_j(x_i)y_j$$
and its corresponding $\cO_L$-morphism of schemes $u_M:\AA_{\cO_M,y}^n\to\AA_{\cO_L,x}^n$.
\end{defn}
It is easy to show that $u_M$ is a $G$-equivariant morphism with respect to the action of $G$ on $\AA_{\cO_L,x}^n$ and the action of $G$ on $\AA_{\cO_M,y}^n$ induced by the action of $G$ on $\spec(\cO_M)$ (see \cite[Lemma 6.6]{Yas}).
Moreover, we can prove that $u_M^*(\cO_L[x]^G)\subset \cO_L[y]$ (see \cite[Proposition 6.7]{Yas}). It induces a morphism $\rho^{|M|}:\AA_{\cO_L,y}^n\to \AA_{\cO_L,x}^n/G$ so that the following diagram

$$\begin{tikzcd}
\AA_{\cO_M,y}^n \arrow[r, "\otimes_{\cO_L}\cO_M"] \arrow[d, "u_M"]
& \AA_{\cO_L,y}^n \arrow[d, "\rho^{|M|}" black] \\
\AA_{\cO_L,x}^n \arrow[r, black, "\rho" black]
& |[black, ]| \AA_{\cO_L,x}^n/G
\end{tikzcd}
$$
is commutative, where $\rho$ is the quotient morphism.

The action of $_M\cH=(\textnormal{Aut}^G_M)^{\rm op}(\cO_L)$ on $\Xi_M$ induces $_M\cH$-actions on $\AA_{\cO_L,y}^n(\cO_L)$ and $\AA_{\cO_M,y}^n(\cO_M)^G$, where $\AA^n_{\cO_M,y}(\cO_M)^G$ be the set of $G$-equivariant morphisms $\spec(\cO_M)\to \AA^n_{\cO_M,y}$. In particular, the map $\alpha_1:\AA_{\cO_L,y}^n(\cO_L)\to \AA^n_{\cO_M,y}(\cO_M)^G$ induced by $\otimes_{\cO_L}\cO_M$ is $_M\cH$-equivariant and bijective (see \cite[Lemma 6.11]{Yas}).  On the other hand, the map $\alpha$ that sends $\gamma$ to $u_M\circ\gamma$ is a bijective and $_M\cH$-equivariant map from $\AA_{\cO_M,y}^n(\cO_M)^G$ to $\AA_{\cO_L,x}^n(\cO_M)^G\equiv \Xi_M$ (see \cite[Lemma 6.12]{Yas}).
\begin{defn}
Let $\AA_{\cO_L,y}^n(\cO_L)^{\natural}$ (resp. $\AA_{\cO_M,y}^n(\cO_M)^{G,\natural}$) be the set of $\cO_L$-points (resp. $G$-equivariant $\cO_M$-points) sending the generic points into the locus where $\AA_{\cO_L,y}^n\to \AA_{\cO_L,x}^n/G$ (resp. $\AA_{\cO_M,y}^n\to \AA_{\cO_L,x}^n/G$) is étale. 
\end{defn}
By the above discussion, we have:
\begin{prop}[Proposition 6.14, \cite{Yas}]
The morphism $\Psi^{|M|}$ induces a bijection 
$$\bigsqcup_{M\in G\textnormal{-\'Et}}\AA_{\cO_L,y}^n(\cO_L)^{\natural}/_M\cH\to (\AA_{\cO_L,x}^n/G)(\cO_L)^{\natural}.$$
\end{prop}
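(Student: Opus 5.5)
We prove the bijection by assembling the three maps introduced above, for each fixed $M\in G\textnormal{-\'Et}(L)$. Put $X:=\AA_{\cO_L,x}^n/G$. By definition $\rho\circ u_M=\rho^{|M|}\circ(\otimes_{\cO_L}\cO_M)$. So an $\cO_L$-point $\gamma$ of $\AA^n_{\cO_L,y}$ is sent by $\rho^{|M|}$ to the image in $X(\cO_L)$ of the $G$-equivariant $\cO_M$-point $\alpha(\alpha_1(\gamma))=u_M\circ(\gamma\otimes\cO_M)$ of $\AA^n_{\cO_L,x}$. In other words, $\rho^{|M|}_*=\lambda_M\circ\alpha\circ\alpha_1$ on $\AA^n_{\cO_L,y}(\cO_L)$.

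The first step is to check that this factorization respects the $\natural$-conditions. The morphism $u_M$ becomes an isomorphism after tensoring with $M$: it is the linear change of coordinates given by a basis of $\Xi_M\otimes_{\cO_L} M$, using Lemma~\ref{free} and the identification of $\Xi_M\otimes M$ with all $M$-linear maps after a splitting field extension. Consequently, $\gamma\otimes\cO_M$ sends the generic point into the étale locus of $\AA^n_{\cO_M,y}\to X$ if and only if $u_M\circ(\gamma\otimes\cO_M)$ sends the generic point into the unramified locus of $\rho$. Also, $\otimes_{\cO_L}\cO_M$ is faithfully flat on generic fibres, so $\gamma$ lands in the étale locus of $\rho^{|M|}$ if and only if $\gamma\otimes\cO_M$ lands in the étale locus of $\AA^n_{\cO_M,y}\to X$. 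Hence $\alpha_1$ and $\alpha$ restrict to ${}_M\cH$-equivariant bijections
\[
\AA^n_{\cO_L,y}(\cO_L)^{\natural}\to \AA^n_{\cO_M,y}(\cO_M)^{G,\natural}\to \AA^n_{\cO_L,x}(\cO_M)^{G,\natural}.
\]
These descend to bijections of ${}_M\cH$-orbit sets.

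Composing with the bijection $\bigsqcup_M \AA^n_{\cO_L,x}(\cO_M)^{G,\natural}/{}_M\cH\to X(\cO_L)^{\natural}$ from Proposition 5.4 of \cite{Yas}, recalled above, gives the asserted bijection $\bigsqcup_M \AA^n_{\cO_L,y}(\cO_L)^{\natural}/{}_M\cH\to X(\cO_L)^{\natural}$. By construction this map is induced by the $\rho^{|M|}$, which are the maps $\Psi^{|M|}$ of the statement.

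The main obstacle is the $\natural$-compatibility in the first step. Precisely, we must show that the image of a $\natural$-point under $\alpha$ lies again in the unramified locus, and that $\alpha$ is surjective onto the $\natural$-points. Over $L$ this reduces, after a finite extension trivializing the torsor $\spec M$, to the split case. There $M=\prod_{G(\overline L)}L'$ and $u_M$ is visibly an isomorphism on generic fibres, as in \cite[Lemmas 6.11, 6.12]{Yas}. The only new feature compared with \cite{Yas} is that $G$ is a finite flat group scheme rather than a constant group. Since $G_L$ is étale and all $\natural$-conditions concern only generic points, the arguments of \cite{Yas} apply verbatim after base change to $L$.
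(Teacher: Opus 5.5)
Your proposal is correct and is essentially the paper's argument. The paper obtains the bijection from the preceding discussion: it combines the ${}_M\cH$-equivariant bijections $\alpha_1$ and $\alpha$ of \cite[Lemmas 6.11, 6.12]{Yas} with the commutative square $\rho\circ u_M=\rho^{|M|}\circ(\otimes_{\cO_L}\cO_M)$ and the bijection of \cite[Proposition 5.4]{Yas}, exactly as you do. Your explicit check that $\alpha_1$ and $\alpha$ preserve the $\natural$-conditions, using that $u_M$ and $\AA^n_{\cO_M,y}\to\AA^n_{\cO_L,y}$ are étale on generic fibres, spells out a step the paper leaves implicit.
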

Let $V\subset \AA_{\cO_L,x}^n$ be a $G$-stable normal closed $\cO_L$-subvariety such that $G$ induces a faithful action on $V$. Let $W$ be the quotient variety $V/G$ and $\overline{W}$ be the image of $V$ in $\AA_{\cO_L,x}^n/G$. The canonical map $W\to \overline{W}$ is finite and birational. Let $U\subset W$ and $\overline{U}\subset \overline{W}$ be the loci where $V\to W$ and $V\to\overline{W}$ are unramifield then the morphism $W\to\overline{W}$ induces an isomorphism $U\to\overline{U}$.
\begin{defn}Let $M\in G\textnormal{-\'Et}(L)$. With the above notations,  we denote by $V^{|M|}$ and $V^{\langle M\rangle}$ the preimages (with the reduced structure) of $\overline{W}$ under the morphisms $\rho^{|M|}$ and  $\rho^{|M|}\circ (\otimes_{\cO_L}\cO_M)$ respectively. Let $\tilde{V}^{|M|}$ and  $\tilde{V}^{\langle M\rangle}$ be the normalizations of $V^{|M|}$ and $V^{\langle M\rangle}$ respectively. Then the normalization morphisms $\tilde{V}^{|M|}\to V^{|M|}$ and $V^{\langle M\rangle}\to\tilde{V}^{\langle M\rangle}$ are isomorphisms over $V^{|M|}\otimes L$ and  $V^{\langle M\rangle}\otimes M$ respectively.
\end{defn}
\begin{defn}Let $\overline{W}(\cO_L)^{\natural}$ be the set of $\cO_L$-points of $\overline{W}$ sending the generic points into the unramifield locus of $V\to \overline{W}$ (or equivalent $V^{|M|}\to X$). We also let $V^{|M|}(\cO_L)^{\natural}$ (resp. $\tilde{V}^{|M|}(\cO_L)^{\natural}$) be the set of $\cO_L$-points of $V^{|M|}$ (resp. $\tilde{V}^{|M|}$) sending the   generic point into the unramifield locus of $V^{|M|}\to\overline{W}$ (resp.  $\tilde{V}^{|M|}\to\overline{W}$).

\end{defn}
We have the following result.
\begin{prop}[Proposition 6.16, \cite{Yas}]\label{decompo}
With the above notations, the maps 
$$\bigsqcup_{M\in G\textnormal{-\'Et}(L)}\frac{V^{|M|}(\cO_L)^{\natural}}{_M\cH}\to \overline{W}(\cO_L)^{\natural}$$
and 
$$\bigsqcup_{M\in G\textnormal{-\'Et}(L)}\frac{\tilde{V}^{|M|}(\cO_L)^{\natural}}{_M\cH}\to W(\cO_L)^{\natural}$$
are bijections. 
\end{prop}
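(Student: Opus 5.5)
The plan is to deduce Proposition \ref{decompo} from the preceding bijection (Proposition 6.14 of \cite{Yas}),
$$\bigsqcup_{M}\AA_{\cO_L,y}^n(\cO_L)^{\natural}/_M\cH\to (\AA_{\cO_L,x}^n/G)(\cO_L)^{\natural},$$
by restricting it to the closed subvariety $\overline{W}\subset \AA_{\cO_L,x}^n/G$, and then lifting to normalizations. The argument is essentially the one of \cite[Proposition 6.16]{Yas}; the only new point is to check that each step uses only finiteness and flatness of $G$ and the fact that $G_L$ is étale.

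\emph{First bijection.} Take an $\cO_L$-point $\gamma$ of $\AA_{\cO_L,y}^n$ whose image $\rho^{|M|}\circ\gamma$ lies in $\overline{W}(\cO_L)^{\natural}$.
\begin{enumerate}
\item Since $\overline{W}$ is closed and $\spec\cO_L$ is reduced, $\gamma$ factors through the reduced preimage $V^{|M|}$.
\item The generic point of $\gamma$ maps to the unramified locus of $V\to\overline{W}$. On the étale locus over $\overline{W}$, the map $V^{|M|}\to \overline{W}$ is a base change of $V\to\overline{W}$ (up to the twist by $M$). Hence $\gamma\in V^{|M|}(\cO_L)^{\natural}$.
\item Conversely, every point of $V^{|M|}(\cO_L)^{\natural}$ maps into $\overline{W}(\cO_L)^{\natural}$.
\end{enumerate}
The $_M\cH$-action preserves $V^{|M|}$, because $\rho^{|M|}$ is $_M\cH$-invariant. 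Intersecting the known bijection with the preimage of $\overline{W}(\cO_L)^{\natural}$ therefore gives the first bijection.

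\emph{Second bijection.}
\begin{enumerate}
\item Since $W\to\overline{W}$ is finite birational and restricts to an isomorphism $U\cong\overline{U}$, the map $W(\cO_L)^{\natural}\to\overline{W}(\cO_L)^{\natural}$ is a bijection. Injectivity holds generically, and $\cO_L$-points are determined by their $L$-points for separated schemes. Surjectivity follows from the valuative criterion for the finite morphism $W\to\overline{W}$.
\item In the same way, $\tilde V^{|M|}\to V^{|M|}$ is finite, and it is an isomorphism over the generic fibre, in particular over the unramified locus. So by the valuative criterion it induces a bijection $\tilde V^{|M|}(\cO_L)^{\natural}\to V^{|M|}(\cO_L)^{\natural}$.
\item This bijection is $_M\cH$-equivariant, since the $_M\cH$-action lifts uniquely to the normalization.
\item The map $\tilde V^{|M|}\to \overline{W}$ factors through $W$ because $W$ is normal and $\tilde V^{|M|}$ is normal; use the universal property of normalization applied to the dominant map to $\overline{W}$, whose normalization in the function field of $V/G$ is $W$.
\end{enumerate}
Composing these compatible bijections yields the second statement.

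The main obstacle is step 4 of the second bijection: $\tilde V^{|M|}\to\overline W$ lifts to $W$ only if the function field of $V^{|M|}$ contains that of $W$. This is what was verified in \cite{Yas}, via the identification $V^{\langle M\rangle}\otimes M$ with a twisted form of $V_M$. I would redo that check over $L$, after an étale base change trivializing $G_L$ and the torsor $M$, exactly as in the proof of Lemma \ref{free}.
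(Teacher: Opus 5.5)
Your proposal is correct and follows essentially the same route as the source \cite{Yas}, which the paper cites without reproducing a proof: restrict the bijection of Proposition 6.14 to the points lying over $\overline{W}(\cO_L)^{\natural}$, then pass to $W$ and to the normalizations using finiteness, the generic isomorphism and the valuative criterion. The step you single out as the main obstacle is automatic: $W\to\overline{W}$ is birational, so $K(W)=K(\overline{W})$ embeds into the function field of every component of $\tilde V^{|M|}$ dominating $\overline{W}$ (these are exactly the horizontal components, while the vertical ones carry no $\cO_L$-points), and in any case you can define the map to $W(\cO_L)^{\natural}$ through the inverse of the bijection $W(\cO_L)^{\natural}\to\overline{W}(\cO_L)^{\natural}$.
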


\subsection{A \(p\)-adic McKay correspondence}\label{twpMcK}

\begin{defn}A $G$-log pair is a log pair $(V,E)$ with a faithful action $\varphi$ of $G$ on $V$ such that $E$ is $G$-stable, i.e. we have an action of $G$ on $E$ induced by restricting $\varphi$ to $E$.
\end{defn}

Let $(V,E)$ be a $G$-log pair and $W=V/G$ then there is a unique $\QQ$-divisor $D$ of $W$ such that  $(W,D)$ is a log-pair and the quotient morphism $V\to W$ induces a crepant morphism $(V,E)\to (W,D)$ (see \cite[Lemma 7.2]{Yas}). Suppose that $V$ is affine then there exists a positive integer $n$, a faithful action of $G$ on $\AA_{\cO_L}^n$ and a $G$-equivariant closed embedding $V\hookrightarrow\AA_{\cO_L}^n$ (see \cite[Section 7.2]{Yas}). With the above notations, we can define  $G$-log pair $(\tilde{V}^{\langle M\rangle},\tilde{E}^{\langle M\rangle})$, log pair $(\tilde{V}^{|M|},\tilde{E}^{|M|})$ such that the morphisms $(\tilde{V}^{\langle M\rangle},\tilde{E}^{\langle M\rangle})\to (\tilde{V}^{|M|},\tilde{E}^{|M|})$ and  $(\tilde{V}^{\langle M\rangle},\tilde{E}^{\langle M\rangle})\to (V,E)$ are crepant and so is the induced morphism $(\tilde{V}^{|M|},\tilde{E}^{|M|})\to (W,D)$. For a subscheme $C$ of $W_{k_L}$ and $M\in G\textnormal{-\'Et}(L)$, we denote by $\tilde{C}^{|M|}$ the preimage of $C$ in $\tilde{V}_{k_L}^{|M|}$ (with the reduced structure). Let $g$ be an $\cO_L$-morphism from a Zariski neighbourhood $W_0$ of $C$ in $W$ to $\AA_{\cO_L}^1$, we denote by $g_{|M|}:(\tilde{\rho}_V^{|M|})^{-1}(W_0)\to \AA_{\cO_L}^1$ the composition of $g$ with the morphism $\tilde{\rho}_V^{|M|}:(\tilde{V}^{|M|},\tilde{E}^{|M|})\to (W,D)$.
\begin{defn}\label{M-st} Let $G,V,E$ be as above and $C$ be a subscheme of $W_{k_L}$. Let $g$ be an $\cO_L$-morphism from a Zariski neighbourhood of $C$ in $W$ to $\AA_{\cO_L}^1$ and $\chi$ be a multiplicative character of $\cO_L^{\times}$. For $M\in G\textnormal{-\'Et}(L)$, we define the $M$-stringy point count of $(V,E)$ along $C$ twisted by $(g,\chi)$ as
$$\#_{\rm st}^{g,\chi,M}(V,E)_C:=\#_{\rm st}^{g_{|M|},\chi}(\tilde{V}^{|M|},\tilde{E}^{|M|})_{\tilde{C}^{|M|}}/\#(_M\cH)$$
and the $G$-stringy point count of $(V,E)$ along $C$ twisted by $(g,\chi)$ as
$$\#_{\rm st}^{g,\chi,G}(V,E)_C:=\sum_{M\in G\textnormal{-\'Et}(L)} \#_{\rm st}^{g,\chi,M}(V,E)_C.$$
\end{defn}

\begin{thm}\label{stringc}Let $(V,E)$ be a $G$-log pair with $V$ affine, let $(W,D)$ be as above. Let $C$ be a subscheme of $W_{k_L}$ and $g$ be an $\cO_L$-morphism from a Zariski neighbourhood of $C$ in  $W$ to $\AA_{\cO_L}^1$. Let $\chi$ be a multiplicative character of $\cO_L^{\times}$. Then we have
$$\#_{\rm st}^{g,\chi,G}(V,E)_C=\#_{\rm st}^{g,\chi}(W,D)_{C}.$$
\end{thm}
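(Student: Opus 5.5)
The plan is to reduce Theorem \ref{stringc} to the decomposition of Proposition \ref{decompo} combined with the change of measure formula of Theorem \ref{change}, keeping track of the twisting factor $\chi(\acc(g))$, which is compatible with all the maps involved. First, by Definition \ref{defnstringy} and the convention that the measure of $X(\cO_L)\setminus X(\cO_L)^{\circ}$ is zero, I will replace $W(\cO_L)_C$ by $W(\cO_L)^{\natural}\cap W(\cO_L)_C$. I will do the same for $\tilde{V}^{|M|}(\cO_L)_{\tilde{C}^{|M|}}$. The complements are contained in the $\cO_L$-points whose generic points lie in the ramification locus, which is a proper closed subset of the generic fibre, so they have measure zero.

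Next, by Proposition \ref{decompo}, the union over $M\in G\textnormal{-\'Et}(L)$ of the maps $\tilde{V}^{|M|}(\cO_L)^{\natural}/{}_M\cH\to W(\cO_L)^{\natural}$ is bijective. An $\cO_L$-point of $\tilde{V}^{|M|}$ has closed point in $\tilde{C}^{|M|}$ exactly when its image in $W$ has closed point in $C$, because $\tilde{C}^{|M|}$ is by definition the preimage of $C$. Hence
$$W(\cO_L)^{\natural}_C=\bigsqcup_{M}\tilde{\rho}_V^{|M|}\bigl(\tilde{V}^{|M|}(\cO_L)^{\natural}_{\tilde{C}^{|M|}}\bigr),$$
and it suffices to prove, for each fixed $M$,
$$\int_{\tilde{\rho}_V^{|M|}(A_M)}\chi(\acc(g))\,\mu_{W,D}=\frac{1}{\#({}_M\cH)}\int_{A_M}\chi(\acc(g_{|M|}))\,\mu_{\tilde{V}^{|M|},\tilde{E}^{|M|}},$$
where $A_M=\tilde{V}^{|M|}(\cO_L)^{\natural}_{\tilde{C}^{|M|}}$ is ${}_M\cH$-stable.

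For this identity I will apply a version of Theorem \ref{change} in which the finite group ${}_M\cH$ acts on $\tilde{V}^{|M|}$ over $W$. The morphism $(\tilde{V}^{|M|},\tilde{E}^{|M|})\to(W,D)$ is crepant, and modulo ${}_M\cH$ it is injective on $\natural$-points by Proposition \ref{decompo}. The integrand $\chi(\acc(g_{|M|}))=\chi(\acc(g))\circ\tilde{\rho}_V^{|M|}$ is a pullback. Since $\chi\circ\acc\circ g$ is locally constant on the analytic manifold, I will partition $\tilde{\rho}_V^{|M|}(A_M)$ into open compact pieces on which it is constant. Applying the measure identity to each preimage piece, which is ${}_M\cH$-stable because it is a preimage, and summing gives the claim. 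Summing over $M$ and multiplying by $q_L^n$ then yields the theorem.

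The main obstacle is justifying the measure identity. Theorem \ref{change} is stated for a group scheme $G$ with $Y/G\to X$ birational, but ${}_M\cH$ generally does not make $\tilde{V}^{|M|}/{}_M\cH\to W$ birational. I would not apply it directly. Instead I would argue locally: on $\natural$-points, $\tilde{V}^{|M|}\to W$ is étale onto its image over the generic fibre, and crepancy means the Jacobian norm matches the two measures. The fibres on $A_M$ are exactly the ${}_M\cH$-orbits. I would also check that the stabilizers are trivial on the $\natural$ locus, which follows as in \cite[Section 6]{Yas}. This gives the factor $1/\#({}_M\cH)$ by the $p$-adic change of variables formula, as in the proof of \cite[Theorem 7.?]{Yas}, of which this is the twisted version.
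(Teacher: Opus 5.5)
Your proposal is correct and follows essentially the same route as the paper. You decompose $W(\cO_L)^{\natural}$ via Proposition~\ref{decompo}, note that $\tilde{V}^{|M|}(\cO_L)^{\natural}\to W(\cO_L)^{\natural}$ is an ${}_M\cH$-equivariant \'etale map of $L$-analytic manifolds whose fibres are ${}_M\cH$-orbits, and rerun the change-of-variables argument from the proof of \cite[Theorem 4.8]{Yas} (rather than Theorem~\ref{change} verbatim) with $\chi(\acc(g))$ pulled back, as the paper does following \cite[Theorem 7.4]{Yas}. Your remark that ${}_M\cH$ acts freely on the $\natural$-locus, which is what gives the factor $1/\#({}_M\cH)$, is slightly sharper than the paper's appeal to faithfulness, and the reference you left as ``Theorem 7.?'' is \cite[Theorem 7.4]{Yas}.
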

\begin{proof}Note that for each $M\in G\textnormal{-\'Et}(L)$,  the map $\sigma_M:\tilde{V}^{|M|}(\cO_L)^{\natural}\to W(\cO_L)^{\natural}$ is a $_M\cH$-equivariant étale morphism of $L$-analytic manifolds and the quotient manifold  $\tilde{V}^{|M|}(\cO_L)^{\natural}/_M\cH$ is isomorphic with the image of $\sigma_M$ as showed in Proposition \ref{decompo}. Moreover, the definition of $_M\cH$ assures that the action of $_M\cH$ on $\tilde{V}^{|M|}(\cO_L)^{\natural}$ is faithful. Thus we still have a version of Theorem \ref{change} with our situation following by the proof  of \cite[Theorem 4.8]{Yas}. Therefore we can proceed similarly as in \cite[Theorem 7.4]{Yas}.
\end{proof}
\begin{remark}The condition that $V$ is affine in the above statements is not necessary as explained in \cite[Section 7.3]{Yas}. 
\end{remark}
\subsection{Stringy point count associated with finite diagonalizable group schemes}\label{compumu}
Suppose that $G$ is an $\cO_L$-diagonalizable subgroup scheme of $\GG_{m,\cO_L}^n$ and $G$ is finite over $\cO_L$. Let $\chi$ be a multiplicative character of $\cO_L^{\times}$ and $(W_0,g)$ be a germ of a regular function (over $\cO_L$) on $A_G=\AA_{\cO_L,x}^n/G$ at $P=\rho_G(\varpi_L\cO_L+(x_1,...,x_n))$, where $\rho_G: \AA_{\cO_L,x}^n\to A_G$ is the quotient morphism and $W_0$ be an open subscheme of $A_G$ containing $P$. The relative canonical divisor $K_{\AA_{\cO_L,x}^n,A_G}$ is effective and its support is contained in $\AA_{k_L}^n\cup\left(\cup_{i=1}^n E_i\right)$, where $E_i$ is defined in Definition \ref{dedivis}.  Thus, we can write $K_{\AA_{\cO_L,x}^n,A_G}=\mathfrak{l}\AA_{k_L}^{n}+\sum_{i=1}^n (\mathfrak{v}_i-1)E_i$ for natural numbers $\mathfrak{l}\geq 0$ and $\mathfrak{v}_i\geq 1$ for $1\leq i\leq n$.  Let $D$ be a $\QQ$-divisor on $A_G$ such that $(A_G,D)$ is an $\cO_L$-log pair. Suppose that $\Div(g\circ\rho_G)=\mathfrak{N}\AA_{k_L}^{n}|_{\rho_G^{-1}(W_0)}+\sum_{i=1}^n N_iE_i|_{\rho_G^{-1}(W_0)}$ and $\rho_G^{*}(D)=\tilde{\mathfrak{N}}\AA_{k_L}^{n}+\sum_{i=1}^n \tilde{N}_iE_i$, where  $\mathfrak{N}\geq 0, N_i\geq 0$  are natural numbers and $\tilde{\mathfrak{N}}, \tilde{N}_i$ are rational numbers for $1\leq i\leq n$. We will compute $\#_{\rm st}^{g,\chi}(A_G,D)_{P}$.

Since $G$ is an $\cO_L$-diagonalizable subgroup scheme of $\GG_{m,\cO_L}^n$, Lemma \ref{closimm} implies that there is a surjective Hopf algebra homomorphism $\mathbf{h}:\cO_L[z_1,z_1^{-1},...,z_n,z_n^{-1}]\to\cO_G$. 
 By Lemma \ref{finimugr}, 
 we can suppose that $G=\prod_{j=1}^\ell\mu_{d_j,\cO_L}=\Spec\cO_L[\zeta_1,...,\zeta_\ell]/(\zeta_1^{d_1}-1,...,\zeta_{\ell}^{d_\ell}-1)$ for positive integers $d_j$.
Then we have \[\cO_G=\cO_L[\zeta_1,...,\zeta_\ell]/(\zeta_1^{d_1}-1,...,\zeta_{\ell}^{d_\ell}-1)\]
and the comultiplication $\Delta:\cO_G\to \cO_G\otimes_{\cO_L}\cO_G, \zeta_j\mapsto \zeta_j\otimes \zeta_j$ for all $j$. The Hopf algebra homomorphism $\mathbf{h}$ sends each  group-like element of $\cO_L[z_1,z_1^{-1},...,z_n,z_n^{-1}]$ to a group-like element of $\cO_G$, thus we can suppose that $\mathbf{h}(z_i)=\prod_{1\leq j\leq \ell}\zeta_j^{\alpha_{ij}}$ for $1\leq i\leq n$ and suitable natural numbers  $\alpha_{ij}$. The action of $\GG_{m,\cO_L}^n$ on $\AA_{\cO_L,x}^n$ is given by the ring homomorphism $\varphi:\cO_L[x]=\cO_L[x_1,...,x_n]\to\cO_L[x_1,...,x_n]\otimes_{\cO_L} \cO_L[z_1,z_1^{-1},...,z_n,z_n^{-1}], x_i\mapsto x_i\otimes z_i$. The action of $G$ on $\AA_{\cO_L}^n$ induced by $\varphi$ and $\mathbf{h}$ is given by 
\begin{align*}
\varphi_G:=(\mathbf{id}_{\cO_L[x]}\otimes \textbf{h})\circ\varphi: \cO_L[x_1,...,x_n]&\to \cO_L[x_1,...,x_n]\otimes_{\cO_L}\cO_L[\zeta_1,...,\zeta_\ell]/(\zeta_1^{d_1}-1,...,\zeta_\ell^{d_\ell}-1),\\x_i&\mapsto x_i\otimes\prod_{1\leq j\leq \ell}\zeta_j^{\alpha_{ij}}.\end{align*}

By Hilbert's theorem 90, any $M\in G\textnormal{-\'Et}(L)$ is isomorphic with an $L$-algebra of the form 
$M_{(\upsilon,\xi)}:=L[t_1,...,t_\ell]/(t_1^{d_1}-\xi_1\varpi_L^{\upsilon_1},...,t_\ell^{d_\ell}-\xi_\ell\varpi_L^{\upsilon_\ell}),$ where $(\upsilon,\xi)=((\upsilon_1,\xi_1),...,(\upsilon_\ell,\xi_\ell))\in (\NN\times \cO_L^{\times})^\ell$ and the action of $G_L$ on $M_{(\upsilon,\xi)}$ is given by $\tau_{\upsilon,\xi}:M_{(\upsilon,\xi)}\to M_{(\upsilon,\xi)}\otimes_{\cO_L}\cO_G, t_j\mapsto t_j\otimes\zeta_j$. For each $1\leq j\leq \ell$, let $A_{j}=\{\xi_{j1},...,\xi_{jk_j}\}$ be a set of representatives of coset in $\cO_L^{\times}/(\cO_L^{\times})^{d_j}$ then we can take  $G\textnormal{-\'Et}(L)=\{M_{(\upsilon,\xi)}\mid \xi_j\in A_j, 0\leq \upsilon_j\leq d_j-1 \textnormal{ for } 1\leq j\leq \ell\}$. So we can identify $G\textnormal{-\'Et}(L)$ with the set $\cS=\{(\upsilon,\xi)\mid \xi_j\in A_j, 0\leq \upsilon_j\leq d_j-1 \textnormal{ for } 1\leq j\leq \ell\}$. 

 Let $(\upsilon,\xi)\in \cS$. We will find a basis of the tuning module $\Xi_{M_{(\upsilon,\xi)}}$ associated with $G, M_{(\upsilon,\xi)}, \varphi_G, \tau_{\upsilon,\xi}$. By definition, we have 
$$\Xi_{M_{(\upsilon,\xi)}}:=\{\phi\in \textnormal{Hom}_{\cO_L}(\cO_L[x]_1\to \cO_{M_{(\upsilon,\xi)}})\mid (\phi_L\otimes\textbf{id}_{\cO_G})\circ \varphi_{GL}=\tau_{\upsilon,\xi}\circ\phi_L\},$$
where $\phi_L:L[x]_1\to M_{(\upsilon,\xi)}$ (resp.  $\varphi_{GL}:L[x]\to L[x]\otimes_{\cO_L}\cO_G$) is the extension of scalars to $L$ of $\phi$ (resp. $\varphi_G$). 
\begin{lem}\label{generator}Let $1\leq i\leq n$ and $r_{i}$ be the smallest positive integer such that \[\left(\prod_{1\leq j\leq \ell}\zeta_j^{\alpha_{ij}}\right)^{r_{i}}=1.\] Then,  there is an element $a_{i,\upsilon,\xi}\in \cO_{M_{(\upsilon,\xi})}\setminus\{0\}$ such that 
$$\{a\in \cO_{M_{(\upsilon,\xi)}}\mid \exists \phi\in \Xi_{M_{(\upsilon,\xi)}}, \phi(x_i)=a\}=\{ca_{i,\upsilon,\xi}\mid c\in \cO_L\}\subset\{a\in \cO_{M_{(\upsilon,\xi)}}\mid a^{r_{i}}\in\cO_L\}.$$
More precisely, we can take 
$$a_{i,\upsilon,\xi}=\varpi_L^{\beta_{i,\upsilon}}\prod_{1\leq j\leq \ell}t_j^{\alpha_{ij}}$$ with $$\beta_{i,\upsilon}=-\bigg\lfloor\sum_{1\leq j\leq \ell}\frac{\alpha_{ij}\upsilon_j}{d_j}\bigg\rfloor,$$
 where if $a$ is a real number, then $\lfloor a\rfloor$ is the largest integer less than or equal to $a$ as usual.
\end{lem}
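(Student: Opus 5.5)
The plan is to unwind the $G$-equivariance condition on a single coordinate, using a concrete basis of $\cO_{M_{(\upsilon,\xi)}}$. A homomorphism $\phi\in\textnormal{Hom}_{\cO_L}(\cO_L[x]_1,\cO_{M_{(\upsilon,\xi)}})$ is determined by the values $\phi(x_1),\dots,\phi(x_n)$. The equivariance condition $(\phi_L\otimes\textbf{id})\circ\varphi_{GL}=\tau_{\upsilon,\xi}\circ\phi_L$ reads
\[
\tau_{\upsilon,\xi}(\phi(x_i))=\phi(x_i)\otimes\prod_j\zeta_j^{\alpha_{ij}}
\]
separately for each $i$. So the set in the statement is exactly the set of $a\in\cO_{M_{(\upsilon,\xi)}}$ with $\tau_{\upsilon,\xi}(a)=a\otimes\prod_j\zeta_j^{\alpha_{ij}}$. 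Here I use that each coordinate can be chosen independently: set $\phi(x_{i'})=0$ for $i'\neq i$.

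First I describe the semi-invariants in $M_{(\upsilon,\xi)}$. As an $L$-vector space, $M_{(\upsilon,\xi)}$ has basis the monomials $t^\gamma=\prod_j t_j^{\gamma_j}$ with $0\le\gamma_j<d_j$. The coaction gives $\tau(t^\gamma)=t^\gamma\otimes\zeta^\gamma$, and the characters $\zeta^\gamma$ are distinct group-like elements of $\cO_G$, hence linearly independent. Therefore the $\zeta^{\alpha_i}$-eigenspace is the line $L\cdot t^{\alpha_i}$; reducing the exponents $\alpha_{ij}$ modulo $d_j$ only changes $t^{\alpha_i}$ by a unit of $L$, namely a power of $\xi_j\varpi_L^{\upsilon_j}$. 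Consequently every admissible $\phi(x_i)$ has the form $c\,t^{\alpha_i}$ with $c\in L$.

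Next I determine which $c\in L$ make $c\,t^{\alpha_i}$ integral. I will use the valuation $v$ on $M_{(\upsilon,\xi)}$, extended from $\ord_L$ to each factor field, with values in $\QQ$. Since $t_j^{d_j}=\xi_j\varpi_L^{\upsilon_j}$ and $\xi_j$ is a unit, $v(t_j)=\upsilon_j/d_j$ on every factor field. Hence $v(c\,t^{\alpha_i})=\ord_L(c)+\sum_j\alpha_{ij}\upsilon_j/d_j$, uniformly across the factors. Because $\cO_M$ is the integral closure of $\cO_L$ in $M$, an element is integral if and only if this valuation is nonnegative in every factor. This gives the condition $\ord_L(c)\ge -\lfloor\sum_j\alpha_{ij}\upsilon_j/d_j\rfloor=\beta_{i,\upsilon}$, so the admissible values of $\phi(x_i)$ are exactly $\cO_L\cdot a_{i,\upsilon,\xi}$. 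Note that $a_{i,\upsilon,\xi}\ne0$ because $t_j$ is invertible in $M$.

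For the final inclusion, $(c\,a_{i,\upsilon,\xi})^{r_i}$ is again a semi-invariant, now for the trivial character $(\zeta^{\alpha_i})^{r_i}=1$. It therefore lies in $M^G=L$ and is integral, so it lies in $\cO_L$. The step I expect to need the most care is the integrality criterion when $M$ is not a field. There I must check that $v(t_j)=\upsilon_j/d_j$ holds in every factor field. It does, since every factor field is an extension of $L$ in which $t_j^{d_j}=\xi_j\varpi_L^{\upsilon_j}$.
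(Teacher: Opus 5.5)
Your proof is correct, but the key step is organized differently from the paper's. You identify the semi-invariants explicitly. Using the $L$-basis $t^\gamma$ of $M_{(\upsilon,\xi)}$ and the linear independence of the distinct group-like elements $\zeta^\gamma$, you show that the $\zeta^{\alpha_i}$-eigenspace is exactly $L\cdot\prod_j t_j^{\alpha_{ij}}$. You then decide integrality factor by factor, using the extended valuations $v(t_j)=\upsilon_j/d_j$.

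The paper never decomposes $M_{(\upsilon,\xi)}$ into eigenspaces. It makes the same reduction to semi-invariants and uses the same argument that $a^{r_i}\in M^{G_L}\cap\cO_{M_{(\upsilon,\xi)}}=\cO_L$. It then shows that any two nonzero semi-invariants $a,b$ are $\cO_L$-comparable:
\begin{itemize}
\item $a^{r_i},b^{r_i}\in\cO_L\setminus\{0\}$ makes $a$ and $b$ units of $M_{(\upsilon,\xi)}$;
\item since $\cO_L$ is a valuation ring, $(a/b)^{r_i}$ or $(b/a)^{r_i}$ lies in $\cO_L$;
\item integral closure then puts $a/b$ or $b/a$ in $\cO_{M_{(\upsilon,\xi)}}$;
\item the ratio is $G$-invariant, so it lies in $\cO_L$.
\end{itemize}
The explicit generator is read off from the criterion that $e\prod_j t_j^{\alpha_{ij}}$, with $e\in L$, is integral if and only if its $r_i$-th power lies in $\cO_L$. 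This is equivalent to $\ord_L(e)+\sum_j\alpha_{ij}\upsilon_j/d_j\ge 0$, and it avoids extending valuations to the factor fields.

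The paper's route obtains the rank-one statement from $M^{G_L}=L$, reducedness of $M$ and integral closure alone, without the explicit presentation of $M_{(\upsilon,\xi)}$. Your route is more direct. It gives the one-dimensionality of the eigenspace in $M$ immediately, whereas the paper gets it only implicitly, by comparing with the nonzero semi-invariant $\prod_j t_j^{\alpha_{ij}}$. It also makes the appearance of $\beta_{i,\upsilon}$ transparent.
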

\begin{proof}
Firstly, we will show that 
\begin{equation}\label{eq0001}\{a\in \cO_{M_{(\upsilon,\xi)}}\mid \exists \phi\in \Xi_{M_{(\upsilon,\xi)}}, \phi(x_i)=a\}=\{a\in \cO_{M_{(\upsilon,\xi)}}\mid a\otimes \prod_{1\leq j\leq \ell}\zeta_j^{\alpha_{ij}} =\tau_{\upsilon,\xi}(a)\}\neq \{0\}.
\end{equation}
Indeed, if $\phi\in \Xi_{M_{(\upsilon,\xi)}}$ and $a=\phi(x_i)$, then we have 

$$((\phi_L\otimes\textbf{id}_{\cO_G})\circ \varphi_{GL})(x_i)=(\tau_{\upsilon,\xi}\circ\phi_L)(x_i)
\Leftrightarrow a\otimes\prod_{1\leq j\leq \ell}\zeta_j^{\alpha_{ij}}=\tau_{\upsilon,\xi}(a).$$
Moreover, if $a\in\cO_{M_{(\upsilon,\xi)}}$ such that $a\otimes \prod_{1\leq j\leq \ell}\zeta_j^{\alpha_{ij}} =\tau_{\upsilon,\xi}(a)$ and $\phi_{ia}:\cO_L[x]_1\to \cO_{M_{(\upsilon,\xi)}}$ is the $\cO_L$-module homomorphism given by $\phi_{ia}(x_i)=a$ and $\phi_{ia}(x_j)=0$ if $j\neq i$ then $\phi_{ia}\in \Xi_{M_{(\upsilon,\xi)}}$. On the other hand, we have 
\begin{equation}\label{eqtau} \prod_{1\leq j\leq \ell}t_j^{\alpha_{ij}} \neq 0 \textnormal{ and } \tau_{\upsilon,\xi}(\prod_{1\leq j\leq \ell}t_j^{\alpha_{ij}})=(\prod_{1\leq j\leq \ell}t_j^{\alpha_{ij}})\otimes (\prod_{1\leq j\leq \ell}\zeta_j^{\alpha_{ij}}).
\end{equation}
Thus, (\ref{eq0001}) is proved. 

Let $a\in \cO_{M_{(\upsilon,\xi)}}$ such that $a\otimes\prod_{1\leq j\leq \ell}\zeta_j^{\alpha_{ij}}=\tau_{\upsilon,\xi}(a)$. Then, we have \[a^{r_{i}}\otimes 1=a^{r_{i}}\otimes\left(\prod_{1\leq j\leq \ell}\zeta_j^{\alpha_{ij}}\right)^{r_{i}}=\tau_{\upsilon,\xi}(a^{r_{i}}).\] Since $M_{(\upsilon,\xi)}\in G\textnormal{-\'Et}(L)$, we have $$M_{(\upsilon,\xi)}^{G_L}=\{b \in M_{(\upsilon,\xi)}| \tau_{\upsilon,\xi}(b)=b\otimes 1\}=L.$$
Therefore,  $a^{r_{i}}\in L$. Note that $\cO_{M_{(\upsilon,\xi)}}$ is the integral closure of $\cO_L$ in $M_{(\upsilon,\xi)}$, $a\in \cO_{M_{(\upsilon,\xi)}}$ and $\cO_L$ is integrally closed domain. Thus, $a^{r_i}\in L\cap \cO_{M_{(\upsilon,\xi)}}=\cO_L$.  This implies that \begin{equation}\label{eq0002}\{a\in \cO_{M_{(\upsilon,\xi)}}\mid a\otimes \prod_{1\leq j\leq \ell}\zeta_j^{\alpha_{ij}} =\tau_{\upsilon,\xi}(a)\}\subset\{a\in \cO_{M_{(\upsilon,\xi)}}\mid a^{r_{i}}\in\cO_L\}.
\end{equation}
By (\ref{eq0001}) and (\ref{eq0002}), to prove the first assertion, it remains to show that if $a,b\in \cO_{M_{(\upsilon,\xi)}}\setminus\{0\}$ such that $a\otimes \prod_{1\leq j\leq \ell}\zeta_j^{\alpha_{ij}}=\tau_{\upsilon,\xi}(a)$  and $b\otimes\prod_{1\leq j\leq \ell}\zeta_j^{\alpha_{ij}}=\tau_{\upsilon,\xi}(b)$ then  $a\in \cO_Lb$ or $b\in \cO_La$.   Indeed, for such $a$ and $b$, it follows from (\ref{eq0002}) that  $a^{r_{i}}, b^{r_{i}}\in \cO_L$. On the other hand, we have $M_{(\upsilon,\xi)}=\cO_{M_{(\upsilon,\xi)}}\otimes_{\cO_L} L=\prod_{1\leq j\leq l_{\upsilon,\xi}}L_{j,\upsilon,\xi}$ for some finite extensions $L_{j,\upsilon,\xi}$ of $L$. Thus, $\cO_{M_{(\upsilon,\xi)}}=\prod_{1\leq j\leq l_{\upsilon,\xi}}\cO_{L_{j,\upsilon,\xi}}$ is a reduced ring and  $a^{r_{i}}, b^{r_{i}}\in \cO_L\setminus\{0\}$.  Therefore, $a$ and $b$ must be invertible elements of $M_{(\upsilon,\xi)}$.  Since $\cO_L$ is a valuation ring, we have either $(\frac{a}{b})^{r_{i}}\in\cO_L$ or  $(\frac{b}{a})^{r_{i}}\in \cO_L$. Since $\cO_{M_{(\upsilon,\xi)}}$ is the integral closure of $\cO_L$ in $M_{(\upsilon,\xi)}$, we must have  $\frac{a}{b}\in \cO_{M_{(\upsilon,\xi)}}$ or $\frac{b}{a}\in \cO_{M_{(\upsilon,\xi)}}$. Without loss of generality, suppose that $c=\frac{a}{b}\in \cO_{M_{(\upsilon,\xi)}}$.  We have $\tau_{\upsilon,\xi}(b)\tau_{\upsilon,\xi}(c)=\tau_{\upsilon,\xi}(a)=a\otimes\prod_{1\leq j\leq \ell}\zeta_j^{\alpha_{ij}}=(b\otimes\prod_{1\leq j\leq \ell}\zeta_j^{\alpha_{ij}}).(c\otimes 1)=\tau_{\upsilon,\xi}(b).(c\otimes 1)$. But $\tau_{\upsilon,\xi}(b)^{r_{i}}=b^{r_{i}}\otimes 1\in \cO_L\setminus\{0\}\subset L\setminus\{0\}\subset (M_{(\upsilon,\xi)}\otimes_{\cO_L}\cO_G)^{\times}$, therefore $\tau_{\upsilon,\xi}(b)$ is not a zero divisor of $M_{(\upsilon,\xi)}\otimes_{\cO_L}\cO_G$. This implies $c\otimes 1=\tau_{\upsilon,\xi}(c)$. Thus, $c\in M_{(\upsilon,\xi)}^{G_L}\cap \cO_{M_{(\upsilon,\xi)}}=L\cap \cO_{M_{(\upsilon,\xi)}}=\cO_L$ and $a\in\cO_Lb$.

The second assertion follows from the first assertion,  (\ref{eq0001}), (\ref{eqtau}), (\ref{eq0002}) and the fact that if $e\in\cO_L$ then $\ord_L(e)\in\ZZ\cup\{+\infty\}$ and
$$\left(e\prod_{1\leq j\leq \ell}t_j^{\alpha_{ij}}\right)^{r_{i}}\in \cO_L\Leftrightarrow \ord_L(e)+\sum_{j=1}^\ell \frac{\alpha_{ij}\upsilon_j}{d_j}\geq 0\Leftrightarrow \ord_L(e)\geq \beta_{i,\upsilon}.$$
\end{proof}
\begin{cor}\label{corab}Let $r_{i}$ and $a_{i,\upsilon,\xi}$ be as in Lemma \ref{generator} for $1\leq i\leq n$. Then $b_{i,\upsilon,\xi}:=a_{i,\upsilon,\xi}^{r_{i}}\in \cO_L\setminus\{0\}$ and $|b_{i,\upsilon,\xi}|^{\frac{1}{r_{i}}}=q_L^{-\gamma_{i,\upsilon}}$ for all $1\leq i\leq n$, where $$\gamma_{i,\upsilon}=\left(\sum_{1\leq j\leq \ell}\frac{\alpha_{ij}\upsilon_j}{d_j}\right)-\bigg\lfloor\sum_{1\leq j\leq \ell}\frac{\alpha_{ij}\upsilon_j}{d_j}\bigg\rfloor.$$ Moreover, $\Xi_{M_{(\upsilon,\xi)}}$ is a free $\cO_L$-module of rank $n$ with a basis consisting of the elements $\phi_{i,\upsilon,\xi}:\cO_L[x]_1\to \cO_{M_{(\upsilon,\xi)}}, x_j\mapsto \delta_{ij}a_{i,\upsilon,\xi}$ for $1\leq j\leq n$, where $\delta_{ij}$ is the Kronecker delta.
\end{cor}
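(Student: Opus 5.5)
The plan is to read both assertions off Lemma \ref{generator}: first compute the absolute value of $b_{i,\upsilon,\xi}$, then use the lemma's description of the $i$-th coordinates of $\Xi_{M_{(\upsilon,\xi)}}$ to split the module into rank-one pieces.

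\emph{The element $b_{i,\upsilon,\xi}$.} By Lemma \ref{generator}, $a_{i,\upsilon,\xi}$ is a nonzero element of the set $\{a\in\cO_{M_{(\upsilon,\xi)}}\mid a^{r_i}\in\cO_L\}$, so $b_{i,\upsilon,\xi}=a_{i,\upsilon,\xi}^{r_i}\in\cO_L$. It is nonzero for the reason used in the proof of that lemma: $\cO_{M_{(\upsilon,\xi)}}$ is a finite product of rings of integers of finite extensions of $L$, hence reduced. Moreover, each $t_j$ is a unit in $M_{(\upsilon,\xi)}$, because $t_j^{d_j}=\xi_j\varpi_L^{\upsilon_j}\in L^{\times}$.

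To compute the absolute value, I will use the explicit form $a_{i,\upsilon,\xi}=\varpi_L^{\beta_{i,\upsilon}}\prod_j t_j^{\alpha_{ij}}$. The valuation of $\ord_L$ extends uniquely to each factor field $L_{j,\upsilon,\xi}$, normalised so that $\ord(\varpi_L)=1$. Since $t_j^{d_j}=\xi_j\varpi_L^{\upsilon_j}$ with $\xi_j$ a unit, in each factor we get $\ord(t_j)=\upsilon_j/d_j$. Hence
\[\ord(a_{i,\upsilon,\xi})=\beta_{i,\upsilon}+\sum_j\frac{\alpha_{ij}\upsilon_j}{d_j}=\gamma_{i,\upsilon}.\]
Because $b_{i,\upsilon,\xi}\in L$, its valuation is the same in every factor, namely $r_i\gamma_{i,\upsilon}$. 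This gives $|b_{i,\upsilon,\xi}|^{1/r_i}=q_L^{-\gamma_{i,\upsilon}}$.

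\emph{The basis.} Here the key point is that the $G$-action is diagonal. By definition of $\varphi_G$, a homomorphism $\phi\in\textnormal{Hom}_{\cO_L}(\cO_L[x]_1,\cO_{M_{(\upsilon,\xi)}})$ is $G$-equivariant if and only if, for each $i$ separately, $\phi(x_i)\otimes\prod_j\zeta_j^{\alpha_{ij}}=\tau_{\upsilon,\xi}(\phi(x_i))$. This is exactly the equivalence used at the start of the proof of Lemma \ref{generator}. Hence
\[\Xi_{M_{(\upsilon,\xi)}}\cong\prod_{i=1}^n\{a\mid a\otimes\textstyle\prod_j\zeta_j^{\alpha_{ij}}=\tau_{\upsilon,\xi}(a)\},\]
and by (\ref{eq0001}) and Lemma \ref{generator} the $i$-th factor equals $\cO_L a_{i,\upsilon,\xi}$. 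The map $c\mapsto c\,a_{i,\upsilon,\xi}$ is injective, since $a_{i,\upsilon,\xi}$ is a unit of $M_{(\upsilon,\xi)}$ and $\cO_L$ has no torsion. So each factor is free of rank one, generated by $a_{i,\upsilon,\xi}$. Therefore the maps $\phi_{i,\upsilon,\xi}$ (each sending $x_i\mapsto a_{i,\upsilon,\xi}$ and $x_j\mapsto 0$ for $j\neq i$) lie in $\Xi_{M_{(\upsilon,\xi)}}$ and form an $\cO_L$-basis, consistent with Lemma \ref{free}.

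The only delicate point is the valuation computation when $M_{(\upsilon,\xi)}$ is not a field. I handle it by computing on each factor field separately and noting that $b_{i,\upsilon,\xi}\in L$, so the answer does not depend on the factor chosen.
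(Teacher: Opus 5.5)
Your proposal is correct and follows the paper's route: both assertions are read off from Lemma \ref{generator}, and your factor-by-factor valuation computation ($\ord(t_j)=\upsilon_j/d_j$, hence $\ord(a_{i,\upsilon,\xi})=\gamma_{i,\upsilon}$) makes explicit what the paper's two-line proof leaves implicit. The only minor difference is that you get linear independence of the $\phi_{i,\upsilon,\xi}$ directly, from the coordinatewise decomposition and the fact that $a_{i,\upsilon,\xi}$ is a unit of $M_{(\upsilon,\xi)}$, whereas the paper cites Lemma \ref{free} for this.
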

\begin{proof}
The first assertion follows immediately from Lemma \ref{generator}. The second assertion follows from Lemmas \ref{free} and \ref{generator}. 
\end{proof}
By Definition \ref{untwm}, we have the ring homomorphism 
\begin{align}\label{mapU}u_{M_{(\upsilon,\xi)}}^*: \cO_L[x_1,...,x_n]&\to\cO_{M_{(\upsilon,\xi)}}[y_1,...,y_n],\\
x_i&\mapsto a_{i,\upsilon,\xi}y_i.\nonumber
\end{align}

We use the notations in Subsection \ref{twpMcK}. We set $V=\AA_{\cO_L,x}^n$. Then $$\tilde{V}^{\langle M_{(\upsilon,\xi)}\rangle}=\AA_{\cO_{M_{(\upsilon,\xi)}},y}^n \textnormal{ and }\tilde{V}^{|M_{(\upsilon,\xi)}|}=\AA_{\cO_L,y}^n.$$ Now, we set $E=\rho_G^*(D)-K_{V/A_G}$. Then it follows from \cite[Lemma 4.6]{Yas} that $(V,E)$ is an $\cO_L$-log pair and the morphism $\rho_G:(V,E)\to (A_G,D)$ is crepant. In order to compute the $M_{(\upsilon,\xi)}$-stringy point count of $(V,E)$
along $P$ twisted by $(g,\chi)$, we will compute the preimage $\tilde{P}^{|M_{(\upsilon,\xi)}|}$ of $P$ under the morphism  $\rho_G^{|M_{(\upsilon,\xi)}|}:\tilde{V}^{|M_{(\upsilon,\xi)}|}\to A_G$ together with $\tilde{E}^{|M_{(\upsilon,\xi)}|}$ and $g_{|M_{(\upsilon,\xi)}|}$.

\begin{lem}\label{comdivi}With the assumptions at the beginning of this subsection and the notations in Corollary  \ref{corab}, we have 
$$\tilde{E}^{|M_{(\upsilon,\xi)}|}=\left(\sum_{i=1}^n (\tilde{N}_{i}-\mathfrak{v}_{i}+1)\hat{E}_i\right) + (\tilde{\mathfrak{N}}-\mathfrak{l})(\varpi_L)+\left(\sum_{i=1}^n(\tilde{N}_{i}-\mathfrak{v}_{i})\frac{1}{r_{i}}(b_{i,\upsilon,\xi})\right),$$
where $\hat{E}_i$, $(\varpi_L)$ and $(b_{i,\upsilon,\xi})$ are the divisors of $\tilde{V}^{|M_{(\upsilon,\xi)}|}$ associated with the principal ideals generated by $y_i$, $\varpi_L$ and $b_{i,\upsilon,\xi}$ respectively.
\end{lem}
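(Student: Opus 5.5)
The plan is to read off $\tilde{E}^{|M_{(\upsilon,\xi)}|}$ from its defining property: it is the unique $\QQ$-divisor on $\tilde{V}^{|M_{(\upsilon,\xi)}|}=\AA_{\cO_L,y}^n$ for which $\rho_G^{|M_{(\upsilon,\xi)}|}:(\AA_{\cO_L,y}^n,\tilde{E}^{|M_{(\upsilon,\xi)}|})\to (A_G,D)$ is crepant. So I will compute the pull-back of a local generator of $\cO_{A_G}(r(K_{A_G}+D))$ under $\rho_G^{|M_{(\upsilon,\xi)}|}$ and compare it with $(dy_1\wedge\dots\wedge dy_n)^{\otimes r}$. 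Here $r$ is a positive integer that clears all denominators of $\tilde{N}_i$ and $\tilde{\mathfrak N}$ and is divisible by every $r_i$.

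The first step is to pass to $V=\AA_{\cO_L,x}^n$. By \cite[Lemma 4.6]{Yas} the morphism $\rho_G:(V,E)\to(A_G,D)$ is crepant, where
\[
E=\rho_G^*(D)-K_{V/A_G}=(\tilde{\mathfrak N}-\mathfrak l)(\varpi_L)+\sum_{i=1}^n(\tilde N_i-\mathfrak v_i+1)E_i .
\]
Hence, near the points over $P$, the pull-back to $V$ of a local generator of $\cO_{A_G}(r(K_{A_G}+D))$ is, up to a unit,
\[
\omega=\varpi_L^{-r(\tilde{\mathfrak N}-\mathfrak l)}\prod_{i=1}^n x_i^{-r(\tilde N_i-\mathfrak v_i+1)}\,(dx_1\wedge\dots\wedge dx_n)^{\otimes r}.
\]

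The second step is to pull $\omega$ back along the untwisting map $u_{M_{(\upsilon,\xi)}}$. By (\ref{mapU}) this map is $x_i\mapsto a_{i,\upsilon,\xi}y_i$, so
\[
dx_1\wedge\dots\wedge dx_n=\Bigl(\prod_i a_{i,\upsilon,\xi}\Bigr)dy_1\wedge\dots\wedge dy_n .
\]
Substituting, one gets
\[
u_{M_{(\upsilon,\xi)}}^*\omega=\varpi_L^{-r(\tilde{\mathfrak N}-\mathfrak l)}\prod_{i=1}^n a_{i,\upsilon,\xi}^{-r(\tilde N_i-\mathfrak v_i)}\,y_i^{-r(\tilde N_i-\mathfrak v_i+1)}\,(dy)^{\otimes r}.
\]
Since $r_i\mid r$ and $r(\tilde N_i-\mathfrak v_i)\in\ZZ$, Corollary \ref{corab} gives
\[
a_{i,\upsilon,\xi}^{-r(\tilde N_i-\mathfrak v_i)}=b_{i,\upsilon,\xi}^{-r(\tilde N_i-\mathfrak v_i)/r_i},
\]
with $b_{i,\upsilon,\xi}\in\cO_L\setminus\{0\}$. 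Hence the coefficient of $(dy)^{\otimes r}$ is an element of $L[y_1^{\pm1},\dots,y_n^{\pm1}]$. Its divisor on $\AA_{\cO_L,y}^n$ is exactly $-r$ times the right-hand side of the claimed formula: the $\hat E_i$-part comes from the powers of $y_i$, the $(\varpi_L)$-part from $\varpi_L^{-r(\tilde{\mathfrak N}-\mathfrak l)}$, and the $(b_{i,\upsilon,\xi})$-parts from the powers of $b_{i,\upsilon,\xi}$. Dividing by $r$ then yields the statement.

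The step I expect to be delicate is justifying that this computation, carried out through $\AA^n_{\cO_{M_{(\upsilon,\xi)}},y}$, really computes the crepant divisor on $\AA_{\cO_L,y}^n$. The worry is that $\cO_{M_{(\upsilon,\xi)}}/\cO_L$ is ramified, so one must keep track of where the computation takes place. I would argue as follows. By the commutative square relating $u_{M}$, $\rho$, $\rho^{|M|}$ and $\otimes_{\cO_L}\cO_M$, one has $\rho^{|M|}\circ(\otimes_{\cO_L}\cO_M)=\rho_G\circ u_M$, and $\rho^{|M|}$ itself is defined over $\cO_L$ since $u_M^*(\cO_L[x]^G)\subset\cO_L[y]$. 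Hence $(\rho^{|M|})^*$ of the generator is a rational $r$-canonical form on $\AA_{\cO_L,y}^n$. Its image under the flat base change $\cO_L[y]\to\cO_M[y]$, which sends $dy\mapsto dy$, coincides with $u_M^*\omega$. Thus the identity above is an identity of forms over $L[y]$, obtained via the faithfully flat extension $L[y]\subset M[y]$. In particular no contribution from the different of $\cO_M/\cO_L$ enters, because we never compare with $\Omega_{\cO_M}$. Finally, $\tilde{V}^{|M|}=\AA_{\cO_L,y}^n$ is already normal, so no normalization correction is needed.
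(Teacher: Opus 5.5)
Your argument is correct, but you handle the descent to $\tilde{V}^{|M|}$ differently from the paper.

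\emph{The paper's route.} It first computes the crepant divisor $\tilde{E}^{\langle M\rangle}$ on $\AA^n_{\cO_M,y}$. To do so it factors $u_M=(\otimes_{\cO_L}\cO_M)\circ\Pi_M$ and uses that the relative canonical divisor of $\Pi_M$ is $\sum_i(a_{i,\upsilon,\xi})$. It then descends via \cite[Lemma 7.2]{Yas}, taking $\frac{1}{d(G)}(\otimes_{\cO_L}\cO_M)_*$. This step requires the norm identity $\frac{1}{d(G)}(\textnormal{Nm}_{\cO_M/\cO_L}(a_{i,\upsilon,\xi}))=\frac{1}{r_i}(b_{i,\upsilon,\xi})$. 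It also requires bookkeeping with $K_{\tilde{V}^{\langle M\rangle}/\tilde{V}^{|M|}}$, a term that enters and then cancels.

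\emph{Your route.} You use the crepancy of $(\tilde{V}^{|M|},\tilde{E}^{|M|})\to(W,D)$ together with $\rho^{|M|}\circ(\otimes_{\cO_L}\cO_M)=\rho_G\circ u_M$. The descent then reduces to injectivity of the base change $\cO_L[y]\to\cO_M[y]$ on the coefficients of $(dy)^{\otimes r}$. Neither $\tilde{E}^{\langle M\rangle}$, nor the canonical divisor of the base change, nor norms appear. The cost is that you must quote two facts. First, that crepancy over $W$: in the paper it is a consequence of the two crepancies through $\tilde{V}^{\langle M\rangle}$ plus faithfully flat descent, not the definition of $\tilde{E}^{|M|}$. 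Second, uniqueness of a crepant divisor along the dominant, generically étale map $\rho^{|M|}$.

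\emph{One repair on the choice of $r$.} The conditions $r_i\mid r$ and $r(\tilde{N}_i-\mathfrak{v}_i)\in\ZZ$ do not imply $r_i\mid r(\tilde{N}_i-\mathfrak{v}_i)$ when $\tilde{N}_i\notin\ZZ$ (for example $\tilde{N}_i=\frac12$, $r=r_i=2$). So the equality $a_{i,\upsilon,\xi}^{-r(\tilde{N}_i-\mathfrak{v}_i)}=b_{i,\upsilon,\xi}^{-r(\tilde{N}_i-\mathfrak{v}_i)/r_i}$ need not hold as stated. You can fix this in either of two ways:
\begin{itemize}
\item Take $r$ divisible by $r_i$ times the denominators of the $\tilde{N}_i$. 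In any case $r$ must also make $r(K_{A_G}+D)$ Cartier, which your stated choice does not guarantee.
\item Observe that only valuations are used. In every factor field of $M_{(\upsilon,\xi)}$ one has $\ord(a_{i,\upsilon,\xi})=\frac{1}{r_i}\ord(b_{i,\upsilon,\xi})$, normalized to $\ord_L$, and that is exactly what the divisor computation needs.
\end{itemize}
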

\begin{proof}

We can write $$u_{M_{(\upsilon,\xi)}}=(\otimes_{\cO_L,x}\cO_{M_{(\upsilon,\xi)}})\circ \Pi_{M_{(\upsilon,\xi)}},$$ where $$\otimes_{\cO_L,x}\cO_{M_{(\upsilon,\xi)}}:\AA_{\cO_{M_{(\upsilon,\xi)},x}}^n\to V$$ is the scalar extension  and $$\Pi_{M_{(\upsilon,\xi)}}:\tilde{V}^{\langle M_{(\upsilon,\xi)}\rangle}=\AA_{\cO_{M_{(\upsilon,\xi)},y}}^n \to \AA_{\cO_{M_{(\upsilon,\xi)},x}}^n$$ is the morphism associated to the ring homomorphism \begin{align}\label{mapPi}\Pi_{M_{(\upsilon,\xi)}}^*: \cO_{M_{(\upsilon,\xi)}}[x_1,...,x_n]&\to\cO_{M_{(\upsilon,\xi)}}[y_1,...,y_n],\\ x_i&\mapsto a_{i,\upsilon,\xi}y_i.\nonumber
\end{align} 

 Moreover, it is easy to verify that \begin{equation}\label{canoequ}K_{\tilde{V}^{\langle M_{(\upsilon,\xi)}\rangle}/\tilde{V}^{|M_{(\upsilon,\xi)}|}}=\Pi_{M_{(\upsilon,\xi)}}^*(K_{\AA_{\cO_{M_{(\upsilon,\xi)}},x}^n/V}),
\end{equation}
where both morphisms $\tilde{V}^{\langle M_{(\upsilon,\xi)}\rangle}\to \tilde{V}^{|M_{(\upsilon,\xi)}|}$ and $\AA_{\cO_{M_{(\upsilon,\xi)}},x}^n\to V$ are given by the scalar extension $\otimes_{\cO_L}\cO_{M_{(\upsilon,\xi)}}$.
Thus, it follows from  (\ref{mapU}), (\ref{mapPi}), (\ref{canoequ}), Subsection \ref{twpMcK} and \cite[Lemma 4.6]{Yas} that \begin{align*}
\tilde{E}^{\langle M_{(\upsilon,\xi)}\rangle}&=u_{M_{(\upsilon,\xi)}}^{*}(E)-K_{\tilde{V}^{\langle M_{(\upsilon,\xi)}\rangle}/V}\\
&=u_{M_{(\upsilon,\xi)}}^{*}(\rho_G^*(D)-K_{V/A_G})-K_{\tilde{V}^{\langle M_{(\upsilon,\xi)}\rangle}/\AA_{\cO_{M_{(\upsilon,\xi)}},x}^n}+ \Pi_{M_{(\upsilon,\xi)}}^*(K_{\AA_{\cO_{M_{(\upsilon,\xi)}},x}^n/V})\\
&=u_{M_{(\upsilon,\xi)}}^{*}\left((\tilde{\mathfrak{N}}-\mathfrak{l})(\varpi_L)+\sum_{i=1}^n(\tilde{N}_i-\mathfrak{v}_i+1)E_i\right)-\left(\sum_{i=1}^n(a_{i,\upsilon,\xi})\right)+ K_{\tilde{V}^{\langle M_{(\upsilon,\xi)}\rangle}/\tilde{V}^{|M_{(\upsilon,\xi)}|}}\\
&=(\tilde{\mathfrak{N}}-\mathfrak{l})(\varpi_L)+\sum_{i=1}^n \left[(\tilde{N}_i-\mathfrak{v}_i+1)\tilde{\cE}_i+(\tilde{N}_i-\mathfrak{v}_i)(a_{i,\upsilon,\xi})\right]+ K_{\tilde{V}^{\langle M_{(\upsilon,\xi)}\rangle}/\tilde{V}^{|M_{(\upsilon,\xi)}|}},
\end{align*}  
where $\tilde{\cE}_i$, $(\varpi_L)$ and $(a_{i,\upsilon,\xi})$ are divisors of $\tilde{V}^{\langle M_{(\upsilon,\xi)}\rangle}$ associated with the principal ideals generated by $y_i$, $\varpi_L$ and $a_{i,\upsilon,\xi}$ respectively. Therefore,  \cite[Lemma 7.2]{Yas} and Corollary \ref{corab} imply 
\begin{align*} 
\tilde{E}^{|M_{(\upsilon,\xi)}|}=&\frac{1}{d(G)}(\otimes_{\cO_L}\cO_{M_{(\upsilon,\xi)}})_{*}\left(\tilde{E}^{\langle M_{(\upsilon,\xi)}\rangle}-K_{\tilde{V}^{\langle M_{(\upsilon,\xi)}\rangle}/\tilde{V}^{|M_{(\upsilon,\xi)}|}}\right)
\\
=&\frac{1}{d(G)}(\otimes_{\cO_L}\cO_{M_{(\upsilon,\xi)}})_{*}\left(\sum_{i=1}^n (\tilde{N}_{i}-\mathfrak{v}_{i}+1)\tilde{\cE}_i\right)\\
&+ \frac{1}{d(G)}(\otimes_{\cO_L}\cO_{M_{\upsilon,\xi)}})_{*}\left((\tilde{\mathfrak{N}}-\mathfrak{l})(\varpi_L)+\sum_{i=1}^n(\tilde{N}_i-\mathfrak{v}_i)(a_{i,\upsilon,\xi})\right)\\=&\left(\sum_{i=1}^n (\tilde{N}_{i}-\mathfrak{v}_{i}+1)\hat{E}_i\right)+ (\tilde{\mathfrak{N}}-\mathfrak{l})(\varpi_L)+\left(\sum_{i=1}^n(\tilde{N}_{i}-\mathfrak{v}_{i})\frac{1}{d(G)}(\textnormal{Nm}_{\cO_M/\cO_L}(a_{i,\upsilon,\xi}))\right)\\
=&\left(\sum_{i=1}^n (\tilde{N}_{i}-\mathfrak{v}_{i}+1)\hat{E}_i\right) + (\tilde{\mathfrak{N}}-\mathfrak{l})(\varpi_L)+\left(\sum_{i=1}^n(\tilde{N}_{i}-\mathfrak{v}_{i})\frac{1}{r_{i}}(b_{i,\upsilon,\xi})\right).
\end{align*}
\end{proof}
\begin{lem}\label{compg}We set $V_0=\rho_G^{-1}(W_0)$ and $\tilde{W}_0=\left(\rho_G^{|M_{(\upsilon,\xi)}|}\right)^{-1}(W_0)$. Let $w$ be the standard unit factor of $g$ on $W_0$ as in Lemma-Definition \ref{factorunit}, then the quality $$g\circ\rho_G=(w\circ\rho_G)\varpi_L^{\mathfrak{N}}\prod_{i=1}^n x_i^{N_i}$$
holds in $\cO_V(V_0)$. Moreover, with the notations in Subsection \ref{twpMcK}, the equality
$$g_{|M_{(\upsilon,\xi)}|}=w_{|M_{(\upsilon,\xi)}|}\varpi_L^{\mathfrak{N}}\prod_{i=1}^n a_{i,\upsilon,\xi}^{N_{i}}y_i^{N_{i}}$$
holds in $\cO_{\tilde{V}^{|M_{(\upsilon,\xi)}|}}(\tilde{W}_0)$. In particular, for all $Q_1\in \tilde{P}^{|M_{(\upsilon,\xi)}|}=\left(\rho_G^{|M_{(\upsilon,\xi)}|}\right)^{-1}(P)$ and $Q_2\in \rho_G^{-1}(P)$, we have 
$$w(\rho_G(Q_2))=w(P)=w_{|M_{(\upsilon,\xi)}|}(Q_1)=\mathfrak{u}(g),$$
where $\mathfrak{u}(g)$ is defined in Lemma-Definition \ref{factorunit}.
In addition,  we have $\prod_{i=1}^n a_{i,\upsilon,\xi}^{N_{i}}\in\cO_L$ and $d_j\mid \sum_{i=1}^n\alpha_{ij}N_i$ for all $1\leq j\leq \ell$.
\end{lem}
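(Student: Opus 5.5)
The plan is to apply Lemma-Definition \ref{factorunit} to $g$ on $W_0$ and then push the resulting factorization through the untwisting morphism. Write $g=w\,\varpi_L^{l}\prod_{i=1}^n x_i^{l_i}$ with $w\in\cO_{A_G}(W_0)^{\times}$ and $\prod_i x_i^{l_i}\in\cO_L[x]^G$. Pulling back by $\rho_G$ and comparing divisors on $V_0$ with $\Div(g\circ\rho_G)=\mathfrak{N}\AA^n_{k_L}+\sum_i N_iE_i$, the unit $w\circ\rho_G$ contributes nothing. This forces $l=\mathfrak{N}$ and $l_i=N_i$, which gives the first equality. In particular $\prod_i x_i^{N_i}$ is $G$-invariant. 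Applying $\varphi_G$ yields $\prod_i x_i^{N_i}\otimes\prod_j\zeta_j^{\sum_i\alpha_{ij}N_i}=\prod_i x_i^{N_i}\otimes 1$ in $\cO_L[x]\otimes\cO_G$. Since the monomials $\prod_j\zeta_j^{e_j}$ with $0\le e_j<d_j$ form an $\cO_L$-basis of $\cO_G$, we get $d_j\mid\sum_i\alpha_{ij}N_i$ for every $j$.

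Next, compose with $\rho_G^{|M|}$ for $M=M_{(\upsilon,\xi)}$. By the commutative square in Subsection \ref{twpMcK}, $\rho^{|M|}$ after scalar extension to $\cO_M$ equals $\rho_G\circ u_M$. By (\ref{mapU}), $u_M^*(x_i)=a_{i,\upsilon,\xi}y_i$. Hence, on $\tilde{W}_0\otimes\cO_M$,
$$g_{|M|}=(w\circ\rho_G\circ u_M)\,\varpi_L^{\mathfrak{N}}\prod_i a_{i,\upsilon,\xi}^{N_i}y_i^{N_i},$$
and $w\circ\rho_G\circ u_M=w_{|M|}$ by definition of $w_{|M|}$. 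Since $\cO_L[y]\to\cO_M[y]$ is injective, the identity descends to $\cO_{\tilde{V}^{|M|}}(\tilde{W}_0)$. This descent uses that $g_{|M|}$, $w_{|M|}$ and $\prod_i y_i^{N_i}$ lie in $\cO_L[y]$-sections, so that $\prod_i a_{i,\upsilon,\xi}^{N_i}$ is a ratio in $L$. That membership can be checked directly: $\prod_i a_{i,\upsilon,\xi}^{N_i}=\varpi_L^{\sum_i\beta_{i,\upsilon}N_i}\prod_j t_j^{\sum_i\alpha_{ij}N_i}$, and by the divisibility just shown each $t_j^{\sum_i\alpha_{ij}N_i}$ is a power of $\xi_j\varpi_L^{\upsilon_j}$, which lies in $L$. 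The product also lies in $\cO_M$ because each $a_{i,\upsilon,\xi}\in\cO_M$. Therefore it lies in $L\cap\cO_M=\cO_L$.

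Finally, for the values at points, take $Q_1\in(\rho_G^{|M|})^{-1}(P)$ and $Q_2\in\rho_G^{-1}(P)$. Then $w_{|M|}(Q_1)=w(\rho_G^{|M|}(Q_1))=w(P)$ and $w(\rho_G(Q_2))=w(P)$. Moreover $w(P)=\mathfrak{u}(g)$ by the definition of the standard angular component in Lemma-Definition \ref{factorunit}, read off in the residue field $k_L$ since $P$ is $k_L$-rational.

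I expect the main obstacle to be the bookkeeping in the first step. One must justify that the exponents produced by Lemma-Definition \ref{factorunit} coincide with $\mathfrak{N}$ and $N_i$. One must also check that the divisor comparison on $V_0$ is legitimate, which requires $V_0$ to be normal with $\varpi_L$ and $x_i$ defining distinct prime divisors through $\rho_G^{-1}(P)$.
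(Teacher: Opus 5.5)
Your proof is correct, and its skeleton matches the paper's. Both apply Lemma-Definition \ref{factorunit}, pull back through $\rho_G\circ u_M=\rho_G^{|M|}\circ(\otimes_{\cO_L}\cO_M)$, descend along the injection $\cO_L[y]\hookrightarrow\cO_M[y]$, and evaluate at points. The ``obstacle'' you flag is not really one. In the proof of Lemma-Definition \ref{factorunit}, $l$ and $l_i$ are defined as the coefficients of $(\varpi_L)$ and $E_i$ in $\Div(g\circ\rho_G)$. Moreover, $V_0$ is an open subset of the regular scheme $\AA^n_{\cO_L}$ containing the point $(\varpi_L,x_1,\dots,x_n)$, through which all these prime divisors pass.

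Where you genuinely differ is in the last two claims, whose logical order you reverse.

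\textbf{The paper's order.} It introduces the global invariant $\hat g=\varpi_L^{\mathfrak N}\prod_i x_i^{N_i}\in\cO_{A_G}(A_G)$. Since $\rho_G^{|M|}$ is defined over $\cO_L$ (this is $u_M^*(\cO_L[x]^G)\subset\cO_L[y]$, \cite[Proposition 6.7]{Yas}), $\hat g\circ\rho_G^{|M|}$ lies in $\cO_L[y]$. Its image $\varpi_L^{\mathfrak N}\prod_i a_{i,\upsilon,\xi}^{N_i}y_i^{N_i}$ in $\cO_M[y]$ therefore has coefficient in $\cO_L$, which gives $\prod_i a_{i,\upsilon,\xi}^{N_i}\in\cO_L$ essentially for free. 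The paper then reads off $d_j\mid\sum_i\alpha_{ij}N_i$ from the $L$-basis $\{\prod_j t_j^{e_j}\}$ of $M_{(\upsilon,\xi)}$.

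\textbf{Your order.} You obtain the divisibility first, directly from the $G$-invariance of $\prod_i x_i^{N_i}$ and the $\cO_L$-basis $\{\prod_j\zeta_j^{e_j}\}$ of $\cO_G$. You then deduce the membership from the explicit shape of $a_{i,\upsilon,\xi}$ in Lemma \ref{generator} together with $L\cap\cO_M=\cO_L$.

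\textbf{What each buys.} Your route is more elementary. It makes the divisibility visibly a property of $G$ and the monomial alone, independent of the torsor $M$. The paper's route shows that the membership is forced structurally by the untwisting being defined over $\cO_L$, without using the explicit form of the generators $a_{i,\upsilon,\xi}$. The paper needs that explicit form only for the divisibility step.
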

\begin{proof}Note that the morphism $\rho_G^*:\cO_{A_G}(A_G)\to \cO_V(V)$ is the inclusion morphism $\cO_L[x_1,...,x_n]^G\to \cO_L[x_1,...,x_n]$. Since $\Div(g\circ\rho_G)=\mathfrak{N}\AA_{k_L}^{n}|_{V_0}+\sum_{i=1}^n N_iE_i|_{V_0}$, we use Lemma-Definition \ref{factorunit} to  have $x_1^{N_{1}}...x_n^{N_{n}}\in\cO_L[x_1,...,x_n]^G$ and  $$g\circ\rho_G=(w\circ\rho_G)\varpi_L^{\mathfrak{N}}\prod_{i=1}^n x_i^{N_i}.$$
Let $\hat{g}$ be the $\cO_L$-morphism from $\AA_{\cO_L}^n/G_e\to \AA_{\cO_L}^1$ corresponding to the element $\varpi_L^{\mathfrak{N}}\prod_{i=1}^nx_i^{N_{i}}\in \cO_L[x_1,...,x_n]^G$ then $\rho_{G}^*(\hat{g})=\varpi_L^{\mathfrak{N}}\prod_{i=1}^nx_i^{N_{i}}$.  Moreover, since  $\rho_G^{|M_{(\upsilon,\xi)}|}\circ (\otimes_{\cO_L}\cO_{M_{(\upsilon,\xi)}})=\rho_G\circ u_{M_{(\upsilon,\xi)}}$, we have    $$\left(\rho_G^{|M_{(\upsilon,\xi)}|}\circ (\otimes_{\cO_L}\cO_{M_{(\upsilon,\xi)}})\right)^{*}(\hat{g})=u_{M_{(\upsilon,\xi)}}^{*}\rho_{G}^*(\hat{g})=\varpi_L^{\mathfrak{N}}\prod_{i=1}^n a_{i,\upsilon,\xi}^{N_{i}}y_i^{N_{i}}.$$
By using the notations in Definition \ref{M-st} and the fact that the map $(\otimes_{\cO_L}\cO_{M_{(\upsilon,\xi)}})^{*}$ is the inclusion $\cO_L[y_1,...,y_n]\to \cO_{M_{(\upsilon,\xi)}}[y_1,...,y_n]$, we have  $\prod_{i=1}^n a_{i,\upsilon,\xi}^{N_i}\in \cO_L$ and $\hat{g}_{|M_{(\upsilon,\xi)}|}=\hat{g}\circ \rho_G^{|M_{(\upsilon,\xi)}|}=\varpi_L^{\mathfrak{N}}\prod_{i=1}^n a_{i,\upsilon,\xi}^{N_{i}}y_i^{N_{i}}$. Thus, we have  $$g_{|M_{(\upsilon,\xi)}|}=g\circ \rho_G^{|M_{(\upsilon,\xi)}|}=\left(\hat{g}_{|M_{(\upsilon,\xi)}|}\right)\big|_{\tilde{W}_0}w_{|M_{(\upsilon,\xi)}|}=w_{|M_{(\upsilon,\xi)}|}\varpi_L^{\mathfrak{N}}\prod_{i=1}^n a_{i,\upsilon,\xi}^{N_{i}}y_i^{N_{i}}.$$

The third assertion follows immediately from Lemma-Definition \ref{factorunit} together with the facts that $\rho_G(Q_2)=P$ and $w_{|M_{(\upsilon,\xi)}|}=w\circ\rho_G^{|M_{(\upsilon,\xi)}|}.$

For the last assertion, we showed above that 
$\prod_{i=1}^n a_{i,\upsilon,\xi}^{N_i}\in \cO_L$. By Lemma \ref{generator}, $\prod_{i=1}^n a_{i,\upsilon,\xi}^{N_i}\in \cO_L$ if and only if $\prod_{j=1}^\ell  t_j^{\sum_{i=1}^n\alpha_{ij}N_i}\in \cO_L$. On the other hand, $\{\prod_{j=1}^\ell  t_j^{e_j}\mid 0\leq e_j\leq d_j-1\hspace{0.1cm}\forall j\}$ is a basis of the $L$-vector space $M_{(\upsilon,\xi)}=L[t_1,...,t_\ell]/(t_1^{d_1}-\xi_1\varpi_L^{\upsilon_1},...,t_\ell^{d_\ell}-\xi_{\ell}\varpi_L^{\upsilon_\ell})$. Thus, $\prod_{j=1}^\ell t_j^{\sum_{i=1}^n\alpha_{ij}N_i}\in \cO_L$ if and only if $d_j\mid \sum_{i=1}^n\alpha_{ij}N_i$ for all $1\leq j\leq \ell$.
\end{proof}
\begin{lem}\label{compC}With the notations in Corollary \ref{corab}, we have 
$$\tilde{P}^{|M_{(\upsilon,\xi)}|}=\{\forall 1\leq i\leq n, y_i=0 \textnormal{ iff } b_{i,\upsilon,\xi}\notin\varpi_L\cO_L\}=\{\forall 1\leq i\leq n, y_i=0 \textnormal{ iff } \gamma_{i,\upsilon}=0\}\subset\AA_{k_L}^n=\tilde{V}_{k_L}^{|M_{(\upsilon,\xi)}|}.$$
\end{lem}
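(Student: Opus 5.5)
The plan is to compute the morphism $\rho_G^{|M_{(\upsilon,\xi)}|}:\AA_{\cO_L,y}^n\to A_G$ on coordinate rings and then read off the fibre over $P$. The point $P=\rho_G(\varpi_L\cO_L+(x_1,\dots,x_n))$ is the closed point of $A_G$ cut out by $\varpi_L$ together with all monomials $x^m\in\cO_L[x]^G$ of positive degree. Indeed, by Lemma-Definition \ref{factorunit} the ring $\cO_L[x]^G$ is generated over $\cO_L$ by such monomials, and these generate the maximal ideal of $P$ together with $\varpi_L$. By (\ref{mapU}), $u_{M_{(\upsilon,\xi)}}^*(x^m)=\bigl(\prod_i a_{i,\upsilon,\xi}^{m_i}\bigr)y^m$. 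By the commutative square in Subsection \ref{twpMcK}, and since $u_M^*(\cO_L[x]^G)\subset\cO_L[y]$, the coefficient $c_m:=\prod_i a_{i,\upsilon,\xi}^{m_i}$ lies in $\cO_L$. Hence $\tilde{P}^{|M_{(\upsilon,\xi)}|}$ is the reduced closed subscheme of $\AA_{k_L}^n$ defined by the ideal generated by $\overline{c_m}\,y^m$, where $m$ ranges over the exponents of invariant monomials. The task is therefore to decide, for each $i$, whether this ideal forces $y_i=0$ on the reduced fibre.

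The key computation is $\ord_L(c_m)$. From Lemma \ref{generator} and Corollary \ref{corab}, $a_{i,\upsilon,\xi}$ has (normalized) valuation $\gamma_{i,\upsilon}\in[0,1)$, since $|a_{i,\upsilon,\xi}|=|b_{i,\upsilon,\xi}|^{1/r_i}=q_L^{-\gamma_{i,\upsilon}}$. Multiplicativity of the absolute value on each field factor of $M_{(\upsilon,\xi)}$ then gives $\ord_L(c_m)=\sum_i m_i\gamma_{i,\upsilon}$. Consequently $\overline{c_m}\neq0$ if and only if $m_i=0$ for every $i$ with $\gamma_{i,\upsilon}>0$. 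The condition $\gamma_{i,\upsilon}=0$ is equivalent to $b_{i,\upsilon,\xi}\notin\varpi_L\cO_L$, which gives the equality of the two descriptions in the statement.

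Two inclusions now finish the argument.

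For the first inclusion, let $J=\{i:\gamma_{i,\upsilon}=0\}$. Each $i\in J$ admits the invariant monomial $x_i^{r_i}$, because $x_i^{r_i}$ is $G$-invariant by the definition of $r_i$. Its image is $b_{i,\upsilon,\xi}y_i^{r_i}$, and $b_{i,\upsilon,\xi}$ is a unit, so $y_i=0$ on the fibre.

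For the reverse inclusion, every generator $\overline{c_m}y^m$ with $\overline{c_m}\neq0$ involves only variables $y_i$ with $i\in J$, and it has positive degree. Hence it vanishes on $\{y_i=0\ \forall i\in J\}$. On the other hand, generators with some $m_i>0$ for $i\notin J$ have $\overline{c_m}=0$ and impose no condition. So the fibre is exactly the linear subspace $\{y_i=0\ \forall i\in J\}$, with the remaining $y_i$ free; this is the meaning of ``$y_i=0$ iff $\gamma_{i,\upsilon}=0$''.

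The main obstacle is the valuation claim $\ord_L(c_m)=\sum m_i\gamma_{i,\upsilon}$. The element $a_{i,\upsilon,\xi}$ lives in the possibly non-field algebra $M_{(\upsilon,\xi)}$, so the argument must pass to the field factors $L_{j,\upsilon,\xi}$, where each $a_{i,\upsilon,\xi}$ is invertible with $a_{i,\upsilon,\xi}^{r_i}=b_{i,\upsilon,\xi}\in\cO_L$. There one uses the unique extension of $|\cdot|$ and checks that the resulting value is independent of the factor. A secondary point is the reduced-structure convention: the fibre is taken with reduced structure, so only radicals matter.
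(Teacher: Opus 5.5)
Your proof is correct. You also read the notation ``$y_i=0$ iff $\gamma_{i,\upsilon}=0$'' the intended way: the fibre is the linear subspace $\{y_i=0\ \forall i\text{ with }\gamma_{i,\upsilon}=0\}$ with the remaining coordinates free, which is what Corollary \ref{strequality} uses through the domains $A_{i,\upsilon,\xi}$.

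Your route differs from the paper's, which never describes the maximal ideal of $P$ in $A_G$. The paper notes that the preimage of $P$ in $V_{k_L}$ is set-theoretically the origin. Since $u_{M_{(\upsilon,\xi)}}$ is diagonal and linear, pulling back gives the ideal $(\varpi_L, a_{1,\upsilon,\xi}y_1,\dots,a_{n,\upsilon,\xi}y_n)$ of $\cO_{M_{(\upsilon,\xi)}}[y]$. The paper then identifies $\tilde{P}^{|M_{(\upsilon,\xi)}|}$ with the image of this fibre under the scalar extension $\AA^n_{\cO_{M_{(\upsilon,\xi)}},y}\to\AA^n_{\cO_L,y}$, a step that needs this finite flat map to be surjective. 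It concludes from the dichotomy that $a_{i,\upsilon,\xi}$ is a unit or nilpotent modulo $\varpi_L$, according to $a_{i,\upsilon,\xi}^{r_i}=b_{i,\upsilon,\xi}$ and Corollary \ref{corab}.

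You instead work on $\AA^n_{\cO_L,y}$, where the fibre is defined. You use the monomial generators of $\cO_L[x]^G$ and the valuation identity $\ord_L(c_m)=\sum_i m_i\gamma_{i,\upsilon}$.

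\begin{itemize}
\item The paper's version is shorter, because the diagonal form of $u_{M_{(\upsilon,\xi)}}$ removes any need to know the invariant ring.
\item Yours avoids the image-of-preimage step and exhibits the scheme-theoretic fibre ideal $(\overline{c_m}\,y^m)$ explicitly. The cost is describing $P$ through invariant monomials and computing valuations on the field factors of $M_{(\upsilon,\xi)}$.
\end{itemize}

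Both arguments rest on the same key fact: $a_{i,\upsilon,\xi}$ has valuation $\gamma_{i,\upsilon}$, so it is a unit exactly when $\gamma_{i,\upsilon}=0$.
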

\begin{proof}
The preimage of $P$ in $V_{k_L}$ corresponds to  the maximal ideal $(x_1,...,x_n,\varpi_L)$ of $\cO_L[x_1,...,x_n]$. Thus, the preimage $P^{\langle M_{(\upsilon,\xi)}\rangle}$ of $P$ in $\tilde{V}^{\langle M_{(\upsilon,\xi)}\rangle}_{k_L}$ corresponds to the ideal of $\cO_{M_{(\upsilon,\xi)}}[y_1,...,y_n]$  generated by $\varpi_L$ and $a_{i,\upsilon,\xi}y_i$ for $1\leq i\leq n$. The assertion follows from Corollary \ref{corab} and the fact that    $$\tilde{P}^{|M_{(\upsilon,\xi)}|}=\otimes_{\cO_L}\cO_{M_{(\upsilon,\xi)}}\left(P^{\langle M_{(\upsilon,\xi)}\rangle}\right).$$ 
\end{proof} 
\begin{cor}\label{strequality}For each $(\upsilon,\xi)\in \cS$ and $1\leq i\leq n$, we put $A_{i,\upsilon,\xi}=\cO_L$ if $\gamma_{i,\upsilon}>0$ and $A_{i,\upsilon,\xi}=\varpi_L\cO_L$ if $\gamma_{i,\upsilon}=0$. Then,
we have 
\begin{align*}&\#_{\textnormal{st}}^{g,\chi,M_{(\upsilon,\xi)}}(V,E)_{P} 
\\=&\frac{q_L^{n}}{\#G(\cO_L)}\int_{\prod_{i=1}^nA_{i,\upsilon,\xi}}\chi\left(\acc\left(w_{|M_{(\upsilon,\xi)}|}(y)\prod_{i=1}^n a_{i,\upsilon,\xi}^{N_{i}}y_i^{N_{i}}\right)\right)q_L^{\tilde{\mathfrak{N}}-\mathfrak{l}}\prod_{i=1}^nq_L^{\gamma_{i,\upsilon}(\tilde{N}_i-\mathfrak{v}_i)}|y_i|^{-\tilde{N}_i+\mathfrak{v}_{i}-1}|dy|
\end{align*}
for all $(\upsilon,\xi)\in \cS$.
Consequently, we have 
\begin{align*}&\#_{\textnormal{st}}^{g,\chi}(A_G,D)_P\\=&\frac{q_L^{n}}{\#G(\cO_L))}\sum_{(\upsilon,\xi)\in \cS} \int_{\prod_{i=1}^nA_{i,\upsilon,\xi}}\chi\left(\acc\left(w_{|M_{(\upsilon,\xi)}|}(y)\prod_{i=1}^n a_{i,\upsilon,\xi}^{N_{i}}y_i^{N_{i}}\right)\right)q_L^{\tilde{\mathfrak{N}}-\mathfrak{l}}\prod_{i=1}^nq_L^{\gamma_{i,\upsilon}(\tilde{N}_i-\mathfrak{v}_i)}|y_i|^{-\tilde{N}_i+\mathfrak{v}_{i}-1}|dy|
\end{align*}
\end{cor}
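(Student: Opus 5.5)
The plan is to unwind Definition \ref{M-st} and Definition \ref{defnstringy} for $V=\AA_{\cO_L,x}^n$, using the explicit description of $\tilde{V}^{|M_{(\upsilon,\xi)}|}=\AA_{\cO_L,y}^n$ obtained above. By definition,
\[
\#_{\rm st}^{g,\chi,M_{(\upsilon,\xi)}}(V,E)_P=\frac{q_L^n}{\#(_{M}\cH)}\int_{\AA^n_{\cO_L,y}(\cO_L)_{\tilde{P}^{|M|}}}\chi\bigl(\acc(g_{|M|})\bigr)\,\mu_{\tilde{V}^{|M|},\tilde{E}^{|M|}},
\]
with $M=M_{(\upsilon,\xi)}$. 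Since $G$ is abelian, Notation \ref{GHop} gives $_M\cH=G(L)=G(\cO_L)$, so the prefactor is $q_L^n/\#G(\cO_L)$.

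Next I identify the three ingredients. First, the domain of integration: by Lemma \ref{compC}, $\tilde{P}^{|M|}$ is the closed subset of $\AA^n_{k_L}$ cut out by $y_i=0$ exactly for those $i$ with $\gamma_{i,\upsilon}=0$, the other coordinates being free. Hence $\AA^n_{\cO_L,y}(\cO_L)_{\tilde{P}^{|M|}}=\prod_i A_{i,\upsilon,\xi}$. Second, the integrand: Lemma \ref{compg} gives $g_{|M|}=w_{|M|}\varpi_L^{\mathfrak{N}}\prod_i a_{i,\upsilon,\xi}^{N_i}y_i^{N_i}$. Since $\acc(\varpi_L^{\mathfrak N}u)=\acc(u)$, the factor $\varpi_L^{\mathfrak N}$ drops out of $\chi(\acc(\cdot))$, which leaves exactly the stated character term. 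Third, the measure: $\tilde{V}^{|M|}$ is smooth affine space with $K$ trivialized by $dy$, so $\mu_{\tilde V^{|M|},\tilde E^{|M|}}=|h|\,|dy|$. Here $h$ is a local equation of $-\tilde{E}^{|M|}$, taken formally with rational exponents and made precise by passing to $r$-th powers as in the definition of $\mu_\cI$.

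Lemma \ref{comdivi} then gives
\[
\mu=|\varpi_L|^{-(\tilde{\mathfrak N}-\mathfrak l)}\prod_i|y_i|^{-(\tilde N_i-\mathfrak v_i+1)}\,|b_{i,\upsilon,\xi}|^{-(\tilde N_i-\mathfrak v_i)/r_i}\,|dy|.
\]
Using $|\varpi_L|=q_L^{-1}$ and Corollary \ref{corab}, namely $|b_{i,\upsilon,\xi}|^{1/r_i}=q_L^{-\gamma_{i,\upsilon}}$, this becomes $q_L^{\tilde{\mathfrak N}-\mathfrak l}\prod_i q_L^{\gamma_{i,\upsilon}(\tilde N_i-\mathfrak v_i)}|y_i|^{-\tilde N_i+\mathfrak v_i-1}|dy|$. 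Assembling the three ingredients yields the first formula.

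For the second formula, I apply Theorem \ref{stringc}. The morphism $\rho_G:(V,E)\to(A_G,D)$ is crepant by the choice $E=\rho_G^*D-K_{V/A_G}$, and $W=A_G$, $C=P$, so $\#_{\rm st}^{g,\chi}(A_G,D)_P=\sum_{M}\#_{\rm st}^{g,\chi,M}(V,E)_P$. The identification $G\textnormal{-\'Et}(L)\leftrightarrow\cS$ then turns this into a sum over $(\upsilon,\xi)\in\cS$, and substituting the first formula gives the second.

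I expect the main obstacle to be the measure step. One must justify that the sign conventions for $\mu_{X,D}$ (the sheaf $\cO(r(D+K_X))$) translate the $\QQ$-divisor $\tilde E^{|M|}$ into the factor $|{\rm local\ equation}|^{-1}$ with the stated exponents. One must also check that the $\QQ$-divisor $\frac1{r_i}(b_{i,\upsilon,\xi})$, which is supported on the special fibre, contributes the constant $|b_{i,\upsilon,\xi}|^{-(\tilde N_i-\mathfrak v_i)/r_i}$. I would verify both on a test case, such as the trivial group with $D=-s\,{\rm div}(f)$ as in Example \ref{exmp}, to fix the signs.
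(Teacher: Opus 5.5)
Your proposal is correct and matches the paper's proof step for step. You unwind Definitions \ref{M-st} and \ref{defnstringy}, use Notation \ref{GHop} for the prefactor, Lemma \ref{compC} for the domain, Lemma \ref{compg} for the integrand, Lemma \ref{comdivi} with Corollary \ref{corab} for the measure, and finally Theorem \ref{stringc} for the sum over $\cS$. Your sign conventions are also the paper's (compare Example \ref{exmp}, where $D_{f,s}=-s\,\Div(f)$ yields $|f|^s$), so the measure step you flagged as an obstacle is a direct substitution.
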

\begin{proof}
By Definitions \ref{defnstringy}, \ref{M-st}, Notation \ref{GHop}, Lemmas \ref{comdivi}, \ref{compg}, \ref{compC} and Corollary \ref{corab}, we have 
\begin{align*}
&\#_{\textnormal{st}}^{g,\chi,M_{(\upsilon,\xi)}}(V,E)_{P} 
\\=&\#_{\textnormal{st}}^{g_{|M_{(\upsilon,\xi)}|},\chi}(\tilde{V}^{|M_{(\upsilon,\xi)}|},\tilde{E}^{|M_{(\upsilon,\xi)}|})_{\tilde{P}^{|M_{(\upsilon,\xi)}|}}
\\=&\frac{q_L^{n}}{\#G(\cO_L)}\int_{\prod_{i=1}^nA_{i,\upsilon,\xi}}\chi\left(\acc\left(w_{|M_{(\upsilon,\xi)}|}(y)\varpi_L^{\mathfrak{N}}\prod_{i=1}^n a_{i,\upsilon,\xi}^{N_{i}}y_i^{N_{i}}\right)\right)|\varpi_L|^{-\tilde{\mathfrak{N}}+\mathfrak{l}}\prod_{i=1}^n|b_{i,\upsilon,\xi}|^{\frac{-\tilde{N}_i+\mathfrak{v}_i}{r_{i}}}|y_i|^{-\tilde{N}_i+\mathfrak{v}_{i}-1}|dy|\\
=&\frac{q_L^{n}}{\#G(\cO_L)}\int_{\prod_{i=1}^nA_{i,\upsilon,\xi}}\chi\left(\acc\left(w_{|M_{(\upsilon,\xi)}|}(y)\prod_{i=1}^n a_{i,\upsilon,\xi}^{N_{i}}y_i^{N_{i}}\right)\right)q_L^{\tilde{\mathfrak{N}}-\mathfrak{l}}\prod_{i=1}^nq_L^{\gamma_{i,\upsilon}(\tilde{N}_i-\mathfrak{v}_i)}|y_i|^{-\tilde{N}_i+\mathfrak{v}_{i}-1}|dy| 
\end{align*}
The second assertion follows from the first assertion, Definition \ref{M-st} and Theorem \ref{stringc}. 
\end{proof}
\begin{prop}\label{larcond}Assume that $c(\chi)>1$, $N_i\notin \varpi_L\cO_L$ for at least one index $i$ and $\tilde{N}_j<\mathfrak{v}_j$ for all $1\leq j\leq n$. Then, we have
$$\#_{\textnormal{st}}^{g,\chi}(A_G,D)_P=0$$
\end{prop}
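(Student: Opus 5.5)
Start from the explicit formula of Corollary \ref{strequality}: $\#_{\rm st}^{g,\chi}(A_G,D)_P$ is a finite sum over $(\upsilon,\xi)\in\cS$ of integrals over $\prod_i A_{i,\upsilon,\xi}$. It therefore suffices to show that each summand
\[
I_{\upsilon,\xi}:=\int_{\prod_{i}A_{i,\upsilon,\xi}}\chi\Bigl(\acc\Bigl(w_{|M_{(\upsilon,\xi)}|}(y)\prod_{i} a_{i,\upsilon,\xi}^{N_i}y_i^{N_i}\Bigr)\Bigr)\prod_{i}|y_i|^{-\tilde N_i+\mathfrak v_i-1}|dy|
\]
vanishes. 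The constant factors $q_L^{\tilde{\mathfrak N}-\mathfrak l}$ and $q_L^{\gamma_{i,\upsilon}(\tilde N_i-\mathfrak v_i)}$ do not matter. The hypothesis $\tilde N_j<\mathfrak v_j$ says that every exponent $-\tilde N_j+\mathfrak v_j-1$ is $>-1$. Hence the integrand is absolutely integrable, and we may use Fubini and decompose the domain freely.

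Fix an index $i_0$ with $N_{i_0}\notin\varpi_L\cO_L$, so $N_{i_0}$ is a unit in $\cO_L$. The constant $c:=\prod_i a_{i,\upsilon,\xi}^{N_i}$ lies in $\cO_L\setminus\{0\}$ by Lemma \ref{compg}, so $\acc(c)$ is a fixed unit. Decompose the domain according to the valuations $\ord(y_i)=k_i$, writing $y_i=\varpi_L^{k_i}u_i$ with $u_i\in\cO_L^\times$. On each piece the weight $\prod|y_i|^{\cdots}$ is constant, and the character factor becomes
\[
\chi(\acc(c))\,\chi\bigl(w_{|M|}(y)\bigr)\prod_i\chi(u_i)^{N_i}.
\]
Since $w_{|M|}$ is a regular unit with integral coefficients, $\acc(w(y))=w(y)$ up to the normalisation, and modulo $\varpi_L^{c(\chi)}$ it depends only on $y\bmod \varpi_L^{c(\chi)}$.

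The key step is the substitution $u_{i_0}\mapsto u_{i_0}(1+\varpi_L^{c(\chi)-1}t)$ with $t\in\cO_L$. This is measure preserving on $\varpi_L^{k_{i_0}}\cO_L^\times$, and it changes $y_{i_0}$ by a multiple of $\varpi_L^{c(\chi)-1+k_{i_0}}$. I would first check that $w(y)$ is unchanged modulo $\varpi_L^{c(\chi)}$; if this fails, the strategy is to absorb the defect into the standard-unit structure via Remark \ref{choiceangu}. The factor $\chi(u_{i_0})^{N_{i_0}}$ gets multiplied by $\chi(1+\varpi_L^{c(\chi)-1}t)^{N_{i_0}}$. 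Because $c(\chi)>1$, the map $t\mapsto\chi(1+\varpi_L^{c(\chi)-1}t)$ is a nontrivial additive character of $k_L$. Because $N_{i_0}$ is a unit, its $N_{i_0}$-th power is still nontrivial. Averaging over $t$ then makes the piece vanish.

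The main obstacle is the effect of the substitution on $w(y)$. $w$ is only a unit, not constant, and a first-order change in $y_{i_0}$ at precision $\varpi_L^{c(\chi)-1}$ could change $w$ by a factor $1+\varpi_L^{c(\chi)-1}(\cdots)$, which would alter the phase. I would handle this by Taylor expanding $w$. The derivative term contributes $\varpi_L^{c(\chi)-1+k_{i_0}}\partial_{i_0}w/w$, which is harmless when $k_{i_0}\ge1$. When $k_{i_0}=0$, it combines with the $N_{i_0}t$ term into a linear function of $t$ of the form $(N_{i_0}+u_{i_0}\partial_{i_0}w/w)t$. If that coefficient vanishes mod $\varpi_L$, one instead uses the homogeneity structure: replace $y$ by the torus scaling $y_i\mapsto \lambda^{e_i}y_i$, which preserves $w$ up to the weight relation. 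Alternatively, one uses that summing over the full $\GG_m^n$-orbits kills the character because $\chi^{N_{i_0}}$ is nontrivial on $1+\varpi_L^{c(\chi)-1}\cO_L$. I would first try the simpler argument and verify that the cross term cannot cancel $N_{i_0}$ generically.
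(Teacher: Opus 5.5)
\paragraph{Verdict: there is a gap at the step you flag as the main obstacle.}
Several parts of your proposal are correct:
\begin{itemize}
\item the reduction to a single summand of Corollary \ref{strequality};
\item the choice of $i_0$ with $N_{i_0}\in\cO_L^\times$ and the convergence remark;
\item the fact that $t\mapsto\chi(1+\varpi_L^{c(\chi)-1}t)^{N_{i_0}}$ is a nontrivial additive character of $k_L$;
\item the treatment of the case $k_{i_0}\ge 1$.
\end{itemize}
The gap is the case $k_{i_0}=0$, which can occur only when $A_{i_0,\upsilon,\xi}=\cO_L$, i.e.\ $\gamma_{i_0,\upsilon}>0$. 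There you never show that $N_{i_0}+u_{i_0}\partial_{i_0}w/w$ is a unit at \emph{every} point of the domain. A ``generic'' non-cancellation would not suffice, and your fallbacks do not supply it:
\begin{itemize}
\item $w_{|M|}=w_{|M_{(\upsilon,\xi)}|}$ is the pull-back of an arbitrary standard unit factor, so no weight relation makes it invariant under a torus scaling.
\item ``Summing over $\GG_m^n$-orbits'' only restates what has to be proved.
\item Remark \ref{choiceangu} concerns the ambiguity in choosing $\mathfrak{u}_e$, not how $w$ varies inside one integral.
\end{itemize}

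\paragraph{The missing ingredient and how it closes your argument.}
What you need is (\ref{inv}), which comes from Lemmas \ref{compg} and \ref{compC}: $w_{|M|}(y)\equiv\mathfrak{u}(g)\bmod\varpi_L$ for every $y\in\prod_iA_{i,\upsilon,\xi}$. This holds because $\rho_G^{|M|}$ sends the whole reduced fibre $\tilde P^{|M|}$, a linear subspace of $\AA^n_{k_L}$, to the single point $P$.

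Concretely, rescale $y_i=\varpi_L z_i$ for the indices with $\gamma_{i,\upsilon}=0$. The domain becomes $\cO_L^n$, and $w_{|M|}=w_0(1+\varpi_L h(z))$, where $w_0\in\cO_L^\times$ is a constant lift of $\mathfrak{u}(g)$ and $h$ is an $\cO_L$-integral analytic function on $\cO_L^n$.

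This rescues your argument directly. Shifting $z_{i_0}$ by a multiple of $\varpi_L^{c(\chi)-1}$ leaves $w_{|M|}$ unchanged modulo $\varpi_L^{c(\chi)}$. In particular $\partial_{i_0}w_{|M|}\in\varpi_L\cO_L$ on the domain when $A_{i_0,\upsilon,\xi}=\cO_L$, so the cross term vanishes mod $\varpi_L$. The phase is then exactly $\chi(1+\varpi_L^{c(\chi)-1}t)^{N_{i_0}}$, and the average over $t$ vanishes.

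\paragraph{The paper's route.}
The paper instead follows Denef's proof of \cite[Theorem 2.1]{Denef91}:
\begin{itemize}
\item Since $N_{i_0}$ is a unit, $(w_{|M|}/w_0)^{1/N_{i_0}}$ is an analytic unit, given by the binomial series.
\item The substitution $z_{i_0}\mapsto z_{i_0}(w_{|M|}/w_0)^{1/N_{i_0}}$ is a measure-preserving bijection of $\cO_L^n$ that preserves $|z_{i_0}|$ and replaces $w_{|M|}$ by the constant $w_0$.
\item The integral then factors. The $z_{i_0}$-factor $\int_{\cO_L}\chi(\acc(z))^{N_{i_0}}|z|^{\beta}|dz|$ vanishes, because $\chi^{N_{i_0}}$ is nontrivial on $1+\varpi_L^{c(\chi)-1}\cO_L$.
\end{itemize}
Either way, what your proposal lacks is the constancy of $w_{|M|}$ modulo $\varpi_L$ on the whole domain.
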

\begin{proof}
It follows from Corollary \ref{strequality} and the proof of \cite[Theorem 2.1]{Denef91}.
\end{proof}
\begin{prop}\label{conducte1}
Suppose that $c(\chi)=1$ and $\tilde{N}_i<\mathfrak{v}_i$ for all $1\leq i\leq n$.
\begin{enumerate}
\item If $d(\chi)\nmid N_{i}$ for some $i$, then we have
$\#_{\textnormal{st}}^{g,\chi}(A_G,D)_P=0$.
\item If $d(\chi)\mid N_{i}$ for all $1\leq i\leq n$, then we have  
$$\#_{\textnormal{st}}^{g,\chi}(A_G,D)_P=\Lambda_{G,D,\chi}\overline{\chi}(\mathfrak{u}(g))\prod_{i=1}^n\frac{(q_L-1)q_L^{\tilde{N}_{i}-\mathfrak{v}_{i}}}{1-q_L^{\tilde{N}_i-\mathfrak{v}_{i}}},$$ where 
$$\Lambda_{G,D,\chi}=\frac{q_L^{\tilde{\mathfrak{N}}-\mathfrak{l}}}{\#(G(\cO_L))}\sum_{(\upsilon,\xi)\in\cS}\chi\left(\acc\left(\prod_{i=1}^n a_{i,\upsilon,\xi}^{N_i}\right)\right)\prod_{i=1}^nq_L^{(\delta_{i,\upsilon}-\gamma_{i,\upsilon})(-\tilde{N}_i+\mathfrak{v}_{i})}$$
with $\delta_{i,\upsilon}=1$ if $\gamma_{i,\upsilon}>0$ and $\delta_{i,\upsilon}=0$ if $\gamma_{i,\upsilon}=0$.
\end{enumerate}  
\end{prop}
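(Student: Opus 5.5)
The plan is to start from the explicit formula in Corollary \ref{strequality} and evaluate each summand indexed by $(\upsilon,\xi)\in\cS$ separately. Since $c(\chi)=1$, the factor $\chi(\acc(\cdot))$ depends only on the residue class of its argument, so $\chi(\acc(ab))=\chi(\acc(a))\chi(\acc(b))$ for nonzero $a,b$. Lemma \ref{compg} gives $\prod_i a_{i,\upsilon,\xi}^{N_i}\in\cO_L\setminus\{0\}$, so this factor comes out as the constant $\chi(\acc(\prod_i a_{i,\upsilon,\xi}^{N_i}))$. For $w_{|M_{(\upsilon,\xi)}|}(y)$ I use that $w$ is a unit on $W_0$. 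Every $y$ in the domain $\prod_i A_{i,\upsilon,\xi}$ reduces into $\tilde P^{|M_{(\upsilon,\xi)}|}$ by Lemma \ref{compC}, and Lemma \ref{compg} says $w_{|M_{(\upsilon,\xi)}|}$ takes the constant value $\mathfrak{u}(g)$ on that fibre. Hence $\acc(w_{|M_{(\upsilon,\xi)}|}(y))\equiv \mathfrak{u}(g)\bmod \varpi_L$, and this factor contributes exactly $\overline{\chi}(\mathfrak{u}(g))$. What remains is $\prod_i\chi(\acc(y_i))^{N_i}$, i.e. $\prod_i\overline{\chi}(\overline{\acc(y_i)})^{N_i}$.

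The integrand then factors as a product over $i$ of one-variable integrals. Each is $\int_{A_{i,\upsilon,\xi}}\chi^{N_i}(\acc(y_i))|y_i|^{-\tilde N_i+\mathfrak{v}_i-1}|dy_i|$, multiplied by the constants $q_L^{\tilde{\mathfrak N}-\mathfrak l}$ and $q_L^{\gamma_{i,\upsilon}(\tilde N_i-\mathfrak v_i)}$. I decompose $A_{i,\upsilon,\xi}$ into shells $\varpi_L^k\cO_L^\times$ with $k\ge 0$ if $\gamma_{i,\upsilon}>0$, and $k\ge1$ if $\gamma_{i,\upsilon}=0$. On each shell $\int_{\cO_L^\times}\chi^{N_i}(\acc(u))|du|$ vanishes unless $\chi^{N_i}$ is trivial. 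Since $c(\chi)=1$, this happens exactly when $d(\chi)\mid N_i$. This proves (1): if $d(\chi)\nmid N_i$ for some $i$, every summand over $\cS$ vanishes, so $\#_{\rm st}^{g,\chi}(A_G,D)_P=0$.

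For (2), each shell integral equals $(1-q_L^{-1})q_L^{-k(-\tilde N_i+\mathfrak v_i)}$. The condition $\tilde N_i<\mathfrak v_i$ ensures convergence of the geometric series. Summing over $k\ge\delta'$, where $\delta'=1-\delta_{i,\upsilon}$ is the lower index, gives
\[(1-q_L^{-1})\frac{q_L^{-\delta'(\mathfrak v_i-\tilde N_i)}}{1-q_L^{\tilde N_i-\mathfrak v_i}}=\frac{(q_L-1)q_L^{\tilde N_i-\mathfrak v_i}}{1-q_L^{\tilde N_i-\mathfrak v_i}}\cdot q_L^{-1}q_L^{\delta_{i,\upsilon}(\mathfrak v_i-\tilde N_i)}.\]
The factor $q_L^{-1}$ for each $i$ cancels the prefactor $q_L^n$. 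Combined with $q_L^{\gamma_{i,\upsilon}(\tilde N_i-\mathfrak v_i)}$, this leaves $q_L^{(\delta_{i,\upsilon}-\gamma_{i,\upsilon})(-\tilde N_i+\mathfrak v_i)}$. Summing over $(\upsilon,\xi)\in\cS$ and dividing by $\#G(\cO_L)$ yields exactly $\Lambda_{G,D,\chi}$ times the stated product.

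The main obstacle is the first step: justifying that $\acc(w_{|M|}(y))$ reduces to $\mathfrak u(g)$ on the whole domain of integration. This requires that the domain $\prod_i A_{i,\upsilon,\xi}$ matches $\tilde V^{|M|}(\cO_L)_{\tilde P^{|M|}}$, which is Lemma \ref{compC}. It also requires that $w_{|M|}$ is a regular unit near that fibre, so its value mod $\varpi_L$ is determined by the reduction of $y$. The remaining work is bookkeeping of exponents.
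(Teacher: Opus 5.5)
Your proposal is correct and follows the paper's proof. The paper likewise uses Lemmas \ref{compg} and \ref{compC} to get $\chi(\acc(w_{|M_{(\upsilon,\xi)}|}(y)))=\overline{\chi}(\mathfrak{u}(g))$ on $\prod_i A_{i,\upsilon,\xi}$, factors the integral of Corollary \ref{strequality} into one-variable integrals, and evaluates these by Denef's argument \cite[Theorem 2.2]{Denef91}, which is exactly your shell computation. One cosmetic slip: $\chi(\acc(ab))=\chi(\acc(a))\chi(\acc(b))$ holds for every $\chi$ because $\acc$ is multiplicative, and $\chi^{N_i}=1\Leftrightarrow d(\chi)\mid N_i$ needs no conductor hypothesis either; $c(\chi)=1$ is only needed to replace $\chi(\acc(w_{|M_{(\upsilon,\xi)}|}(y)))$ by $\overline{\chi}(\mathfrak{u}(g))$.
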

\begin{proof} It follows from Lemmas \ref{compg} and \ref{compC} that
\begin{equation}\label{inv}
w_{|M|}(y) \mod \varpi_L\cO_L=\mathfrak{u}(g)
\end{equation} 
for all $(\upsilon,\xi)\in\cS$ and all $y\in \prod_{i=1}^nA_{i,\upsilon,\xi}$.

Since $c(\chi)=1$, we have $\chi(\acc(w_{|M|}(y)))=\overline{\chi}(\mathfrak{u}(g))$
for all  $(\upsilon,\xi)\in\cS$ and all $y\in \prod_{i=1}^nA_{i,\upsilon,\xi}$.  By using Corollary \ref{strequality} and repeating the argument in  \cite[Theorem 2.2]{Denef91}, if $d(\chi)\nmid N_i$ for some $i$ then $$\#_{\textnormal{st}}^{g,\chi}(A_G,D)_P=0.$$
Item (1) is proved. 

Suppose that $d(\chi)\mid N_{i}$ for all $i$.  Then we use \cite[Theorem 2.2]{Denef91} again to have 
\begin{align*}
&\int_{\prod_{i=1}^nA_{i,\upsilon,\xi}}\chi\left(\acc\left(w_{|M_{(\upsilon,\xi)}|}(y)\prod_{i=1}^n a_{i,\upsilon,\xi}^{N_{i}}y_i^{N_{i}}\right)\right)q_L^{\tilde{\mathfrak{N}}-\mathfrak{l}}\prod_{i=1}^nq_L^{\gamma_{i,\upsilon}(\tilde{N}_i-\mathfrak{v}_i)}|y_i|^{-\tilde{N}_i+\mathfrak{v}_{i}-1}|dy| \\
=&q_L^{\tilde{\mathfrak{N}}-\mathfrak{l}}\left(\prod_{i=1}^nq_L^{\gamma_{i,\upsilon}(\tilde{N}_i-\mathfrak{v}_i)}\right)\chi\left(\acc\left(\prod_{i=1}^n a_{i,\upsilon,\xi}^{N_{i}}\right)\right)\overline{\chi}(\mathfrak{u}(g))\prod_{i=1}^n\int_{A_{i,\upsilon,\xi}}\chi\left(\acc(y_i)^{N_{i}}\right)|y_i|^{-\tilde{N}_{i}+\mathfrak{v}_{i}-1}|dy_i|\\
=&q_L^{\tilde{\mathfrak{N}}-\mathfrak{l}}\left(\prod_{i=1}^nq_L^{\gamma_{i,\upsilon}(\tilde{N}_i-\mathfrak{v}_i)}\right)\chi\left(\acc\left(\prod_{i=1}^n a_{i,\upsilon,\xi}^{N_{i}}\right)\right)\overline{\chi}(\mathfrak{u}(g))\prod_{i=1}^n\int_{A_{i,\upsilon,\xi}}|y_i|^{-\tilde{N}_{i}+\mathfrak{v}_{i}-1}|dy_i|\\
=&q_L^{\tilde{\mathfrak{N}}-\mathfrak{l}}\left(\prod_{i=1}^nq_L^{\gamma_{i,\upsilon}(\tilde{N}_i-\mathfrak{v}_i)}\right)\chi\left(\acc\left(\prod_{i=1}^n a_{i,\upsilon,\xi}^{N_{i}}\right)\right)\overline{\chi}(\mathfrak{u}(g))\prod_{i=1}^n\frac{(q_L-1)q_L^{(1-\delta_{i,\upsilon})(\tilde{N}_i-\mathfrak{v}_{i})}}{q_L(1-q_L^{\tilde{N}_i-\mathfrak{v}_{i}})}.
\end{align*}
This and Corollary \ref{strequality} imply Item (2).
\end{proof}
\begin{prop}\label{Gamsom}We set $$\eta_{j}=\frac{\sum_{1\leq i\leq n}\alpha_{ij}N_{i}}{d_j}$$ for $1\leq j\leq \ell$. Then $\eta_j\in \NN$ for all $1\leq j\leq \ell$.  We keep the assumptions of Proposition \ref{conducte1}. Moreover, we suppose that $d(\chi)\mid N_i$ for all $1\leq i\leq n$ and $\gcd(d(G),p_L)=1$. Then we have 
$$\Lambda_{G,D,\chi}=q_L^{\tilde{\mathfrak{N}}-\mathfrak{l}}\sum_{\upsilon\in \prod_{j=1}^\ell\{0,...,d_j-1\}}q_L^{\sum_{1\leq i\leq n}(\delta_{i,\upsilon}-\gamma_{i,\upsilon})(-\tilde{N}_{i}+\mathfrak{v}_{i})}$$
if $d(\chi)\mid \eta_{j}$ for all $j$. If $d(\chi)\nmid \eta_{j}$ for some $j$ then $\Lambda_{G,D,\chi}=0$.
\end{prop}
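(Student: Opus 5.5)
First, $\eta_j\in\NN$ is exactly the last assertion of Lemma \ref{compg}: there $d_j\mid\sum_i\alpha_{ij}N_i$ for all $j$, and the $\alpha_{ij}$ and $N_i$ are natural numbers. For the formula, I will start from the expression of $\Lambda_{G,D,\chi}$ in Proposition \ref{conducte1}(2). The exponent factor $\prod_i q_L^{(\delta_{i,\upsilon}-\gamma_{i,\upsilon})(-\tilde N_i+\mathfrak v_i)}$ depends only on $\upsilon$, not on $\xi$. So I split the sum over $\cS$ as a sum over $\upsilon\in\prod_j\{0,\dots,d_j-1\}$ and an inner sum over $\xi\in\prod_j A_j$. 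The proposition then reduces to evaluating
$$S_\upsilon:=\sum_{\xi\in\prod_j A_j}\chi\Bigl(\acc\Bigl(\prod_{i=1}^n a_{i,\upsilon,\xi}^{N_i}\Bigr)\Bigr).$$
The goal is to show $S_\upsilon=\#G(\cO_L)$ when $d(\chi)\mid\eta_j$ for all $j$, and $S_\upsilon=0$ otherwise.

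To evaluate the summand, I use Lemma \ref{generator}:
$$\prod_i a_{i,\upsilon,\xi}^{N_i}=\varpi_L^{\sum_i\beta_{i,\upsilon}N_i}\prod_j t_j^{\sum_i\alpha_{ij}N_i}=\varpi_L^{\sum_i\beta_{i,\upsilon}N_i}\prod_j (t_j^{d_j})^{\eta_j},$$
and $t_j^{d_j}=\xi_j\varpi_L^{\upsilon_j}$. Hence the angular component is $\prod_j\xi_j^{\eta_j}$, and the summand equals $\prod_j\chi(\xi_j)^{\eta_j}$. The sum then factors as
$$S_\upsilon=\prod_{j=1}^\ell\sum_{\xi_j\in A_j}\chi^{\eta_j}(\xi_j).$$

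Next I use $\gcd(d(G),p_L)=1$, so $p_L\nmid d_j$. By Hensel's lemma, $(\cO_L^\times)^{d_j}\supset 1+\varpi_L\cO_L$, so $\cO_L^\times/(\cO_L^\times)^{d_j}\cong k_L^\times/(k_L^\times)^{d_j}$ is cyclic of order $g_j:=\gcd(d_j,q_L-1)$. In particular $\#A_j=g_j$, and $\#G(\cO_L)=\#\mu_{d_j}(\cO_L)$ multiplied over $j$, which is $\prod_j g_j$ as well. Since $c(\chi)=1$, $\chi$ factors through $k_L^\times$, so $d(\chi)\mid q_L-1$.

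The remaining work is in the inner sum $\sum_{\xi_j\in A_j}\chi^{\eta_j}(\xi_j)$, which requires $\chi^{\eta_j}$ to be well defined on the quotient $\cO_L^\times/(\cO_L^\times)^{d_j}$. Otherwise the sum depends on the choice of representatives, and the orthogonality argument needs care. This is the step I expect to be the main obstacle. I need $\chi^{\eta_j d_j}$ to be trivial, and this should come from the hypothesis $d(\chi)\mid N_i$ for all $i$: it gives $d(\chi)\mid\sum_i\alpha_{ij}N_i=d_j\eta_j$. Granting this, $\chi^{\eta_j}$ is a character of the group $\cO_L^\times/(\cO_L^\times)^{d_j}$. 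By orthogonality the inner sum is $g_j$ if $\chi^{\eta_j}$ is trivial, and $0$ otherwise. Finally, $\chi^{\eta_j}$ is trivial if and only if $d(\chi)\mid\eta_j$.

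Putting these together: if some $d(\chi)\nmid\eta_j$, then $S_\upsilon=0$ for every $\upsilon$, so $\Lambda_{G,D,\chi}=0$. Otherwise $S_\upsilon=\prod_j g_j=\#G(\cO_L)$ cancels the prefactor $1/\#G(\cO_L)$, and this gives the stated formula.
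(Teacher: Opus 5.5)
Your proof is correct and follows the paper's argument. You use the same computation $\acc\bigl(\prod_i a_{i,\upsilon,\xi}^{N_i}\bigr)=\prod_j\xi_j^{\eta_j}$, the same observation $d(\chi)\mid d_j\eta_j$ making the value depend only on the classes of the $\xi_j$ in $\cO_L^\times/(\cO_L^\times)^{d_j}$, and the same Hensel identification with $k_L^\times/(k_L^\times)^{d_j}$ giving $\#A_j=\#\mu_{d_j}(\cO_L)$. The only difference is cosmetic: you apply orthogonality directly on the quotient group $\cO_L^\times/(\cO_L^\times)^{d_j}$, whereas the paper first averages over each coset to turn the sum over $\xi$ into a sum over all of $(k_L^\times)^\ell$, and then applies orthogonality on $k_L^\times$.
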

\begin{proof}
By Lemma \ref{compg}, we have $\eta_j\in\NN$ for all $1\leq j\leq \ell$.
Recall that we are assuming $c(\chi)=1$ and $d(\chi)\mid N_{i}$ for all $1\leq i\leq n$. By Lemma \ref{generator}, we have 
$$\acc\left(\prod_{i=1}^na_{i,\upsilon,\xi}^{N_{i}}\right)=\acc\left(\left(\prod_{i=1}^n\varpi_L^{N_i\beta_{i,\upsilon}}\right)\left(\prod_{j=1}^\ell t_j^{\eta_jd_j}\right)\right)=\acc\left(\prod_{j=1}^\ell t_j^{\eta_jd_j}\right)=\acc\left(\prod_{j=1}^\ell\xi_j^{\eta_{j}}\right).$$ Since $d(\chi)\mid  N_{i} $ for all $i$ and $\chi^{d(\chi)}=1$, we have $d(\chi)\mid \eta_jd_j$ for all $1\leq j\leq \ell$ and $$\chi\left(\acc\left(\prod_{j=1}^\ell\xi_j^{\eta_{j}}\right)\right)=\chi\left(\acc\left(\prod_{j=1}^\ell\left(\xi_j\epsilon_j^{d_j}\right)^{\eta_{j}}\right)\right)$$ if $\epsilon_j\in\cO_L^{\times}$ for all $1\leq j\leq \ell$.  Thus $\chi\left(\acc\left(\prod_{j=1}^\ell\xi_j^{\eta_{j}}\right)\right)$ only depends on the cosets of $\xi_j$ in $\cO_L^{\times}/(\cO_L^{\times})^{d_j}$ for $1\leq j\leq \ell$. By our assumptions, we have  $\gcd(\prod_{j=1}^m d_j,p_L)=\gcd(d(G),p_L)=1$. Thus, the reduction modulo $\varpi_L\cO_L$ yields a group isomorphism  $\cO_L^{\times}/(\cO_L^{\times})^{d_j}\to k_L^{\times}/(k_L^{\times})^{d_j}$. Moreover, we have $\#(\mu_{d_j,\cO_L}(\cO_L))=\#(\mu_{d_j,\cO_L}(k_L))$, 
$\#(\mu_{d_j,\cO_L}(k_L))\#((k_L^{\times})^{d_j})=q_L-1$ and 
\begin{align*}
\chi\left(\acc\left(\prod_{j=1}^\ell\xi_j^{\eta_{j}}\right)\right)&=\overline{\chi}\left(\prod_{j=1}^\ell\overline{\xi}_j^{\eta_{j}}\right)\\&=\frac{1}{\prod_{j=1}^\ell\#((k_L^{\times})^{d_j})}\sum_{(\theta_1,...,\theta_\ell)\in \prod_{j=1}^\ell\overline{\xi}_j(k_L^{\times})^{d_j}}\overline{\chi}\left(\prod_{j=1}^\ell\theta_j^{\eta_{j}}\right)\\
&=\frac{\#(G(\cO_L))}{(q_L-1)^\ell}\sum_{(\theta_1,...,\theta_\ell)\in \prod_{j=1}^\ell\overline{\xi}_j(k_L^{\times})^{d_j}}\overline{\chi}\left(\prod_{j=1}^\ell\theta_j^{\eta_{j}}\right)
\end{align*} 
Therefore, we have 
\begin{align*}
\Lambda_{G,D,\chi}&=\frac{q_L^{\tilde{\mathfrak{N}}-\mathfrak{l}}}{\#(G(\cO_L))}\sum_{(\upsilon,\xi)\in\cS}\chi\left(\acc\left(\prod_{i=1}^n a_{i,\upsilon,\xi}^{N_i}\right)\right)\prod_{i=1}^nq_L^{(\delta_{i,\upsilon}-\gamma_{i,\upsilon})(-\tilde{N}_i+\mathfrak{v}_{i})}\\
&=\frac{q_L^{\tilde{\mathfrak{N}}-\mathfrak{l}}}{\#(G(\cO_L))}\sum_{(\upsilon,\xi)\in \cS}\overline{\chi}\left(\prod_{j=1}^\ell\overline{\xi}_j^{\eta_{j}}\right)\prod_{i=1}^nq_L^{(\delta_{i,\upsilon}-\gamma_{i,\upsilon})(-\tilde{N}_i+\mathfrak{v}_{i})}\\
&=\frac{q_L^{\tilde{\mathfrak{N}}-\mathfrak{l}}}{(q_L-1)^\ell}\sum_{(\upsilon,\xi)\in \cS}\sum_{(\theta_1,...,\theta_\ell)\in \prod_{j=1}^\ell\overline{\xi}_j(k_L^{\times})^{d_j}}\overline{\chi}\left(\prod_{j=1}^\ell\theta_j^{\eta_{j}}\right)q_L^{\sum_{1\leq i\leq n}(\delta_{i,\upsilon}-\gamma_{i,\upsilon })(-\tilde{N}_{i}+\mathfrak{v}_{i})}\\
&=\frac{q_L^{\tilde{\mathfrak{N}}-\mathfrak{l}}\sum_{(\theta_1,...,\theta_\ell)\in \GG_{m,\cO_L}^\ell(k_L)}\overline{\chi}\left(\prod_{j=1}^\ell\theta_j^{\eta_{j}}\right)}{(q_L-1)^\ell}\sum_{\upsilon\in \prod_{j=1}^\ell\{0,...,d_j-1\}}q_L^{\sum_{1\leq i\leq n}(\delta_{i,\upsilon}-\gamma_{i,\upsilon})(-\tilde{N}_{i}+\mathfrak{v}_{i})}.
\end{align*}
If $d(\chi)\mid \eta_{j}$ for all $j$, then we have $\overline{\chi}\left(\prod_{j=1}^\ell\theta_j^{\eta_{j}}\right)=1$ for all $(\theta_1,...,\theta_\ell)\in \GG_{m,\cO_L}^\ell(k_L)$. Thus, we have 
$$\Lambda_{G,D,\chi}=q_L^{\tilde{\mathfrak{N}}-\mathfrak{l}}\sum_{\upsilon\in \prod_{j=1}^\ell\{0,...,d_j-1\}}q_L^{\sum_{1\leq i\leq n}(\delta_{i,\upsilon}-\gamma_{i,\upsilon})(-\tilde{N}_{i}+\mathfrak{v}_{i})}.$$

Suppose that $d(\chi)\nmid \eta_{j}$ for some $j$. Without loss of generality,  we can suppose that $j=\ell$. Then we have 
$$\sum_{(\theta_1,...,\theta_\ell)\in \GG_{m,\cO_L}^\ell(k_L)}\overline{\chi}\left(\prod_{j=1}^\ell\theta_j^{\eta_{j}}\right)
=\sum_{(\theta_1,...,\theta_{\ell-1})\in \GG_{m,\cO_L}^{\ell-1}(k_L)}\overline{\chi}\left(\prod_{j=1}^{\ell-1}\theta_j^{\eta_{j}}\right)\sum_{\theta_\ell\in \GG_{m,\cO_L}(k_L)}\overline{\chi}\left(\theta_\ell^{\eta_{\ell}}\right)$$
Since $c(\chi)=1$, the order of  $\overline{\chi}$ is also $d(\chi)$. It follows from the condition $d(\chi)\nmid \eta_{\ell}$ that
$$\sum_{\theta_\ell\in \GG_{m,\cO_L}(k_L)}\overline{\chi}\left(\theta_\ell^{\eta_{\ell}}\right)=0.$$
Therefore $\Lambda_{G,D,\chi}=0$ in this case.
\end{proof}

\section{A formula for Igusa's twisted local zeta functions in terms of $\QQ$-embedded resolutions of singularities}\label{form}

In this section, we give a formula for Igusa's twisted local zeta functions. 

\subsection{Set up}
We start with setting up some notations and assumptions.

In this section, we use the notions and notations in Section \ref{prep}.  Let $L$ be a local field and $\chi$ be a multiplicative character of $\cO_L^{\times}$. Let $f\in\cO_L[x]:=\cO_L[x_1,...,x_n]$ be a polynomial such that $\overline{f}:=f\mod \varpi_L\cO_L$ is non constant. Recall that we set $D(f)=\spec(L[x]/(f)), X=\AA_L^n, \tilde{D}(f)=\Spec\cO_L[x]/(f), \tilde{X}=\AA_{\cO_L}^n$. Suppose that the pair $(X, D(f))$ admits an embedded $\QQ$-resolution $(Z,\pi)$ and $(Z,\pi)$ has a good $\cO_L$-model. 
We use the notations $(\tilde{Z},\tilde{\pi})$ and $(\overline{Z},\overline{\pi})$ in Definition \ref{q-good}.  We fix almost good data $(\overline{Z}_e,G_e,\nu_{ei},N_{ei}, \mathfrak{n}_e,\mathfrak{N}_e,\mathfrak{u}_e)_{1\leq e\leq l, 1\leq i\leq n}$ of $(Z,\pi)$. If $(Z,\pi)$ has $\QQ$-good reduction modulo $\varpi_L\cO_L$, then we have $\mathfrak{n}_e=\mathfrak{N}_e=0$ for all $e$. Note that $G_e$ is an $\cO_L$-diagonalizable subgroup scheme of $\GG_{m,\cO_L}^n$ and $G_e$ is finite over $\cO_L$. By Lemma \ref{finimugr}, for each $1\leq e\leq l$, we can suppose that $G_e=\prod_{1\leq j\leq \ell_e}\mu_{d_{ej},\cO_L}=\Spec\cO_L[\zeta_1,...,\zeta_{\ell_e}]/(\zeta_1^{d_{e1}}-1,...,\zeta_{\ell_e}^{d_{e\ell_e}}-1)$ for positive integers $d_{ej}$. The action of $G_e$ on $\AA_{\cO_L}^n$ is of the form $$\varphi_e:\cO_L[x_1,...,x_n]\to \cO_L[x_1,...,x_n]\otimes \cO_{G_e}, x_i\mapsto x_i\otimes\prod_{1\leq j\leq \ell_e}\zeta_j^{\alpha_{eij}}$$
for natural numbers $\alpha_{eij}, 1\leq i\leq n, 1\leq j\leq \ell_e$. For each $1\leq e\leq l, 1\leq i\leq n, 1\leq j\leq \ell_e$ and $\upsilon=(\upsilon_1,...,\upsilon_{\ell_e})\in \prod_{1\leq j\leq \ell_e}\{0,1,...,d_{ej}-1\}$, we set 
$$\gamma_{e,i,\upsilon}:=\left(\sum_{1\leq j\leq \ell_e}\frac{\alpha_{eij}\upsilon_j}{d_{ej}}\right)-\bigg\lfloor\sum_{1\leq j\leq \ell_e}\frac{\alpha_{eij}\upsilon_j}{d_{ej}}\bigg\rfloor,$$
$$\delta_{e,i,\upsilon }:=\begin{cases}1 \textnormal{ if } \gamma_{e,i,\upsilon }>0,\\ 0 \textnormal{ if } \gamma_{e,i,\upsilon}=0,\end{cases}$$
and 
$$\eta_{ej}=\frac{\sum_{1\leq i\leq n}\alpha_{eij}N_{ei}}{d_j}$$ for $1\leq j\leq \ell_e$.

By Proposition \ref{stragood}, for each \(1\le e \le l\), there exists a  locally standard triple \((A_{G_e}^{\rm loc}, \cD_e ,\cD_e')\) over $\cO_L$ and natural numbers $\mathfrak{l}_e,\mathfrak{l}'_e$ such that for each point \(z\in \overline{Z}_e(k_{L})\), there is an $\cO_{L}$-étale morphism $\phi_{z}$ from a standard scheme over $\cO_L$ witnessed by $G_e$ to $\tilde{Z}$ that induces an $\cO_L$-étale morphism of triples
\[\phi_{z}^{\loc}:(A_{G_e}^{\rm loc}, \cD_e ,\cD_e')\to 
 (\Spec \cO_{\tilde{Z},z}, \tilde{\pi}^{*}\tilde{D}(f)-\mathfrak{l}_e\Spec\cO_{\tilde{Z}_{k_{L}},z}, K_{\tilde{Z}/\tilde{X}}-\mathfrak{l}'_e\Spec\cO_{\tilde{Z}_{k_{L}},z})
.\]
 
For each $1\leq e\leq l$ and each $z\in \overline{Z}_e(k_L)$, we set $g_z:=f\circ\tilde{\pi}\circ\phi_{z}$. By  Proposition \ref{stragood} together with Definitions \ref{dedivis} and \ref{destrata}, if $1\leq e\leq l$ and  $z\in \overline{Z}_e(k_L)$ then \begin{equation}\label{eqdivi}\Div(g_z\circ\rho_{G_e})|_{V_z}=\mathfrak{N}_e\AA_{k_L}^n|_{V_z}+\sum_{i=1}^nN_{ei}E_i|_{V_z}
\end{equation} for a suitable open neighbourhood $V_z$ of $P_e:=\rho_{G_e}((\varpi_L)+(x_1,...,x_n))$ in $A_{G_e}$ . By Lemma-Definition \ref{factorunit} and Definition \ref{destrata}, the standard angular component $\mathfrak{u}(g_z)$ of $g_z$ at $P_e$ is equal to $\mathfrak{u}_e(z)$ for all $1\leq e\leq l$ and all $z\in \overline{Z}_e(k_L)$. Moreover,  by Proposition \ref{stragood}, we also have \begin{equation}\label{eqredivi}\rho_{G_e}^*(\cD_e'+\mathfrak{l}'_e\AA_{G_e\otimes_{\cO_L} k_L}^{\loc})+K_{\AA_{\cO_L}^{n,\loc}/A_{G_e}^{\loc}}=\mathfrak{n}_e\AA_{k_L}^{n,\loc}+\sum_{i=1}^n(\nu_{ei}-1)E_i
\end{equation}for all $1\leq e\leq l$ and all $z\in\tilde{Z}(k_L).$

  Let $s$ be a positive rational number. We consider the divisor $\tilde{D}_{f,s}=-s\tilde{D}(f)$ of $\tilde{X}$. We set $\mathfrak{D}_s:=\tilde{\pi}^*(\tilde{D}_{f,s})-K_{\tilde{Z}/\tilde{X}}$.

Let $C$ be a subscheme of $\AA_{k_L}^n$. We consider the Schwartz-Bruhat function $\Phi_{L,Z}$ as mentioned in Subsection \ref{bano}.  

\subsection{The explicit formula}
Now, we establish our formula for Igusa's twisted local zeta functions in terms of the above setting. 
\begin{lem}\label{crepmo}
The morphism $\tilde{\pi}:(\tilde{Z},\mathfrak{D}_s)\to (\tilde{X},\tilde{D}_{f,s})$ is a crepant proper birational morphism of $\cO_L$-log pairs. Moreover, for each $1\leq e\leq l$ and each $z\in \overline{Z}_e(k_L)$, there is an open neighbourhood $W_z$ of $P_e$ such that 
$$\phi_z:(W_z, \phi_z^{*}(\mathfrak{D}_s))\to (\tilde{Z},\mathfrak{D}_s)$$
is a crepant $\cO_L$-étale morphism of log pairs. Moreover, the support of $\rho_{G_e}^*(\phi_z^{*}(\mathfrak{D}_s))$ is contained in $\AA_{k_L}^n|_{U_z}\cup\left(\cup_{i=1}^nE_i|_{U_z}\right)$ and  we have 
$$\rho_{G_e}^*\left(\phi_z^{*}(\mathfrak{D}_s)\right)-K_{\AA_{\cO_L}^n/A_{G_e}}|_{U_z}=-(\mathfrak{N}_es+\mathfrak{n}_e)\AA_{k_L}^n|_{U_z}-\sum_{i=1}^n(N_{ei}s+\nu_{ei}-1)E_i|_{U_z}$$
for all $1\leq e\leq l$ and all $z\in \overline{Z}_e(k_L)$, where $U_z=\rho_{G_e}^{-1}(W_z)$.
\end{lem}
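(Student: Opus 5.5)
The plan has three parts: crepancy of $\tilde{\pi}$, crepancy of the étale charts $\phi_z$, and the divisor identity on $U_z$.

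\textbf{Crepancy of $\tilde{\pi}$.} By construction $\mathfrak{D}_s=\tilde{\pi}^*(\tilde{D}_{f,s})-K_{\tilde{Z}/\tilde{X}}$. Since $\tilde{X}=\AA_{\cO_L}^n$ is smooth, $K_{\tilde{X}}+\tilde{D}_{f,s}$ is $\QQ$-Cartier. By definition of the relative canonical divisor,
\[K_{\tilde{Z}}+\mathfrak{D}_s=\tilde{\pi}^*(K_{\tilde{X}}+\tilde{D}_{f,s})\]
as $\QQ$-divisors. So $K_{\tilde{Z}}+\mathfrak{D}_s$ is $\QQ$-Cartier, and for $r$ clearing denominators the natural map on $r$-th powers of top forms identifies $\tilde{\pi}^*\cO_{\tilde{X}}(r(\tilde{D}_{f,s}+K_{\tilde{X}}))$ with $\cO_{\tilde{Z}}(r(\mathfrak{D}_s+K_{\tilde{Z}}))$. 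That is exactly crepancy.

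Two further checks are needed. First, $\tilde{Z}$ is normal: this holds because it is covered by the $\tilde{Y}_i$, each isomorphic to an open of a quotient $\AA^n_{\cO_L}/G_i$. Second, $\tilde{\pi}$ is proper and birational: it is the closure of the projective $\pi$ in $\PP^N_{\cO_L[x]}$, and it is an isomorphism over the generic fibre away from $\Sing(D(f))$.

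\textbf{Crepancy of $\phi_z$.} Let $W_z$ be the open neighbourhood of $P_e$ on which $\phi_z$ is étale, as given by Proposition \ref{stragood}. For an étale morphism $K_{W_z/\tilde{Z}}=0$. Hence $K_{W_z}+\phi_z^*\mathfrak{D}_s=\phi_z^*(K_{\tilde{Z}}+\mathfrak{D}_s)$, and the same form-theoretic argument gives crepancy of $\phi_z\colon (W_z,\phi_z^*\mathfrak{D}_s)\to(\tilde{Z},\mathfrak{D}_s)$.

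\textbf{The divisor identity.} This is the main computation, done on $U_z=\rho_{G_e}^{-1}(W_z)$. Write
\[\phi_z^*\mathfrak{D}_s=-s\,\phi_z^*\tilde{\pi}^*\tilde{D}(f)-\phi_z^*K_{\tilde{Z}/\tilde{X}}.\]
\begin{itemize}
\item For the first term, pull back along $\rho_{G_e}$. By \eqref{eqdivi}, $\rho_{G_e}^*\phi_z^*\tilde{\pi}^*\tilde{D}(f)=\Div(g_z\circ\rho_{G_e})$, which equals $\mathfrak{N}_e\AA^n_{k_L}+\sum_i N_{ei}E_i$ on $U_z$, after shrinking $W_z$ to lie in $V_z$.
\item For the second term, the étale morphism of triples in Proposition \ref{stragood} identifies $\phi_z^*K_{\tilde{Z}/\tilde{X}}$ with $\cD'_e+\mathfrak{l}'_e\,(A_{G_e})_{k_L}$ near $P_e$. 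By \eqref{eqredivi},
\[\rho_{G_e}^*\phi_z^*K_{\tilde{Z}/\tilde{X}}+K_{\AA^n_{\cO_L}/A_{G_e}}=\mathfrak{n}_e\AA^n_{k_L}+\sum_i(\nu_{ei}-1)E_i .\]
\end{itemize}
Combining the two yields
\[\rho_{G_e}^*\phi_z^*\mathfrak{D}_s-K_{\AA^n_{\cO_L}/A_{G_e}}=-(\mathfrak{N}_es+\mathfrak{n}_e)\AA^n_{k_L}-\sum_i(N_{ei}s+\nu_{ei}-1)E_i\]
on $U_z$. The support claim follows because $K_{\AA^n/A_{G_e}}$ is itself supported on $\AA^n_{k_L}\cup\bigcup E_i$.

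\textbf{Expected obstacle.} The delicate point is passing from the local identities, stated on $\Spec\cO_{\tilde{Z},z}$ and $A^{\loc}_{G_e}$, to an actual open neighbourhood $U_z$. Divisors on the local scheme determine the divisor on a small enough open. So I would shrink $W_z$ so that the following hold simultaneously:
\begin{itemize}
\item every irreducible component of the relevant divisors on $W_z$ passes through $P_e$;
\item these components are among the $D_i$ and the special fibre;
\item $W_z\subset V_z$.
\end{itemize}
The local equalities then extend to $W_z$, and to $U_z$ by pullback.
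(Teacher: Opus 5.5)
Your proposal is correct and follows essentially the same route as the paper. In both arguments, crepancy of $\tilde{\pi}$ and of $\phi_z$ comes from the definition $\mathfrak{D}_s=\tilde{\pi}^*(\tilde{D}_{f,s})-K_{\tilde{Z}/\tilde{X}}$ (which the paper simply cites as \cite[Lemma 4.6]{Yas}) together with $K_{W_z/\tilde{Z}}=0$ for étale $\phi_z$, and the divisor identity is obtained, as you do, by combining \eqref{eqdivi}, \eqref{eqredivi} and the pulled-back triple identity for $K_{\tilde{Z}/\tilde{X}}$ after shrinking $W_z$.
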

\begin{proof}
The first and second assertions follow from \cite[Lemma 4.6]{Yas} together with the facts that $\pi=\tilde{\pi}\otimes_{\cO_L}L$ is a birational morphism and $\phi_z$ is an $\cO_L$-étale morphism. 

Let $1\leq e\leq l$ and $z\in \overline{Z}_e$. Since $\phi_z^{\loc}$ is an $\cO_L$-étale morphism of triples, we have $K_{W_z/\tilde{Z}}=0$ and
\begin{equation}\label{pulldivi}\left(\phi_z^{\loc}\right)^*(K_{\tilde{Z}/\tilde{X}})-\mathfrak{l}'_e\AA_{k_L}^{n,\loc}=\left(\phi_z^{\loc}\right)^*(K_{\tilde{Z}/\tilde{X}}-\mathfrak{l}'_e\Spec\cO_{\tilde{Z}_{k_L},z})=\cD'_e.
\end{equation}
By (\ref{eqdivi}), (\ref{eqredivi}), (\ref{pulldivi}) and shrinking $W_z$ if needed, we have 
\begin{align*}
&\rho_{G_e}^*\left(\phi_z^{*}(\mathfrak{D}_s)\right)-K_{\AA_{\cO_L}^n/A_{G_e}}|_{U_z}\\=&\rho_{G_e}^*\phi_z^{*}\tilde{\pi}^*(\tilde{D}_{f,s})-\rho_{G_e}^*\left(\phi_z^{*}\left(K_{\tilde{Z}/\tilde{X}}\right)\right)-K_{\AA_{\cO_L}^n/A_{G_e}}|_{U_z}\\
=&-\left(\mathfrak{N}_es\AA_{k_L}^n|_{U_z}+\sum_{i=1}^nN_{ei}sE_i|_{U_z}\right)-\rho_{G_e}^*\left(\phi_z^{*}(\cD'_e+\mathfrak{l}'_e\AA_{k_L}^{n}) \right)-K_{\AA_{\cO_L}^n/A_{G_e}}|_{U_z}
\\
=&-(\mathfrak{N}_es+\mathfrak{n}_e)\AA_{k_L}^n|_{U_z}-\sum_{i=1}^n(N_{ei}s+\nu_{ei}-1)E_i|_{U_z}
\end{align*}
and 
$$\rho_{G_e}^*\left(\phi_z^{*}(\mathfrak{D}_s)\right)=-(\mathfrak{N}_es+\mathfrak{n}_e)\AA_{k_L}^n|_{U_z}-\sum_{i=1}^n(N_{ei}s+\nu_{ei}-1)E_i|_{U_z}+ K_{\AA_{\cO_L}^n/A_{G_e}}|_{U_z}.$$
Moreover, since $K_{\AA_{\cO_L}^n/A_{G_e}}$ is an effective divisor whose support is contained in $\AA_{k_L}^n\cup\left(\cup_{i=1}^nE_i\right)$, the last assertion is proved.
\end{proof}
\begin{cor}\label{namdivi}Let $1\leq e\leq l$ and $z\in \overline{Z}_e(k_L)$. We write $\phi_z^{*}(\mathfrak{D}_s)=\mathfrak{m}_{es}W_z\otimes_{\cO_L}k_L+\sum_{i=1}^nm_{ies}D_i|_{W_z}$ for rational numbers $\mathfrak{m}_{es},m_{ies}$. Then $\mathcal{E}_{es}:=\mathfrak{m}_{es}A_{G_e}\otimes_{\cO_L}k_L+\sum_{i=1}^nm_{ies}D_i$ depends only on $e$ and $s$, where $D_i$ is defined in  Definition \ref{dedivis}. Moreover, we have 
$$\rho_{G_e}^*(\cE_{es})-K_{\AA_{\cO_L}^n/A_{G_e}}=-(\mathfrak{N}_es+\mathfrak{n}_e)\AA_{k_L}^n-\sum_{i=1}^n(N_{ei}s+\nu_{ei}-1)E_i.$$
\end{cor}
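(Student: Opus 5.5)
The plan is to obtain Corollary \ref{namdivi} directly from the last assertion of Lemma \ref{crepmo}, using that pullback by $\rho_{G_e}$ is injective on $\QQ$-divisors supported on $A_{G_e}\otimes_{\cO_L}k_L\cup(\cup_i D_i)$.

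First, observe what $\rho_{G_e}$ does to each prime divisor. Over $W_z$ we have $\rho_{G_e}^{*}(D_i|_{W_z})=c_iE_i|_{U_z}$ for some positive rational $c_i$. This $c_i$ is the ramification index of $\rho_{G_e}$ along $E_i$ and depends only on $G_e$ and its action, not on $z$. Likewise $\rho_{G_e}^{*}(W_z\otimes_{\cO_L}k_L)=c_0\AA_{k_L}^n|_{U_z}$ for a constant $c_0>0$ determined by $G_e$; it equals $1$ when $d(G_e)$ is prime to $p_L$, but only the constancy matters here. Pulling back the given expression for $\phi_z^{*}(\mathfrak{D}_s)$ therefore yields
\[\rho_{G_e}^*(\phi_z^{*}(\mathfrak{D}_s))=c_0\mathfrak{m}_{es}\AA_{k_L}^n|_{U_z}+\sum_{i=1}^n c_im_{ies}E_i|_{U_z}.\]

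Next, compare this with Lemma \ref{crepmo}. There,
\[\rho_{G_e}^*(\phi_z^{*}(\mathfrak{D}_s))=-(\mathfrak{N}_es+\mathfrak{n}_e)\AA_{k_L}^n|_{U_z}-\sum_{i=1}^n(N_{ei}s+\nu_{ei}-1)E_i|_{U_z}+K_{\AA_{\cO_L}^n/A_{G_e}}|_{U_z}.\]
Write the canonical divisor as $K_{\AA_{\cO_L}^n/A_{G_e}}=\mathfrak{l}\AA_{k_L}^n+\sum_i(\mathfrak{v}_i-1)E_i$, where $\mathfrak{l}$ and $\mathfrak{v}_i$ depend only on $G_e$ (Lemma \ref{etalelocalquoti}). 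The divisors $\AA_{k_L}^n|_{U_z}$ and $E_i|_{U_z}$ are distinct prime divisors. They are nonempty because $U_z$ contains the point over $P_e$, and $P_e$ lies in every $E_i$ and in the special fibre. We may therefore compare coefficients, which gives
\[c_0\mathfrak{m}_{es}=-(\mathfrak{N}_es+\mathfrak{n}_e)+\mathfrak{l},\qquad c_im_{ies}=-(N_{ei}s+\nu_{ei}-1)+\mathfrak{v}_i-1.\]
The right-hand sides depend only on $e$ and $s$: the numerical data $\mathfrak{N}_e,\mathfrak{n}_e,N_{ei},\nu_{ei}$ are attached to the stratum $\overline{Z}_e$ by Proposition \ref{stragood} and do not depend on $z$. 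Hence $\mathfrak{m}_{es}$ and $m_{ies}$ depend only on $e$ and $s$, and so does $\cE_{es}$.

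Finally, the displayed identity for $\cE_{es}$ on all of $A_{G_e}$ follows from the same coefficient computation applied globally. Indeed, $\rho_{G_e}^*(\cE_{es})=c_0\mathfrak{m}_{es}\AA_{k_L}^n+\sum_i c_im_{ies}E_i$, and subtracting $K_{\AA_{\cO_L}^n/A_{G_e}}$ gives exactly the right-hand side by the two equations above.

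The only delicate point is justifying that $c_0$ and $c_i$ are independent of $z$, i.e. that the pullback multiplicities of $D_i$ and of the special fibre are intrinsic to the standard quotient $A_{G_e}$. I would argue this from the explicit description of $\cO_{A_{G_e}}$ as the ring of $G_e$-invariant monomials (as in the proof of Lemma-Definition \ref{factorunit}). In that ring, $c_i$ is the minimal positive exponent $k$ such that $x_i^{k}$ is invariant, divided by the multiplicity of $D_i$. Alternatively, one can simply note that only the coefficients after pullback are needed, since pullback is injective on these divisors.
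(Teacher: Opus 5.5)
Your proof is correct and follows the paper's route. The paper says only that the corollary follows immediately from the last assertion of Lemma \ref{crepmo}, and your coefficient comparison, which uses that the pullback multiplicities of $D_i$ and of the special fibre are intrinsic to $A_{G_e}$, is exactly the unpacking of that remark. (In fact $c_0=1$ in all cases: $\cO_{A_{G_e}}$ is spanned by invariant monomials, so its special fibre is the integral principal divisor of $\varpi_L$. As you note, only constancy matters.)
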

\begin{proof}
 It follows immediately from the last assertion of Lemma \ref{crepmo}.   
\end{proof}
\begin{lem}\label{sumstring}With the divisor $\cE_{es}$ in Corollary \ref{namdivi}, we have
\begin{align*}
q_L^n\cZ_{f,\Phi_{L,C},\chi}(s)=\sum_{e=1}^l\sum_{z\in \overline{\pi}^{-1}(C)(k_L)\cap \overline{Z}_e(k_L)}\#_{\textnormal{st}}^{g_z,\chi}(A_{
G_e},\cE_{es})_{P_e}
\end{align*}
for all positive rational numbers $s$. 
\end{lem}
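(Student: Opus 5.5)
We chain the comparison results of Section \ref{stringy}. By Example \ref{exmp}, $q_L^n\cZ_{f,\Phi_{L,C},\chi}(s)=\#_{\rm st}^{f,\chi}(\tilde{X},\tilde{D}_{f,s})_{C}$. By Lemma \ref{crepmo}, $\tilde{\pi}:(\tilde{Z},\mathfrak{D}_s)\to(\tilde{X},\tilde{D}_{f,s})$ is a crepant proper birational morphism of log pairs. Proposition \ref{relation1} then gives
\[
\#_{\rm st}^{f,\chi}(\tilde{X},\tilde{D}_{f,s})_{C}=\#_{\rm st}^{f\circ\tilde{\pi},\chi}(\tilde{Z},\mathfrak{D}_s)_{\overline{\pi}^{-1}(C)}.
\]
Next we decompose along $k_L$-points using (\ref{pointwise}). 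Because $\overline{Z}=\bigsqcup_e\overline{Z}_e$, the set $\overline{\pi}^{-1}(C)(k_L)$ is the disjoint union over $e$ of $\overline{\pi}^{-1}(C)(k_L)\cap\overline{Z}_e(k_L)$. This reduces the claim to the pointwise identity
\[
\#_{\rm st}^{f\circ\tilde{\pi},\chi}(\tilde{Z},\mathfrak{D}_s)_z=\#_{\rm st}^{g_z,\chi}(A_{G_e},\cE_{es})_{P_e}
\]
for each $e$ and each $z\in\overline{Z}_e(k_L)$.

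For the pointwise identity, fix $e$ and $z$, and take the étale morphism $\phi_z$ from an open neighbourhood $W_z$ of $P_e$ with $\phi_z(P_e)=z$. Lemma \ref{crepmo} shows that $\phi_z:(W_z,\phi_z^*(\mathfrak{D}_s))\to(\tilde{Z},\mathfrak{D}_s)$ is a crepant étale morphism of log pairs. Proposition \ref{relation2} with $y=P_e$ therefore yields
\[
\#_{\rm st}^{f\circ\tilde{\pi},\chi}(\tilde{Z},\mathfrak{D}_s)_z=\#_{\rm st}^{g_z,\chi}(W_z,\phi_z^*(\mathfrak{D}_s))_{P_e},
\]
since $g_z=f\circ\tilde{\pi}\circ\phi_z$ by definition.

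It remains to pass from $(W_z,\phi_z^*(\mathfrak{D}_s))$ to $(A_{G_e},\cE_{es})$. By Corollary \ref{namdivi}, $\cE_{es}|_{W_z}=\phi_z^*(\mathfrak{D}_s)$. The stringy point count at $P_e$ only involves $\cO_L$-points reducing to $P_e$, and these all lie in the open subscheme $W_z$. It also only involves the measure determined by the divisor near $P_e$. Hence the count computed on $W_z$ equals the one computed on $A_{G_e}$. Formally, this is Proposition \ref{relation2} applied to the open immersion $W_z\hookrightarrow A_{G_e}$, which is crepant étale.

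The main obstacle is one technical point: we must check that $(A_{G_e},\cE_{es})$ is genuinely a log pair, that is, that $K+\cE_{es}$ is $\QQ$-Cartier, and that $g_z$ is defined on a neighbourhood of $P_e$ so that Definition \ref{defnstringy} applies. We would argue this from the quotient structure. $A_{G_e}$ is normal with finite abelian quotient singularities, so every Weil divisor on it is $\QQ$-Cartier (a multiple by $d(G_e)$ descends from $\AA^n$). Alternatively, the $\QQ$-Cartier property follows from the one for $(\tilde{Z},\mathfrak{D}_s)$ via the étale map $\phi_z$ near $P_e$. Summing the pointwise identities over $e$ and $z$ gives the statement for every positive rational $s$.
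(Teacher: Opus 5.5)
Your proposal is correct and follows the paper's proof essentially step by step. The chain is Example \ref{exmp}, then Proposition \ref{relation1} with Lemma \ref{crepmo}, the pointwise splitting \eqref{pointwise}, Proposition \ref{relation2} for the étale map $\phi_z$, and finally the observation that the count at $P_e$ only sees $W_z$; the paper cites Definition \ref{defnstringy} for that last step and splits into points before applying Proposition \ref{relation1}, while your remark on the $\QQ$-Cartier property of $(A_{G_e},\cE_{es})$ makes explicit a detail the paper leaves implicit.
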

\begin{proof}
We have 
\begin{align*}
q_L^n\cZ_{f,\Phi_{L,C},\chi}(s)&=\#_{\textnormal{st}}^{f,\chi}(\tilde{X},\tilde{D}_{f,s})_{C} & (\text{Example \ref{exmp}} )\\
&=\sum_{x\in C(k_L)}\#_{\textnormal{st}}^{f,\chi}(\tilde{X},\tilde{D}_{f,s})_{\overline{x}}& (\text{Equality \eqref{pointwise}} )\\
&=\sum_{z\in \overline{\pi}^{-1}(C)(k_L)}\#_{\textnormal{st}}^{f\circ\tilde{\pi},\chi}(\tilde{Z},\mathfrak{D}_{s})_{\overline{z}}& (\text{Proposition \ref{relation1} and Lemma \ref{crepmo}})\\
&=\sum_{e=1}^l\sum_{z\in \overline{\pi}^{-1}(C)(k_L)\cap \overline{Z}_e(k_L)}\#_{\textnormal{st}}^{g_z,\chi}(W_z,(\cE_{es})|_{W_z})_{P_e}& (\text{Proposition \ref{relation2} and Lemma \ref{crepmo}})\\
&=\sum_{e=1}^l\sum_{z\in \overline{\pi}^{-1}(C)(k_L)\cap \overline{Z}_e(k_L)}\#_{\textnormal{st}}^{g_z,\chi}(A_{G_e},\cE_{es})_{P_e}& (\text{Definition \ref{defnstringy}}).
\end{align*}
\end{proof}
\begin{prop}\label{ngood}Suppose that there is an embedded $\QQ$-resolution $(Z,\pi)$ of $(\AA_{L}^n,D(f))$ having a good $\cO_L$-model and $(\overline{Z}_e,G_e, \nu_{ei},N_{ei},\mathfrak{n}_e,\mathfrak{N}_e,\mathfrak{u}_e)_{1\leq e\leq l, 1\leq i\leq n}$ are  $\cO_L$-almost good data of $(Z,\pi)$ as above. Then the real part of every pole of $\cZ_{f,\Phi_{L,C},\chi}(s)$ belongs to the set $\{-\frac{\nu_{ei}}{N_{ei}}\mid 1\leq e\leq r, 1\leq i\leq n,  N_{ei}>0, d(\chi)\mid N_{ei}\}$. Moreover, the multiplicity of the pole $\alpha$ of $\cZ_{f,\Phi_{L,C},\chi}(s)$ is at most $\max_{1\leq e\leq r}\#\{1\leq i\leq n\mid N_{ei}>0, \nu_{ei}=-\Re(\alpha)N_{ei}\}$.
\end{prop}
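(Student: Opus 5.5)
The plan is to start from Lemma \ref{sumstring}, which expresses $q_L^n\cZ_{f,\Phi_{L,C},\chi}(s)$, for positive rational $s$, as a finite sum over $e$ and over $z\in\overline{\pi}^{-1}(C)(k_L)\cap\overline{Z}_e(k_L)$ of the twisted stringy point counts $\#_{\rm st}^{g_z,\chi}(A_{G_e},\cE_{es})_{P_e}$. Each summand I will compute with Corollary \ref{strequality}, applied with $G=G_e$, $g=g_z$ and $D=\cE_{es}$. By Corollary \ref{namdivi}, $\rho_{G_e}^*(\cE_{es})-K_{\AA^n_{\cO_L}/A_{G_e}}=-(\mathfrak{N}_es+\mathfrak{n}_e)\AA^n_{k_L}-\sum_i(N_{ei}s+\nu_{ei}-1)E_i$. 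In the notation of Subsection \ref{compumu} this means $\tilde N_i-\mathfrak{v}_i+1=-(N_{ei}s+\nu_{ei}-1)$, that is $-\tilde N_i+\mathfrak{v}_i=N_{ei}s+\nu_{ei}$, and also $\tilde{\mathfrak{N}}-\mathfrak{l}=-(\mathfrak{N}_es+\mathfrak{n}_e)$. Since $\nu_{ei}\geq 1$ and $s>0$, the hypothesis $\tilde N_i<\mathfrak{v}_i$ holds, so the integrals converge.

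Substituting, each summand becomes
\[
\frac{q_L^{n}}{\#G_e(\cO_L)}\sum_{(\upsilon,\xi)}q_L^{-(\mathfrak{N}_es+\mathfrak{n}_e)}\prod_i q_L^{-\gamma_{e,i,\upsilon}(N_{ei}s+\nu_{ei})}\int_{\prod_iA_{i,\upsilon,\xi}}\chi\bigl(\acc(w(y)\textstyle\prod_i a_{i}^{N_{ei}}y_i^{N_{ei}})\bigr)\prod_i|y_i|^{N_{ei}s+\nu_{ei}-1}|dy|.
\]
The prefactors are entire in $s$: they are monomials in $q_L^{-s}$. So poles can come only from the integrals. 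These are Denef-type monomial integrals over the product sets $A_{i,\upsilon,\xi}\in\{\cO_L,\varpi_L\cO_L\}$, with a unit factor $w$ that is analytic in $y$. I will treat them exactly as in the proofs of \cite[Theorems 2.1, 2.2]{Denef91}.

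Concretely, I will decompose the domain according to the residue class of $y$ modulo $\varpi_L$. On the part where $y_i$ is a unit for every $i$ with $N_{ei}>0$, the integral is a polynomial in $q_L^{-s}$. Near residue points where a subset $I$ of coordinates vanishes, I will change variables, which is legitimate because $w$ is a unit, to reduce to products over $i\in I$ of $\int_{\varpi_L\cO_L}\chi(\acc(y_i))^{N_{ei}}|y_i|^{N_{ei}s+\nu_{ei}-1}|dy_i|$ times functions holomorphic in $s$. Each such one-variable integral vanishes unless $\chi^{N_{ei}}$ is trivial, which holds when $d(\chi)\mid N_{ei}$. When it does not vanish, it equals a rational function with the single denominator $1-q_L^{-N_{ei}s-\nu_{ei}}$. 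For $N_{ei}=0$ the factor is holomorphic.

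Since the identity of Lemma \ref{sumstring} holds for all positive rational $s$, it extends by meromorphic continuation. The result is that $\cZ_{f,\Phi_{L,C},\chi}(s)$ equals a holomorphic function times products of factors $(1-q_L^{-N_{ei}s-\nu_{ei}})^{-1}$, with $N_{ei}>0$ and $d(\chi)\mid N_{ei}$, and with at most one factor for each $i$ within a fixed $e$. A pole $\alpha$ of such a factor satisfies $\Re(\alpha)=-\nu_{ei}/N_{ei}$. For each fixed $e$, the order of a pole at $\alpha$ is at most the number of $i$ with $N_{ei}>0$ and $\nu_{ei}=-\Re(\alpha)N_{ei}$. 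Taking the maximum over $e$ gives the multiplicity bound.

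I expect the main obstacle to be the case $c(\chi)>1$. There Proposition \ref{larcond} is stated only for the single point $P$, under the assumption that some $N_i$ is a unit mod $\varpi_L$, and it does not cover the almost good (non-tame) setting. I will handle this case directly: since only the location and order of poles are claimed, it suffices to note that the integrand near each residue point still factors through $\chi(\acc(y_i))^{N_{ei}}$ times a locally constant term. This gives the same denominators, or vanishing, without needing the exact value.
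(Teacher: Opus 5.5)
Your proposal is correct and follows essentially the same route as the paper: Lemma~\ref{sumstring} together with Corollaries~\ref{strequality} and~\ref{namdivi} reduces $\cZ_{f,\Phi_{L,C},\chi}(s)$ to finitely many monomial integrals with exponents $N_{ei}s+\nu_{ei}-1$. The Igusa--Denef argument of \cite[Theorem 1.3.2]{DenefBour} is then applied to these integrals, followed by continuation from $s\in\QQ_{>0}$. The only looseness is that for $c(\chi)>1$ you should integrate over polydiscs small enough that $\chi(\acc(w(y)))$ is constant, rather than over residue classes, and no change of variables is needed (absorbing $w$ into a coordinate would require $N_{ei}$-th roots that may not exist when $p_L\mid N_{ei}$); your last paragraph effectively does this, and the prefactors $q_L^{-rs}$ with $r\in\QQ$ are entire, though not literally monomials in $q_L^{-s}$.
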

\begin{proof}We can use  Corollaries \ref{strequality}, \ref{namdivi}, Proposition \ref{conducte1}, Lemma \ref{sumstring} and the argument of \cite[Theorem 1.3.2]{DenefBour} to compute $\cZ_{f,\Phi_{L,C},\chi}(s)$ if $s$ is a positive rational number. In particular, there is a rational function $Q(s)$ in $q_L^{-s}$ whose denominator is a factor of $\gcd\{\prod_{1\leq i\leq n}(1-q_L^{-N_{ei}s-\nu_{ei}})\mid 1\leq e\leq l, d(\chi)\mid N_{ei} \forall \hspace{0.1cm} 1\leq i\leq n\}$ such that $Q(s)=\cZ_{f,\Phi_{L,C},\chi}(s)$ for all $s\in\QQ_{>0}$. As mentioned in Subsection \ref{Igex}, $\cZ_{f,\Phi_{L,C},\chi}(s)$ is a rational function in $q_L^{-s}$ when $\Re(s)>0$. Therefore, $Q(s)$ is the meromorphic continuation to $\CC$ of  $\cZ_{f,\Phi_{L,C},\chi}(s)$. 
\end{proof}
\begin{prop}\label{chilarge1}
Suppose that $c(\chi)>1$ and that $(Z,\pi)$ has $\QQ$-tame good reduction modulo $\varpi_L\cO_L$. Then, $\cZ_{f,\Phi_{L,C},\chi}(s)$ is a constant function of $s$. In particular, 
$\cZ_{f,\Phi_{L,C},\chi}(s)=0$
if the restriction of $\overline{f}$ on a Zariski neighbourhood of $C$ in  $\AA_{k_L}^n$ is smooth outside $\overline{f}^{-1}(0)$.
\end{prop}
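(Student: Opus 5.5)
The plan is to use Lemma \ref{sumstring}. It expresses $q_L^n\cZ_{f,\Phi_{L,C},\chi}(s)$, for every positive rational $s$, as a finite sum of local twisted stringy point counts $\#_{\rm st}^{g_z,\chi}(A_{G_e},\cE_{es})_{P_e}$, over $e$ and over $z\in\overline{\pi}^{-1}(C)(k_L)\cap\overline{Z}_e(k_L)$. Since $(Z,\pi)$ has $\QQ$-good reduction, we have $\mathfrak{n}_e=\mathfrak{N}_e=0$. Corollary \ref{namdivi} then gives $\rho_{G_e}^*(\cE_{es})-K_{\AA^n_{\cO_L}/A_{G_e}}=-\sum_i(N_{ei}s+\nu_{ei}-1)E_i$. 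In the notation of Subsection \ref{compumu} this reads $\tilde N_i-\mathfrak{v}_i=-(N_{ei}s+\nu_{ei})<0$, and $\tilde{\mathfrak N}-\mathfrak l=0$. So the hypothesis $\tilde N_j<\mathfrak{v}_j$ of Proposition \ref{larcond} holds for all $j$ and all $s>0$. By (\ref{eqdivi}) the divisor of $g_z\circ\rho_{G_e}$ is $\sum_iN_{ei}E_i$ near $P_e$, so the integers $N_i$ there are the $N_{ei}$.

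I split the strata into two kinds. If $(N_{e1},\dots,N_{en})\neq 0$, then $\QQ$-tame good reduction says that some $N_{ei}\notin\varpi_L\cO_L$. Together with $c(\chi)>1$, Proposition \ref{larcond} then gives $\#_{\rm st}^{g_z,\chi}(A_{G_e},\cE_{es})_{P_e}=0$. If all $N_{ei}=0$, the integrand in Corollary \ref{strequality} involves no $s$, since $s$ enters only through the terms $N_{ei}s$. That stringy count is therefore independent of $s$. Summing, $\cZ_{f,\Phi_{L,C},\chi}(s)$ agrees with a constant on $\QQ_{>0}$. By rationality in $q_L^{-s}$ (Subsection \ref{Igex}) it is constant.

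For the "in particular", I show that the remaining constant terms also vanish when $\overline f$ is smooth outside $\overline f^{-1}(0)$ near $C$. For a stratum with all $N_{ei}=0$, $g_z$ is a unit near $P_e$, so $\overline{\pi}(z)\notin\overline{f}^{-1}(0)$. Since $\overline\pi$ is an isomorphism off $\Sing D(\overline f)$, the last bullet of Proposition \ref{stragood} applies, so one may take $G_e=\{1\}$ and $\nu_{ei}=1$. The contribution of $z$ is then $q_L^n\int_{\overline y=\overline x}\chi(\acc(f(y)))|dy|$ around the smooth point $x=\overline\pi(z)$ with $\overline f(x)\neq0$. Because $\overline f$ is smooth at $x$, some partial derivative is nonzero mod $\varpi_L$. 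A Hensel-type change of variables then makes $f$ a coordinate on the residue disc of $x$, so $\acc(f(y))$ is equidistributed on the classes of $1+\varpi_L\cO_L$ lifting $\overline f(x)$. Since $c(\chi)>1$, $\chi$ is nontrivial on $1+\varpi_L\cO_L$, and the integral vanishes. This is exactly the argument of \cite[Theorem 2.1]{Denef91}.

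The step I expect to need the most care is checking that Proposition \ref{larcond} and Corollary \ref{strequality} really apply at every $z$ with the identifications above. In particular, the $N_i$ of Subsection \ref{compumu} must be the $N_{ei}$, and the $\tilde N_i-\mathfrak{v}_i$ must be the quantities computed from $\cE_{es}$ via Corollary \ref{namdivi}. Once this bookkeeping is confirmed, the rest is formal.
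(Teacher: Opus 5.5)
Your proposal is correct and is essentially the paper's proof. The paper splits $C=C_1\sqcup C_2$ with $C_1=C\cap\overline{f}^{-1}(0)$; it kills the $C_1$-part with Proposition \ref{larcond} via tameness, and treats the $C_2$-part (whose strata have $G_e=\{1\}$ and trivial numerical data by Proposition \ref{stragood}) as an $s$-independent term that vanishes under the smoothness hypothesis by the argument of \cite[Theorem 2.1]{Denef91}. Your split of the strata according to whether $(N_{e1},\dots,N_{en})=0$ gives the same partition, and reading off $s$-independence directly from Corollary \ref{strequality} is only a minor streamlining.
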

\begin{proof}
We set $C_1=C\cap \overline{f}^{-1}(0)$ and $C_2=C\setminus C_1$. Then, we have $\Phi_{L,C}=\Phi_{L,C_1}+\Phi_{L,C_2}$ and $\cZ_{f,\Phi_{L,C},\chi}(s)=\cZ_{f,\Phi_{L,C_1},\chi}(s)+\cZ_{f,\Phi_{L,C_2},\chi}(s)$. Since $\cZ_{f,\Phi_{L,C},\chi}(s)$ is a rational function in $q_L^{-s}$ as mentioned in Subsection \ref{Igex}, we just need to prove the assertions for $s\in \QQ_{>0}$.
If $\overline{\pi}^{-1}(C_2)\cap \overline{Z}_{e}\neq\emptyset$ then Proposition \ref{stragood}  implies that   $G_e=\{1\},\nu_{ei}-1=N_{ei}=\mathfrak{n}_e=\mathfrak{N}_e=0$ for all $1\leq i\leq n$. Thus, we can use Corollary \ref{strequality} and Lemma \ref{sumstring} together with the proof of \cite[Theorem 2.1]{Denef91} to show that $\cZ_{f,\Phi_{L,C_2},\chi}(s)$ is a constant function on $\QQ_{>0}$. Moreover, $\cZ_{f,\Phi_{L,C_2},\chi}(s)=0$ for all $s\in\QQ_{>0}$ if the restriction of $\overline{f}$ on a Zariski neighbourhood of $C_2$ in  $\AA_{k_L}^n\setminus\overline{f}^{-1}(0)$ is smooth. Thus, the assertions are proved if we can show that $\cZ_{f,\Phi_{L,C_1},\chi}(s)=0$ for  all $s\in\QQ_{>0}$. Indeed, if $\overline{Z}_e\cap \overline{\pi}^{-1}(C_1)\neq \emptyset$ then the $\QQ$-tame good reduction of $(Z,\pi)$ implies that $N_{ei}\notin\varpi_L\cO_L$ for at least one index $i$. Hence, we can use Proposition \ref{larcond} and Lemma \ref{sumstring} to deduce $\cZ_{f,\Phi_{L,C_1},\chi}(s)=0$ for all $s\in\QQ_{>0}$. This completes our proof.  
\end{proof}
\begin{maintheorem}\label{mucase}Suppose that $(Z,\pi)$ is an embedded $\QQ$-resolution of $(\AA_{L}^n,D(f))$ with $\QQ$-good reduction modulo $\varpi_L\cO_L$. Let $(\overline{Z}_e,G_e,\nu_{ei},N_{ei},\mathfrak{u}_e)_{1\leq e\leq r, 1\leq i\leq n}$ be good data of $(Z,\pi)$,  $\chi$ be a multiplicative character of $\cO_L^{\times}$ and $C$ be a subscheme of $\AA_{k_L}^n$ as above. Moreover, we assume that $c(\chi)=1$. With the above setting, we have 
$$\cZ_{f,\Phi_{L,C},\chi}(s)=\sum_{1\leq e\leq r, d(\chi)\mid N_{ei} , d(\chi)\mid\eta_{ej} \textnormal{ for all } i,j}\Omega_{C,\chi,\pi,\overline{Z}_e}\Theta_{\overline{Z}_e,\chi}\prod_{i=1}^n\frac{(q_L-1)q_L^{-N_{ei}s-\nu_{ei}}}{q_L(1-q_L^{-N_{ei}s-\nu_{ei}})},$$
where 
$$\Theta_{\overline{Z}_e,\chi}=\sum_{\upsilon\in \prod_{j=1}^{\ell_e}\{0,...,d_{ej}-1\}}q_L^{\sum_{1\leq i\leq n}(\delta_{e,i,\upsilon}-\gamma_{e,i,\upsilon})(N_{ei}s+\nu_{ei})}$$
and 
$$\Omega_{C,\chi,\pi,\overline{Z}_e}=\sum_{z\in(\overline{\pi}^{-1}(C)\cap\overline{Z}_e)(k_L)}\overline{\chi}(\mathfrak{u}_e(z)).$$
\end{maintheorem}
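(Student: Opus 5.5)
Since $\cZ_{f,\Phi_{L,C},\chi}(s)$ is a rational function of $q_L^{-s}$ (Subsection \ref{Igex}), it suffices to prove the identity for all $s\in\QQ_{>0}$. The plan is to start from Lemma \ref{sumstring}, which writes $q_L^n\cZ_{f,\Phi_{L,C},\chi}(s)$ as a sum over strata $e$ and points $z\in(\overline{\pi}^{-1}(C)\cap\overline{Z}_e)(k_L)$ of the local stringy counts $\#_{\rm st}^{g_z,\chi}(A_{G_e},\cE_{es})_{P_e}$. Each local count is then evaluated by Propositions \ref{conducte1} and \ref{Gamsom}, applied with $G=G_e$, $D=\cE_{es}$, $g=g_z$ and $W_0=V_z$.

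For this we read off the input data. By good reduction (Lemma \ref{rem2}, Proposition \ref{stragood}) we have $\mathfrak{N}_e=\mathfrak{n}_e=0$, and $\mathfrak{l}=0$ for $K_{\AA^n/A_{G_e}}$ because $\gcd(d(G_e),p_L)=1$ (Lemma \ref{etalelocalquoti}). By (\ref{eqdivi}), $\Div(g_z\circ\rho_{G_e})=\sum_i N_{ei}E_i$ near $P_e$. By Corollary \ref{namdivi},
$$\rho_{G_e}^*(\cE_{es})=K_{\AA^n/A_{G_e}}-\sum_i(N_{ei}s+\nu_{ei}-1)E_i.$$
With $K_{\AA^n/A_{G_e}}=\sum_i(\mathfrak{v}_i-1)E_i$ this gives $\tilde{\mathfrak{N}}=0$ and $\tilde N_i-\mathfrak{v}_i=-(N_{ei}s+\nu_{ei})<0$. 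So the hypothesis $\tilde N_i<\mathfrak{v}_i$ holds, and $c(\chi)=1$ is assumed. Moreover, $\mathfrak{u}(g_z)=\mathfrak{u}_e(z)$ by Definition \ref{destrata}.

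Proposition \ref{conducte1}(1) kills every stratum with $d(\chi)\nmid N_{ei}$ for some $i$, and Proposition \ref{Gamsom} kills those with $d(\chi)\nmid\eta_{ej}$ for some $j$. For each surviving stratum we get
$$\Lambda=\Theta_{\overline{Z}_e,\chi},$$
since $(\delta-\gamma)(-\tilde N_i+\mathfrak{v}_i)=(\delta-\gamma)(N_{ei}s+\nu_{ei})$. We also get
$$\prod_i\frac{(q_L-1)q_L^{-N_{ei}s-\nu_{ei}}}{1-q_L^{-N_{ei}s-\nu_{ei}}},$$
multiplied by $\overline\chi(\mathfrak{u}_e(z))$. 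Summing over $z$ produces $\Omega_{C,\chi,\pi,\overline{Z}_e}$. Dividing by $q_L^n$ distributes a factor $q_L^{-1}$ into each of the $n$ factors, which yields exactly the displayed formula.

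The main obstacle is bookkeeping rather than a new idea. First, the quantities $\gamma_{e,i,\upsilon}$, $\delta$ and $\eta_{ej}$ must be independent of the chosen point $z$ and étale chart $\phi_z$; this follows because they depend only on the action $\alpha_{eij}$ and on $N_{ei}$. Second, the standard angular component must match $\mathfrak{u}_e(z)$ consistently (Remark \ref{choiceangu}). I expect the independence of the final expression from choices of $\mathfrak{u}_e$ to need the observation that $\overline\chi(\prod_i a_i^{N_{ei}})=1$ when $d(\chi)\mid N_{ei}$.
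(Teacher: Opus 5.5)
Your proposal is correct and follows the paper's proof: reduce to $s\in\QQ_{>0}$ by rationality, then use Lemma \ref{sumstring} and Corollary \ref{namdivi}, which give $\tilde{\mathfrak{N}}-\mathfrak{l}=0$ and $\tilde N_i-\mathfrak{v}_i=-(N_{ei}s+\nu_{ei})$, and evaluate each local count by Propositions \ref{conducte1} and \ref{Gamsom}, exactly as the paper does but with the bookkeeping written out. Your closing observation that $\overline{\chi}\bigl(\prod_i a_i^{N_{ei}}\bigr)=1$ when $d(\chi)\mid N_{ei}$ is not needed for the identity itself, but it gives a direct proof of the independence of the choice of $\mathfrak{u}_e$ asserted in Remark \ref{trivindep}.
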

\begin{proof}
Note that $\cZ_{f,\Phi_{L,C},\chi}(s)$ is a rational function in $q_L^{-s}$ that has a meromorphic continuity to the whole $\CC$ as mentioned in Subsection \ref{Igex}. Thus, it turns out that  we only need to prove the equality for all $s\in\QQ_{>0}$. In fact, the equality follows immediately from applying Propositions \ref{conducte1} and \ref{Gamsom} to our setting. 
\end{proof}
\begin{remark}\label{trivindep}
If $G_e=\{1\}$ for all $e$, it is easy to verify that Main Theorem \ref{mucase} agrees with \cite[Theorem 2.2]{Denef91}. Moreover, the formula in Main Theorem \ref{mucase} for $\chi=\chi_{\textnormal{triv}}$ is of a similar form to the one in \cite{LMVV}. 

On the other hand,  in Main Theorem \ref{mucase}, we observe that  $\cZ_{f,\Phi_{L,C},\chi}(s)$ is independent of the choice of the embedded $\QQ$-resolution  $(Z,\pi)$ and its good data. Thus, after fixing the data $(\overline{Z}_e,G_e,\nu_{ei},N_{ei})$,   for each  multiplicative character $\chi$ with $c(\chi)=1$ and each stratum $\overline{Z}_e$ such that $d(\chi)\mid \gcd(N_{ei},\eta_{ej})$ for all $i$ and $j$, the value $\overline{\chi}(\mathfrak{u}_e(z))$ must be independent of the choice of $\mathfrak{u}_e$ (see Remark \ref{choiceangu}).
\end{remark}

\section{The case of weighted homogeneous polynomials with isolated singularities}\label{wiso}
Let $w=(w_1,...,w_n)\in \NN_{>0}^n$. Suppose that $\gcd(w_1,...,w_n)=1$. We set $|w|=w_1+...+w_n$. A polynomial $g$ in $n$ variables over a ring is $w$-weighted homogeneous if there is a natural number $d$ such that $$g=\sum_{I=(i_1,...,i_n)\in \NN^n, \sum_{1\leq j\leq n}i_jw_j=d}a_Ix_1^{i_1}...x_n^{i_n}.$$ In this case, we say that $d$ is the $w$-degree of $g$. For an arbitrary polynomial $g$, we can write $g=g_{0w}+...+g_{dw}$, where $g_{iw}$ is the $w$-weighted homogeneous part of $w$-degree $i$ of $g$ and $g_{dw}\neq 0$. We call $d$ the $w$-degree 
of $g$. 

The following lemma  provides the technique condition for the calculation in this section. 
\begin{lem}\label{almostpp}
Let $K$ be a number field and $g\in\cO_K[x_1,...,x_n]$ be a non-constant polynomial.  Then, there is a natural number $M$ such that any singular point with coordinates in $k_L$ of  the hypersurface $\{g\mod \varpi_L\cO_L=0\}\subset \AA_{k_L}^n$  can be lifted to a singular point with coordinates in $\cO_L$   of the hypersurface $\{g_L=0\}$  for all local fields $L$ over $\cO_K$ of residue field characteristic at least $M$.
\end{lem}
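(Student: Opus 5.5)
The plan is to reduce to a statement over $\overline{K}$ and then spread out. Consider the singular locus $S:=\{g=\partial g/\partial x_1=\dots=\partial g/\partial x_n=0\}\subset\AA_{\cO_K}^n$ as a closed subscheme over $\cO_K$. Over $K$, the generic fibre $S_K$ is a finite-type $K$-scheme; decompose $(S_K)_{\mathrm{red}}$ into irreducible components and, after a finite extension if needed, think of its geometric points. The key claim is that for $\mathfrak{p}$ outside a finite set of primes, the reduction $S\otimes_{\cO_K}\cO_K/\mathfrak{p}$ has the same underlying set as the closure of $(S_K)_{\mathrm{red}}$ reduced mod $\mathfrak{p}$. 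In other words, no new singular points appear in characteristic $p$ except those specializing from characteristic zero. This is generic flatness. Let $\tilde{S}$ be the scheme-theoretic closure of $S_K$ in $\AA^n_{\cO_K}$. Then $S$ and $\tilde{S}$ agree over $\cO_K[1/N]$ for some integer $N$, because the ideal of $\tilde{S}$ modulo that of $S$ is a finitely generated module killed by some nonzero integer.

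Next, I want every $k_L$-point of $\tilde{S}$ to lift to an $\cO_L$-point. I would apply generic smoothness to $(S_K)_{\mathrm{red}}$ and stratify it into finitely many locally closed smooth $K$-subvarieties $T_1,\dots,T_m$. Spreading out over $\cO_K[1/N']$, I arrange that the integral models $\tilde{T}_k$ are smooth over $\cO_K[1/N']$ and partition $\tilde{S}_{\mathrm{red}}$, fibrewise over every prime not dividing $N'$. Constructibility and Chevalley's theorem let me enlarge $N'$ until the stratification is compatible with every closed fibre. Then a $k_L$-point $\bar{x}$ of $\{\bar{g}=0\}$ that is singular lies in $S(k_L)$, hence in some $\tilde{T}_k(k_L)$. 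Since $\tilde{T}_k\otimes\cO_L$ is smooth over $\cO_L$, Hensel's lemma, in the form of the smooth lifting property, gives $x\in\tilde{T}_k(\cO_L)$ reducing to $\bar{x}$. Such an $x$ lies in $S(\cO_L)$, so $g(x)=0$ and all partial derivatives vanish at $x$. Thus $x$ is a singular point of $\{g_L=0\}$ with $\cO_L$-coordinates. Take $M$ larger than every prime dividing $NN'$.

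The main obstacle is making the stratification work uniformly on the special fibres. One issue is guaranteeing that $\tilde{S}\otimes k_L$ coincides set-theoretically with $S\otimes k_L$. The other is that the smooth strata $\tilde{T}_k$ actually cover it with smooth fibres. I would handle both with standard spreading-out results (EGA IV, constructibility of smoothness and of images), enlarging the finite set of bad primes at each step. The degenerate case $S_K=\emptyset$ must be checked separately. There $1$ lies in the ideal of $S$ after inverting some integer, so $S(k_L)=\emptyset$ for large residue characteristic and the statement is vacuous.
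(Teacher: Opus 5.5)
Your argument is correct, and it follows a different route from the paper.

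The paper offers two short proofs. The first cites Denef's theorem that an embedded resolution of $(\AA^n_K,\{g=0\})$ has good reduction modulo $\varpi_L$ for all but finitely many residue characteristics, and adds the observation that good reduction forces singular points of $\{\overline{g}=0\}$ to lift. The second notes that in Henselian valued fields of equicharacteristic zero every residue solution of $g=\partial g/\partial x_1=\dots=\partial g/\partial x_n=0$ lifts. It then transfers this to local fields of large residue characteristic by model-theoretic compactness.

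You replace both the resolution machinery and the model theory with a direct spreading-out argument. You proceed in four steps:
\begin{enumerate}
\item Discard the finitely many primes over which the singular-locus scheme $S$ has $\cO_K$-torsion, so that $S=\tilde S$ there.
\item Stratify $(S_K)_{\mathrm{red}}$ into smooth pieces by generic smoothness in characteristic zero.
\item Spread this out to smooth $\cO_K[1/N']$-schemes $\tilde T_k$ partitioning $\tilde S$ fibrewise, using Chevalley's theorem and openness of the smooth locus.
\item Lift by Hensel's lemma for smooth $\cO_L$-schemes.
\end{enumerate}

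Each step goes through as you describe. The strata $\tilde T_k$ are smooth over a regular base, hence reduced. So a $k_L$-point of $\tilde S$ lying set-theoretically in $\tilde T_k$ factors through $\tilde T_k$. A lift in $\tilde T_k(\cO_L)$ has generic point in $S_K(L)$, so it is a singular point of $\{g_L=0\}$. Your remark ``this is generic flatness'' is slightly loose, since what you actually use is the torsion/saturation argument you then write down. Either argument works.

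What your approach buys is an elementary, self-contained proof. In principle it makes the excluded primes explicit: those where $\cO_S$ has torsion and those where the spread-out strata fail to be smooth. Compactness gives no information on $M$, and the first route ultimately rests on Hironaka's resolution. Conversely, the compactness proof is very short because the lifting in equicharacteristic zero is essentially trivial (via a coefficient field containing $K$). Your stratification argument is in effect the algebro-geometric content hidden behind that transfer.
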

\begin{proof}
There are several ways to prove the assertion. 

Note that if $L$ is a local field and $h\in \cO_L[x_1,...,x_n]$ is a non-constant polynomial such that $(\AA_L^n, \Spec L[x_1,...,x_n]/(h))$ admits an embedded resolution with good reduction modulo $\varpi_L\cO_L$ mentioned in Remark \ref{comparQgoo}, then any singular point with coordinates in $k_L$ of  the hypersurface $\{h(x)\mod \varpi_L\cO_L=0\}$ can be lifted to a singular point with coordinates in  $\cO_L$  of the hypersurface  $\{h(x)=0\}$. Thus, we can use \cite[Theorem 2.4]{Denefdegree} for $g$ to deduce the assertion. 

Alternatively, we observe that if $\mathsf{F}$ is  a Henselian value field of equi-characteristic zero endowed with a structure of $\cO_K$-algebra and $h$ is a non-constant polynomial in $n$ variables whose coefficients belong to $\cO_{K}$ then any $k_{\mathsf{F}}$-solution of the system $h(x)=\frac{\partial h}{\partial x_1}=...=\frac{\partial h}{\partial x_n}=0$ can be lifted to an $\cO_{\mathsf{F}}$-solution  of the system $h(x)=\frac{\partial h}{\partial x_1}=...=\frac{\partial h}{\partial x_n}=0$, where $k_{\mathsf{F}}$ is the residue field of $\mathsf{F}$. Therefore, we can use model theoretic compactness (see, e.g.,  \cite[Theorem 2.4]{Marker}) for the theory of Henselian valued fields of equi-characteristic zero endowed with a structure of $\cO_K$-algebra to prove the assertion. 
\end{proof}
Now, we fix a number field $K$, a tuple $w=(w_1,...,w_n)\in \NN_{>0}^n$ satisfying $\gcd(w_1,...,w_n)=1$ and a non constant polynomial $f\in \cO_K[x_1,...,x_n]$. To simpler, we assume that $f$ is primitive, i.e., the ideal of $\cO_K$ generated by coefficients of $f$ is $(1)$.  We write $f=f_{d_0w}+...+f_{dw}$ as above with $f_{d_0w}\neq 0$. We suppose that $d_0>1$ and the origin $O=(0,...,0)$ is the only common zero point of $\frac{\partial f_{d_0w}}{\partial x_1},...,\frac{\partial f_{d_0w}}{\partial x_n}$ in $\AA_\CC^n$. Then $O$ is the isolated singularity of $D_K(f_{d_0w})$. 

\subsection{$\QQ$-embedded resolution of singularities of weighted homogeneous polynomials with isolated singularities}

Let $\pi:Z\to\AA_{K}^n$ be the $w$-weighted blowing-up at the origin $O$ then  $\pi$ is an embedded $\QQ$-resolution of $(\AA_{K}^n,D_K(f))$ in a small enough Zariski neighbourhood of  $O$ in $\AA_K^n$ as shown in \cite[Section 2.2]{BBV}. 

\subsubsection{Description of $(Z,\pi)$}
It is well-known  that the $w$-weighted projective space of $\PP_{K}^{w}$, is the geometric quotient of $\AA_K^n\setminus \{O\}$ by the action of $\GG_{\textnormal{m},K}=\spec K[z,z^{-1}]$ given by $t(x_1,...,x_n)=(t^{w_1}x_1,...,t^{w_n}x_n)$, where $t\in \GG_{\textnormal{m},K}(R), (x_1,...,x_n)\in R^n$ and $R$ is any $K$-algebra. Let $\overline{K}$ be an algebraic closure of $K$ and $x=(x_1,...,x_n)\in \overline{K}^n\setminus \{O\}$, we write $[x]_w$ or $[x_1:...:x_n]_w$ for the orbit of $x$ under the above action of $\GG_{\textnormal{m},K}(\overline{K})=\overline{K}^{\times}$. A $K$-point of $\PP_{K}^{w}$ is an orbit $[x_1:...:x_n]_w$ containing a point in $\AA_K^n(K)\setminus \{O\}$. We can cover $\PP_{K}^{w}$ by open subset $U_i$ for $1\leq i\leq n$ given by $U_i:=\{[x_1:...:x_n]_w\mid x_i\neq 0\}$. On the other hand, $Z$ is the  Zariski closure in $\AA_{K}^n\times \PP_{K}^{w}$ of the set consisting of points $(a_1,...,a_n,[a_1:...:a_n]_w)$. Moreover, it is well-known that the projection $\pi:Z\to \AA_{K}^n$ is an isomorphism over $\AA_K^n\setminus \{O\}$ and $\pi^{-1}(O)=\PP_{K}^{w}$. We can cover $Z$ by open subsets $U'_i$ for $1\leq i\leq n$ given by 
$U'_i:=Z\cap (\AA_K^n\times U_i).$
Moreover, for each $a\in K^{\times}$ and each $1\leq i\leq n$, we can identify $U'_i$ with the quotient $\AA_K^n/\mu_{w_i,K}$ by the morphism $\sigma_{ia}:\AA_{K}^{n}\to U'_i$ sending $(x_1,...,x_{i-1},x_i,x_{i+1},...,x_n)$ to $$(x_i^{w_1}x_1,...,x_i^{w_{i-1}}x_{i-1},ax_i^{w_i},x_{i}^{w_{i+1}}x_{i+1},...,x_i^{w_n}x_n,[x_1:...:x_{i-1}:a:x_{i+1}:...:x_n]_w),$$
where the action $\phi_i$ of $\mu_{w_i,K}$ on $\AA_K^n$ is induced by the embedding $\mu_{w_i,K}\to \GG_{m,K}^n$ associated with the homomorphism of Hofp algebras $K[z_1,z_1^{-1},...,z_n,z_n^{-1}]\to K[\zeta]/(\zeta^{w_i}-1), z_j\mapsto \zeta^{w_j} \textnormal{ if } i\neq j, z_i\mapsto \zeta^{w_i-1}.$ In fact, we have 
\begin{align*}
\varphi_{i}: K[x_1,...,x_{i-1},x_i,x_{i+1},...,x_n]&\to K[x_1,...,x_{i-1},x_i,x_{i+1},...,x_n]\otimes_K K[\zeta]/(\zeta^{w_i}-1),\\
 x_j&\mapsto x_j\otimes \zeta^{w_j} \textnormal{ if } i\neq j,\\
 x_i&\mapsto x_i\otimes \zeta^{w_i-1}.
\end{align*}
The morphism $\sigma_{ia}$ has a property that if $a_1,...,a_n\in K$ with $a=a_i\neq 0$ then $\sigma_{ia}^{-1}(0,...,0,[a_1:...:a_n]_w)$ contains the $K$-point $(a_1,...,a_{i-1},0,a_{i+1},...,a_n)$ of $\AA_K^n$.  Let us consider the chart $U'_i$. Suppose that $i=1$ and $a\neq 0$ are fixed. We have the equality \begin{equation}\label{pulbf}f(\pi|_{U'_1}(\sigma_{1a}(x_1,x_2,...,x_n)))=x_1^{d_0}(f_{d_0w}(a,x_2,...,x_n)+x_1h(x_1,a,x_2,...,x_n))
\end{equation}
of the functions in $x_1,x_2,...,x_n$ for some polynomial $h$ and
\begin{equation}\label{pulcanon}(\pi|_{U'_1}\circ\sigma_{1a})^*(dx_1\wedge...\wedge dx_n)=w_1x_1^{|w|-1}dx_1\wedge dx_2\wedge...\wedge dx_n.
\end{equation}
We also observe that for any fixed $a\neq 0$,  $f_{d_0w}(a,x_2,...,x_n)$ is a smooth function  as shown in \cite[Section 2.2]{BBV}. 
\subsubsection{Almost good data of $(Z,\pi)$}
Let $L$ be a local field over $K$. 
Then $(Z,\pi)$ induces an embedded $\QQ$-resolution $(Z_L,\pi_L)$ of $(\AA_L^n,D_L(f))$ in a small neighbourhood of the origin $O\in \AA_L^n$.  By using Lemmas \ref{etalelocalquoti} and \ref{almostgood}, we will describe almost good data of $(Z_L,\pi_L)$. Let  $\tilde{Z}_{\cO_L}$ be the Zariski closure of $Z_L$ in $\AA_{\cO_L}^n\times \PP_{\cO_L}^w$ and $\tilde{\pi}_{\cO_L}$ be the projection from $\tilde{Z}_{\cO_L}$ to $\AA_{\cO_L}^n$. For each $\cO_L$-algebra $R$,  the set of $R$-points of $\tilde{Z}_{\cO_L}$ consists of elements  of the forms $(x,[x]_w)$ or $(0,...,0,[x]_w)$ for $x\in R^n\setminus\{(0,...,0)\}$. We also set $\overline{\pi}_{k_L}:=\tilde{\pi}_{\cO_L}\otimes_{\cO_L}k_L$. Then $\overline{\pi}_{k_L}: \overline{Z}_{k_L}=\tilde{Z}_{\cO_L}\otimes_{\cO_L}k_L\to \AA_{k_L}^n$ is the restriction to $\overline{Z}_{k_L}$ of the projection $\AA_{k_L}^n\times \PP_{k_L}^w\to\AA_{k_L}^n$. 

We can cover $\PP_{\cO_L}^{w}$ by open subset $\tilde{U}_i=\PP_{\cO_L}^{w}\setminus D_i$ for $1\leq i\leq n$, where $D_i$ is the divisor of $\PP_{\cO_L}^{w}=\Proj\cO_L[x_1,...,x_n;w]_{\ZZ}$ associated with $x_i$. We cover $\tilde{Z}_{\cO_L}$ by open subsets $\tilde{U}'_i$ for $1\leq i\leq n$ given by 
$\tilde{U}'_i:=\tilde{Z}_{\cO_L}\cap (\AA_{\cO_L}^n\times \tilde{U}_i).$
Moreover, for each $a\in \cO_L^{\times}$ and each $1\leq i\leq n$, we can identify $\tilde{U}'_i$ with the quotient $\AA_{\cO_L}^n/\mu_{w_i,\cO_L}$ by the morphism $\tilde{\sigma}_{ia}:\AA_{\cO_L}^{n}\to \tilde{U}'_i$ sending an $R$-point $(x_1,...,x_{i-1},x_i,x_{i+1},...,x_n)$ to $$(x_i^{w_1}x_1,...,x_i^{w_{i-1}}x_{i-1},ax_i^{w_i},x_{i}^{w_{i+1}}x_{i+1},...,x_i^{w_n}x_n,[x_1:...:x_{i-1}:a:x_{i+1}:...:x_n]_w),$$
for each $\cO_L$-algebra $R$, where  the action of $\mu_{w_i,\cO_L}$ on $\AA_{\cO_L}^n$ is given by 
\begin{align*}
\tilde{\varphi}_{i\cO_L}: \cO_L[x_1,...,x_{i-1},x_i,x_{i+1},...,x_n]&\to \cO_L[x_1,...,x_{i-1},x_i,x_{i+1},...,x_n]\otimes_{\cO_L} \cO_L[\zeta]/(\zeta^{w_i}-1),\\
 x_j&\mapsto x_j\otimes \zeta^{w_j} \textnormal{ if } i\neq j,\\
 x_i&\mapsto x_i\otimes \zeta^{w_i-1}.
\end{align*}

Now, we stratify $\overline{Z}_{k_L}$ as follows. Recall that $\PP_{k_L}^w=\Proj k_L[x_1,...,x_n;w]_{\ZZ}$. We set $\overline{Z}^{\circ}=\overline{Z}_{k_L}\setminus \{(0,...,0)\}\times \PP_{k_L}^w\subset \overline{Z}_{k_L}$. For each $1\leq i\leq n$, we set $\overline{D}_i:=D_i\otimes_{\cO_L}k_L$. We also denote by $\overline{H}$ the divisor of $\PP_{k_L}^w$ associated with $\overline{f}_{d_0w}=f_{d_0w}\mod \varpi_L\cO_L$.
For each non-empty subset $J$ of $\{1,2,...,n\}$, we set $d_J=\gcd(w_i)_{i\in J}$, $G_{J\cO_L}=\mu_{d_J,\cO_L}$, $$\overline{Z}_{J0L}=\{(0,...,0)\}\times (\overline{H}\cap(\cap_{i\notin J} \overline{D}_i)\setminus \cup_{i\in J}\overline{D}_i)\subset \overline{Z}_{k_L}$$ and $$\overline{Z}_{J1L}=\{(0,...,0)\}\times(\cap_{i\notin J} \overline{D}_i\setminus (\overline{H}\cup(\cup_{i\in J}\overline{D}_i))\subset \overline{Z}_{k_L}.$$
For each non-empty subset $J$ of $\{1,...,n\}$, we fix an index $i_J\in J$ and  consider the following action of $G_{J\cO_L}$ on $\AA_{\cO_K}^n$:
\begin{align*}
\varphi_{J\cO_L}: \cO_L[x_1,...,x_n]&\to \cO_L[x_1,...,x_n]\otimes_{\cO_L} \cO_L[\zeta]/(\zeta^{d_J}-1)\\
x_{i_J}&\mapsto x_{i_J}\otimes \zeta^{d_J-1}\\
x_i&\mapsto x_i\otimes 1 \textnormal{ if } i\in J\setminus\{i_J\}\\
x_i&\mapsto x_i\otimes \zeta^{w_i} \textnormal{ if } i\notin J,
\end{align*} 
\begin{lem} \label{desstraw}Suppose that $\overline{f}_{d_0w}$ defines a hypersurface with an isolated singularity in $\AA_{\overline{k}_L}^n$. Then, there is a Zariski open neighbourhood $W$ of $P=(\varpi_L)+(x_1,...,x_n)$ in $\AA_{\cO_L}^n$ such that  $(\pi_L^{-1}(W_L),\pi_L)$ is an embedded $\QQ$-resolution of $(W_L, D_L(f)|_{W_L})$ with a good $\cO_L$-model induced by the morphism $\tilde{\pi}_{\cO_L}:\tilde{Z}_{\cO_L}\to \AA_{\cO_L}^n$ and the morphisms $\tilde{\sigma}_{ia}$ mentioned above. Moreover, we have 
$\overline{Z}_{k_L}\cap \overline{\pi}_{k_L}^{-1}(W_{k_L})=(\overline{Z}^{\circ}\cap \overline{\pi}_{k_L}^{-1}(W_{k_L}))\sqcup(\sqcup_{\emptyset\neq J\subset \{1,...,n\}}(\overline{Z}_{J0L}\sqcup\overline{Z}_{J1L}))$ and this is a stratification of $\overline{Z}_{k_L}\cap \overline{\pi}_{k_L}^{-1}(W_{k_L})$ that satisfies the conditions of Lemma \ref{stragood}. Furthermore, we can choose almost good data of $(\pi_L^{-1}(W_L),\pi_L)$ such that 
\begin{itemize}
\item The numerical data of $\overline{Z}^{\circ}\cap \overline{\pi}_{k_L}^{-1}(W_{k_L})$ is $(\{1\},\nu_{i}^{\circ},N_i^{\circ},0,0,\overline{f}\circ\overline{\pi}_{k_L})_{1\leq i\leq n}$, where  $\nu_{i}^{\circ}=1$ and $N_i^{\circ}=0$ for all $1\leq i\leq n$.
\item The numerical data of $\overline{Z}_{J0L}$ is $(G_{J\cO_L},\nu_{J0i},N_{J0i},\mathfrak{n}_{J0L},0,1)_{1\leq i\leq n}$.
\item The numerical data of $\overline{Z}_{J1L}$ is $(G_{J,\cO_L},\nu_{J1i},N_{J1i},\mathfrak{n}_{J1L}, 0,\mathfrak{u}_{J1})_{1\leq i\leq n}$.
\item If $L'$ is a finite extension of $L$, $z\in \overline{Z}_{J1L}(k_{L'})$, and  $(0,...,0,[x]_w)$ is a fixed  representation of $z$ with $x\in k_{L'}^n\setminus\{(0,...,0)\}$, then we can take $\mathfrak{u}_{J1}(z)=\overline{f}_{d_0w}(x).$
\item $\mathfrak{n}_{J0L}, \mathfrak{n}_{J1L}$ are bounded above by $\ord_L(w_{i_J})+\ord_L(d_J)$.
\item $N_{J1i_J}=d_0, \nu_{J1i_J}=|w|, N_{J1i}=\nu_{J1i}-1=0$ if $i\neq i_J$.
\item $N_{J0i_J}=d_0, N_{J0i_J'}=1, \nu_{J0i_J}=|w|, \nu_{J0i_J'}=1$ and $N_{J0i}=\nu_{J0i}-1=0$ if $i\notin\{i_J,i_J'\}$ for an index  $i_J'$ satisfying $i_J'\neq i_J$ and $\varphi_{J\cO_L}(x_{i_J'})=x_{i_J'}\otimes \zeta^{d_0}$.
\end{itemize}
\end{lem}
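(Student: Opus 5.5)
The plan is to work chart by chart on the open cover $\tilde{U}'_i$ of $\tilde{Z}_{\cO_L}$, each identified with $\AA_{\cO_L}^n/\mu_{w_i,\cO_L}$ via $\tilde{\sigma}_{ia}$. Then I would refine each chart to the stratum-wise local models required in Proposition \ref{stragood}, using Lemma \ref{etalelocalquoti} to pass from $\mu_{w_i}$ to the stabilizer $G_J=\mu_{d_J}$.

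\emph{Step 1: choice of $W$.} I first take $W$ so small that $\overline{f}$ has no singular $k_L$-points in $W_{k_L}$ away from the origin. This should be possible by Lemma \ref{almostpp} combined with the hypothesis that $\overline{f}_{d_0w}$ has an isolated singularity. Shrinking around $P$ so that the higher weighted parts $f_{iw}$, $i>d_0$, do not create new singular points near $O$ is the delicate part. On $\overline{Z}^{\circ}\cap\overline{\pi}^{-1}_{k_L}(W_{k_L})$ the map $\tilde\pi$ is an isomorphism, so the stratum is witnessed by $\{1\}$ with trivial data. The unit is then $\overline f\circ\overline\pi_{k_L}$ itself, giving the first bullet.

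\emph{Step 2: the exceptional strata.} On the chart $i=i_J$ (with $i_J\in J$), a point $z=(0,[x]_w)$ with $x_j=0$ exactly for $j\notin J$ lies in $\rho(\overset{\circ}{E}_{I})$ with $I=\{i_J\}\cup(\{1,\dots,n\}\setminus J)$. The maximal subgroup of $\mu_{w_{i_J}}$ acting trivially on $E_I$ is $\mu_{d_J}$, since $\zeta^{w_j}=1$ for all $j\in J\setminus\{i_J\}$ forces $\zeta\in\mu_{d_J}$. The induced action is $\varphi_{J\cO_L}$ after rewriting $\zeta^{w_{i_J}-1}$ as $\zeta^{d_J-1}$ up to the reparametrization $\zeta\mapsto\zeta$ on $\mu_{d_J}$, and Lemma \ref{etalelocalquoti} provides the étale map $\theta_{I,z}$. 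Formulas (\ref{pulbf}) and (\ref{pulcanon}) (with index $i_J$) then give the pulled-back divisor $d_0E_{i_J}$ plus the strict transform, and the Jacobian $w_{i_J}x_{i_J}^{|w|-1}$. Hence $N_{i_J}=d_0$ and $\nu_{i_J}=|w|$, while the $\varpi_L$-part of the canonical divisor is bounded by $\ord_L(w_{i_J})$ plus the contribution $\ord_L(d_J)$ of $K_{\AA^n/A_{G_J}}$. This yields the bound on $\mathfrak{n}$.

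For $z\in\overline Z_{J1L}$ the strict transform $f_{d_0w}(a,\dots)+x_{i_J}h$ is a unit at $z$. Corollary \ref{angucomp} then gives the standard angular component $h(z)\prod_{i\notin I}\tilde z_i^{l_i}$, which equals $\overline f_{d_0w}(x)$ after normalizing the representative. For $z\in\overline Z_{J0L}$, smoothness of $f_{d_0w}(a,x_2,\dots)$ (cf.\ \cite[Section 2.2]{BBV}, valid mod $\varpi_L$ by the isolated singularity assumption) lets me use the strict transform as a new coordinate $x_{i_J'}$. I would pick $i_J'$ among the directions $j\in J\setminus\{i_J\}$ where $\partial \overline f_{d_0w}/\partial x_j(x)\neq0$; this coordinate is $G_J$-semi-invariant of weight $\zeta^{d_0}$. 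This gives $N_{i_J'}=1$, $\nu_{i_J'}=1$, and angular component $1$.

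\emph{Main obstacle.} The hard step is the $\overline Z_{J0L}$ case. I need the coordinate change replacing $x_{i_J'}$ by the strict transform to be $G_J$-equivariant and étale over $\cO_L$ (not just over $k_L$), and I need it uniformly along the stratum, so that the same locally standard triple works at all points over all finite extensions $L'$. I would first attempt it by a Hensel/implicit-function argument on the semi-invariant function $f_{d_0w}(a,\cdot)+x_{i_J}h$, using that its weight is $\zeta^{d_0}$. I would then check that the required equivariance reduces to $\mu_{d_J}$ acting on $x_{i_J'}$ through $\zeta^{d_0}$, matching the statement's condition $\varphi_{J\cO_L}(x_{i_J'})=x_{i_J'}\otimes\zeta^{d_0}$.
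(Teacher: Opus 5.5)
Your overall route is the paper's: the charts $\tilde U'_{i_J}$ via $\tilde\sigma_{i_Ja}$; passing to the stabilizer $\mu_{d_J}$ through Lemma \ref{etalelocalquoti}; reading $N_{i_J}=d_0$, $\nu_{i_J}=|w|$ and the bound on $\mathfrak n$ off (\ref{pulbf}) and (\ref{pulcanon}); and Corollary \ref{angucomp} for $\overline Z_{J1L}$. The gap is in the $\overline Z_{J0L}$ case, in your choice of $i_J'$.

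You restrict $i_J'$ to $J\setminus\{i_J\}$. Under $\varphi_{J\cO_L}$ those coordinates are $\mu_{d_J}$-invariant, so your assertion that $x_{i_J'}$ has character $\zeta^{d_0}$ holds only when $d_J\mid d_0$. Moreover, a nonvanishing partial in a $J$-direction need not exist. For a singleton $J$ with $\overline Z_{J0L}\neq\emptyset$, the set $J\setminus\{i_J\}$ is empty.

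Concretely, take $f=x_1^3x_2+x_2^3$ with $w=(2,3)$ and $d_0=9$; this has an isolated singularity. Take $J=\{1\}$ and $z=((0,0),[1:0]_w)\in\overline Z_{J0L}$. On the chart $i_J=1$ the strict transform is $a^3x_2+x_2^3$, so necessarily $i_J'=2\notin J$. Indeed $\varphi_{J\cO_L}(x_2)=x_2\otimes\zeta^{3}=x_2\otimes\zeta^{9}$ on $\mu_2$, while $d_J=2\nmid 9$.

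Your Euler-relation argument does produce some $j\neq i_J$ with nonzero partial at $z'$, but you must allow $j\notin J$, i.e. coordinates vanishing at $z'$. This case is exactly what feeds $\gamma_{J\upsilon i_J'}=\frac{d_0\upsilon}{d_J}-\lfloor\frac{d_0\upsilon}{d_J}\rfloor$ and $\delta_{J\upsilon}$ in Theorem \ref{explicit1}, and the $F_J=0$ branch of Lemma \ref{ubound}. With your choice you would get $\gamma_{J\upsilon i_J'}=0$.

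Once $i_J'$ is allowed to be any index $\neq i_J$ with $\partial\tilde g_z/\partial x_{i_J'}(0)$ a unit, your ``main obstacle'' is settled in the paper in a few lines, with no Hensel or implicit-function argument.

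\emph{Character of $\tilde g_z$.} After $\theta_{I,z}$, write $\rho_{\mu_{d_J}}^*\theta_{I,z}^*\tilde\pi^*(f)=x_{i_J}^{d_0}\tilde g_z$. This product is $\mu_{d_J}$-invariant, and $x_{i_J}^{d_0}$ has character $\zeta^{d_0(d_J-1)}=\zeta^{-d_0}$. Hence $\varphi_{J\cO_L}(\tilde g_z)=\tilde g_z\otimes\zeta^{d_0}$.

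\emph{Character of $x_{i_J'}$.} Since the action is diagonal, every monomial of $\tilde g_z$ carries this character. In particular the linear monomial $x_{i_J'}$, whose coefficient is a unit, does. This \emph{derives} $\varphi_{J\cO_L}(x_{i_J'})=x_{i_J'}\otimes\zeta^{d_0}$ rather than presupposing it.

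\emph{Coordinate change.} The substitution $x_{i_J'}\mapsto\tilde g_z$ is therefore $\mu_{d_J}$-equivariant and, by the Jacobian criterion, étale over $\cO_{L'}$ at the origin. It yields $x_{i_J}^{d_0}x_{i_J'}$, with $N_{i_J'}=\nu_{i_J'}=1$ and angular component $1$.

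\emph{Uniformity.} $i_J'$ may vary with $z$, but any two coordinates with character $\zeta^{d_0}$ can be swapped equivariantly. So one locally standard triple serves the whole stratum over all $L'$.
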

\begin{proof}It is clear that $$\overline{Z}_{k_L}\cap \overline{\pi}_{k_L}^{-1}(W_{k_L})=(\overline{Z}^{\circ}\cap \overline{\pi}_{k_L}^{-1}(W_{k_L}))\sqcup(\sqcup_{\emptyset\neq J\subset \{1,...,n\}}(\overline{Z}_{J0L}\sqcup\overline{Z}_{J1L})).$$
Moreover, the assertions relating to  $\overline{Z}^{\circ}\cap \overline{\pi}_{k_L}^{-1}(W_{k_L})$ are obvious.

Let $J$ be a non-empty subset of $\{1,...,n\}$, $L'$ be a finite extension of $L$ and $z\in \overline{Z}_{J0L}(k_{L'})$. Suppose that $z=(0,...,0,[x_1:...:x_n]_w)$ for $x=(x_1,...,x_n)\in k_{L'}^n\setminus \{(0,...,0)\}$. Since $\overline{f}_{d_0w}$ defines a hypersurface with an isolated singularity in $\AA_{\overline{k}_L}^n$ and $\overline{f}_{d_0w}(x)=0$, we can use Hensel's lemma 
to obtain a point $\tilde{z}=(\tilde{z}_1,...,\tilde{z}_n)\in \cO_{L'}^n$ such that $\tilde{z}\mod \varpi_{L'}\cO_{L'}=x$  
and $f_{d_0w}(\tilde{z})=0$. Since $x_i=0$ iff $i\notin J$, we have $\tilde{z}_i\in\varpi_{L'}\cO_{L'}$ for all $i\notin J$.   We set $a=\tilde{z}_{i_J}$. Since $x_{i_J}\neq 0$, we have $a\in \cO_L^{\times}$.  We use the map $\tilde{\sigma}_{i_Ja}$ to identify $\AA_{\cO_{L'}}^n/\mu_{i_J,\cO_{L'}}$ with $\tilde{U}'_{i_J}\otimes_{\cO_L}\cO_{L'}\ni z$. In particular, it is easy to verify that  $z=\tilde{\sigma}_{i_J,a}(z')$ with $z'=(\tilde{z}_1,...,\tilde{z}_{i_J-1},0,\tilde{z}_{i_J+1},...,\tilde{z}_n) \mod \varpi_{L'}\cO_{L'}\in \AA_{k_{L'}}^n(k_{L'})=\AA_{\cO_{L'}}^n(k_{L'})$.  Similarly to (\ref{pulbf}), we have $$\tilde{\sigma}_{i_Ja}^*\tilde{\pi}_{\cO_L}^*(f)= x_{i_J}^{d_0}(f_{d_0w}(x_1,...,x_{i_J-1},a,x_{i_J+1},...,x_n)+x_{i_J}h_{a}(x_1,...,x_n))$$
for a polynomial $h_a\in \cO_{L'}[x_1,...,x_n]$. We set $\tilde{t}=(\tilde{t}_1,...,\tilde{t}_n)$ with $\tilde{t}_i=\tilde{z}_i$ if $i\in J$ and $\tilde{t}_i=0$ if $i\notin J$. 
 We use the morphism $\theta_{I,z}$ constructed in Lemma \ref{etalelocalquoti} with respect to $\tilde{t}$, where $I=\{1,...,n\}\setminus J$. More precisely, $\theta_{I,z}=\eta_J\circ T_{\tilde{t}}$, where $\eta_J:\AA_{\cO_{L'}}^n/\mu_{d_J,\cO_{L'}}\to \AA_{\cO_{L'}}^n/\mu_{w_{i_J},\cO_{L'}}$ is the quotient morphism and $T_{\tilde{t}}$ is the $\cO_{L'}$-automorphism of $\AA_{\cO_{L'}}^n/\mu_{w_{i_J},\cO_{L'}}$ induced by the $\mu_{d_J,\cO_{L'}}$-equivariant $\cO_L'$-isomorphism $\cO_{L'}[x_1,...,x_n]\to\cO_{L'}[x_1,...,x_n], x_i\mapsto x_i+\tilde{t}_i$.   Then $\theta_{I,z}$ is an $\cO_{L'}$-étale morphism  $\phi_z$ from a standard scheme over $\cO_L'$ witnessed by $\mu_{d_J,\cO_{L'}}$  to a neighbourhood of $z$ in $\tilde{U}_{i_J}'\otimes_{\cO_L}\cO_{L'}$ that sends $\rho_{\mu_{d_J,\cO_{L'}}}((\varpi_{L'})+(x_1,...,x_n))$ to $z$. Since $\overline{f}_{d_0w}$ defines a hypersurface with an isolated singularity in $\AA_{\overline{k}_L}^n$, the function $g_a:=f_{d_0w}(x_1,...,x_{i_J-1},a,x_{i_J+1},...,x_n)+x_{i_J}h_{a}(x_1,...,x_n)$ is smooth at $z'$. This and the construction of $\theta_{I,z}$ imply that $\rho_{\mu_{d_J,\cO_{L'}}}^*\theta_{I,z}^*\tilde{\pi}_{\cO_{L'}}^*(f)=x_{i_J}^{d_0}\tilde{g}_z$ for a polynomial $\tilde{g}_z\in \cO_{L'}[x_1,...,x_n]$ such that $\tilde{g}_z$ belongs to the ideal $(\varpi_{L'})+(x_1,...,x_n)$ of $\cO_L[x_1,...,x_n]$, $x_{i_J}\nmid \tilde{g}_z$ and $\tilde{g}_z$ is smooth at the origin $(\varpi_{L'})+(x_1,...,x_n)=(0,...,0)\in \AA_{k_{L'}}^n$. Therefore,  there is $i_J'\neq i_J$ with $\frac{\partial \tilde{g}_z}{\partial x_{i_J'}}(0,...,0)\in \cO_{L'}^{\times}$. Moreover, since $\varphi_{J\cO_{L'}}(x_{i_J}^{d_0}\tilde{g}_z)=x_{i_J}^{d_0}\tilde{g}_z\otimes 1$ and $\varphi_{J\cO_{L'}}(x_{i_J}^{d_0})=x_{i_J}^{d_0}\otimes \zeta^{d_0(d_J-1)}$, we have $\varphi_{J\cO_{L'}}(\tilde{g}_z)=\tilde{g}_z\otimes\zeta^{d_0}$. By looking at the linear part of $\tilde{g}_z$, we must have $\varphi_{J\cO_{L'}}(x_{i_J'})=x_{i_J'}\otimes \zeta^{d_0}$. Thus, the change of coordinates $(x_1,...,x_{i_J'-1},x_{i_J'},x_{i_J'+1},...,x_n)\mapsto (x_1,...,x_{i_J'-1},\tilde{g}_z,x_{i_J'+1},...,x_n)$ yields an $\cO_{L'}$-isomorphism $\varrho_{J,z}$ between two Zariski neighbourhoods of $P_J:=\rho_{\mu_{d_J,\cO_{L'}}}((\varpi_{L'})+(x_1,...,x_n))$ in $\AA_{\cO_{L'}}^n/\mu_{d_J,\cO_{L'}}$ sending $P_J$ to itself such that $\rho_{\mu_{d_J,\cO_{L'}}}^*\varrho_{J,z}^*\theta_{I,z}^*\tilde{\pi}_{\cO_{L'}}^*(f)=x_{i_J}^{d_0}x_{i_J'}.$ This implies  the assertions regarding $\overline{Z}_{J0L}$ except the one related to the upper bound of $\eta_{J1L}$.  To show the upper bound of $\eta_{J1L}$, we do similarly to (\ref{pulcanon}) to see that the coefficient of $\AA_{k_{L'}}^n$ in the relative canonical divisor $K_{\AA_{\cO_{L'}}^n/W_{\cO_{L'}}}=\tilde{\sigma}_{i_Ja}^*(K_{\tilde{U}'_{i_J}\otimes_{\cO_L}\cO_{L'}/W_{\cO_{L'}}})+K_{\AA_{\cO_{L'}}^n/\tilde{U}'_{i_J}\otimes_{\cO_{L}}\cO_{L'}}$ associated with the morphism $\tilde{\pi}_{\cO_L}\circ\tilde{\sigma}_{i_Ja}$ is $\ord_{L'}(w_{i_J})$. Thus, the coefficients  of $\AA_{k_{L'}}^n$ in both $K_{\AA_{\cO_{L'}}^n/\tilde{U}'_{i_J}\otimes_{\cO_{L}}\cO_{L'}}$ and $\tilde{\sigma}_{i_Ja}^*(K_{\tilde{U}'_{i_J}\otimes_{\cO_L}\cO_{L'}/W_{\cO_{L'}}})$ are at most  $\ord_{L'}(w_{i_J})$. On the other hand,  we observe that $\tilde{U}'_{i_J}\otimes_{\cO_{L}}\cO_{L'}$ is identified with $\AA_{\cO_{L'}}^n/\mu_{w_{i_J},\cO_{L'}}$ and the action $\varphi_{J\cO_{L'}}$  is of a form similar  to the action $\tilde{\varphi}_{i_J\cO_{L'}}$. Thus, the quotient $\AA_{\cO_{L'}}^n/\mu_{d_J,\cO_{L'}}$ can be described  similarly to $\AA_{\cO_{L'}}^n/\mu_{w_{i_J},\cO_{L'}}$. Therefore,  we see that the coefficient of $\AA_{k_{L'}}^n$ in $K_{\AA_{\cO_{L'}}^n/(\AA_{\cO_{L'}}^n/\mu_{d_J,\cO_{L'}})}$ is at most $\ord_{L'}(d_J)$. Thus, the coefficient of $\AA_{k_{L'}}^n$ in the relative canonical divisor $K_{\AA_{\cO_{L'}}^n/W_{\cO_{L'}}}=\varrho_{J,z}^*\theta_{I,z}^*(K_{\tilde{U}'_{i_J}\otimes_{\cO_L}\cO_{L'}/W_{\cO_{L'}}})+K_{\AA_{\cO_{L'}}^n/(\AA_{\cO_{L'}}^n/\mu_{d_J,\cO_{L'}})}$ associated with the morphism $\tilde{\pi}_{\cO_{L'}}\circ\theta_{I,z}\circ\varrho_{J,z}\circ\rho_{\mu_{d_J,\cO_{L'}}}$ is at most $\ord_{L'}(w_{i_J})+\ord_{L'}(d_J)$. Here, we need to use the fact that $\theta_{I,z}\circ\varrho_{J,z}$ is an $\cO_{L'}$-étale morphism.
 
The assertions relating to $\overline{Z}_{J1L}$ are proved similarly. We can take an arbitrary $\tilde{z}\in \cO_{L'}^n$ such that $\tilde{z}\mod \varpi_L\cO_{L'}= x$. After that, we can  repeat the above strategy, but skip the morphism $\varrho_{J,z}$ since it is not necessary. To compute $\mathfrak{u}_{J1}$, we can use Corollary \ref{angucomp}. 
\end{proof}

\subsection{Proof of Main Theorem \ref{mthm1}}

It follows from Lemma \ref{almostpp}  that Main Theorem \ref{mthm1} is a corollary of the following theorem. 
\begin{thm}\label{smallp}Let $K,w,f,d_0,f_{d_0w}$ be as above.  Let $L$ be a local field over $\cO_K$, $\chi$ be a multiplicative character of $\cO_L^{\times}$ and $Y$ be a subscheme of $\AA_{\cO_K}^n$. Suppose that $\overline{f}_{d_0w}=f_{d_0w} \mod \varpi_L\cO_L$ defines a hypersurface with an isolated singularity in $\AA_{\overline{k}_L}^n$. Let $A$ be the set of the real parts of poles of  $\cZ_{f,\textbf{1}_{\varpi_L\cO_L^n},\chi}(s)\cZ_{f_{d_0w},\Phi_{L,Y},\chi}(s)$. If $d(\chi)\neq 1$ and $d(\chi)\nmid d_0$ then $A=\emptyset$. If $d(\chi)=1$ then  
$A\subset\{-1,\frac{-(w_1+...+w_n)}{d_0}\}$. If $d(\chi)\neq 1$ and $d(\chi)\mid d_0$ then $A\subset\{\frac{-(w_1+...+w_n)}{d_0}\}$. Moreover, the multiplicity of any pole of $\cE_{L,\chi}(s)\cZ_{f,\textbf{1}_{\varpi_L\cO_L^n},\chi}(s)$  or $\cE_{L,\chi}(s)\cZ_{f_{d_0w},\Phi_{L,Y},\chi}(s)$ is at most $1$, where $\cE_{L,\chi}(s)=1$ if $d(\chi)>1$ and $\cE_{L,\chi}(s)=(q_L^{s+1}-1)$ if $d(\chi)=1$. 
\end{thm}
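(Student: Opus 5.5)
The plan is to reduce everything to the explicit formula of Main Theorem \ref{mucase} and Proposition \ref{ngood}, applied to the $w$-weighted blowing-up $(Z_L,\pi_L)$ with the stratification and almost good data of Lemma \ref{desstraw}. The two zeta functions are treated separately; I read the statement as concerning the poles of each factor.

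\textbf{Step 1: the weighted homogeneous case, by a direct argument.} For $\cZ_{f_{d_0w},\Phi_{L,Y},\chi}$ I would split $\Phi_{L,Y}=\Phi_{L,Y\setminus\{O\}}+\Phi_{L,Y\cap\{O\}}$.
\begin{itemize}
\item Away from the origin, $\overline{f}_{d_0w}$ has no singular points outside $\overline{f}_{d_0w}^{-1}(0)$. Indeed, the Euler relation $d_0f_{d_0w}=\sum_i w_ix_i\partial_if_{d_0w}$, together with the isolated-singularity hypothesis, shows the only critical point of $\overline{f}_{d_0w}$ is $O$, at least when $p_L\nmid d_0$. Then Denef's formula with $G=\{1\}$ (the stratum $\overline{Z}^{\circ}$) gives only the pole with real part $-1$, for $\chi$ trivial, and it is simple.
\item On $\varpi_L\cO_L^n$, the substitution $x\mapsto(\varpi_L^{w_1}x_1,\dots,\varpi_L^{w_n}x_n)$ gives
\[
\cZ_{f_{d_0w},\11_{\varpi_L\cO_L^n},\chi}(s)=\chi(\acc(\varpi_L^{d_0}))\,q_L^{-d_0s-|w|}\,\cZ_{f_{d_0w},\11_{\cO_L^n},\chi}(s).
\]
Solving this relation expresses the origin contribution through the contribution of $\cO_L^n\setminus\varpi_L\cO_L^n$. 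The result has the single denominator $1-\chi(\cdot)q_L^{-d_0s-|w|}$ times a function with at most the pole at $-1$.
\end{itemize}
The character condition then comes from the residual sums. The sum over $\cO_L^n\setminus\varpi_L\cO_L^n$ of $\chi(\acc f)$ is invariant under scaling by $\lambda^{w}$, $\lambda\in\cO_L^{\times}$. This forces vanishing unless $\chi^{d_0}$ is trivial on the relevant part, i.e.\ unless $d(\chi)\mid d_0$, given $c(\chi)=1$. For $c(\chi)>1$, Proposition \ref{chilarge1} (or Denef's argument) gives vanishing.

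\textbf{Step 2: the general $f$ on $\varpi_L\cO_L^n$, via the weighted blow-up.} For $\cZ_{f,\11_{\varpi_L\cO_L^n},\chi}$ I take $C=\{O\}$ and use Lemma \ref{desstraw}.
\begin{itemize}
\item Only the strata $\overline{Z}_{J0L}$ and $\overline{Z}_{J1L}$ meet $\overline{\pi}^{-1}(O)$.
\item Their numerical data give candidate poles $-\nu/N$ only from the pairs $(N,\nu)=(d_0,|w|)$ and $(1,1)$. The second pair gives $-1$ and requires $d(\chi)\mid 1$, i.e.\ $\chi$ trivial.
\item Proposition \ref{ngood} then gives $A\subset\{-1,-|w|/d_0\}$, with the divisibility constraint $d(\chi)\mid N_{ei}$.
\item For a nontrivial $\chi$, every stratum containing the pair $(1,1)$ is excluded. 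On the $J1$ strata, all $N_{ei}$ other than $d_0$ vanish, so the condition is exactly $d(\chi)\mid d_0$. This gives the cases $d(\chi)\nmid d_0\Rightarrow A=\emptyset$ and $d(\chi)\mid d_0\Rightarrow A\subset\{-|w|/d_0\}$.
\item When $c(\chi)>1$, Proposition \ref{chilarge1} applies once tame good reduction is known; otherwise the argument of Proposition \ref{larcond} applies directly, since $N_{J0i_J'}=1$ is a unit.
\end{itemize}

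\textbf{Step 3: the multiplicity bound.} Proposition \ref{ngood} only bounds the multiplicity by the number of indices with $\nu_{ei}=-\Re(\alpha)N_{ei}$.
\begin{itemize}
\item For $\alpha$ with real part $-|w|/d_0$, this is one index, namely $i_J$, unless $|w|/d_0=1$.
\item For real part $-1$, it is one index, $i_J'$, again unless $|w|=d_0$.
\item If $|w|=d_0$, the two candidates coincide and a double pole can appear on $\overline{Z}_{J0L}$.
\end{itemize}
This is exactly why the bound is stated for $\cE_{L,\chi}(s)\cZ$: for trivial $\chi$, the factor $(q_L^{s+1}-1)$ cancels one copy of $1-q_L^{-s-1}$. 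I expect the main obstacle to be checking this cancellation honestly. One must verify from the explicit product in Main Theorem \ref{mucase} that, after multiplying by $(q_L^{s+1}-1)$, the remaining pole is simple. The same check is needed when the poles at real part $-1$ coming from $\Theta$ and from the non-origin part combine. It also requires that the periodic factor $\Theta_{\overline{Z}_e,\chi}$, a Laurent polynomial in $q_L^{-s}$, introduces no new poles. That is clear, but one should confirm it does not cancel numerators in a way that changes the count.
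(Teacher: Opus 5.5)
Your Steps~2 and~3 are essentially the paper's proof. The paper splits $\Phi_{L,Y}=\11_{\varpi_L\cO_L^n}+\Phi_{L,Y(k_L)\setminus\{O\}}$ and reads off pole locations and multiplicities from the data of Lemma~\ref{desstraw} via Propositions~\ref{ngood} and~\ref{chilarge1}. Your handling of the coincidence $|w|=d_0$ is also correct.

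The genuine problem is the ``direct argument'' in Step~1. The substitution $x\mapsto(\varpi_L^{w_1}x_1,\dots,\varpi_L^{w_n}x_n)$ maps $\cO_L^n$ onto $\prod_i\varpi_L^{w_i}\cO_L$. This equals $\varpi_L\cO_L^n$ only when $w=(1,\dots,1)$, so your identity is false for general weights, already for $x^2+y^3$ with $w=(3,2)$. The correct identity only relates the integral over $\prod_i\varpi_L^{w_i}\cO_L$ to the integral over $S=\cO_L^n\setminus\prod_i\varpi_L^{w_i}\cO_L$. But $S$ is not the residual region away from the origin. If $w_1\geq 2$ it contains $\{\ord_L(x_1)=1,\ \ord_L(x_i)\geq w_i \text{ for } i\geq 2\}$, all of whose points reduce to $O$. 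There, smoothness of $\overline{f}_{d_0w}$ off $O$ gives no control. Showing that the integral over $S$ has at most the pole $-1$ requires analysing all the intermediate weighted shells. That is what the twisted sectors encode, i.e.\ the sum over $\upsilon$ in $\Theta_{\overline{Z}_e,\chi}$ (cf.\ the terms $q_L^{\epsilon_J(\upsilon)+\frac{\upsilon}{d_J}(d_0s+|w|)}$ in Theorem~\ref{explicit}). It is not a one-step rescaling.

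The repair is to stop treating $f_{d_0w}$ separately at the origin. The polynomial $f_{d_0w}$ itself satisfies the hypotheses of Lemma~\ref{desstraw}, so your Step~2 with $C=\{O\}$ applies verbatim to $\cZ_{f_{d_0w},\11_{\varpi_L\cO_L^n},\chi}$. Only the part away from $O$ then needs the computation with $G=\{1\}$. For that part you need neither the Euler relation nor $p_L\nmid d_0$. Residue classes with $\overline{f}_{d_0w}\neq 0$ have $|f_{d_0w}|=1$ and contribute constants. The remaining classes are centred at smooth points of $\{\overline{f}_{d_0w}=0\}$.

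Your $\lambda^w$-invariance observation is correct, and is best applied directly to $\11_{\varpi_L\cO_L^n}$, which is invariant under $x\mapsto\lambda^wx$. It gives $\cZ_{f_{d_0w},\11_{\varpi_L\cO_L^n},\chi}=0$ whenever $\chi^{d_0}\neq 1$, for every conductor and without any tameness assumption. It does not extend to non-homogeneous $f$, so Step~2 is still needed for $\cZ_{f,\11_{\varpi_L\cO_L^n},\chi}$.
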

\begin{proof} Note that 
\[\cZ_{f_{d_0w},\Phi_{L,Y},\chi}(s)=\cZ_{f_{d_0w},\textbf{1}_{\varpi_L\cO_L^n},\chi}(s)+\cZ_{f_{d_0w},\Phi_{L,Y(k_L)\setminus\{(0,...,0)\}},\chi}(s).\]
Therefore, the assertions follow immediately from Propositions \ref{ngood}, \ref{chilarge1} and Lemma  \ref{desstraw}. 
\end{proof}

\subsection{Explicit formula for  Igusa zeta functions of weighted homogeneous polynomials with isolated singularities}
From now on until the end of this paper, we take a local field $L$ over $\cO_K$ that satisfies the condition of Lemma \ref{desstraw} and the condition $\ord_{L}(w_i)=0$ for all $i$. For such a local field $L$, Lemma \ref{desstraw} implies that the embedded $\QQ$-resolution $(Z_L,\pi_L)$ of $(\AA_L^n,D_L(f))$  in a small enough Zariski neighbourhood of the origin of $\AA_L^n$ has $\QQ$-good reduction modulo $\varpi_L\cO_L$. In particular, by Lemma \ref{almostpp}, there is an integer $M_0$ 
such that every local field $L$ over $\cO_K$ with residue field characteristic at least $M_0$ will satisfy the above conditions. 

If $J$ is a non-empty subset of $\{1,...,n\}$, recall that we have 
$$\overline{Z}_{J1L}=\{(0,...,0,[u_1,...,u_n]_w)\in \AA_{k_L}^n\times \PP_{k_L}^{w}\mid u_i=0 \textnormal{ iff } i\notin J, \overline{f}_{d_0 w}(u_1,...,u_n)\neq 0\}$$
and 
$$\overline{Z}_{J0L}=\{(0,...,0,[u_1,...,u_n]_w)\in \AA_{k_L}^n\times \PP_{k_L}^{w}\mid u_i=0 \textnormal{ iff } i\notin J, \overline{f}_{d_0 w}(u_1,...,u_n)=0\}.$$
We also set $F_J:=f_{d_0w}|_{\{x_i=0\forall i\notin J\}}$ for each non-empty subset $J$ of $\{1,...,n\}$ with the convention that $F_J=f_{d_0w}$ if $J=\{1,...,n\}$.

We will need to use  the following lemma in the calculation below related to  $\overline{Z}_{J1L}$.
\begin{lem}\label{etaJ}We use the above notations. If  $J$ is a non-empty subset of $\{1,...,n\}$ and $\overline{Z}_{J1L}\neq \emptyset$ then $F_J\neq 0$.  If $J$ is a non-empty subset of $\{1,...,n\}$ such that  $F_J\neq 0$ then $d_J\mid d_0$. This allows us to set $\eta_J:=\gcd(d_0,\frac{d_0(d_J-1)}{d_J})=\frac{d_0}{d_J}\geq 1$ if $F_J\neq 0$.
\end{lem}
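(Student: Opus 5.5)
Both assertions are elementary, and I will prove them in order. For the first, suppose $\overline{Z}_{J1L}\neq\emptyset$. By the description just recalled, there is a finite extension $L'$ of $L$ and a point $(0,\dots,0,[u_1:\dots:u_n]_w)$ with $u\in k_{L'}^n$ such that $u_i=0$ exactly for $i\notin J$ and $\overline{f}_{d_0w}(u)\neq 0$. Since $u_i=0$ for all $i\notin J$, we have $\overline{f}_{d_0w}(u)=\overline{F_J}(u)$, where $\overline{F_J}$ is the reduction of $F_J$. Hence $\overline{F_J}\neq 0$, and therefore $F_J\neq 0$.

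For the second assertion, suppose $F_J\neq 0$. Then $F_J$ contains a monomial $x^a=\prod_{i\in J}x_i^{a_i}$ with nonzero coefficient, and this monomial involves only the variables $x_i$ with $i\in J$. Because $f_{d_0w}$ is $w$-weighted homogeneous of $w$-degree $d_0$, so is $F_J$, and hence $\sum_{i\in J}a_iw_i=d_0$. Each $w_i$ with $i\in J$ is divisible by $d_J=\gcd(w_i)_{i\in J}$, so $d_J\mid d_0$.

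It remains to identify $\gcd\bigl(d_0,\frac{d_0(d_J-1)}{d_J}\bigr)$. Write $d_0=d_Jm$ with $m=d_0/d_J\in\NN_{\geq 1}$; then $\frac{d_0(d_J-1)}{d_J}=m(d_J-1)$. We get $\gcd(d_Jm,m(d_J-1))=m\gcd(d_J,d_J-1)=m$, because consecutive integers are coprime. Thus $\eta_J=d_0/d_J\geq 1$.

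No step is a genuine obstacle. The only point needing care is in the first assertion: we must pass from a point over an extension $k_{L'}$ to the statement over $\cO_K$. This is immediate, since a nonzero reduction forces the polynomial itself to be nonzero.
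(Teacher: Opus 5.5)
Your proof is correct and follows the same route as the paper: the paper proves the first assertion by observing that $F_J=0$ forces $\overline{Z}_{J1L}=\emptyset$, which is the contrapositive of your direct argument, and it obtains $d_J\mid d_0$ from a nonzero monomial of $F_J$ in the variables $x_i$ ($i\in J$), just as you do. Your explicit verification that $\gcd\bigl(d_0,\frac{d_0(d_J-1)}{d_J}\bigr)=\frac{d_0}{d_J}$ fills in a step the paper leaves implicit.
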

\begin{proof}If  $F_J=0$ then it is clear that $\overline{Z}_{J1L}=\emptyset$. This implies the first assertion.

Now, we write $$f_{d_0w}(x_1,...,x_n)=\sum_{I=(i_1,...,i_n)\in \NN^n, w_1i_1+...+w_ni_n=d_0}a_Ix_1^{i_1}...x_n^{i_n}.$$ 

Since $d_J=\gcd(w_i)_{i\in J}$, if  there is a tuple  $I=(i_1,...,i_n)\in \NN^n$ with $a_I\neq 0$ and $i_j=0$ for all $j\notin J$ then it is clear that $d_J\mid d_0=\sum_{j\in J}w_ji_j$. 

Suppose that for each tuple $I=(i_1,...,i_n)\in \NN^n$ with $a_I\neq 0$ there is $j\notin J$ such that $i_j\neq 0$.  Then, for all $I=(i_1,...,i_n)\in\NN^n$ with $a_I\neq 0$, we  have 
 $a_Ix_1^{i_1}...x_n^{i_n}=0$ if  $x_j=0$ for all $j\notin J$. This implies that  $F_J=0$. Thus, we have a contradiction. 
\end{proof}
We also need the following results about the properties of $\overline{Z}_{J1L}$ and $\overline{Z}_{J0L}$.
\begin{lem}\label{L-W}We have 
$$\#(\overline{Z}_{J1L}(k_L))\leq (q_L-1)^{\#(J)-1}$$
and 
$$\#(\overline{Z}_{J0L}(k_L))\leq (q_L-1)^{\#(J)-1}$$
for
all $\emptyset\neq J\subset\{1,...,n\}$.
\end{lem}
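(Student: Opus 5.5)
The idea is to identify each stratum with an explicit subset of a torus and count points there. By definition, every point of $\overline{Z}_{J1L}$ or $\overline{Z}_{J0L}$ has the form $(0,\dots,0,[u]_w)$, where $u\in k_L^n$ satisfies $u_i=0$ if and only if $i\notin J$. So both strata lie in the locally closed torus orbit
\[
T_J:=\{[u]_w\in \PP_{k_L}^w \mid u_i=0 \iff i\notin J\}.
\]
Moreover, $\overline{Z}_{J1L}$ and $\overline{Z}_{J0L}$ are disjoint, and their union is $T_J$. The plan is therefore to prove the stronger statement
\[
\#\bigl(\overline{Z}_{J1L}(k_L)\bigr)+\#\bigl(\overline{Z}_{J0L}(k_L)\bigr)=\#T_J(k_L)\le (q_L-1)^{\#(J)-1},
\]
from which both inequalities follow at once.

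To count $k_L$-points of $T_J$, I would use the chart $\tilde{U}_{i_J}\cong \AA^{n-1}/\mu_{w_{i_J}}$ described before Lemma \ref{desstraw}. The alternative is the definition of a $k_L$-point of $\PP^w$ as an orbit containing a $k_L$-rational representative. The second route is cleaner.

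Every $k_L$-point of $T_J$ is an orbit $[u]_w$ with $u\in (k_L^{\times})^J\times\{0\}^{J^c}$. Two such $u,u'$ give the same point when $u'=t\cdot u$ for some $t\in\overline{k}_L^{\times}$, and $t$ then satisfies $t^{w_i}\in k_L^{\times}$ for all $i\in J$. Hence $\#T_J(k_L)$ equals the number of orbits of $(k_L^{\times})^{J}$ under the group $\Gamma:=\{t\in\overline{k}_L^\times\mid t^{w_i}\in k_L^\times\ \forall i\in J\}$. This group acts through the homomorphism $t\mapsto (t^{w_i})_{i\in J}$. Its image contains the image of $k_L^{\times}$ under $t\mapsto(t^{w_i})_{i\in J}$.

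The key step is to show that this homomorphism restricted to $k_L^\times$ has kernel $\mu_{d_J}(k_L)$. The orbit count is then at most $(q_L-1)^{\#J}/\#\bigl(k_L^\times/\mu_{d_J}(k_L)\bigr)$.

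In the standing situation $p_L\nmid w_i$. I expect the subtle point to be showing that the full group $\Gamma$ has image of order exactly $q_L-1$ in $(k_L^{\times})^J$. This holds because the image of $\Gamma$ is isomorphic to $\Gamma/\mu_{d_J}(\overline{k}_L)$. Moreover, $t^{d_J}\in k_L^\times$ for $t\in \Gamma$, by a Bézout combination of the $w_i$, $i\in J$. So the image of $\Gamma$ is identified with $\{s\in\overline{k}_L^\times : s^{w_i/d_J}\in k_L^\times \ \forall i\in J\}$. Since $\gcd(w_i/d_J)_{i\in J}=1$, this set equals $k_L^{\times}$, which has order $q_L-1$.

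The action of the image of $\Gamma$ on $(k_L^\times)^J$ is free, so the number of orbits is exactly $(q_L-1)^{\#J-1}$. This gives the required bound; indeed equality holds for the union.

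The only real obstacle is this reduction to $\gcd(w_i/d_J)_{i\in J}=1$. The case $\#J=1$ is a consistency check: there $T_J$ is a single point, and $(q_L-1)^0=1$.
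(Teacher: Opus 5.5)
Your proposal is correct and follows the paper's proof. Both observe that $\overline{Z}_{J0L}$ and $\overline{Z}_{J1L}$ are disjoint pieces of $\{(0,\dots,0)\}\times T_J$, and that $T_J$ has exactly $(q_L-1)^{\#(J)-1}$ rational points. The paper quotes this count from \cite[Lemma 7]{Perr}, while you verify it directly via $\Gamma/\mu_{d_J}(\overline{k}_L)\cong k_L^{\times}$; for that, neither the preliminary restriction to $k_L^{\times}$ nor the hypothesis $p_L\nmid w_i$ is needed.
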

\begin{proof}
By \cite[Lemma 7]{Perr}, we have 
$$\#(\{(0,...,0,[u_1,...,u_n]_w)\in \AA_{k_L}^n(k_L)\times \PP_{k_L}^{w}(k_L)\mid u_i=0 \textnormal{ iff } i\notin J\})=(q_L-1)^{\#(J)-1}.$$
Therefore,
the assertion follows from the definition of $\overline{Z}_{J1L}$ and $\overline{Z}_{J0L}$.
\end{proof}

\begin{lem}\label{weig}Let $J$ be a non-empty subset of $\{1,...,n\}$ and $x\in \PP_{k_L}^{w}(k_L)$. Then 
$$\#\{ u\in k_L^n\mid [u]_w=x\}=q_L-1.$$
Moreover, suppose that $x=[u]_w$ for $u=(u_1,...,u_n)\in k_L^n\setminus\{(0,...,0)\}$ such that $u_i=0$ if and only if $i\notin J$. Let $u'=(u'_1,...,u'_n)\in k_L^n\setminus\{(0,...,0)\}$.  Then $[u']_w=x$ if and only if there exists a unique $y\in k_L^{\times}$ such that $u'_i=y^{\frac{w_i}{d_J}}u_i$ for all $1\leq i\leq n$. In particular, if $\overline{Z}_{J1L}\neq\emptyset$  and $[u]_w=x=[u']_w$ then we have $\overline{f}_{d_0w}(u')=y^{\eta_J}\overline{f}_{d_0w}(u)$ for some $y\in k_L^{\times}$.
\end{lem}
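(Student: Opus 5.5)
The first assertion comes straight from the definition of $\PP_{k_L}^w$ as the geometric quotient of $\AA^n\setminus\{O\}$ by the $\GG_m$-action $t\cdot u=(t^{w_1}u_1,\dots,t^{w_n}u_n)$. Here $x=[u]_w$ is a $k_L$-point, and the $k_L$-lifts of $x$ form an orbit under $k_L^\times$. So I will describe the fibre $\{u'\in k_L^n\setminus\{0\}\mid [u']_w=x\}$ explicitly, and the count $q_L-1$ will drop out of that description. The work therefore lies in the second assertion.

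I fix a representative $u$ with support exactly $J$ and let $u'$ be any point with $[u']_w=[u]_w$. Over $\overline{k}_L$ there is then $t\in\overline{k}_L^\times$ with $u'_i=t^{w_i}u_i$ for all $i$. In particular $u'$ also has support $J$, and $t^{w_i}=u'_i/u_i\in k_L^\times$ for every $i\in J$.

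Since $d_J=\gcd(w_i)_{i\in J}$, I write $d_J=\sum_{i\in J}c_iw_i$ with $c_i\in\ZZ$. Put $y:=t^{d_J}=\prod_{i\in J}(t^{w_i})^{c_i}\in k_L^\times$. Then for $i\in J$ we get $u'_i=y^{w_i/d_J}u_i$. For $i\notin J$ both sides vanish, so the identity holds there too.

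For the converse, let $y\in k_L^\times$ and set $u'_i=y^{w_i/d_J}u_i$. Choose $t\in\overline{k}_L$ with $t^{d_J}=y$. Then $u'_i=t^{w_i}u_i$, so $[u']_w=x$.

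Uniqueness of $y$ holds because $y\mapsto (y^{w_i/d_J})_{i\in J}$ is injective on $k_L^\times$. Indeed, the exponents $w_i/d_J$, $i\in J$, have gcd $1$, so if $y^{w_i/d_J}=1$ for all $i\in J$, a Bézout combination gives $y=1$.

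Counting now gives the first assertion. The map $y\mapsto u'$ is a bijection $k_L^\times\to\{u'\mid [u']_w=x\}$, since every $x$ has a representative of this form with some support $J$. Hence the fibre has exactly $q_L-1$ elements.

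For the last claim, weighted homogeneity of $f_{d_0w}$ gives $\overline{f}_{d_0w}(t^{w_1}u_1,\dots,t^{w_n}u_n)=t^{d_0}\overline{f}_{d_0w}(u)$. Since $\overline{Z}_{J1L}\neq\emptyset$, Lemma \ref{etaJ} gives $d_J\mid d_0$ (this uses $F_J\ne0$, which persists modulo $\varpi_L$). Therefore $t^{d_0}=(t^{d_J})^{d_0/d_J}=y^{\eta_J}$, which yields $\overline{f}_{d_0w}(u')=y^{\eta_J}\overline{f}_{d_0w}(u)$.

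The main obstacle is justifying that $[u']_w=[u]_w$ for $k_L$-points means equality of $\overline{k}_L^\times$-orbits, i.e.\ that points of the geometric quotient over $\overline{k}_L$ are orbits. This is the convention the paper adopted over $K$, and I would invoke it in the same way over $k_L$.
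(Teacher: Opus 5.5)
Your proof is correct. The last assertion is handled exactly as in the paper: the second assertion, then Lemma \ref{etaJ}, then weighted homogeneity.

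For the first two assertions the routes differ. The paper gives no argument and simply cites \cite[Lemma 7]{Perr}. You prove them directly: you descend $y=t^{d_J}=\prod_{i\in J}(t^{w_i})^{c_i}$ to $k_L^\times$ by B\'ezout, get uniqueness from $\gcd(w_i/d_J)_{i\in J}=1$, and read off the count $q_L-1$ from the resulting bijection with $k_L^\times$. This route is self-contained and the fibre description works over any field. It also shows why $d_J$ is the right exponent: it is the order of the stabilizer $\mu_{d_J}$ of a point with support $J$.

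What the citation supplies beyond this is that every $x\in\PP^w_{k_L}(k_L)$ has a representative in $k_L^n\setminus\{0\}$, which your count needs. You take this from the paper's convention that a rational point is an orbit containing a rational vector, which is legitimate. This existence step, not the identification of $[u']_w=[u]_w$ with equality of $\overline{k}_L^\times$-orbits (that is just the paper's definition), is what would need justification if $\PP^w_{k_L}(k_L)$ is read scheme-theoretically. It is a short Hilbert 90 argument. For a Galois-stable orbit write $\sigma(u)=t_\sigma\cdot u$. Then $\sigma\mapsto t_\sigma^{d_J}$ is a $1$-cocycle in $\overline{k}_L^\times$, so $t_\sigma^{d_J}=\sigma(c)/c$ for some $c$. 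Choosing $s$ with $s^{d_J}=c^{-1}$, the vector $s\cdot u$ is Galois-fixed, hence $k_L$-rational.

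Finally, your parenthetical about $F_J\neq 0$ persisting modulo $\varpi_L$ has the direction backwards. Nothing needs to persist under reduction: $\overline{Z}_{J1L}\neq\emptyset$ already forces the reduction of $F_J$, hence $F_J$ itself, to be nonzero. That is exactly the first half of Lemma \ref{etaJ}.
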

\begin{proof}
The first and second assertions follow immediately from \cite[Lemma 7]{Perr}. The last assertion follows from the second assertion together with  Lemma \ref{etaJ} and the fact that $\overline{f}_{d_0w}$ is a $w$-weighted homogeneous polynomial of $w$-degree $d_0$.
\end{proof}

Now, we are ready to present explicit formulas for Igusa's local zeta functions of weighted homogeneous polynomials with isolated singularities. 
\begin{thm}\label{explicit}
 We keep the above notations and assumptions. Let $\chi$ be a multiplicative character of $\cO_L^{\times}$. The following assertions hold:
\begin{itemize}
\item[\textit{i,}] 
$\cZ_{f,\mathbf{1}_{\varpi_L\cO_L^n},\chi}(s)=0$ if  either $c(\chi)>1$ and $\ord_{L}(d_0)=0$ or $c(\chi)=1$ and $d(\chi)\nmid d_0$.
\item[\textit{ii,}] If $c(\chi)=1, d(\chi)\mid d_0$ and $d(\chi)>1$, we have $$\cZ_{f,\textbf{1}_{\varpi_L\cO_L^n},\chi}(s)=q_L^{-n}\left(\sum_{J\subset \{1,...,n\}, J\neq\emptyset,Z_{J1}\neq\emptyset, d(\chi)\mid \eta_{J}}\Omega_{J1,\chi}\sum_{\upsilon=0}^{d_J-1}q_L^{\epsilon_{J}(\upsilon)+\frac{\upsilon}{d_J}(d_0s+|w|)}\right)\frac{(q_L-1)q_L^{-d_0s-|w|}}{1-q_L^{-d_0s-|w|}},$$
where  
$\epsilon_{J}(\upsilon)=\sum_{i\notin J, d_J\nmid \upsilon w_i}(1-\frac{\upsilon w_i}{d_J}+\lfloor \frac{\upsilon w_i}{d_J}\rfloor)$ and

$$\Omega_{J1,\chi}=\frac{1}{(q_L-1)}\sum_{(u_1,...,u_n)\in k_L^n, u_i=0 \textnormal{ iff } i\notin J}\overline{\chi}(\overline{f}_{d_0w}(u_1,...,u_n)).$$
\end{itemize}
\end{thm}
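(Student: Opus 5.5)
Taking $C=\{(0,\dots,0)\}\subset\AA_{k_L}^n$, we have $\Phi_{L,C}=\mathbf{1}_{\varpi_L\cO_L^n}$. The plan is to feed the stratification of Lemma \ref{desstraw} into Proposition \ref{chilarge1} and Main Theorem \ref{mucase}. The strata of $\overline{\pi}_{k_L}^{-1}(C)$ are exactly the $\overline{Z}_{J0L}$ and $\overline{Z}_{J1L}$ for $\emptyset\neq J$. The stratum $\overline{Z}^\circ$ does not meet $\overline{\pi}_{k_L}^{-1}(C)$. Since $\ord_L(w_i)=0$ and $d_J\mid w_{i_J}$, all $\mathfrak{n}$'s vanish, so we are in the $\QQ$-good reduction setting.

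\textbf{Part (i), case $c(\chi)>1$.} Here $\ord_L(d_0)=0$, and every stratum over $C$ has $N_{e,i_J}=d_0\notin\varpi_L\cO_L$. The same argument as in Proposition \ref{chilarge1} then applies: by Lemma \ref{sumstring} and Proposition \ref{larcond}, each stratum contributes $0$. This uses $\tilde N_i<\mathfrak v_i$, which holds for $s>0$ rational.

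\textbf{Part (i), case $c(\chi)=1$.} Apply Main Theorem \ref{mucase}. A stratum contributes only if $d(\chi)\mid N_{ei}$ for all $i$. On every stratum over $C$ we have $N_{e,i_J}=d_0$, so if $d(\chi)\nmid d_0$ nothing survives. When $d(\chi)>1$, the $\overline{Z}_{J0L}$ strata also have some $N_{e,i'_J}=1$, which is not divisible by $d(\chi)$, so they drop out. Only the $\overline{Z}_{J1L}$ strata remain. For these, $N_{J1,i_J}=d_0$ and $\nu_{J1,i_J}=|w|$, while the other indices have $N=0$ and $\nu=1$.

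\textbf{Part (ii), the $\overline{Z}_{J1L}$ terms.} Here $G=\mu_{d_J}$, with $\ell=1$. The exponents are $\alpha_{i_J}=d_J-1$, $\alpha_i=0$ for $i\in J\setminus\{i_J\}$, and $\alpha_i=w_i$ for $i\notin J$. This gives $\eta_{e1}=d_0(d_J-1)/d_J$. Combined with $d(\chi)\mid d_0$, the condition $d(\chi)\mid\eta_{e1}$ becomes equivalent to $d(\chi)\mid d_0/d_J=\eta_J$ (Lemma \ref{etaJ}). Next compute $\Theta$:
\begin{itemize}
\item For $i=i_J$ and $\upsilon\ge1$, $\gamma=1-\upsilon/d_J$ and $\delta=1$, giving the factor $q_L^{\frac{\upsilon}{d_J}(d_0s+|w|)}$. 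For $\upsilon=0$ the factor is $1$, which matches the same formula.
\item For $i\in J\setminus\{i_J\}$, $\gamma=0$, so there is no contribution.
\item For $i\notin J$, $\nu=1$ and $N=0$. The factor is $q_L^{1-\{\upsilon w_i/d_J\}}$ when $d_J\nmid\upsilon w_i$, which produces $\epsilon_J(\upsilon)$.
\end{itemize}
The product in Main Theorem \ref{mucase} gives $\frac{(q_L-1)q_L^{-d_0s-|w|}}{q_L(1-q_L^{-d_0s-|w|})}$ from $i_J$. Each of the other $n-1$ indices gives $\frac{(q_L-1)q_L^{-1}}{q_L(1-q_L^{-1})}=q_L^{-1}$. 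Together these produce the factor $q_L^{-n}$.

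\textbf{Identifying $\Omega$.} We need to show that $\sum_{z\in\overline{Z}_{J1L}(k_L)}\overline\chi(\mathfrak u_{J1}(z))$ equals $\Omega_{J1,\chi}$. By Lemma \ref{desstraw}, $\mathfrak u_{J1}(z)=\overline f_{d_0w}(x)$ for any representative $x$ of $z$. By Lemma \ref{weig}, each $z$ has exactly $q_L-1$ representatives, all with $x_i=0$ iff $i\notin J$. These representatives change $\overline f_{d_0w}$ by factors $y^{\eta_J}$. Since $d(\chi)\mid\eta_J$, the value $\overline\chi$ is constant on the fibre. Summing over representatives and dividing by $q_L-1$ then gives the claimed $\Omega_{J1,\chi}$.

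The main obstacle is bookkeeping the $\gamma,\delta$ values for $i_J$ (exponent $d_J-1$). Concretely, this means checking that $(\delta-\gamma)(N s+\nu)$ yields exactly $\frac{\upsilon}{d_J}(d_0s+|w|)$, and that $\Theta$ in Main Theorem \ref{mucase} uses the sign convention matching $\epsilon_J$.
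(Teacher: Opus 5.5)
Your proposal is correct and follows essentially the same route as the paper, which also combines Proposition \ref{chilarge1} (whose $C_1$-argument you reproduce directly via $N_{e,i_J}=d_0$ and Proposition \ref{larcond}) with Main Theorem \ref{mucase} on the strata of Lemma \ref{desstraw}, computing the same $\gamma,\delta$ values, the same reduction of the divisibility condition to $d(\chi)\mid\eta_J$, and the same $\Omega$ via Lemma \ref{weig}. The one detail you leave implicit is the last step of the $\Omega$ computation: to pass from representatives of points of $\overline{Z}_{J1L}(k_L)$ to all $u$ with $u_i=0$ iff $i\notin J$, you need $\overline{\chi}(0)=0$, exactly as the paper notes.
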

\begin{thm}\label{explicit1}With the above notations and assumptions, we have  
\begin{align*}
&\cZ_{f,\textbf{1}_{\varpi_L\cO_L^n},\chi_{\textnormal{triv}}}(s)\\=&q_L^{-n}\bigg(\sum_{J\subset \{1,...,n\}, J\neq\emptyset,Z_{J1}\neq\emptyset}\#(\overline{Z}_{J1L}(k_L))\sum_{\upsilon=0}^{d_J-1}q_L^{\epsilon_{J}(\upsilon)+\frac{\upsilon}{d_J}(d_0s+|w|)}\bigg)\frac{(q_L-1)q_L^{-d_0s-|w|}}{1-q_L^{-d_0s-|w|}}\\
&+q_L^{-n}\bigg(\sum_{J\subset \{1,...,n\}, J\neq\emptyset,Z_{J0}\neq\emptyset}\#(\overline{Z}_{J0L}(k_L))\sum_{\upsilon=0}^{d_J-1}q_L^{\epsilon_{J}(\upsilon)+\left(\delta_{J\upsilon j_0}+\big\lfloor\frac{d_0\upsilon}{d_J}\big\rfloor-\frac{d_0\upsilon}{d_J}\right)s+\frac{\upsilon}{d_J}(d_0s+|w|)}\bigg)\\&\hspace{1cm}\times\frac{(q_L-1)^2q_L^{-d_0s-|w|}q_L^{-s-1}}{(1-q_L^{-d_0s-|w|})(1-q_L^{-s-1})},
\end{align*}
where $\delta_{J\upsilon}=1$ if $d_J\nmid d_0\upsilon$, $\delta_{J\upsilon}=0$ if $d_J\mid d_0\upsilon$.
\end{thm}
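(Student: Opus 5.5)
The plan is to apply Main Theorem \ref{mucase} with $\chi=\chi_{\textnormal{triv}}$ and $C=\{(0,\dots,0)\}\subset\AA_{k_L}^n$, so that $\Phi_{L,C}=\mathbf{1}_{\varpi_L\cO_L^n}$. For this, use the good data of $(Z_L,\pi_L)$ provided by Lemma \ref{desstraw}; it is $\QQ$-good since $\ord_L(w_i)=0$ forces $\mathfrak{n}_{J0L}=\mathfrak{n}_{J1L}=0$. Since $\overline{\pi}^{-1}(C)=\{O\}\times\PP^w_{k_L}$, the stratum $\overline{Z}^{\circ}$ contributes nothing. Only the strata $\overline{Z}_{J1L}$ and $\overline{Z}_{J0L}$ with $J\neq\emptyset$ remain. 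For the trivial character all divisibility conditions $d(\chi)\mid N_{ei},\eta_{ej}$ hold automatically, and $\overline{\chi}(\mathfrak{u}_e(z))=1$ because $\mathfrak{u}_e(z)\neq 0$. Hence $\Omega_{C,\chi_{\rm triv},\pi,\overline{Z}_e}=\#\overline{Z}_e(k_L)$, which gives the cardinalities appearing in the statement.

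Next compute $\Theta$ and the product factor for each stratum. Here $G_J=\mu_{d_J}$ with $\ell=1$ and weights $\alpha_{i_J}=d_J-1$, $\alpha_i=0$ for $i\in J\setminus\{i_J\}$, and $\alpha_i=w_i$ for $i\notin J$. Thus $\gamma_{i,\upsilon}$ is the fractional part of $\alpha_i\upsilon/d_J$.
\begin{itemize}
\item For $i\in J\setminus\{i_J\}$ we have $\gamma=\delta=0$.
\item For $i=i_J$ we get $\gamma=\{-\upsilon/d_J\}=1-\upsilon/d_J$ when $\upsilon\ge1$, so $\delta-\gamma=\upsilon/d_J$; for $\upsilon=0$ the term vanishes. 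This index carries $N=d_0$ and $\nu=|w|$, which produces the factor $q_L^{\frac{\upsilon}{d_J}(d_0s+|w|)}$.
\item For $i\notin J$ with $i\neq i'_J$ we have $N_i=0$ and $\nu_i=1$, so the contribution is $q_L^{\delta-\gamma}$ with $\delta-\gamma=1-\frac{\upsilon w_i}{d_J}+\lfloor\frac{\upsilon w_i}{d_J}\rfloor$ when $d_J\nmid\upsilon w_i$. Summing these exponents gives $\epsilon_J(\upsilon)$.
\end{itemize}
In the product $\prod_i\frac{(q_L-1)q_L^{-N_is-\nu_i}}{q_L(1-q_L^{-N_is-\nu_i})}$, each index with $N=0$ and $\nu=1$ contributes $\frac{(q_L-1)q_L^{-1}}{q_L(1-q_L^{-1})}=q_L^{-1}$. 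The index $i_J$ contributes $q_L^{-1}\frac{(q_L-1)q_L^{-d_0s-|w|}}{1-q_L^{-d_0s-|w|}}$. Collecting the $n$ factors of $q_L^{-1}$ gives the prefactor $q_L^{-n}$, and this yields the first line of the formula.

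For the strata $\overline{Z}_{J0L}$ there is the additional index $i'_J$ with $N=\nu=1$ and $\alpha_{i'_J}\equiv d_0$ modulo $d_J$. Its contribution to $\Theta$ is $q_L^{(\delta-\gamma)(s+1)}$ with $\gamma=\frac{d_0\upsilon}{d_J}-\lfloor\frac{d_0\upsilon}{d_J}\rfloor$. The constant part $(\delta-\gamma)\cdot 1$ has exactly the same shape as the $i'_J$ term in $\epsilon_J(\upsilon)$, because $i'_J\notin J$: the relation $\varphi_J(x_{i'_J})=x_{i'_J}\otimes\zeta^{d_0}$ forces $w_{i'_J}\equiv d_0$ modulo $d_J$. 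It is therefore absorbed into $\epsilon_J(\upsilon)$, leaving the extra factor $q_L^{(\delta_{J\upsilon}+\lfloor d_0\upsilon/d_J\rfloor-d_0\upsilon/d_J)s}$. The product factor for $i'_J$ is $q_L^{-1}\frac{(q_L-1)q_L^{-s-1}}{1-q_L^{-s-1}}$, and this yields the second line.

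The main obstacle is the bookkeeping for $i'_J$. One must check that $i'_J\notin J$; this holds since $i'_J\in J$ would give $\alpha=0$ and hence $d_J\mid d_0$ with a trivial twist, which needs separate consideration. One must also reconcile the sign conventions ($\tilde N_i-\mathfrak{v}_i=-(N_is+\nu_i)$ from Corollary \ref{namdivi}), so that $\epsilon_J$ uses $w_{i'_J}$ consistently, and verify the convention for $\upsilon=0$.
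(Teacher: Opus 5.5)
Your proposal is correct and follows the paper's proof. Like the paper, you apply Main Theorem \ref{mucase} with $\chi=\chi_{\textnormal{triv}}$, $C$ the origin and the good data of Lemma \ref{desstraw}, compute $\gamma,\delta$ for $\mu_{d_J}$ acting via $\varphi_{J\cO_L}$, and take $\Omega=\#\overline{Z}_e(k_L)$; your explicit absorption of the constant part of the $i'_J$-term into $\epsilon_J(\upsilon)$ via $w_{i'_J}\equiv d_0 \pmod{d_J}$ is exactly what the paper leaves implicit.

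One correction: $i'_J\in J\setminus\{i_J\}$ does genuinely occur, e.g.\ for $J=\{1,\dots,n\}$ where $d_J=1$. This case is harmless, however. There $\alpha_{i'_J}=0$ and $d_J\mid d_0$, so $\gamma_{i'_J}=\delta_{i'_J}=\delta_{J\upsilon}=0$ and $\frac{d_0\upsilon}{d_J}\in\ZZ$. Hence every $i'_J$-contribution vanishes on both sides, and the formula holds with no absorption needed.
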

\begin{proof}[Proof of Theorems \ref{explicit} and \ref{explicit1}]
The assertions  follow from Proposition \ref{chilarge1}, Main Theorem \ref{mucase} and Lemma \ref{desstraw}.  Here, it remains to  calculate the constants appearing in Theorem \ref{mucase}. Let $J$ be a non-empty subset of $\{1,...,n\}$. 

First of all,  we calculate the numbers $\gamma_{J\upsilon i},\delta_{J\upsilon i}$ with respect to the group scheme $G_{J\cO_L}=\mu_{d_J,\cO_L}$ and its action $\varphi_{J\cO_L}$ on $\AA_{\cO_L}^n$.  
It is easy to have $\gamma_{J\upsilon i}=\delta_{J\upsilon i}=0$ for all $1\leq i\leq n$ if $\upsilon=0$. If $1\leq \upsilon\leq d_J-1$, then we have $\gamma_{J\upsilon i_J}=\frac{(d_J-1)\upsilon}{d_J}-\big\lfloor\frac{(d_J-1)\upsilon}{d_J}\big\rfloor=1-\frac{\upsilon}{d_J}$, $\delta_{J\upsilon i_J}=1$, 
$\gamma_{J\upsilon i}=\delta_{J\upsilon i}=0$ if $i\in J\setminus\{i_J\}$. Moreover, if $i\notin J$ we have $\gamma_{J\upsilon i}=\frac{w_i\upsilon}{d_J}-\big\lfloor\frac{w_i\upsilon}{d_J}\big\rfloor$, $\delta_{J\upsilon i}=1$ if $d_J\nmid w_i\upsilon$ and $\delta_{J\upsilon i}=0$ if $d_J\mid w_i\upsilon$. In addition, $\gamma_{J\upsilon i_J'}=\frac{d_0\upsilon}{d_J}-\big\lfloor\frac{d_0\upsilon}{d_J}\big\rfloor$, $\delta_{J\upsilon i_J'}=1$ if $d_J\nmid d_0\upsilon$ and $\delta_{J\upsilon i_J'}=0$ if $d_J\mid d_0\upsilon$. We will write $\delta_{J\upsilon}$ instead of $\delta_{J\upsilon i_J'}$. 

Now we can calculate the factors $\Theta_{J1,\chi},\Theta_{J0,\chi}$ with respect to the strata $\overline{Z}_{J0L},\overline{Z}_{J1L}$ over $k_L$ as follows. For the stratum $\overline{Z}_{J1L}$, we use the above calculation, Theorem \ref{mucase} and Lemma \ref{desstraw}
to obtain the factor $$\Lambda_{\overline{Z}_{J1L},\chi}=\sum_{\upsilon=0}^{d_J-1}q_L^{\epsilon_{J}(\upsilon)+\frac{\upsilon}{d_J}(d_0s+|w|)}$$
in our statement. Similarly, we also obtain the factor 
$$\Lambda_{\overline{Z}_{J0L},\chi}=\sum_{\upsilon=0}^{d_J-1}q_L^{\epsilon_{J}(\upsilon)+(\delta_{J\upsilon}+\big\lfloor\frac{d_0\upsilon}{d_J}\big\rfloor-\frac{d_0\upsilon}{d_J})s+\frac{\upsilon}{d_J}(d_0s+|w|)})$$
with respect to the stratum $\overline{Z}_{J0L}$. 

In the remaining part, we calculate the factor $\Omega_{C, \chi,\pi_L,J0}$ and $\Omega_{C, \chi,\pi_L,J1}$ in Theorem \ref{mucase}  with respect to the strata $\overline{Z}_{J0L}$ and $\overline{Z}_{J1L}$ over $k_L$, where $C=\Spec k_L[x_1,...,x_n]/(x_1,...,x_n)$ and $\chi$ is a multiplicative character of $\cO_L^{\times}$ of conductor $c(\chi)=1$. 
It is clear that $$\Omega_{C, \chi_\textnormal{triv},\pi,J0}=\#(\overline{Z}_{J0L}(k_L))$$ and $$\Omega_{C, \chi_\textnormal{triv},\pi,J1}=\#(\overline{Z}_{J1L}(k_L)).$$ If $c(\chi)=1$, $d(\chi)>1$ and $d(\chi)\mid d_0$ then $d(\chi)\nmid 1=\gcd(N_{J0i})_{1\leq i\leq n}$ thus $\overline{Z}_{J0L}$ does not contribute to $\cZ_{f,\textbf{1}_{\varpi_L\cO_L^n},\chi}(s)$ as shown in Theorem \ref{mucase}. As mentioned in the above, we have  $$\overline{Z}_{J1L}=\{(0,...,0,[u_1,...,u_n]_w)\in \AA_{k_L}^n\times \PP_{k_L}^{w}\mid u_i=0 \textnormal{ iff } i\notin J, \overline{f}_{d_0 w}(u_1,...,u_n)\neq 0\}.$$ 
Let $\eta_J$ be as in Lemma \ref{etaJ}. By Theorem \ref{mucase}, if $d(\chi)\nmid \eta_J$ then $\overline{Z}_{J1L}$ does not contribute to $\cZ_{f,\textbf{1}_{\varpi_L\cO_L^n},\chi}(s)$. Suppose that $d(\chi)\mid \eta_J$. For each $z\in \overline{Z}_{J1L}(k_L)$, we fix a representative $z=(0,...,0,[u_1(z),...,u_n(z)]_w)$ for $u_1(z),...,u_n(z)\in k_L$. By Lemma \ref{desstraw}, 
we can take $\mathfrak{u}_{J1}(z)=\overline{f}_{d_0w}(u_1(z),...,u_n(z))$   for every $z\in\overline{Z}_{J1L}(k_L)$. 
Note that Lemma \ref{weig} together with the conditions that $d(\chi)\mid \eta_J=\frac{d_0}{d_J}$  and $f_{d_0w}$ is a $w$-weighted homogeneous polynomial of $w$-degree $d_0$ assure that $\overline{\chi}(\overline{f}_{d_0w}(u_1(z),...,u_n(z)))$ does not depend on the representative $(u_1(z),...,u_n(z))$ of $z$. This agrees with Remark \ref{trivindep}.  On the other hand, each point $\overline{z}\in \overline{Z}_{J1L}(k_L)$  has exactly $q_L-1$ representatives with $k_L$-coefficients as shown in Lemma \ref{weig}.  Thus we obtain the factor $\Omega_{J1,\chi}$ as in our statement by using the fact that $\overline{\chi}(0)=0$. 
\end{proof}

\subsection{Some auxiliary results}\label{auxres}
In this subsection, we will give some preparations for the proof of Main Theorem \ref{mthm2}. In Theorems \ref{explicit} and \ref{explicit1}, we showed explicit formulas for Igusa's local zeta functions associated with $f$ and the Schwartz-Bruhat function $\textbf{1}_{\varpi_L\cO_L^n}$. Thus, we can use Proposition \ref{zetaex} to compute exponential sums associated with $f$ and $\textbf{1}_{\varpi_L\cO_L^n}$. This leads to estimating the coefficients of $t^{l}$ for $l\geq 1$ and $t=q_L^{-s}$ in the power series expansions of suitable rational functions in $q_L^{-s}$. Based on Theorems \ref{explicit} and \ref{explicit1}, we will study these estimates with respect to each non-empty subset $J$ of $\{1,...,n\}$ before summing them over all subsets. Recall that $L$ is a local field over $\cO_K$ that satisfies the condition of Lemma \ref{desstraw} and  $\ord_L(w_i)=0$ for all $i$. From now on, we also suppose that $\ord_L(d_0)=0$.

\begin{lem}\label{estm1}We have
\begin{align*}
&\bigg|q_L^{-n}\#(\overline{Z}_{J1L}(k_L))\sum_{\upsilon=0}^{d_J-1}q_L^{\epsilon_{J}(\upsilon)+\frac{\upsilon}{d_J}|w|}\frac{(q_L-1)q_L^{-|w|}}{1-q_L^{-|w|}}\\&-\textnormal{coeff}_{t^{m-1}}\left(q_L^{-n}\#(\overline{Z}_{J1L}(k_L))
\times\sum_{\upsilon=0}^{d_J-1}t^{-\frac{\upsilon d_0}{d_J}}q_L^{\epsilon_{J}(\upsilon)+\frac{\upsilon}{d_J}|w|}\frac{(q_L-1)t^{d_0}q_L^{-|w|}}{(1-t^{d_0}q_L^{-|w|})(1-t)}\right)\bigg|\\
\leq &\frac{d_Jq_L^{\frac{-m|w|}{d_0}}}{1-q_L^{-|w|}}
\end{align*}
for all non-empty subsets $J$ of $\{1,...,n\}$ 
and  all $m\geq 1$.
\end{lem}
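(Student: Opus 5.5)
The plan is a direct power-series computation, one $\upsilon$ at a time. Write $A:=q_L^{-n}\#(\overline{Z}_{J1L}(k_L))$, $c_\upsilon:=q_L^{\epsilon_J(\upsilon)+\frac{\upsilon}{d_J}|w|}$ and $r:=q_L^{-|w|}$. For fixed $\upsilon$, expand
\[
\frac{t^{d_0-\frac{\upsilon d_0}{d_J}}r}{(1-t^{d_0}r)(1-t)}=\sum_{k\ge 1} r^k t^{kd_0-\frac{\upsilon d_0}{d_J}}\sum_{j\ge 0}t^j .
\]
The exponent $kd_0-\upsilon d_0/d_J$ is a nonnegative integer, since $d_J\mid d_0$ by Lemma \ref{etaJ}. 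Hence the coefficient of $t^{m-1}$ is $\sum_{k\ge1,\;kd_0-\upsilon d_0/d_J\le m-1} r^k$. This is a truncation of the full geometric series $\sum_{k\ge1}r^k=\frac{r}{1-r}$, which is exactly what appears in the first term of the statement. So the difference for this $\upsilon$ equals
\[
A(q_L-1)c_\upsilon\sum_{k\ge k_0(\upsilon)} r^k=A(q_L-1)c_\upsilon\frac{r^{k_0}}{1-r},
\]
where $k_0=k_0(\upsilon)$ is the smallest $k$ with $kd_0-\upsilon d_0/d_J\ge m$, i.e. $k_0\ge \frac{m}{d_0}+\frac{\upsilon}{d_J}$.

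Next I bound each factor.
\begin{itemize}
\item From $k_0\ge m/d_0+\upsilon/d_J$ we get $r^{k_0}\le q_L^{-\frac{m|w|}{d_0}}q_L^{-\frac{\upsilon}{d_J}|w|}$. The second factor exactly cancels the $q_L^{\frac{\upsilon}{d_J}|w|}$ inside $c_\upsilon$.
\item What remains is $A(q_L-1)q_L^{\epsilon_J(\upsilon)}$. By Lemma \ref{L-W}, $\#(\overline{Z}_{J1L}(k_L))(q_L-1)\le (q_L-1)^{\#J}\le q_L^{\#J}$.
\item Each summand of $\epsilon_J(\upsilon)$ lies in $(0,1]$, and only indices $i\notin J$ contribute, so $\epsilon_J(\upsilon)\le n-\#J$.
\end{itemize}
Combining these, $A(q_L-1)q_L^{\epsilon_J(\upsilon)}\le q_L^{-n}q_L^{\#J}q_L^{n-\#J}=1$. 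Each $\upsilon$ therefore contributes at most $\frac{q_L^{-m|w|/d_0}}{1-q_L^{-|w|}}$. Summing over the $d_J$ values $\upsilon=0,\dots,d_J-1$ gives the claimed bound $\frac{d_J q_L^{-m|w|/d_0}}{1-q_L^{-|w|}}$, and all the differences are nonnegative, so the absolute value is harmless.

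The main point needing care is the bookkeeping: the truncation index $k_0$ must absorb the shift $t^{-\upsilon d_0/d_J}$, so that the $q_L^{\upsilon|w|/d_J}$ growth cancels. I also need to confirm that the exponent of $t$ is integral, which is where Lemma \ref{etaJ} enters. The rest consists of the elementary estimates $\epsilon_J(\upsilon)\le n-\#J$ and the point count from Lemma \ref{L-W}.
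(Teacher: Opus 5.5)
Your proposal is correct and follows the paper's proof essentially step for step: expand the series, recognize the difference as the tail $\sum_{k d_0-\upsilon d_0/d_J\ge m} q_L^{-k|w|}$, bound it by $q_L^{-(m/d_0+\upsilon/d_J)|w|}/(1-q_L^{-|w|})$ so that the factor $q_L^{\upsilon|w|/d_J}$ cancels, and finish with Lemma \ref{L-W} and $\epsilon_J(\upsilon)\le n-\#(J)$. One small point: Lemma \ref{etaJ} gives $d_J\mid d_0$ only when $\overline{Z}_{J1L}\neq\emptyset$, but in the other case both terms carry the factor $\#(\overline{Z}_{J1L}(k_L))=0$, so the bound holds trivially.
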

\begin{proof}
We have 
\begin{align*}
&\bigg|q_L^{-n}\#(\overline{Z}_{J1L}(k_L))\sum_{\upsilon=0}^{d_J-1}q_L^{\epsilon_{J}(\upsilon)+\frac{\upsilon}{d_J}|w|}\frac{(q_L-1)q_L^{-|w|}}{1-q_L^{-|w|}}\\&-\textnormal{coeff}_{t^{m-1}}\left(q_L^{-n}\#(\overline{Z}_{J1L}(k_L))
\times\sum_{\upsilon=0}^{d_J-1}t^{-\frac{\upsilon d_0}{d_J}}q_L^{\epsilon_{J}(\upsilon)+\frac{\upsilon}{d_J}|w|}\frac{(q_L-1)t^{d_0}q_L^{-|w|}}{\left(1-t^{d_0}q_L^{-|w|}\right)(1-t)}\right)\bigg|\\
=&\#(\overline{Z}_{J1L}(k_L))\sum_{\upsilon=0}^{d_J-1}q_L^{-n-|w|+\epsilon_{J}(\upsilon)+\frac{\upsilon}{d_J}|w|}(q_L-1)\left|\frac{1}{1-q_L^{-|w|}}-\textnormal{coeff}_{t^{m-1}}\left(\frac{t^{d_0-\frac{\upsilon d_0}{d_J}}}{(1-t)\left(1-t^{d_0}q_L^{-|w|}\right)}\right)\right|\\
=&\#(\overline{Z}_{J1L}(k_L))\sum_{\upsilon=0}^{d_J-1}q_L^{-n-|w|+\epsilon_{J}(\upsilon)+\frac{\upsilon}{d_J}|w|}(q_L-1)\left|\left(\sum_{r\geq 0}q_L^{-r|w|}\right)-\textnormal{coeff}_{t^{m-1-d_0+\frac{\upsilon d_0}{d_J}}}\left(\frac{1}{(1-t)\left(1-t^{d_0}q_L^{-|w|}\right)}\right)\right|\\
=&\#(\overline{Z}_{J1L}(k_L))\sum_{\upsilon=0}^{d_J-1}q_L^{-n-|w|+\epsilon_{J}(\upsilon)+\frac{\upsilon}{d_J}|w|}(q_L-1)\left|\left(\sum_{r\geq 0}q_L^{-r|w|}\right)-\textnormal{coeff}_{t^{m-1-d_0+\frac{\upsilon d_0}{d_J}}}\left(\sum_{\ell\geq 0}\sum_{0\leq rd_0\leq \ell}q_L^{-r|w|}t^\ell\right)\right|\\
=&\#(\overline{Z}_{J1L}(k_L))\sum_{\upsilon=0}^{d_J-1}q_L^{-n-|w|+\epsilon_{J}(\upsilon)+\frac{\upsilon}{d_J}|w|}(q_L-1)\sum_{d_0r+d_0-\frac{\upsilon d_0}{d_J}\geq m}q_L^{-r|w|}\\
\leq& \#(\overline{Z}_{J1L}(k_L))\sum_{\upsilon=0}^{d_J-1}q_L^{-n-|w|+\epsilon_{J}(\upsilon)+\frac{\upsilon}{d_J}|w|}\frac{q_L-1}{1-q_L^{-|w|}}q_L^{-\frac{m-d_0+\frac{\upsilon d_0}{d_J}}{d_0}|w|}\\
\end{align*}
\begin{align*}
&\hspace{-8.5cm}\leq d_J\#(\overline{Z}_{J1L}(k_L))q_L^{-n+\epsilon_{J}(\upsilon)}(q_L-1)\frac{q_L^{\frac{-m|w|}{d_0}}}{1-q_L^{-|w|}}.
\end{align*}
Our assertion follows from the facts that $\#(\overline{Z}_{J1L}(k_L))\leq (q_L-1)^{\#(J)-1}$ 
as shown in Lemma \ref{L-W} and 
$$\epsilon_{J}(\upsilon)=\sum_{i\notin J, d_J\nmid \upsilon w_i}\left(1-\frac{\upsilon w_i}{d_J}+\bigg\lfloor \frac{\upsilon w_i}{d_J}\bigg\rfloor\right)\leq n-\#(J).$$

\end{proof}

\begin{lem-defn}\label{exis}
Let $J$ be a non-empty subset of $\{1,...,n\}$ and $m\in\ZZ$. Then we have $d_J\leq d_0$ and  there is at most one tuple $(\upsilon,r)$ with $0\leq\upsilon\leq d_J-1$ and $r\in\NN$ such that $d_0r+d_0+1-(\delta_{J\upsilon}+\big\lfloor\frac{d_0\upsilon}{d_J}\big\rfloor)=m$. If such a tuple $(\upsilon,r)$ exists, we will set $\upsilon_{Jm}=\upsilon$. If $\overline{Z}_{J1L}\neq\emptyset$ and $\upsilon_{Jm}$ exists then $d_0r+d_0+1-\frac{d_0\upsilon_{Jm}}{d_J}=m$ for a natural number $r$.
\end{lem-defn}
\begin{proof}
Since $f_{d_0w}$ is a $w$-weighted homogeneous polynomial of degree $d_0$ and defines a hypersurface with an isolated singularity in $\AA_\CC^n$, we have $d_J=\gcd(w_i)_{i\in J}\leq \min_{i\in J}w_i\leq d_0$. Suppose that we have two tuples $(\upsilon,r)$, $(\upsilon',r')$ with $0\leq \upsilon,\upsilon'\leq d_J-1$ and $r,r'\in\NN$ such that $$d_0r+d_0+1-\left(\delta_{J\upsilon}+\bigg\lfloor\frac{d_0\upsilon}{d_J}\bigg\rfloor\right)=d_0r'+d_0+1-\left(\delta_{J\upsilon'}+\bigg\lfloor\frac{d_0\upsilon'}{d_J}\bigg\rfloor\right).$$
This is equivalent with
\begin{equation}\label{equi}d_0(r-r')=(\delta_{J\upsilon}-\delta_{J\upsilon'})+\left(\bigg\lfloor\frac{d_0\upsilon}{d_J}\bigg\rfloor-\bigg\lfloor\frac{d_0\upsilon'}{d_J}\bigg\rfloor\right).
\end{equation}
If $d_0=d_J$ then $\delta_{J\upsilon}=\delta_{J\upsilon'}=0$. Thus we have $d_0(r-r')=\upsilon-\upsilon'$. By this and the fact that $0\leq \upsilon,\upsilon'\leq d_J-1=d_0-1$, we have $r=r',\upsilon=\upsilon'$.

Suppose that $d_0>d_J$. By the definition of $\delta_{J,\upsilon}$, we have  
\begin{equation}\label{inequi}
\delta_{J\upsilon}, \delta_{J\upsilon'}\in\{0,1\} \textnormal{ and } -1\leq \delta_{J\upsilon}-\delta_{J\upsilon'}\leq 1.
\end{equation} Since $0\leq\upsilon,\upsilon'\leq d_J-1$ and $d_0>d_J$ we have $$0\leq \bigg\lfloor\frac{d_0\upsilon}{d_J}\bigg\rfloor,\bigg\lfloor\frac{d_0\upsilon'}{d_J}\bigg\rfloor\leq \bigg\lfloor\frac{d_0(d_J-1)}{d_J} \bigg\rfloor\leq d_0-2.$$
Thus, $$-d_0+1\leq d_0(r-r')=(\delta_{J\upsilon}-\delta_{J\upsilon'})+\left(\bigg\lfloor\frac{d_0\upsilon}{d_J}\bigg\rfloor-\bigg\lfloor\frac{d_0\upsilon'}{d_J}\bigg\rfloor\right)\leq d_0-1.$$
So we must have $r=r'$. Suppose that $\upsilon\neq\upsilon'$, then the condition $d_0>d_J$ implies $$\left|\bigg\lfloor\frac{d_0\upsilon}{d_J}\bigg\rfloor-\bigg\lfloor\frac{d_0\upsilon'}{d_J}\bigg\rfloor\right|\geq 1$$
and if the equality holds then $|v-v'|=1$. This and (\ref{equi}), (\ref{inequi}) imply 
\begin{equation}\label{equ00}\left|\bigg\lfloor\frac{d_0\upsilon}{d_J}\bigg\rfloor-\bigg\lfloor\frac{d_0\upsilon'}{d_J}\bigg\rfloor\right|= 1 \textnormal{ and } |v-v'|=1.
\end{equation}
Without loss of generality, we can suppose that $v=v'+1$. By using (\ref{equi}) and  (\ref{inequi}) again we have $\delta_{J\upsilon}=0$, $\delta_{J\upsilon'}=1$. By the definition of $\delta_{J\upsilon}, \delta_{J\upsilon'}$ we have $d_J\mid \upsilon d_0$ and $d_J\nmid \upsilon' d_0$. This and the condition $d_0>d_J$ imply
$$\bigg\lfloor\frac{d_0\upsilon}{d_J}\bigg\rfloor-\bigg\lfloor\frac{d_0\upsilon'}{d_J}\bigg\rfloor=\frac{d_0\upsilon}{d_J}-\bigg\lfloor\frac{d_0\upsilon-d_0}{d_J}\bigg\rfloor\geq \frac{d_0\upsilon}{d_J}-\left(\frac{d_0\upsilon}{d_J}-2\right)\geq 2.$$
This contradicts with (\ref{equ00}). Thus $\upsilon=\upsilon'$ as desired.

Suppose that $\overline{Z}_{J1L}\neq\emptyset$. By Lemma \ref{etaJ}, we have $d_J\mid d_0$. Thus, $\delta_{J\upsilon}=0$ for all $1\leq \upsilon\leq d_J-1$. Moreover, if $\upsilon_{Jm}$ exists then $d_0r+d_0+1-\frac{d_0\upsilon_{Jm}}{d_J}=m$ for a natural number $r$.
\end{proof}

\begin{lem}\label{estm2}We have
\begin{align*}
D_{m,J,L,1}&:=-\textnormal{coeff}_{t^{m-1}}\left(\frac{q_L^{-n}}{q_L-1}\#(\overline{Z}_{J1L}(k_L))
\times\sum_{\upsilon=0}^{d_J-1}t^{-\frac{\upsilon d_0}{d_J}}q_L^{\epsilon_{J}(\upsilon)+\frac{\upsilon}{d_J}|w|}\frac{(q_L-1)t^{d_0}q_L^{-|w|}}{(1-t^{d_0}q_L^{-|w|})}\right)\\
&=\begin{cases}-q_L^{-n}\#(\overline{Z}_{J1L}(k_L))q_L^{\epsilon_{J}(\upsilon_{Jm})+\frac{|w|}{d_0}}q_L^{-\frac{|w|m}{d_0}} \textnormal{ if } \overline{Z}_{J1L}\neq\emptyset \textnormal{ and } \upsilon_{Jm} \textnormal{ exists } \\
0 \textnormal{ otherwise}
\end{cases}
\end{align*}
for all non-empty subsets $J$ of $\{1,...,n\}$ 
and all $m\geq 1$.
\end{lem}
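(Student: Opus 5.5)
The plan is to compute the coefficient directly by expanding the rational function as a power series in $t$. First, if $\overline{Z}_{J1L}=\emptyset$, then $\#(\overline{Z}_{J1L}(k_L))=0$ and $D_{m,J,L,1}=0$, which is the second case. So I will assume $\overline{Z}_{J1L}\neq\emptyset$. Then Lemma \ref{etaJ} gives $d_J\mid d_0$, so every exponent $d_0-\frac{\upsilon d_0}{d_J}=\frac{d_0}{d_J}(d_J-\upsilon)$ is a positive integer for $0\le\upsilon\le d_J-1$, and the expression is a genuine power series in $t$. The factor $q_L-1$ cancels, leaving
\[
D_{m,J,L,1}=-q_L^{-n}\#(\overline{Z}_{J1L}(k_L))\sum_{\upsilon=0}^{d_J-1}q_L^{\epsilon_J(\upsilon)+\frac{\upsilon}{d_J}|w|-|w|}\,\textnormal{coeff}_{t^{m-1}}\left(\frac{t^{d_0-\frac{\upsilon d_0}{d_J}}}{1-t^{d_0}q_L^{-|w|}}\right).
\]

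Next I expand $\frac{1}{1-t^{d_0}q_L^{-|w|}}=\sum_{r\ge0}q_L^{-r|w|}t^{d_0r}$. The coefficient of $t^{m-1}$ in the $\upsilon$-term is $q_L^{-r|w|}$ if $m-1=d_0r+d_0-\frac{\upsilon d_0}{d_J}$ for some $r\in\NN$, and $0$ otherwise. Because $d_J\mid d_0$, we have $\delta_{J\upsilon}=0$ and $\lfloor\frac{d_0\upsilon}{d_J}\rfloor=\frac{d_0\upsilon}{d_J}$, so this condition is exactly the defining equation in Lemma-Definition \ref{exis}. By the uniqueness part of that lemma, at most one pair $(\upsilon,r)$ satisfies it. If there is none, the sum vanishes, which again gives the second case. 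If there is one, only $\upsilon=\upsilon_{Jm}$ contributes, with the corresponding $r$ given by $d_0r=m-1-d_0+\frac{\upsilon_{Jm}d_0}{d_J}$.

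Finally I simplify the single surviving exponent. From the relation above,
\[
-r|w|=-\frac{|w|}{d_0}\Big(m-1-d_0+\frac{\upsilon_{Jm}d_0}{d_J}\Big)=-\frac{|w|m}{d_0}+\frac{|w|}{d_0}+|w|-\frac{\upsilon_{Jm}}{d_J}|w|.
\]
Adding $\frac{\upsilon_{Jm}}{d_J}|w|-|w|$ cancels the last two terms and leaves $\epsilon_J(\upsilon_{Jm})+\frac{|w|}{d_0}-\frac{|w|m}{d_0}$, which is the claimed formula. The only step needing care is using Lemma-Definition \ref{exis} to ensure that a single term survives; it also guarantees that $r$ is an honest natural number, so the series index is valid. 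Everything else is routine bookkeeping.
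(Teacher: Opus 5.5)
Your proposal is correct and follows essentially the same route as the paper, which also expands the geometric series as in Lemma \ref{estm1} and uses Lemma \ref{etaJ} together with Lemma-Definition \ref{exis} to isolate the single surviving term $\upsilon=\upsilon_{Jm}$. The paper's key identity $\frac{\upsilon_{Jm}}{d_J}+\frac{1}{d_0}\bigl(1-\delta_{J\upsilon_{Jm}}-\lfloor\frac{d_0\upsilon_{Jm}}{d_J}\rfloor\bigr)=\frac{1}{d_0}$ is exactly your final exponent simplification.
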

\begin{proof}It follows from Lemmas \ref{etaJ} and \ref{exis} that 
$$\frac{\upsilon_{Jm}}{d_J}+\frac{1}{d_0}\left(1-\delta_{J\upsilon_{Jm}}-\bigg\lfloor\frac{d_0\upsilon_{Jm}}{d_J}\bigg\rfloor\right)=\frac{1}{d_0}$$
if $\overline{Z}_{J1L}\neq\emptyset$ and $\upsilon_{Jm}$  exists.
Therefore, we can proceed similarly to  Lemma \ref{estm1} to prove the equality. 
\end{proof}
\begin{lem}\label{estm3}
We have 
\begin{align*}
D_{m,J,L,2}&:=\textnormal{coeff}_{t^{m-1}}\bigg( q_L^{-n}\#(\overline{Z}_{J0L}(k_L))\sum_{\upsilon=0}^{d_J-1}q_L^{\epsilon_{J}(\upsilon)+\frac{\upsilon |w|}{d_J}}t^{-(\delta_{J\upsilon}+\big\lfloor\frac{d_0\upsilon}{d_J}\big\rfloor)}\times\frac{(q_L-1)^2t^{d_0}q_L^{-|w|}tq_L^{-1}}{\left(1-t^{d_0}q_L^{-|w|}\right)(1-tq_L^{-1})}\\&\hspace{9.5cm}\times \frac{t-q_L}{(q_L-1)(1-t)}\bigg)\\
&\hspace{0.6cm}+q_L^{-n}\#(\overline{Z}_{J0L}(k_L))\sum_{\upsilon=0}^{d_J-1}q_L^{\epsilon_{J}(\upsilon)+\frac{\upsilon |w|}{d_J}}\times\frac{(q_L-1)^2q_L^{-|w|}q_L^{-1}}{\left(1-q_L^{-|w|}\right)(1-q_L^{-1})}\\
&=\begin{cases}(q_L-1)q_L^{-n}\#(\overline{Z}_{J0L}(k_L))q_L^{a_{Jm}}q_L^{-\frac{|w|m}{d_0}}+ c(m,L)q_L^{-\frac{|w|m}{d_0}}  \textnormal{ if } \upsilon_{Jm} \textnormal{ exists } \\
 e(m,L)q_L^{-\frac{|w|m}{d_0}}\textnormal{ otherwise}
\end{cases}
\end{align*}
for all non-empty subsets $J$ of $\{1,...,n\}$ 
 and all $m\geq 1$, where $$a_{Jm}=\epsilon_{J}(\upsilon_{Jm})+\left(\frac{\upsilon_{Jm}}{d_J}+\frac{1}{d_0}\left(1-\delta_{J\upsilon_{Jm}}-\bigg\lfloor\frac{d_0\upsilon_{Jm}}{d_J}\bigg\rfloor\right)\right)|w|$$ and there is a constant $c_0$ depending only on $w$ such that $\max\{|c(m,L),|e(m,L)|\}|\leq c_0$.
\end{lem}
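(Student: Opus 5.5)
The plan is to compute the coefficient directly by partial fractions in $t$, as in Lemma \ref{estm1}, and to isolate the only term that is not already $O(q_L^{-|w|m/d_0})$ with a $w$-dependent constant.

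Fix $\upsilon$ and put $e_\upsilon:=\delta_{J\upsilon}+\lfloor d_0\upsilon/d_J\rfloor$. The rational function inside the coefficient is then $t^{d_0+1-e_\upsilon}$ times
\[
R(t)=\frac{(q_L-1)q_L^{-|w|-1}(t-q_L)}{(1-t^{d_0}q_L^{-|w|})(1-tq_L^{-1})(1-t)} .
\]
I would decompose $R$ into partial fractions with respect to its three kinds of poles: $t=1$, $t=q_L$, and the $d_0$-th roots of $q_L^{|w|}$. Two features make this work.

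The factor $(t-q_L)$ cancels $(1-tq_L^{-1})=-(t-q_L)/q_L$. This leaves
\[
-(q_L-1)q_L^{-|w|}\,\frac{1}{(1-t^{d_0}q_L^{-|w|})(1-t)} .
\]
This is exactly the shape treated in Lemma \ref{estm1}, now with the shifted exponent $m-1-d_0-1+e_\upsilon$.

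The constant part coming from the pole at $t=1$ is $\sum_{r\ge0}q_L^{-r|w|}$. Up to sign, it exactly cancels the second summand $q_L^{-n}\#\overline{Z}_{J0L}(k_L)\sum_\upsilon\cdots(1-q_L^{-|w|})^{-1}\cdots$ in the definition of $D_{m,J,L,2}$. I would check that the prefactors $(q_L-1)^2q_L^{-1}/(1-q_L^{-1})=(q_L-1)$ agree.

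After this cancellation, the $\upsilon$-term equals the tail
\[
-q_L^{-n}\#\overline{Z}_{J0L}(k_L)\,q_L^{\epsilon_J(\upsilon)+\upsilon|w|/d_J-|w|}(q_L-1)\sum_{d_0r\ge m-d_0-1+e_\upsilon}q_L^{-r|w|}.
\]
There are two cases for this tail.

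First, the index $r_0$ with $d_0r_0=m-d_0-1+e_\upsilon$ may exist. This is precisely the condition defining $\upsilon_{Jm}$ in Lemma-Definition \ref{exis}, and that lemma says it happens for at most one $\upsilon$. In that case the leading term $q_L^{-r_0|w|}$ produces exactly $(q_L-1)q_L^{-n}\#\overline{Z}_{J0L}(k_L)q_L^{a_{Jm}}q_L^{-|w|m/d_0}$; this is an exponent bookkeeping that I would verify. The remaining terms, with $r>r_0$, are bounded as in Lemma \ref{estm1}.

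Second, if $r_0$ does not exist, the first summand index $r$ satisfies $d_0r>m-d_0-1+e_\upsilon$. The whole tail is then $O(q_L^{-|w|(m-d_0-1+e_\upsilon)/d_0-|w|/d_0})$ times the prefactor. This is the step I expect to be the main obstacle. I need the total power of $q_L$ to be at most $-|w|m/d_0$ with a constant depending only on $w$. For this I would use:
\begin{itemize}
\item Lemma \ref{L-W}, giving $\#\overline{Z}_{J0L}(k_L)\le(q_L-1)^{\#J-1}$;
\item $\epsilon_J(\upsilon)\le n-\#J$, so that $q_L^{-n}\#\overline{Z}_{J0L}(q_L-1)q_L^{\epsilon_J}\le 1$;
\item the inequality $\upsilon|w|/d_J-|w|(e_\upsilon+1)/d_0\le |w|/d_0$.
\end{itemize}
The last inequality follows from $e_\upsilon\ge d_0\upsilon/d_J-1$, since $\lfloor x\rfloor\ge x-1$ and $\delta\ge0$. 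A careful check is needed here: the gap of at least one unit in $d_0r$ must absorb the fractional loss. Should it fail, I would fall back on $q_L^{|w|/d_0}$-type losses, which are still bounded by an absolute function of $w$ after using $d_0\ge d_J$.

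Summing over the at most $d_J\le d_0$ values of $\upsilon$ gives constants $c(m,L)$ and $e(m,L)$ bounded by some $c_0$. A priori $c_0$ depends on $w$ and $d_0$, and $d_0$ is fixed along with $f$. Finally, $(1-q_L^{-|w|})^{-1}\le2$.
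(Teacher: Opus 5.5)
Your reduction matches the paper's. The factor $(t-q_L)$ cancels $(1-tq_L^{-1})$, the $t=1$ part cancels the constant summand, and one gets
\[
D_{m,J,L,2}=(q_L-1)q_L^{-n-|w|}\#(\overline{Z}_{J0L}(k_L))\sum_{\upsilon=0}^{d_J-1}q_L^{\epsilon_J(\upsilon)+\frac{\upsilon|w|}{d_J}}\sum_{d_0r\geq m-d_0-1+e_\upsilon}q_L^{-r|w|},\qquad e_\upsilon=\delta_{J\upsilon}+\left\lfloor\frac{d_0\upsilon}{d_J}\right\rfloor .
\]
The single term with $d_0r=m-d_0-1+e_{\upsilon_{Jm}}$ then gives the main term. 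The tail enters with a plus sign, not the minus sign you wrote; with your sign the main term would come out negative.

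The genuine gap is in bounding everything else. Every $\upsilon\neq\upsilon_{Jm}$ falls into your second case. There the first admissible $r$ satisfies $d_0r\geq m-d_0+e_\upsilon$, possibly with equality. After Lemma \ref{L-W} and $\epsilon_J(\upsilon)\leq n-\#(J)$, its contribution has size $q_L^{-|w|m/d_0}\cdot q_L^{|w|(\upsilon/d_J-e_\upsilon/d_0)}$. The integrality gap in $d_0r$ has already been spent in replacing $m-d_0-1+e_\upsilon$ by $m-d_0+e_\upsilon$, so it cannot absorb anything more. What you need is exactly the fact the paper's proof invokes:
\[
\frac{\upsilon}{d_J}-\frac{1}{d_0}\left(\delta_{J\upsilon}+\left\lfloor\frac{d_0\upsilon}{d_J}\right\rfloor\right)\leq 0 .
\]
Your bound $e_\upsilon\geq d_0\upsilon/d_J-1$, from $\delta_{J\upsilon}\geq 0$ and $\lfloor x\rfloor\geq x-1$, loses up to one unit. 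It leaves a factor as large as $q_L^{|w|/d_0}$, and nothing else in your estimates compensates for it. Your fallback is also false: $c(m,L)$ and $e(m,L)$ must be bounded independently of $L$, while $q_L^{|w|/d_0}\to\infty$.

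The missing observation is that $\delta_{J\upsilon}$ is precisely the correction turning the floor into a ceiling. Indeed $\delta_{J\upsilon}=1$ exactly when $d_0\upsilon/d_J\notin\ZZ$, so $e_\upsilon=\lceil d_0\upsilon/d_J\rceil\geq d_0\upsilon/d_J$. With this, everything except the main term is at most $\frac{d_J}{1-q_L^{-|w|}}q_L^{-|w|m/d_0}\leq 2d_Jq_L^{-|w|m/d_0}$. Since $d_J\leq\min_{i\in J}w_i$, the constant $c_0$ depends only on $w$, as the statement requires, and not on $d_0$ as you allowed.
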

\begin{proof}By a similar calculation to Lemma \ref{estm1}, we have 
\begin{align}\label{mJL2}
D_{m,J,L,2}=&\textnormal{coeff}_{t^{m-1}}\bigg( q_L^{-n}\#(\overline{Z}_{J0L}(k_L))\sum_{\upsilon=0}^{d_J-1}q_L^{\epsilon_{J}(\upsilon)+\frac{\upsilon |w|}{d_J}}t^{-\left(\delta_{J\upsilon}+\big\lfloor\frac{d_0\upsilon}{d_J}\big\rfloor\right)}\times\frac{(q_L-1)^2t^{d_0}q_L^{-|w|}tq_L^{-1}}{\left(1-t^{d_0}q_L^{-|w|}\right)(1-tq_L^{-1})}\nonumber\\&\hspace{9.5cm}\times \frac{t-q_L}{(q_L-1)(1-t)}\bigg)\nonumber\\
&+q_L^{-n}\#(\overline{Z}_{J0L}(k_L))\sum_{\upsilon=0}^{d_J-1}q_L^{\epsilon_{J}(\upsilon)+\frac{\upsilon |w|}{d_J}}\times\frac{(q_L-1)^2q_L^{-|w|}q_L^{-1}}{\left(1-q_L^{-|w|}\right)(1-q_L^{-1})}\nonumber\\
=&(q_L-1)q_L^{-n-|w|}\#(\overline{Z}_{J0L}(k_L))\sum_{\upsilon=0}^{d_J-1}q_L^{\epsilon_{J}(\upsilon)+\frac{\upsilon |w|}{d_J}}\sum_{d_0r+d_0+1-\left(\delta_{J\upsilon}+\big\lfloor\frac{d_0\upsilon}{d_J}\big\rfloor\right)\geq m}q_L^{-r|w|}
\end{align}
for
all $m\geq 1$ and all $\emptyset\neq J\subset\{1,...,n\}$.
 By Lemma-Definition \ref{exis}, we have 
\begin{align}\label{series}
&\sum_{d_0r+d_0+1-\left(\delta_{J\upsilon}+\big\lfloor\frac{d_0\upsilon}{d_J}\big\rfloor\right)\geq m}q_L^{-r|w|}\nonumber\\=&\left(\sum_{d_0r+d_0+1-\left(\delta_{J\upsilon}+\big\lfloor\frac{d_0\upsilon}{d_J}\big\rfloor\right)\geq m+1}q_L^{-r|w|}\right)+\begin{cases}
q_L^{-\left(\frac{m-d_0-1+\delta_{J\upsilon_{Jm}}+\big\lfloor\frac{d_0\upsilon_{Jm}}{d_J}\big\rfloor}{d_0}\right)|w|} \textnormal{ if } \upsilon=\upsilon_{Jm},   \\
0 \textnormal{ otherwise}.
\end{cases}
\end{align}
Moreover, it is easy to have 
\begin{align}\label{d0r}
\sum_{d_0r+d_0+1-\left(\delta_{J\upsilon}+\big\lfloor\frac{d_0\upsilon}{d_J}\big\rfloor\right)\geq m+1}q_L^{-r|w|}&\leq \frac{1}{1-q_L^{-|w|}}q_L^{-\left(\frac{m-d_0+\delta_{J\upsilon}+\big\lfloor\frac{d_0\upsilon}{d_J}\big\rfloor}{d_0}\right)|w|} \nonumber\\&\leq \frac{1}{1-2^{-|w|}}q_L^{-\left(\frac{m-d_0+\delta_{J\upsilon}+\big\lfloor\frac{d_0\upsilon}{d_J}\big\rfloor}{d_0}\right)|w|}.
\end{align}
Thus, the assertion follows from Lemma \ref{L-W},  (\ref{mJL2}), (\ref{series}), (\ref{d0r}) and the facts that
$$\frac{\upsilon}{d_J}-\frac{1}{d_0}\left(\delta_{J\upsilon}+\bigg\lfloor\frac{d_0\upsilon}{d_J}\bigg\rfloor\right)\leq 0$$
 and 
$$\epsilon_{J}(\upsilon)=\sum_{i\notin J, d_J\nmid \upsilon w_i}\left(1-\frac{\upsilon w_i}{d_J}+\bigg\lfloor \frac{\upsilon w_i}{d_J}\bigg\rfloor\right)\leq n-\#(J)$$
for all $0\leq\upsilon\leq d_J-1$. 
\end{proof}

\begin{lem}\label{estm4}
Let 
$J$ be a non-empty subset of $\{1,...,n\}$ such that $\overline{Z}_{J1L}\neq\emptyset$. Let $\psi$ be an additive character of $L$ of conductor $0$ and $\Psi$  be the additive character of $k_L$ induced by $\psi_{\varpi_L^{-1}}$. Let $a\in \cO_L^{\times}$.  Then we have 
\begin{align*}
D_{m,J,L}(a)&:=\sum_{\chi\neq\chi_{\textnormal{triv}}, c(\chi)=1,\chi^{\eta_J}=1} g_{\chi^{-1}}\chi(a)\textnormal{coeff}_{t^{m-1}}\left(q_L^{-n}\Omega_{J1,\chi}
\sum_{\upsilon=0}^{d_J-1}t^{-\frac{\upsilon d_0}{d_J}}q_L^{\epsilon_{J}(\upsilon)+\frac{\upsilon}{d_J}|w|}\frac{(q_L-1)t^{d_0}q_L^{-|w|}}{\left(1-t^{d_0}q_L^{-|w|}\right)}\right)\\
&=\begin{cases}q_L^{-n+\epsilon_J(\upsilon_{Jm})-\frac{(m-1)|w|}{d_0}}\left(\#(\overline{Z}_{J1L}(k_L))-(q_L-1)\#(\overline{Z}_{J0L}(k_L))+ H_{J,L,\Psi,f_{d_0w},a}\right)\textnormal{ if } \upsilon_{Jm} \textnormal{ exists } \\
 0 \textnormal{ otherwise},
\end{cases}
\end{align*}
where
$$H_{J,L,\Psi,f_{d_0w},a}=\sum_{u=(u_1,...,u_n)\in k_L^n, u_i=0 \Leftrightarrow i\notin J}\Psi(\overline{a}\overline{f}_{d_0 w}(u))$$
and if $\chi$ is a multiplicative character of $\cO_L^{\times}$ then
$g_{\chi}$ is the Gaussian sum
\[
g_{\chi}=\frac{q_L^{1-c(\chi)}}{q_L-1}\sum_{\overline{v}\in (\cO_{L}/\varpi_L^{c(\chi)})^{\times}}\chi(v)\psi\bigl(v\bigr)
\] defined in Proposition \ref{zetaex}.
\end{lem}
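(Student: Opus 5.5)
The computation splits into a coefficient extraction, which depends only on $t$, and a character sum over $\chi$, which is purely arithmetic over $k_L$. I carry them out in that order.

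\emph{Coefficient extraction.} Expand
\[
\frac{(q_L-1)t^{d_0}q_L^{-|w|}}{1-t^{d_0}q_L^{-|w|}}=(q_L-1)\sum_{r\ge0}q_L^{-(r+1)|w|}t^{d_0(r+1)}.
\]
Multiplying by $t^{-\upsilon d_0/d_J}$ and taking the coefficient of $t^{m-1}$ selects those $(\upsilon,r)$ with $d_0r+d_0-\frac{\upsilon d_0}{d_J}=m-1$. Since $\overline{Z}_{J1L}\neq\emptyset$, Lemma \ref{etaJ} gives $d_J\mid d_0$, so $\delta_{J\upsilon}=0$. By Lemma-Definition \ref{exis}, this is exactly the condition defining $\upsilon_{Jm}$, and at most one such tuple exists. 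If none exists, the coefficient vanishes and $D_{m,J,L}(a)=0$. If it exists, then $(r+1)|w|=\frac{(m-1)|w|}{d_0}+\frac{\upsilon_{Jm}|w|}{d_J}$. The factors $q_L^{\upsilon|w|/d_J}$ therefore cancel, and the coefficient equals
\[
q_L^{-n+\epsilon_J(\upsilon_{Jm})-\frac{(m-1)|w|}{d_0}}(q_L-1)\,\Omega_{J1,\chi}.
\]
This factor is independent of $\chi$, so it remains to evaluate $S:=(q_L-1)\sum_{\chi}g_{\chi^{-1}}\chi(a)\Omega_{J1,\chi}$, where the sum runs over $\chi\ne\chi_{\rm triv}$ with $c(\chi)=1$ and $\chi^{\eta_J}=1$.

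\emph{The character sum.} Substitute the definition of $\Omega_{J1,\chi}$ and, for $c(\chi)=1$, write $g_{\chi^{-1}}=\frac1{q_L-1}\sum_{v\in k_L^\times}\overline{\chi}^{-1}(v)\Psi(v)$. This gives
\[
S=\frac{1}{q_L-1}\sum_{u}\sum_{v}\Psi(v)\sum_{\chi}\overline{\chi}\bigl(\overline a\,\overline{f}_{d_0w}(u)v^{-1}\bigr),
\]
where $u$ ranges over $u\in k_L^n$ with $u_i=0\Leftrightarrow i\notin J$ and $\overline f_{d_0w}(u)\neq0$. Now complete the $\chi$-sum by adding the trivial character. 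Because $\ord_L(d_0)=0$ and $\eta_J\mid d_0$, we may assume $p_L\nmid\eta_J$. The characters of $k_L^\times$ with $\overline\chi^{\eta_J}=1$ then form a group of order $e:=\gcd(\eta_J,q_L-1)$, and orthogonality gives
\[
\sum_{\overline\chi^{\eta_J}=1}\overline\chi(y)=e\cdot\mathbf 1_{y\in (k_L^\times)^{e}}=e\cdot\mathbf 1_{y\in (k_L^\times)^{\eta_J}}.
\]
The trivial-character term contributes $-\frac{1}{q_L-1}\sum_u\sum_v\Psi(v)=\frac{\#\{u\}}{q_L-1}$, using $\sum_{v\in k_L^\times}\Psi(v)=-1$. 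By Lemma \ref{weig}, $\#\{u\}=(q_L-1)\#\overline{Z}_{J1L}(k_L)$, so this term is $\#\overline{Z}_{J1L}(k_L)$.

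For the remaining part, the indicator forces $v\in \overline a\,\overline f_{d_0w}(u)(k_L^\times)^{\eta_J}$. Each such $v$ is hit by exactly $(q_L-1)/e$ values $y\in k_L^\times$ via $v=y^{\eta_J}\overline a\,\overline f_{d_0w}(u)$, so the $v$-sum becomes $\frac{e}{q_L-1}\sum_{y\in k_L^\times}\Psi(\overline a\,y^{\eta_J}\overline f_{d_0w}(u))$. By Lemma \ref{weig}, $y^{\eta_J}\overline f_{d_0w}(u)=\overline f_{d_0w}(y\cdot_w u)$, where $y\cdot_w u$ has coordinates $y^{w_i/d_J}u_i$. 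As $u$ ranges over the set above and $y$ over $k_L^\times$, the point $y\cdot_w u$ covers that same set, each point $(q_L-1)$ times. The count of preimages is the step I expect to need the most care, since it must match the normalisation in Lemma \ref{weig}. If it holds, the contribution equals $\sum_u\Psi(\overline a\,\overline f_{d_0w}(u))$, still restricted to $\overline f_{d_0w}(u)\neq0$.

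\emph{Collecting terms.} Adding back the points $u$ with $\overline f_{d_0w}(u)=0$, each with $\Psi(0)=1$, turns this into $H_{J,L,\Psi,f_{d_0w},a}$ minus the number of such $u$. That number is $(q_L-1)\#\overline{Z}_{J0L}(k_L)$, again by Lemma \ref{weig}. Hence
\[
S=\#\overline{Z}_{J1L}(k_L)-(q_L-1)\#\overline{Z}_{J0L}(k_L)+H_{J,L,\Psi,f_{d_0w},a},
\]
which together with the prefactor from the first step yields the stated formula.
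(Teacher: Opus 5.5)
Your route is the paper's. You extract the $t^{m-1}$-coefficient via Lemma-Definition \ref{exis} (the computation of Lemma \ref{estm2}), open $g_{\chi^{-1}}$ and $\Omega_{J1,\chi}$, complete the $\chi$-sum with the trivial character, and apply orthogonality for the characters of $k_L^\times$ killed by $\eta_J$. The following parts are all correct:
\begin{itemize}
\item the coefficient extraction;
\item the trivial-character term $\#\overline{Z}_{J1L}(k_L)$;
\item the covering count for $(u,y)\mapsto y\cdot_w u$, which you flagged but which is fine, since over a given target there is exactly one $u$ for each $y$;
\item the final bookkeeping with $\overline{Z}_{J0L}$.
\end{itemize}

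The step that fails is the fibre count just before the covering. Put $b:=\overline a\,\overline f_{d_0w}(u)$. Since $k_L^\times$ is cyclic of order $q_L-1$, the map $y\mapsto y^{\eta_J}$ has kernel of order $\gcd(\eta_J,q_L-1)=e$. So each $v$ in the coset $b\,(k_L^\times)^{\eta_J}$ is hit by exactly $e$ values of $y$. The number $(q_L-1)/e$ is the size of the coset, not of a fibre. The correct identity is
\[
\frac{1}{q_L-1}\sum_{v\in k_L^\times}\Psi(v)\sum_{\overline\chi^{\eta_J}=1}\overline\chi\bigl(b\,v^{-1}\bigr)=\frac{e}{q_L-1}\sum_{v\in b\,(k_L^\times)^{\eta_J}}\Psi(v)=\frac{1}{q_L-1}\sum_{y\in k_L^\times}\Psi\bigl(y^{\eta_J}b\bigr),
\]
which is exactly the paper's intermediate expression. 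The $(q_L-1)$-fold covering then cancels the factor $\frac{1}{q_L-1}$ and gives $\sum_u\Psi(\overline a\,\overline f_{d_0w}(u))$.

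Your displayed $\frac{e}{q_L-1}\sum_y(\cdots)$ is not correct in general, whichever surrounding prefactors you meant it to contain. If it includes both $\frac{1}{q_L-1}$ and $e$, it carries a spurious factor $e$, and the covering would then give $e\sum_u\Psi(\cdots)$. So the conclusion you state does not follow from what you wrote. Once the fibre count is corrected, your argument goes through and coincides with the paper's proof.

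Incidentally, you do not need $p_L\nmid\eta_J$. The characters of $k_L^\times$ with $\overline\chi^{\eta_J}=1$ form a group of order $\gcd(\eta_J,q_L-1)$ in any case.
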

\begin{proof}
By Lemma \ref{estm2}, we have
\begin{align}\label{coe}
&\textnormal{coeff}_{t^{m-1}}\left(\frac{q_L^{-n}}{q_L-1}
\times\sum_{\upsilon=0}^{d_J-1}t^{-\frac{\upsilon d_0}{d_J}}q_L^{\epsilon_{J}(\upsilon)+\frac{\upsilon}{d_J}|w|}\frac{(q_L-1)t^{d_0}q_L^{-|w|}}{(1-t^{d_0}q_L^{-|w|})}\right)\nonumber\\
=&\begin{cases}q_L^{-n}q_L^{\epsilon_{J}(\upsilon_{Jm})+\frac{|w|}{d_0}}q_L^{-\frac{|w|m}{d_0}} \textnormal{ if } \upsilon_{Jm} \textnormal{ exists } \\
0 \textnormal{ otherwise}
\end{cases}
\end{align}

We set $e_{J,L}=\gcd\{\eta_J,q_L-1\}$, $$\tilde{Z}_{JL}=\{u=(u_1,...,u_n)\in k_L^n\mid u_i=0 \textnormal{ iff } i\notin J \}$$ and $$\tilde{Z}_{J1L}=\{u=(u_1,...,u_n)\in k_L^n\mid u_i=0 \textnormal{ iff } i\notin J, \overline{f}_{d_0w}(u)\neq 0\}.$$ Let $\chi$ be a multiplicative character of $\cO_L^{\times}$. If  $c(\chi)=1$ then $\chi^{q_L-1}=1$. Thus, if $c(\chi)=1$ and $\chi^{\eta_J}=1$ then $\chi$ is of order $e_{J,L}$. This fact and the definition of $\Omega_{J1,\chi}$ yield 
\begin{align}
&\sum_{\chi\neq\chi_{\textnormal{triv}}, c(\chi)=1,\chi^{\eta_J}=1} g_{\chi^{-1}}\chi(a)(q_L-1)\Omega_{J1,\chi}\nonumber\\
=&\sum_{\chi\neq\chi_{\textnormal{triv}}, c(\chi)=1,\chi^{e_{J,L}}=1} g_{\chi^{-1}}\chi(a)(q_L-1)\Omega_{J1,\chi}\nonumber\\
=&\sum_{\chi\neq\chi_{\textnormal{triv}}, c(\chi)=1,\chi^{e_{J,L}}=1} \frac{1}{q_L-1}\sum_{\overline{v}\in k_L^{\times}}\chi\bigl(v^{-1}a\bigr)\psi\bigl(\frac{v}{\varpi_L}\bigr)(q_L-1)\Omega_{J1,\chi}\nonumber\\
=&\sum_{\chi\neq\chi_{\textnormal{triv}}, c(\chi)=1,\chi^{e_{J,L}}=1} \frac{1}{q_L-1}\sum_{\overline{v}\in k_L^{\times}}\overline{\chi}\bigl(\overline{v}^{-1}\overline{a}\bigr)\Psi\bigl(\overline{v}\bigr)\sum_{u\in\tilde{Z}_{J1L}}\overline{\chi}\bigl(\overline{f}_{d_0w}(u)\bigr)\nonumber\\
=&\frac{1}{q_L-1}\sum_{u\in\tilde{Z}_{J1L}}\sum_{\overline{v}\in k_L^{\times}}\Psi\bigl(\overline{v}\bigr)\bigg(-1+\sum_{c(\chi)=1,\chi^{e_{J,L}}=1} \overline{\chi}\bigl(\overline{v}^{-1}\overline{a}\overline{f}_{d_0w}(u)\bigr)\bigg).\label{chi1}
\end{align}
By orthogonality of characters, Lemma \ref{weig}  and the definition of $\overline{Z}_{J1L}$, we have
\begin{equation}\label{chi2}
\frac{1}{q_L-1}\sum_{u\in\tilde{Z}_{J1L}}\sum_{\overline{v}\in k_L^{\times}}\Psi\bigl(\overline{v}\bigr)=\frac{1}{q_L-1}\sum_{u\in\tilde{Z}_{J1L}}\bigg(-1+\sum_{\overline{v}\in k_L}\Psi\bigl(\overline{v}\bigr)\bigg)=-\#\left(\overline{Z}_{J1L}(k_L)\right).
\end{equation}
On the other hand, if $b\in k_L$ then we have 
$$\sum_{c(\chi)=1,\chi^{e_{J,L}}=1} \overline{\chi}(b)=\begin{cases} e_{J,L} \textnormal{ if } \exists y\in k_L^{\times} \textnormal{ such that } b=y^{e_{J,L}}\\
0 \textnormal{ otherwise}.
\end{cases}$$
Since $e_{J,L}=\gcd(\eta_J,q_L-1)\mid q_L-1$, we have $\#(\{y\in k_L^{\times}\mid y^{e_{J,L}}=1\})=\#(\{y\in k_L^{\times}\mid y^{\eta_J}=1\})=e_{J,L}$. Moreover $\{y^{\eta_J}\mid y\in k_L^{\times}\}=\{y^{e_{J,L}}\mid y\in k_L^{\times}\}$. These facts and Lemma \ref{weig} imply
\begin{align}\label{chi3}
&\frac{1}{q_L-1}\sum_{u\in\tilde{Z}_{J1L}}\sum_{\overline{v}\in k_L^{\times}}\Psi\bigl(\overline{v}\bigr)\sum_{c(\chi)=1,\chi^{e_{J,L}}=1} \overline{\chi}\bigl(\overline{v}^{-1}\overline{a}\overline{f}_{d_0w}(u)\bigr)\nonumber\\
=&\frac{1}{q_L-1}\sum_{u\in\tilde{Z}_{J1L}}\sum_{y\in k_L^{\times}}\Psi\bigl(y^{e_{J,L}}\overline{a}\overline{f}_{d_0w}(u)\bigr)\nonumber\\
=&\frac{1}{q_L-1}\sum_{u\in\tilde{Z}_{J1L}}\sum_{y\in k_L^{\times}}\Psi\bigl(y^{\eta_{J}}\overline{a}\overline{f}_{d_0w}(u)\bigr)\nonumber\\
=&\frac{1}{q_L-1}\sum_{u\in\tilde{Z}_{J1L}}\sum_{u'\in \tilde{Z}_{J1L}, [u]_w=[u']_w\in \PP_{k_L}^{w}(k_L)}\Psi\bigl(\overline{a}\overline{f}_{d_0w}(u')\bigr)\nonumber\\
=&\sum_{u\in\tilde{Z}_{J1L}}\Psi\bigl(\overline{a}\overline{f}_{d_0w}(u)\bigr)\nonumber\\
=&\left(\sum_{u\in\tilde{Z}_{JL}}\Psi\bigl(\overline{a}\overline{f}_{d_0w}(u)\bigr)\right)-\left(\sum_{u\in\tilde{Z}_{JL}, \overline{f}_{d_0w}(u)=0}1\right)\nonumber\\
=&-(q_L-1)\#(\overline{Z}_{J0L}(k_L))+\sum_{u\in\tilde{Z}_{JL}}\Psi\bigl(\overline{a}\overline{f}_{d_0w}(u)\bigr).
\end{align}

Lastly, we can combine (\ref{coe}), (\ref{chi1}), (\ref{chi2}) and (\ref{chi3}) to prove the assertion.
\end{proof}

\begin{cor}\label{estm5}With the notations in Lemma \ref{estm3} and the above assumptions, there is a positive constant $c_1$ depending only on $w$ such that 
$$\bigg|E_{f,\textbf{1}_{\varpi_L\cO_L^n},\psi_{a\varpi_L^{-m}}}-q_L^{-\frac{m|w|}{d_0}}\sum_{J\subset\{1,...,n\}, J\neq\emptyset, \upsilon_{Jm} \textnormal{ exists }}q_L^{\lambda(J,m)}H_{J,L,\Psi,f_{d_0w},a}\bigg|\leq c_1q_L^{-\frac{|w|m}{d_0}}$$
for 
all additive characters $\psi$ of $L$ of conductor $0$, all $a\in \cO_L^{\times}$, all $m\geq 1$, where $\Psi$ is the character of $k_L$ induced by $\psi_{\varpi^{-1}}$ and $$\lambda(J,m)=-n+\epsilon_{J}(\upsilon_{Jm})+\left(\frac{\upsilon_{Jm}}{d_J}+\frac{1}{d_0}\left(1-\delta_{J\upsilon_{Jm}}-\bigg\lfloor\frac{d_0\upsilon_{Jm}}{d_J}\bigg\rfloor\right)\right)|w|.$$

\end{cor}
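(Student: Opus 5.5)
We will expand $E_{f,\textbf{1}_{\varpi_L\cO_L^n},\psi_z}$ with $z=a\varpi_L^{-m}$ using Proposition \ref{zetaex}. Its hypothesis $\Supp(\Phi)\cap C_f\subset f^{-1}(0)$ holds for $\Phi=\textbf{1}_{\varpi_L\cO_L^n}$ after shrinking to the neighbourhood $W$ of Lemma \ref{desstraw}, since the origin is the only critical point of $f$ near $O$. We then substitute the explicit formulas of Theorems \ref{explicit} and \ref{explicit1}, organising everything as a sum over non-empty $J\subset\{1,\dots,n\}$ and strata $\overline{Z}_{J1L}$, $\overline{Z}_{J0L}$. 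Only characters with $c(\chi)=1$ contribute, because $\ord_L(d_0)=0$ and Theorem \ref{explicit}(i) applies. Among these, only those with $d(\chi)\mid d_0$ contribute, and by Theorem \ref{explicit} the stratum $\overline{Z}_{J1L}$ contributes only when $\chi^{\eta_J}=1$. This gives three groups of terms:
\begin{itemize}
\item the trivial-character terms $\cZ(0)+\textnormal{Coeff}_{t^{m-1}}\bigl(\tfrac{(t-q_L)\cZ}{(q_L-1)(1-t)}\bigr)$, split into $J1$-parts and $J0$-parts;
\item for each $J$, the nontrivial-character sum over $\chi$ with $c(\chi)=1$ and $\chi^{\eta_J}=1$;
\item nothing else.
\end{itemize}

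For the $J1$-part of the trivial character, write $\tfrac{t-q_L}{(q_L-1)(1-t)}=-\tfrac{1}{1-t}\cdot\tfrac{q_L-t}{q_L-1}$ and split it as $-\tfrac{1}{1-t}$ plus $\tfrac{1}{q_L-1}$ times a polynomial in $t$. The piece $\cZ(0)$ minus the $\tfrac{1}{1-t}$-coefficient is bounded by Lemma \ref{estm1}, giving at most $d_Jq_L^{-m|w|/d_0}/(1-2^{-|w|})$. The leftover piece is exactly $D_{m,J,L,1}$ of Lemma \ref{estm2}, which equals $-q_L^{\lambda(J,m)-m|w|/d_0}\#\overline{Z}_{J1L}(k_L)$ when $\upsilon_{Jm}$ exists and $\overline{Z}_{J1L}\neq\emptyset$. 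Here we use Lemma-Definition \ref{exis} to identify the exponent $\epsilon_J(\upsilon_{Jm})+|w|/d_0-n$ with $\lambda(J,m)$, which holds because $d_J\mid d_0$, so $\delta_{J\upsilon}=0$ and $\lfloor d_0\upsilon/d_J\rfloor=d_0\upsilon/d_J$.

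For the $J0$-parts, Lemma \ref{estm3} gives $(q_L-1)q_L^{-n}\#\overline{Z}_{J0L}(k_L)q_L^{a_{Jm}-m|w|/d_0}$ plus an error bounded by $c_0q_L^{-m|w|/d_0}$, and $a_{Jm}-n=\lambda(J,m)$. For the nontrivial characters, Lemma \ref{estm4} gives
\[
q_L^{\lambda(J,m)-m|w|/d_0}\bigl(\#\overline{Z}_{J1L}(k_L)-(q_L-1)\#\overline{Z}_{J0L}(k_L)+H_{J,L,\Psi,f_{d_0w},a}\bigr)
\]
when $\overline{Z}_{J1L}\neq\emptyset$, where the exponent $-n+\epsilon_J+|w|/d_0$ again equals $\lambda(J,m)$. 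Adding the three contributions, the $\#\overline{Z}_{J1L}$ terms cancel against Lemma \ref{estm2}, and the $(q_L-1)\#\overline{Z}_{J0L}$ terms cancel against the main part of Lemma \ref{estm3}. What remains is $q_L^{\lambda(J,m)-m|w|/d_0}H_{J,L,\Psi,f_{d_0w},a}$ plus errors of size $O(q_L^{-m|w|/d_0})$ with constants depending only on $w$. Summing over the at most $2^n$ subsets $J$ then yields $c_1$.

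The main obstacle is the case $\overline{Z}_{J1L}=\emptyset$ but $\overline{Z}_{J0L}\neq\emptyset$ with $\upsilon_{Jm}$ existing. Then Lemma \ref{estm4} does not apply, and the $J0$ main term from Lemma \ref{estm3} must be matched with $H$ directly. If $F_J\equiv 0$ mod $\varpi_L$ on the torus of $J$, then $H_{J,L,\Psi,f_{d_0w},a}=\#\tilde{Z}_{JL}=(q_L-1)^{\#J}=(q_L-1)\#\overline{Z}_{J0L}(k_L)$, so the $J0$ term coincides with $q_L^{\lambda(J,m)}H$, and the two exponents agree because $a_{Jm}-n=\lambda(J,m)$ by definition. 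We will verify this identity, using Lemma \ref{L-W} and \cite[Lemma 7]{Perr}, and that $\delta_{J\upsilon}$ enters $\lambda$ consistently, so the statement holds in this case too. Bookkeeping the constant terms $\cZ(0)$ from the $J0$ strata, which are already absorbed in Lemma \ref{estm3}, finishes the proof.
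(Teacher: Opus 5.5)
Your proposal is correct and is essentially the paper's own argument. Via Proposition \ref{zetaex}, Theorems \ref{explicit}, \ref{explicit1} and the splitting $\frac{t-q_L}{(q_L-1)(1-t)}=\frac{-1}{q_L-1}-\frac{1}{1-t}$, the paper writes the exponential sum as the sum over $J$ of the expression in Lemma \ref{estm1}, $D_{m,J,L,1}$, $D_{m,J,L,2}$ and $D_{m,J,L}(a)$, with the same cancellations you describe, and it handles the case $\overline{Z}_{J1L}=\emptyset$ exactly as you propose, via $H_{J,L,\Psi,f_{d_0w},a}=(q_L-1)^{\#(J)}=(q_L-1)\#(\overline{Z}_{J0L}(k_L))$ (Lemma \ref{weig}) and $d_J\mid d_0$ when $\overline{Z}_{J1L}\neq\emptyset$.
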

\begin{proof}
If $\overline{Z}_{J1L}\neq\emptyset$ then $d_J\mid d_0$ so 
$$\epsilon_{J}(\upsilon)+\left(\frac{\upsilon}{d_J}+\frac{1}{d_0}\left(1-\delta_{J\upsilon}-\bigg\lfloor\frac{d_0\upsilon}{d_J}\bigg\rfloor\right)\right)|w|=\epsilon_{J}(\upsilon)+\frac{|w|}{d_0}$$
for all $0\leq\upsilon\leq d_J-1$.
By Lemma \ref{weig}, if $\overline{Z}_{J1L}=\emptyset$ then 
$$H_{J,L,\Psi,f_{d_0w},a}=(q_L-1)^{\#(J)}=(q_L-1)\#(\overline{Z}_{J0L}(k_L)).$$
By Proposition \ref{zetaex}, Theorems \ref{explicit}, \ref{explicit1} and the equality
\begin{equation}\label{simfor}\frac{t-q_L}{(q_L-1)(1-t)}=\frac{-1}{q_L-1}+\frac{-1}{1-t},
\end{equation}
we can write  $E_{f,\textbf{1}_{\varpi_L\cO_L^n},\psi_{a\varpi_L^{-m}}}$ as the sum over non-empty sets $J\subset\{1,...,n\}$ of $D_{m,J,L,1}$, $D_{m,J,L,2}$, $D_{m,J,L}(a)$ and the expression inside the absolute value bars in Lemma \ref{estm1}. Thus, the assertion follows from the combination of Lemmas \ref{estm1}, \ref{estm2}, \ref{estm3} and \ref{estm4}. 
\end{proof}

\begin{lem}\label{exfin}There is a natural number $M_1\geq \max\{M_0,d_0+1\}$ depending on $f_{d_0w}$ and a  constant $c_2>0$ depending on $n,w,d_0$ such that 

$$\left|H_{J,L,\Psi,f_{d_0w},a}\right|=\left|\sum_{u=(u_1,...,u_n)\in k_L^n, u_i=0 \Leftrightarrow i\notin J}\Psi(\overline{a}\overline{f}_{d_0 w}(u))\right|\leq c_2q_L^{\frac{\#(J)+s_J}{2}},$$
for all non-empty subsets $J$ of $\{1,...,n\}$ with $\overline{Z}_{J1L}\neq \emptyset$, all $\overline{a}\in k_L^{\times}$ and all non-trivial character $\Psi$ of $k_L$ provided that the residue field characteristic of $L$ is at least $M_1$, where $s_J$ is the dimension of the singular locus of $F_J=f_{d_0w}|_{\{x_i=0 \forall i\notin J\}}$ in $\AA_\CC^{\#(J)}$ and $M_0$ is defined in Lemma \ref{almostpp}.
\end{lem}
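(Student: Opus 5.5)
The plan is to reduce the estimate to a standard bound for exponential sums over finite fields attached to polynomials with controlled singular locus, namely \cite[Theorem 4.3 / Corollary]{CDenSperlocal}-type results, as hinted in the introduction via \cite{CDenSperlocal} and \cite{MoiIde}. First I will remove the condition ``$u_i\neq 0$ for $i\in J$'' by inclusion--exclusion:
\[
H_{J,L,\Psi,f_{d_0w},a}=\sum_{J'\subset J}(-1)^{\#(J\setminus J')}\sum_{u\in k_L^{J'}}\Psi\bigl(\overline{a}\,\overline{F}_{J'}(u)\bigr),
\]
where $\overline{F}_{J'}$ is the reduction of $F_{J'}$, viewed as a polynomial in the variables $x_i$, $i\in J'$. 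For $J'=\emptyset$ the inner sum is $1\le q_L^{(\#(J)+s_J)/2}$. So it suffices to bound each complete sum $S_{J'}=\sum_{u\in k_L^{J'}}\Psi(\overline{a}\,\overline{F}_{J'}(u))$ by $c\,q_L^{(\#(J')+s_{J'})/2}$ and then to check that $\#(J')+s_{J'}\le \#(J)+s_J$ for $J'\subset J$.

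For the complete sums, I will use that $F_{J'}$ is $w$-weighted homogeneous of degree $d_0$ with integral coefficients. If $F_{J'}=0$, then $S_{J'}=q_L^{\#(J')}$, and the singular locus of the zero polynomial is all of $\AA^{\#(J')}$. Under the natural convention $s_{J'}=\#(J')$, this gives exactly $q_L^{(\#(J')+s_{J'})/2}$. If $F_{J'}\neq 0$, I will invoke the result of \cite{CDenSperlocal} (or \cite{MoiIde}) for weighted homogeneous polynomials. That result gives $|S_{J'}|\le c\,q_L^{(\#(J')+s_{J'})/2}$ for $p_L$ large, since the critical locus over $\overline{\FF}_p$ of the reduction has the same dimension as over $\CC$ for almost all $p$. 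The constant $c$ depends only on $\#(J')$ and the degree, hence only on $n,w,d_0$. The largeness of $p_L$ (at least some $M_1$ depending on $f_{d_0w}$) is obtained by spreading out the singular loci of the finitely many $F_{J'}$ over $\cO_K[1/N]$, together with $p_L>d_0$ so that Euler's identity $d_0F=\sum w_ix_i\partial_iF$ holds and the critical locus lies in $\{F=0\}$.

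The main obstacle is the monotonicity $\#(J')+s_{J'}\le\#(J)+s_J$. I will argue via the restriction: $F_{J'}=F_J|_{\{x_i=0,\ i\in J\setminus J'\}}$. A point $u'\in\Sing(F_{J'})$ satisfies $\partial_iF_J(u')=0$ for $i\in J'$, but the remaining $\#(J\setminus J')$ partials may be nonzero. Each codimension-one slicing therefore drops the singular locus dimension by at most one, so $\dim(\Sing F_J\cap\{x_i=0,\ i\in J\setminus J'\})\ge s_{J'}-\#(J\setminus J')$. This is an inequality in the wrong direction, and it already shows that the monotonicity is delicate.

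I will therefore instead try to bound directly $s_{J'}\le s_J+\#(J\setminus J')$. Since $\Sing F_{J'}$ lies in $\{F_J=0\}\cap\{x_i=0,\ i\in J\setminus J'\}$ and is cut out by $\partial_iF_J$ for $i\in J'$, this is a comparison between the critical locus of a restriction and that of the original polynomial. I expect this to need the isolated-singularity hypothesis on $f_{d_0w}$. Concretely, I will try to show that $\Sing F_{J'}$ embeds into a set of dimension at most $s_J+\#(J\setminus J')$, using the $\GG_m$-weighted action and the non-degeneracy coming from $\overline{Z}_{J1L}\neq\emptyset$. Then $\#(J')+s_{J'}\le\#(J')+s_J+\#(J\setminus J')=\#(J)+s_J$. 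Summing the $2^{\#(J)}$ terms then yields $c_2$ depending only on $n,w,d_0$.
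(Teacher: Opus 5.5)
Your route is a reasonable unpacking of the paper's proof, which is only a citation of \cite[Lemma 6.2]{MoiIde} and \cite[Theorem 7.4]{CDenSperlocal}. You use inclusion--exclusion down to the complete sums $S_{J'}$ on the coordinate subspaces $\AA^{J'}$, then a finite-field estimate $|S_{J'}|\le c\,q_L^{(\#(J')+s_{J'})/2}$ for weighted homogeneous polynomials when $p_L$ is large, then a comparison of exponents. The problem is that you misread your own dimension count, and then leave the step you call the main obstacle as an unexecuted plan (``I will try to show\dots'').

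The inequality you derived points the right way. Since $\Sing F_J\cap\{x_i=0,\ i\in J\setminus J'\}\subset\Sing F_J$, it gives
\[
s_J\ \ge\ \dim\bigl(\Sing F_J\cap\{x_i=0,\ i\in J\setminus J'\}\bigr)\ \ge\ s_{J'}-\#(J\setminus J').
\]
This is literally $s_{J'}\le s_J+\#(J\setminus J')$, i.e.\ $\#(J')+s_{J'}\le\#(J)+s_J$. Your final paragraph is therefore unnecessary. In particular, neither the isolated-singularity hypothesis nor $\overline{Z}_{J1L}\neq\emptyset$ plays any role in this step.

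What you should add is nonemptiness, since the affine dimension theorem \cite[Proposition 7.1]{Harts} only bounds the components of a nonempty intersection.

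\begin{enumerate}
\item Because $\partial_iF_{J'}=(\partial_iF_J)|_{\AA^{J'}}$ for $i\in J'$, we have
\[
\Sing F_J\cap\AA^{J'}=\Sing F_{J'}\cap\bigcap_{i\in J\setminus J'}\{\partial_iF_J|_{\AA^{J'}}=0\}.
\]
\item Every irreducible component of $\Sing F_{J'}$ is stable under $t\cdot u=(t^{w_i}u_i)$ with all $w_i>0$, hence contains the origin.
\item The origin lies on each hypersurface $\{\partial_iF_J=0\}$, because $\partial_if_{d_0w}(O)=0$. Indeed $O$ is a critical point of $f_{d_0w}$, so $f_{d_0w}$ has no linear monomials.
\item Start from a component of $\Sing F_{J'}$ of dimension $s_{J'}$. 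Cut by the $\#(J\setminus J')$ hypersurfaces one at a time, keeping at each stage a component through the origin. Each cut lowers the dimension by at most one, which gives the displayed inequality.
\end{enumerate}

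This also covers $F_{J'}=0$, where $\Sing F_{J'}=\AA^{J'}$, matching the convention $s_{J'}=\#(J')$ used in Lemma \ref{ubound}. It also shows $s_J\ge 0$, which you need for the $J'=\emptyset$ term. With this fix your argument is complete, granted the cited weighted complete-sum estimate with a constant depending only on $n,w,d_0$.
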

\begin{proof}The assertion follows immediately from \cite[Lemma 6.2]{MoiIde} and \cite[Theorem 7.4]{CDenSperlocal}.
\end{proof}

\begin{lem}\label{exinj}
Let $J$ be a non empty subset of $\{1,...,n\}$ and $s_J$ be as in Lemma \ref{exfin}. Then there is a subset $I$ of $J$ and an injection $\varsigma_J:I \to \{1,...,n\}\setminus J$ such that  $n-\#(J)\geq \#(I)\geq s_J$, $d_J\mid (d_0-w_{\varsigma_J(i)})$ and $d_0\geq w_i+w_{\varsigma_J(i)}$ for all $i\in I$.
\end{lem}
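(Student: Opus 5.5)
We use the isolated singularity hypothesis on $f_{d_0w}$: the only common zero in $\AA_\CC^n$ of $f_{d_0w}$ and its partials $\partial f_{d_0w}/\partial x_1,\dots,\partial f_{d_0w}/\partial x_n$ is the origin. Write
\[
f_{d_0w}=F_J(x_J)+\sum_{k\notin J}x_k\,G_k(x_J)+R,
\]
where $x_J=(x_i)_{i\in J}$ and $R$ lies in the ideal generated by the products $x_kx_l$ with $k,l\notin J$. The polynomial $G_k=\partial f_{d_0w}/\partial x_k|_{\{x_l=0\,\forall l\notin J\}}$ is $w$-weighted homogeneous of degree $d_0-w_k$. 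On the coordinate subspace $E=\{x_l=0\ \forall l\notin J\}$, the partials with respect to $x_i$, $i\in J$, restrict to $\partial F_J/\partial x_i$, and the partials with respect to $x_k$, $k\notin J$, restrict to $G_k$. So the common zero locus in $\AA_\CC^{\#J}$ of $\Sing(F_J)$ and all the $G_k$ is $\{0\}$. Since $\Sing(F_J)$ is conical for the weighted $\GG_m$-action and has dimension $s_J$, cutting it down to the origin requires at least $s_J$ of the $G_k$ to be nonzero on it; more precisely, the $G_k$ that are nonzero on $\Sing(F_J)$ form a set $S\subset\{1,\dots,n\}\setminus J$ with $\#S\geq s_J$. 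In particular $s_J\leq n-\#J$.

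Next we need a matching $\varsigma_J:I\to S$ with $I\subset J$ and $\#I=\#S$ such that, for every $i\in I$, the polynomial $G_{\varsigma_J(i)}$ contains a monomial involving $x_i$. That monomial has degree $d_0-w_{\varsigma_J(i)}\geq w_i$, which gives $d_0\geq w_i+w_{\varsigma_J(i)}$. Divisibility also follows: every monomial of $G_k$ in the variables $x_J$ has weight in $d_J\ZZ$, and $G_k\neq 0$, so $d_J\mid d_0-w_k$. This divisibility holds for every $k\in S$, independently of $i$.

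The remaining and hardest step is to produce the injection $\varsigma_J$. We plan to use Hall's marriage theorem on the bipartite graph between $S$ and $J$, with $k\sim i$ whenever $x_i$ divides some monomial of $G_k$ that is nonzero on $\Sing(F_J)$. Suppose Hall's condition fails: there is $T\subset S$ whose neighbourhood $N(T)\subset J$ satisfies $\#N(T)<\#T$. Then the $G_k$, $k\in T$, depend only on $x_{N(T)}$. We want to show that $\Sing(F_J)\cap\{G_k=0\ \forall k\in S\setminus T\}$ has dimension at least $\#T$, so that it contains a positive-dimensional cone; the $\#T$ polynomials $G_k$, $k\in T$, depend on fewer than $\#T$ variables and, being weighted homogeneous, should not cut this cone down to the origin. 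That contradicts the isolated singularity hypothesis. The delicate point is the dimension count on the intersection, which we would handle by choosing $S$ minimal: choose the $G_k$ successively so that each one drops the dimension of $\Sing(F_J)\cap\{\text{previous }G\text{'s}=0\}$ by exactly one. Then one can take $T$ to be a terminal segment of this sequence and argue on the intermediate locus $V=\Sing(F_J)\cap\{G_k=0\ \forall k\in S\setminus T\}$, which has dimension $\#T$. If the exact dimension drop fails, we instead pass to a generic linear combination of the $G_k$ of equal degree, taking care to preserve the degree and divisibility conditions.
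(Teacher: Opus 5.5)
Your preliminary steps are correct and agree with the paper (your $G_k$ are its $g_k$). In particular, $\Sing(F_J)\cap Z(G_k:k\notin J)=\{0\}$, and once some $x_i$ occurs in $G_k$ you correctly get $d_J\mid d_0-w_k$ and $d_0\ge w_i+w_k$.

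The gap is in the matching step, which is the real content of the lemma. You look for a matching that saturates $S$, and this is false in general. Take $f=x_1x_2+x_1x_3+x_2^2+x_3^2$ (a nondegenerate quadratic form, $w=(1,1,1)$, $d_0=2$) and $J=\{1\}$. Then $F_J=0$, $s_J=1$ and $G_2=G_3=x_1$, so $S=\{2,3\}$ has more elements than $J$. Hall's condition fails for $T=S$, and the inequality you want, $\dim V\ge\#T$ with $V=\Sing(F_J)=\AA^1$, is false.

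Your repair needs an $S$ with $\#S=s_J$ and $\Sing(F_J)\cap Z(G_k:k\in S)=\{0\}$. With such an $S$, Krull would give $\dim V\ge\#T$ for every $T$, terminal segment or not. But such an $S$ need not exist. For $f=x_1^2x_2^2+x_1x_3+x_2^2x_4+x_4^2$ with $w=(1,1,3,2)$, $d_0=4$ (an isolated singularity) and $J=\{1,2\}$, one has $\Sing(F_J)=\{x_1x_2=0\}$, $s_J=1$, $G_3=x_1$ and $G_4=x_2^2$. Neither $G_k$ alone cuts $\Sing(F_J)$ down to the origin. The fallback of generic linear combinations is unavailable here, since $G_3$ and $G_4$ have different weighted degrees. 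Even when it is available, you would still owe an argument turning a matching of combinations into an injection into genuine indices $k\notin J$.

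The missing idea is that you never need to saturate $S$; you only need a matching of size $\ge s_J$. The defect form of Hall's theorem (K\"onig's theorem) gives this directly: the maximum matching has size $\#S-\max_{T\subseteq S}(\#T-\#N(T))$. Take $S=\{1,\dots,n\}\setminus J$ and put $k\sim i$ iff $x_i$ occurs in $G_k$. Fix $T\subseteq S$ and set $V=\Sing(F_J)\cap Z(G_k:k\in S\setminus T)$.

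On one hand, Krull applied to a top-dimensional conical component gives $\dim V\ge s_J-\#(S\setminus T)$. On the other hand, $V\cap Z(G_k:k\in T)=\{0\}$, and $Z(G_k:k\in T)$ is a cylinder over a cone in $\AA^{N(T)}$, so the affine dimension theorem gives $\dim V\le\#N(T)$. Hence $\#T-\#N(T)\le\#S-s_J$ for every $T$, and a matching of size $\ge s_J$ exists, with no minimality, ordering or genericity needed.

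This is essentially the paper's argument in different packaging. The paper applies the affine dimension theorem in the ambient $\AA^{\#J}$ to get $\dim Z(g_{r+1},\dots,g_n)\le\#J-s_J$. It then proves in Lemma \ref{inje} a K\"onig-type statement by adjoining generic linear forms and running an explicit augmenting-path argument: polynomials whose common zero locus has codimension $\ell$ admit an injection from at least $\ell$ variables to polynomials containing them.
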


\begin{proof}We set $r=\#(J)>0$. Without loss of generality, we can suppose that $J=\{1,...,r\}$. Let $\cJ_{f_{d_0w}}=(\frac{\partial f_{d_0w}}{\partial x_1},...,\frac{\partial f_{d_0w}}{\partial x_n})$ be the Jacobian ideal of $f_{d_0w}$. Since $f_{d_0w}$ defines a hypersurface with an isolated singularity in $\AA_\CC^n$, the scheme $Z(\cJ_{f_{d_0w}})\subset \AA_{\CC}^n$ associated to $\cJ_{f_{d_0w}}$ has only one point. We set $\cI_J= \cJ_{f_{d_0w}}+ (x_{r+1},...,x_n)\subset \CC[x_1,...,x_n]$ and $\tilde{\cI}_J=(\frac{\partial f_{d_0w}}{\partial x_1},...,\frac{\partial f_{d_0w}}{\partial x_r}, x_{r+1},...,x_n)$. Let $Z(\cI_J)$ and $Z(\tilde{\cI}_J)$ be the subschemes of $\AA_\CC^n$ associated to $\cI_J, \tilde{\cI}_J$ respectively. Then $Z(\cI_J)$ has only one point and  $\dim(Z(\tilde{\cI}_J))=s_J$. For each $1\leq j\leq n$, there exists a unique $w$-weighted homogeneous polynomial $g_j\in \CC[x_1,...,x_r]$ such that $\frac{\partial f_{d_0w}}{\partial x_j}-g_j\in (x_{r+1},...,x_n)$. So we have
$$\tilde{\cI}_J=(g_1,...,g_r, x_{r+1},...,x_n), \cI_J=(g_1,...,g_r, x_{r+1},...,x_n, g_{r+1},...,g_n)=\tilde{\cI}_J+(g_{r+1},...,g_n).$$
We identify $\AA_\CC^r$  with $\spec(\CC[x_1,...,x_n]/(x_{r+1},...,x_n))\subset \AA_\CC^n$. Then $Z(\cI_J)$ and $ Z(\tilde{\cI}_J)$ are subschemes of $\AA_\CC^r$. Let $Z(g_{r+1},...,g_n)$ be the subscheme of $\AA_\CC^r$ associated to the ideal $(g_{r+1},...,g_n)$. Then we have $Z(\cI_J)=Z(\tilde{\cI}_J)\cap Z(g_{r+1},...,g_n)$. Since $\dim(Z(\cI_J))=0$ and $\dim(Z(\tilde{\cI}_J))=s_J$, we can use \cite[Proposition 7.1]{Harts} to deduce  $0\leq e=\dim(Z(g_{r+1},...,g_n))\leq r-s_J$. 

If $s_J=0$ then it is sufficient to take $I=\emptyset$. 
Suppose that $s_J>0$. Then $0\leq e=\dim(Z(g_{r+1},...,g_n))<r$ thus $g_{j_0}$ is not the zero polynomial for at least one index $r+1\leq j_0\leq n$. Let $1\leq i_0\leq r$ such that $x_{i_0}$ appears non-trivial in $g_{j_0}$. We set $I_0=\{i_0\}$ and $\varsigma_0:I_0\to \{1,...,n\}, i_0\mapsto j_0$. Then, we can apply Lemma \ref{inje} below for $Q=\{r+1,...,n\}, (g_i)_{r+1\leq j\leq n}, I_0, \varsigma_0$ to obtain a subset $I\supset I_0$ of $J=\{1,...,r\}$ and an injection $\varsigma_J:I\to\{r+1,...,n\}$ such that $\#(I)\geq r-e\geq s_J$ and $x_i$ appears non-trivial in $g_{\varsigma_J(i)}$ for all $i\in I$.  
It follows from the conditions $\frac{\partial f_{d_0w}}{\partial x_{\varsigma_J(i)}}-g_{\varsigma_J(i)}\in (x_{r+1},...,x_n)$ for all $i\in I$ and $g_j\in\CC[x_1,...,x_r]$ for all $j$ that there is a monomial $X^{(i)}=x_1^{i_1}...x_r^{i_r}$ such that the coefficient of $x_ix_{\varsigma_J(i)}X^{(i)}$ in $f_{d_0w}$ is non-zero for all $i\in J$. Thus, the $w$-weighted homogeneity of $f_{d_0w}$ implies that $d_0=w_i+w_{\varsigma_J(i)}+\sum_{j=1}^r w_ji_j$ for all $i\in J$. Therefore, $d_J\mid (d_0-w_{\varsigma_J(i)})$ and $d_0\geq w_i+w_{\varsigma_J(i)}$  for all $i\in J$. This completes the proof.
\end{proof}

\begin{lem}\label{inje}Let $Q$ be a non-empty set and $\cI$ be the ideal of $\CC[x_1,...,x_n]$ generated by non-constant polynomials $g_i$ for $i\in Q$. Suppose that the scheme $Z(\cI)$ associated to $\cI$ is non-empty and of codimension $\ell$. 
If $I_0$ is a subset of $\{1,...,n\}$ and $\varsigma_0:I_0\to Q$ be an injection such that $x_i$ appears non-trivial in $g_{\varsigma_0(i)}$ for all $i\in I_0$ then there is a set $I\supset I_0$ and an injection $\varsigma:I\to Q$ such that $\#(I)\geq \ell$,  $x_i$ appears non-trivial in $g_{\varsigma(i)}$ for all $i\in I$ and $\varsigma_0(I_0)\subset \varsigma(I)$. 
\end{lem}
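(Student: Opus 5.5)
We prove Lemma \ref{inje} by a greedy augmentation argument of matroid/Hall type. The idea is to enlarge the partial matching $\varsigma_0:I_0\to Q$ one element at a time, using the codimension hypothesis to guarantee that augmentation is possible whenever $\#(I)<\ell$. For a set $S\subset\{1,\dots,n\}$ of variables, let $Q_S=\{j\in Q\mid g_j\in\CC[x_i]_{i\in S}\}$. These are the polynomials involving only variables from $S$; the other polynomials involve some variable outside $S$.

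The key dimension estimate comes first. Suppose $T\subset\{1,\dots,n\}$ is a set of variables, and every $g_j$ with $j\in Q'$ (where $Q'\subset Q$) involves only variables in $T$. Then the subscheme $Z((g_j)_{j\in Q'})$ is a cylinder over a subscheme of $\AA^{\#(T)}$. Hence its codimension is at most $\#(T)$, since it is non-empty: $Z(\cI)\neq\emptyset$ is contained in it.

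We then run the argument through Hall's theorem in its deficiency form (König's theorem). Consider the bipartite graph between variables $\{1,\dots,n\}$ and $Q$, with an edge $(i,j)$ when $x_i$ appears in $g_j$. By König, the maximum matching size equals the minimum vertex cover size. A vertex cover consists of a set $T$ of variables together with a set $Q''\subset Q$ such that every $g_j$ with $j\notin Q''$ involves only variables in $T$. Then $Z(\cI)=Z((g_j)_{j\notin Q''})\cap Z((g_j)_{j\in Q''})$. The first factor has codimension $\le \#(T)$ by the estimate above. Intersecting with $\#(Q'')$ further hypersurfaces raises codimension by at most $\#(Q'')$ (Krull's principal ideal theorem / \cite[Proposition 7.1]{Harts}, working at a point of $Z(\cI)$). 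Therefore $\ell\le \#(T)+\#(Q'')$, so every cover has size $\ge\ell$, and a maximum matching has size $\ge\ell$.

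It remains to start from $\varsigma_0$. By Berge's augmenting path theorem, any matching can be enlarged by augmenting paths until it becomes maximum. Augmenting along an alternating path only adds matched vertices on both sides and never unmatches one. Starting with $\varsigma_0$, we therefore obtain a maximum matching $\varsigma:I\to Q$ with $I\supset I_0$, $\varsigma(I)\supset\varsigma_0(I_0)$, and $\#(I)\ge\ell$.

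The main obstacle is the codimension inequality $\operatorname{codim}\le \#(T)+\#(Q'')$. This needs non-emptiness of $Z(\cI)$ and care with codimension of a non-irreducible scheme. We handle it by taking an irreducible component of $Z(\cI)$ of codimension $\ell$ and applying Krull locally at its generic point.
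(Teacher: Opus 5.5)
Your proof is correct. It packages the paper's core idea more conceptually. The paper first reduces to $\ell=n$ by adjoining $n-\ell$ linear forms that cut $Z(\cI)$ down to dimension $0$. For $\ell=n$ it then shows by hand that $\varsigma_0$ can be enlarged whenever $\#(I_0)<n$. Either some unused $g_j$ involves an unused variable, or the paper builds the chain $I_1\subset I_2\subset\cdots$, which is a breadth-first search along alternating paths. That chain either leaves $I_0$ and yields an augmenting path, or it stabilizes inside $I_0$. In the latter case $I_{r_0}$ together with $\varsigma_0(I_0\setminus I_{r_0})$ is a vertex cover of size $\#(I_0)<n$, and the cylinder-plus-hypersurfaces count via \cite[Proposition 7.1]{Harts} contradicts $\dim Z(\cI)=0$.

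So the paper re-derives the K\"onig--Berge argument in a special case. You instead isolate the single geometric input (every vertex cover has size at least $\ell$), treat arbitrary $\ell$ directly, and quote the matching theorems.

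Your route also gains something concrete. Since you augment in the original graph, no vertex ever becomes unmatched, so $I\supset I_0$ and $\varsigma(I)\supset\varsigma_0(I_0)$ hold at the same time. The paper's reduction does not guarantee $I\supset I_0$ for its final $I=\varsigma^{-1}(Q\cap\varsigma(\{1,\dots,n\}))$, because a variable of $I_0$ can end up matched to an auxiliary linear form. For example, take $n=2$, $g_a=x_1+x_2$, $I_0=\{1\}$, and auxiliary form $x_1$. The only admissible $\varsigma$ on $\{1,2\}$ sends $1$ to the auxiliary form and $2$ to $a$, which gives $I=\{2\}$. This is harmless for Lemma \ref{exinj}, but your argument proves the lemma as stated.

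Two small points need tidying.
\begin{enumerate}
\item The statement allows $Q$ to be infinite. Before invoking K\"onig, use the Hilbert basis theorem to pass to a finite subfamily that still generates $\cI$ and contains $\varsigma_0(I_0)$. A matching for that subfamily is a matching for $Q$.
\item In the Krull step at the generic point of a top-dimensional component $C$ of $Z(\cI)$, you need the irreducible component of $Z((g_j)_{j\notin Q''})$ containing $C$ to have codimension at most $\#(T)$. The product structure gives this for every component of the cylinder, which is slightly stronger than the minimal-codimension bound you wrote.
\end{enumerate}
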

\begin{proof}Suppose that the assertion holds for $\ell=n$ then we will show that it holds for all $0\leq \ell\leq n$. Indeed, let $Q, \cI, (g_i)_{i\in Q}, \ell, I_0, \varsigma_0$ be as in the statement such that $\ell<n$. It follows by induction and \cite[Propostion 7.1]{Harts} that there exist $n-\ell$ non-zero polynomials $l_1(x_1,...,x_n),...,l_{n-\ell}(x_1,...,x_n)$ of degree $1$ such that $\dim(Z(\cI)\cap \{l_1(x_1,...,x_n)=0\}\cap...\cap\{l_{n-\ell}(x_1,...,x_n)=0\})=0$. We set $\tilde{Q}=Q\sqcup\{1,...,n-\ell\}$ and $g_i=\ell_i$ if $i\in \{1,...,n-\ell\}$ then the ideal $\tilde{\cI}$ generated by $g_i$ for $i\in \tilde{Q}$ defines a subscheme of dimension $0$ of $\AA_\CC^n$. By our assumption, there is an injection $\varsigma:\{1,...,n\}\to\tilde{Q}$ such that $x_i$ appears non-trivial in $g_{\varsigma(i)}$ for all $1\leq i\leq n$ and $\varsigma_0(I_0)\subset \varsigma(\{1,...,n\})$. Let $I=\varsigma^{-
1}(Q\cap \varsigma(\{1,...,n\}))$ then $\#(I)\geq \ell$, the map $\varsigma|_I: I\to Q$ is injective  and $x_i$ appears non-trivial in $g_{\varsigma(i)}$ for all $i\in I$. Moreover, since $\gamma_0(I_0)\subset Q$,  we have $\varsigma_0(I_0)\subset \varsigma(\{1,...,n\})\cap Q=\varsigma(I)$. This shows that the assertion holds for $Q,\cI,(g_i)_{i\in Q}, \ell, I_0,\varsigma_0$  as desired.

Now, it remains to prove the assertion for $\ell=n$. Let $Q, \cI, (g_i)_{i\in Q}, \ell, I_0, \varsigma_0$ be as in the statement such that $\ell=n$. Since $\ell=n$, we must have $\#(Q)\geq n$. It is sufficient to show that if $\#(I_0)<n$ then there is a set $I\supsetneq I_0$ and an injection $\varsigma:I\to Q$ such that $x_i$ appears non-trivial in $g_{\varsigma(i)}$ for all $i\in I$ and $\varsigma_0(I_0)\subset\varsigma(I)$.  
Indeed, if there is $j\in Q\setminus \varsigma_0(I_0)$ and $i\notin I_0$ such that $x_i$ appears non-trivial in $g_j$, then we can take $\cI=\cI_0\sqcup \{i\}$ and $\varsigma:I\to Q, \varsigma|_{I_0}=\varsigma_0, \varsigma(i)=j$. Thus, we need to consider the case that $g_j\in\CC[x_i]_{i\in I_0}$ for all $j\in Q\setminus\varsigma_0(I_0)$. For this case,  let $I_1$ be the smallest subset of $I_0$ such that $g_j\in \CC[x_i]_{i\in I_1}$ for all $j\in Q\setminus\varsigma_0(I_0)$. We construct a sequence $(I_r)_{r\geq 1}$ of subsets of $\{1,...,n\}$ by letting $I_{r+1}$ to be the smallest set such that $g_j\in \CC[x_i]_{i\in I_{r+1}}$ for all $j\in \varsigma_0(I_r\cap I_0)$ and all $r\geq 1$. Since $x_i$ appears non-trivial in $g_{\varsigma(i)}$ for all $i\in I_0$,  $(I_r\cap I_0)_{r\geq 1}$ is an increasing sequence of subsets of $I_0$. This implies that $(I_r)_{r\geq 1}$ is an increasing sequence of subsets of $\{1,...,n\}$.  It cannot happen that $I_r\subset I_0$ for all $r$. Indeed, suppose otherwise that $I_r\subset I_0$ for all $r$. Since $(I_r)_{r\geq 1}$ is an increasing sequence of subsets of the finite set $I_0$,  there is $r_0\geq 1$ such that $I_r=I_{r+1}$ for all $r\geq r_0$. It follows from the definition of the sets $I_r$ for $r\geq 1$ that  $g_j\in \CC[x_i]_{i\in I_{r_0}}$ for all $j\in Q\setminus\varsigma_0(I_0\setminus I_{r_0})$. Let $\tilde{\cI}$ be the ideal of $\CC[x_1,...,x_n]$ generated by $g_j$ for $j\in Q\setminus\varsigma_0(I_0\setminus I_{r_0})$ and $Z(\tilde{\cI})$ be the closed subscheme of $\AA_\CC^n$ associated to $\tilde{\cI}$. Then the condition $g_j\in \CC[x_i]_{i\in I_{r_0}}$ for all $j\in Q\setminus\varsigma_0(I_0\setminus I_{r_0})$ implies that $\dim(Z(\tilde{\cI}))\geq n-\#(I_{r_0})$ if $Z(\tilde{\cI})\neq\emptyset$. On the other hand, we have $Z(\cI)=Z(\tilde{\cI})\cap (\cap_{j\in \varsigma_0(I_0\setminus I_{r_0})} \{g_{j}=0\})$. Since $\dim(Z(\cI))=0$, we  have $Z(\tilde{\cI}) \neq\emptyset$. Thus, we can use  \cite[Proposition 7.1]{Harts} again to have $$0=\dim(Z(\cI))\geq \dim(Z(\tilde{\cI}))-\#(I_0\setminus I_{r_0})\geq n-\#(I_{r_0})-\#(I_0\setminus I_{r_0}).$$
Thus $\#(I_0)\geq n$ and we have a contradiction. Thus, there is $r>1$ such that $I_r\nsubseteq I_0$. Let $r_0$ be the smallest index with this property. Then we have $I_{r_0-1}\subset I_0$ and $I_{r_0}\nsubseteq I_0$. Let $i_0\in I_{r_0}\setminus I_0$. By the construction of the sets $I_r$ for $2\leq r\leq r_0$, there are indexes $j_1,...,j_{r_0-1}$ such that $j_r\in I_r$ for all $1\leq r\leq r_0-1$, $x_{i_0}$ appears non-trivial in $g_{\varsigma_0(j_{r_0-1})}$ and  $x_{j_m}$ appears non-trivial in $g_{\varsigma_0(j_{m-1})}$ for all $1<m\leq r_0-1$. By the definition of $I_1$, there is $j'\in Q\setminus\varsigma_0(I_0)$ such that $x_{j_1}$ appears non-trivial in $g_{j'}$. We set $I=I_0\sqcup\{i_0\}$ and consider the map $\varsigma:I\to Q$ given by $\varsigma(i)=\varsigma_0(i)$ if $i\in I_0\setminus\{j_1,...,j_{r_0-1}\}$, $\varsigma(j_m)=\varsigma_0(j_{m-1})$ for $1<m\leq r_0-1$, $\varsigma(i_0)=\varsigma_0(j_{r_0-1})$ and $\varsigma(j_1)=j'$. It is easy to verify that $\varsigma$ is an injection, $x_i$ appears non-trivial in $g_{\varsigma(i)}$ for all $i\in I$ and $\varsigma_0(I_0)\subset \varsigma(I)$ as desired.
\end{proof}
\begin{lem}\label{ressmooth}Let $l$ be a positive integer. Let $J$ be the subset of $\{1,...,n\}$ consisting of indexes $i$ such that $\gcd(l,d_0)\mid w_i$. Suppose that $J\neq \emptyset$. Recall that $F_J=f_{d_0w}|_{\{x_i=0\forall i\notin J\}}$. 
Then $F_J$ defines a hypersurface with an isolated singularity in the affine subspace $H=\{x_i=0\hspace{0.1cm} \forall i\notin J\}$ of $\AA_\CC^n$.  
\end{lem}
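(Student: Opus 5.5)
The plan is to use a torus-symmetry argument. Put $e=\gcd(l,d_0)$ and $\zeta=e^{2\pi i/e}$. Consider the diagonal automorphism $\tau$ of $\AA_\CC^n$ given by $x_i\mapsto \zeta^{w_i}x_i$. Because $f_{d_0w}$ is $w$-weighted homogeneous of degree $d_0$ and $e\mid d_0$, we have $f_{d_0w}\circ\tau=\zeta^{d_0}f_{d_0w}=f_{d_0w}$. Hence $\tau$ preserves $f_{d_0w}$ and permutes its critical points. For $i\in J$ we have $e\mid w_i$, so $\tau$ fixes $x_i$. For $i\notin J$ we have $e\nmid w_i$, so $\zeta^{w_i}\neq 1$. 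It follows that the fixed locus of $\tau$ is exactly $H$.

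We then compute the partial derivatives of $F_J$ on $H$. For $j\in J$, restricting to $H$ commutes with differentiating in $x_j$, so $\partial F_J/\partial x_j=(\partial f_{d_0w}/\partial x_j)|_H$. It therefore suffices to show that at a point $p\in H$ where all $\partial f_{d_0w}/\partial x_j$ with $j\in J$ vanish, the remaining partials $\partial f_{d_0w}/\partial x_k$ with $k\notin J$ also vanish. Once this is shown, any singular point of $F_J$ in $H$ is a critical point of $f_{d_0w}$, hence equals $O$, and $F_J$ has at most an isolated singularity at the origin of $H$. If $F_J=0$ we should check separately whether this case can occur. It cannot: otherwise every point of $H$ would be critical for $F_J$, and by the same argument critical for $f_{d_0w}$, so $H=\{O\}$, contradicting $J\neq\emptyset$.

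The key step is this vanishing, and it comes from equivariance. Differentiating $f_{d_0w}\circ\tau=f_{d_0w}$ gives $\zeta^{w_k}(\partial_k f_{d_0w})(\tau x)=(\partial_k f_{d_0w})(x)$. At a point $p\in H$ we have $\tau p=p$, so $(1-\zeta^{w_k})\,\partial_k f_{d_0w}(p)=0$. For $k\notin J$ the factor $1-\zeta^{w_k}$ is nonzero, so $\partial_k f_{d_0w}(p)=0$. This holds at every point of $H$, not only at the singular points of $F_J$, and it completes the argument.

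The only delicate point I expect is the degenerate case $J=\{1,\dots,n\}$, where $H$ is the whole space and the statement is just the hypothesis. I also need to keep the convention that ``defines a hypersurface with an isolated singularity'' allows the singular locus to be either $\{O\}$ or empty. Both are covered by the argument above.
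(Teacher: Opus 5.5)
Your proof is correct and follows the same route as the paper. Both arguments show that $\partial f_{d_0w}/\partial x_k$ vanishes identically on $H$ for every $k\notin J$, so any critical point of $F_J$ in $H$ is a critical point of $f_{d_0w}$ and hence the origin. The paper gets this vanishing from the weight observation $f_{d_0w}-F_J\in(x_k)_{k\notin J}^2$ (a monomial $x_kX$ with $X$ supported on $J$ would force $\gcd(l,d_0)\mid w_k$), while you derive it from the invariance of $f_{d_0w}$ under $x_i\mapsto\zeta^{w_i}x_i$; these are the same fact in different language.
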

\begin{proof} By rearranging the coordinates, we can suppose that $J=\{1,...,r\}$. By the definition of $J$,  we can write $$f_{d_0w}(x_1,...,x_n)=g(x_1,...,x_r)+h(x_1,...,x_n)$$ for a polynomial $h\in (x_{r+1},...,x_n)^2$. Let $x^{r,(0)}=(x_1^{(0)},...,x_r^{(0)})$ such that $\frac{\partial g}{\partial x_i}(x^{(0)})=0$ for all $1\leq i\leq r$. We set $x^{(0)}=(x^{r,(0)},0,...,0)\in\AA_\CC^n$. Since $h\in (x_{r+1},...,x_n)^2$, it is easy to verify that $x^{(0)}$ is a singularity of $\{f_{d_0w}=0\}$. Thus $x_i^{(0)}=0$ for all $1\leq i\leq r$. This implies the assertion.
\end{proof}

\begin{lem}\label{1/2}Let $w'=(w'_1,...,w'_r)\in \NN_{>0}^r$ and  $g$ be a $w'$-weighted homogeneous polynomial of $w'$-degree $d_0$ such that $g$ defines a hypersurface with an isolated singularity in $\AA_\CC^n$. Then we have 
$\frac{d_0r}{2}\geq |w'|=w'_1+...+w'_r.$
\end{lem}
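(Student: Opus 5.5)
Lemma \ref{1/2} says that for a $w'$-weighted homogeneous $g$ of degree $d_0$ in $r$ variables with an isolated singularity at the origin, $\sum_i (d_0-2w'_i)\geq 0$. Equivalently, $\sum_{i=1}^r \frac{w'_i}{d_0}\leq \frac r2$. The plan is to read this off the Milnor algebra via the Milnor--Orlik formula. Since $g$ has an isolated singularity at $0$, the partial derivatives $\partial g/\partial x_1,\dots,\partial g/\partial x_r$ form a regular sequence in $\CC[x_1,\dots,x_r]$. The quotient $A=\CC[x]/(\partial_1 g,\dots,\partial_r g)$ is therefore a finite-dimensional graded complete intersection. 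Here $\partial_i g$ is homogeneous of weighted degree $d_0-w'_i$; if some $\partial_i g$ vanished identically, the critical locus would contain the whole $x_i$-axis, contradicting isolatedness, so each $d_0-w'_i\geq 0$, and in fact $d_0-w'_i>0$.

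First step: compute the Hilbert series of this complete intersection,
\[
H_A(t)=\prod_{i=1}^r\frac{1-t^{d_0-w'_i}}{1-t^{w'_i}}.
\]
The socle degree of $A$ (the top degree in which $A$ is nonzero) is
\[
\sigma=\sum_i (d_0-w'_i)-\sum_i w'_i=\sum_i(d_0-2w'_i).
\]
This follows from the standard fact that a graded Artinian complete intersection is Gorenstein and its Hilbert series is a palindromic polynomial of degree $\sigma$. Alternatively, look at the degree of the polynomial $H_A(t)$ directly: it is a polynomial because $A$ is Artinian, and its degree is the numerator degree minus the denominator degree.

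Second step: since $A\neq 0$ (it contains $1$ in degree $0$), the polynomial $H_A$ is nonzero with nonnegative coefficients, so its degree satisfies $\sigma\geq 0$. This gives $\sum_i(d_0-2w'_i)\geq 0$, i.e.\ $d_0r/2\geq |w'|$, which is the claim.

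The main point needing care is the justification that $\partial_1 g,\dots,\partial_r g$ form a homogeneous system of parameters, so that the Koszul complex is exact and the product formula for $H_A$ holds. For this I would use that the origin is the only common zero. By weighted homogeneity (the Euler relation $d_0 g=\sum w'_i x_i\partial_i g$ shows that $g$ vanishes on the critical locus), the zero set of the partials is a $\GG_m$-stable closed set containing only the origin. Hence the ideal of partials is $(x_1,\dots,x_r)$-primary. Then $r$ homogeneous elements generating a primary ideal in an $r$-dimensional graded polynomial ring form a regular sequence, since the polynomial ring is Cohen--Macaulay. The degree computation for $H_A$ is then routine.
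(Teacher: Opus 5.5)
Your argument is correct, but it is not the paper's route. You derive the inequality from the graded structure of the Milnor algebra $A=\CC[x]/(\partial_1 g,\dots,\partial_r g)$. Isolatedness makes the partials a homogeneous regular sequence, so $H_A(t)=\prod_i(1-t^{d_0-w'_i})/(1-t^{w'_i})$ is a nonzero polynomial. Comparing degrees then gives $\sum_i(d_0-2w'_i)=\deg H_A\geq 0$, uniformly in $r$.

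The paper instead argues by elementary linear algebra:
\begin{itemize}
\item It first treats $r=1$ directly ($g=x^\ell$, $\ell\ge 2$).
\item It singles out the ``heavy'' variables $x_1,\dots,x_{r_0}$ with $w'_j>d_0/2$. Each monomial of $g$ contains at most one of them, and only linearly, so $g=h+\sum_{j\le r_0}x_jg_j$ with $h,g_j$ in the light variables.
\item If the matrix of linear coefficients of the $g_j$ had rank $<r_0$, a kernel vector would give a nonzero critical point. Hence there is an injection $j\mapsto i_j$ into the light variables with $w'_j+w'_{i_j}=d_0$.
\item Summing, with the unmatched light variables contributing at most $d_0/2$ each, gives the bound.
\end{itemize}

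Your approach is shorter, modulo standard commutative algebra (Cohen--Macaulayness of the polynomial ring, Hilbert series of complete intersections). It also identifies $\sum_i(d_0-2w'_i)$ conceptually as the socle degree of $A$, equivalently the weighted degree of the Hessian of $g$, and gives the Milnor--Orlik formula $\mu=H_A(1)=\prod_i(d_0-w'_i)/w'_i$ for free. The paper's approach is self-contained and produces an explicit matching of weights summing to $d_0$. That is the same mechanism it reuses in Lemmas \ref{exinj}, \ref{inje} and \ref{ubound}.

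One small point on exposition. Your claim $d_0-w'_i>0$, needed for $A_0=\CC$ and for the origin to lie in the zero set of the partials, uses that the origin is genuinely a singular point, i.e.\ $g$ has no linear term. This is part of the hypothesis, and it is necessary: $g=x_1$ violates the inequality. The paper uses it too (through $\ell\ge 2$). You should state it explicitly rather than slip it in with ``in fact''.
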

\begin{proof}If $r=1$ then $f=x^\ell$ for some $\ell\geq 2$ and $d_0=\ell w'_1$ if the weight of $x$ is $w'_1$, thus $\frac{1}{2}\geq \frac{1}{\ell}=\frac{w'_1}{d_0}$. If $r\geq 2$,  we will show that $\frac{r}{2}\geq \frac{\sum_{i=1}^r w'_i}{d_0}$. Let $J=\{1\leq i\leq r\mid w'_i>\frac{d_0}{2}\}$. If $J=\emptyset$ then it is clear that $\frac{r}{2}\geq \frac{\sum_{i=1}^r w'_i}{d_0}$. If $J\neq\emptyset$, by rearranging the order of coordinates, we can suppose that $J=\{1,...,r_0\}$ and $g=h(x_{r_0+1},...,x_r)+\sum_{i=1}^{r_0} x_ig_i(x_{r_0+1},...,x_r)$. Let $A=(\frac{\partial g_i}{\partial x_j}(0,...,0))_{r_0+1\leq j\leq r, 1\leq i\leq r_0}$ be an $(r-r_0)\times r_0$ matrix. If $\rank(A)<r_0$ then there exists $x^{r_0,(0)}=(x_1^{(0)},...,x_{r_0}^{(0)})\in \CC^{r_0}\setminus\{0,...,0\}$ such that $x^{r_0,(0)}A^T=0$. It is easy to verify that $x^{(0)}:=(x^{r_0,(0)},0,...,0)\neq (0,...,0)$ is a critical point of $g$ in $\AA_\CC^r$. We get a contradiction. Thus $\rank(A)=r_0$. Then there are distinct indexes $r_0+1\leq i_1,...,i_{r_0}\leq r$ such that $x_{i_j}$ appears as a non-trivial linear term in $g_{j}(x_{r_0+1},...,x_{r})$ for all $1\leq j\leq r_0$. Then $w'_j+w'_{i_j}=d_0$ for all $1\leq j\leq r_0$. Thus, we have 
\begin{align*}
\frac{rd_0}{2}&=\left(\sum_{1\leq j\leq r_0}d_0\right)+ \left(\sum_{i>r_0, i\neq i_j\forall 1\leq j\leq r_0}\frac{d_0}{2}\right)\\&=\left(\sum_{1\leq j\leq r_0}(w'_j+w'_{i_j})\right)+\left(\sum_{i>r_0, i\neq i_j\forall 1\leq j\leq r_0}\frac{d_0}{2}\right)\\&\geq \left(\sum_{1\leq j\leq r_0}(w'_j+w'_{i_j})\right)+\left(\sum_{i>r_0, i\neq i_j\forall 1\leq j\leq r_0}w'_i\right)\\&=\sum_{1\leq i\leq r}w'_i.
\end{align*}
\end{proof}

\begin{lem}\label{ubound}Let $J$ be a non empty subset of $\{1,...,n\}$ and $s_J$ be as in Lemma \ref{exfin}. Suppose that $d_0\geq\kappa(w)$. 

If $F_J\neq \emptyset$ then 
$$\epsilon_{J}(\upsilon)+\frac{|w|}{d_0}-n+\frac{\#(J)+s_{J}}{2}\leq 0$$
for all $0\leq\upsilon\leq d_J-1$.

If $F_J=0$ then 
$$\epsilon_{J}(\upsilon)+\left(\frac{\upsilon}{d_J}+\frac{1}{d_0}\left(1-\delta_{J\upsilon}-\bigg\lfloor\frac{d_0\upsilon}{d_J}\bigg\rfloor\right)\right)|w|-n+\#(J)\leq 0$$
for all $0\leq\upsilon\leq d_J-1$.
\end{lem}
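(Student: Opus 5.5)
We would prove both inequalities by splitting the indices $\{1,\dots,n\}$ into groups and showing that each group contributes at most its "fair share" to the left-hand side. The pairing comes from Lemma \ref{exinj}; the hypothesis $d_0\geq\kappa(w)$ controls the unpaired indices outside $J$.

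\emph{Preliminary observations.}

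1. For $j\notin J$, put $g_j=\gcd(d_J,w_j)$. If $d_J\nmid \upsilon w_j$, the fractional part $\{\upsilon w_j/d_J\}$ is a positive multiple of $g_j/d_J$. Since $d_0\geq\kappa(w)\geq \lcm(d_J,w_j)$, we also have $w_j/d_0\leq w_j/\lcm(d_J,w_j)=g_j/d_J$. Hence the $j$-th term of $\epsilon_J(\upsilon)$ satisfies
\[(1-\{\upsilon w_j/d_J\})+c\,w_j\leq 1\quad\text{for every } 0\leq c\leq 1/d_0 .\]
If instead $d_J\mid\upsilon w_j$, that term vanishes and $c\,w_j\leq w_j/d_0\leq 1$.

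2. Put $c_\upsilon:=\frac{\upsilon}{d_J}+\frac{1}{d_0}\bigl(1-\delta_{J\upsilon}-\lfloor d_0\upsilon/d_J\rfloor\bigr)$ and $\varphi:=\{d_0\upsilon/d_J\}$. A direct computation gives $c_\upsilon=\frac{\varphi+1-\delta_{J\upsilon}}{d_0}$. So $c_\upsilon=1/d_0$ when $\varphi=0$, and $c_\upsilon=\varphi/d_0$ when $\varphi>0$. In particular $c_\upsilon\leq 1/d_0$, and when $F_J\neq0$ (so $d_J\mid d_0$ by Lemma \ref{etaJ}) we get $c_\upsilon=1/d_0$.

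\emph{Case $F_J=0$.} Here $F_J$ vanishes identically on $H=\{x_i=0\ \forall i\notin J\}$, so its singular locus is all of $H$ and $s_J=\#(J)$. Lemma \ref{exinj} then gives $I=J$ and an injection $\varsigma_J:J\to\{1,\dots,n\}\setminus J$ with $w_i+w_{\varsigma_J(i)}\leq d_0$ and $d_J\mid d_0-w_{\varsigma_J(i)}$.

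For a pair $(i,j=\varsigma_J(i))$ we have $\upsilon w_j\equiv\upsilon d_0 \pmod{d_J}$, hence $\{\upsilon w_j/d_J\}=\varphi$.
\begin{itemize}
\item If $\varphi=0$, the $\epsilon$-term of $j$ is $0$, and $c_\upsilon(w_i+w_j)\leq 1$.
\item If $\varphi>0$, the pair contributes $(1-\varphi)+\frac{\varphi}{d_0}(w_i+w_j)\leq 1$.
\end{itemize}
Each of the remaining $n-2\#(J)$ indices outside $J\cup\varsigma_J(J)$ contributes at most $1$ by observation 1. Summing, the left-hand side is at most $\#(J)+(n-2\#(J))=n-\#(J)$, which is the second inequality.

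\emph{Case $F_J\neq0$.} Now $c_\upsilon=1/d_0$, and we must show $\epsilon_J(\upsilon)+|w|/d_0\leq n-\frac{\#(J)+s_J}{2}$. Lemma \ref{exinj} gives $I\subset J$ with $\#(I)\geq s_J$ and an injection $\varsigma_J$, with $d_J\mid d_0-w_{\varsigma_J(i)}$. Since $d_J\mid d_0$, this yields $d_J\mid w_{\varsigma_J(i)}$, so the $\epsilon$-terms of $\varsigma_J(I)$ vanish.

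The contributions then split as follows.
\begin{itemize}
\item Each pair $(i,\varsigma_J(i))$ with $i\in I$ contributes $(w_i+w_{\varsigma_J(i)})/d_0\leq1$.
\item Each index outside $J\cup\varsigma_J(I)$ contributes at most $1$ by observation 1.
\end{itemize}
This reduces the claim to the estimate
\[\sum_{i\in J\setminus I}\frac{w_i}{d_0}\leq \frac{\#(J)-s_J}{2}.\]

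\emph{Main obstacle.} This last estimate is where we expect the difficulty. The plan is to mimic the proof of Lemma \ref{1/2}. Indices $i\in J\setminus I$ with $w_i\leq d_0/2$ cost at most $1/2$ each. An index with $w_i>d_0/2$ forces $f_{d_0w}$ to be linear in $x_i$, and the isolated-singularity (rank) argument of Lemma \ref{1/2} produces a partner $i'$ with $w_i+w_{i'}=d_0$.

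We would try first to rechoose $I$ and $\varsigma_J$ so that such heavy indices are either in $I$ or paired inside $J\setminus I$; Lemma \ref{inje} allows $I$ to be enlarged while keeping $\varsigma_J(I)$. The partner $i'$ is then distinct and light, and each such pair costs at most $1$ on two slots. If that succeeds, the left-hand side is at most $\#(J\setminus I)/2\leq(\#(J)-s_J)/2$, using $\#(I)\geq s_J$.
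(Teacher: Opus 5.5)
Your case $F_J=0$ is correct. The identity $c_\upsilon=(\varphi+1-\delta_{J\upsilon})/d_0$, the congruence $\upsilon w_{\varsigma_J(i)}\equiv\upsilon d_0\pmod{d_J}$ and the pair bound $(1-\varphi)+\varphi(w_i+w_{\varsigma_J(i)})/d_0\le 1$ give the second inequality, and they are a cleaner version of the paper's bookkeeping there.

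The gap is in the case $F_J\neq 0$. Your reduction is valid, but the estimate $\sum_{i\in J\setminus I}w_i/d_0\le(\#(J)-s_J)/2$, which carries the whole argument, is left unproved, and the route you sketch is not justified:
\begin{itemize}
\item The rank argument of Lemma \ref{1/2} needs an isolated singularity on the coordinate subspace you work in. $F_J$ has a critical locus of dimension $s_J$, so the argument cannot be run on $J$ or on $J\setminus I$.
\item If you run it on all of $f_{d_0w}$ instead, the partner $i'$ of a heavy $i\in J\setminus I$ only satisfies $d_J\mid w_{i'}$. It may already lie in $I$, in $\varsigma_J(I)$, or elsewhere.
\item Lemma \ref{inje} only gives $I\supseteq I_0$ and $\varsigma_0(I_0)\subseteq\varsigma(I)$. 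Nothing in it prevents light partners from being absorbed into $I$, while you must keep $\#(I)\ge s_J$.
\item Your per-group accounting (each pair at most $1$, each remaining index at most $1$) never lets a heavy index of $J\setminus I$ be compensated by light indices elsewhere. That compensation is exactly what is needed.
\end{itemize}

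The missing idea is to enlarge the block rather than re-choose $I$. Put $J'=\{i : d_J\mid w_i\}$.
\begin{itemize}
\item You already showed $d_J\mid w_{\varsigma_J(i)}$, so $J\sqcup\varsigma_J(I)\subseteq J'$ and $\#(J')\ge\#(J)+s_J$.
\item Since $d_J\mid d_0$, Lemma \ref{ressmooth} with $l=d_J$ shows that $f_{d_0w}$ restricted to $\{x_i=0\ \forall i\notin J'\}$ still has an isolated singularity. Lemma \ref{1/2} then gives $\sum_{i\in J'}w_i\le d_0\#(J')/2$.
\item Indices of $J'$ contribute no $\epsilon$-term, because $d_J\mid\upsilon w_i$.
\item Each $i\notin J'$ contributes at most $1$. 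Use your observation 1 if $d_J\nmid\upsilon w_i$; otherwise the $\epsilon$-term vanishes and $w_i\le\mathrm{lcm}(d_J,w_i)\le d_0$.
\end{itemize}
Hence the left-hand side is at most $(n-\#(J'))+\#(J')/2-n+(\#(J)+s_J)/2=(\#(J)+s_J-\#(J'))/2\le 0$. This averaging over all of $J'$ is the paper's proof.
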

\begin{proof}
Suppose that $F_J\neq 0$. By Lemma \ref{etaJ}, we have $d_J\mid d_0$. Let $J'$ be the subset of $\{1,...,n\}$ consisting of indexes $i$ such that $d_J\mid w_i$. It follows from Lemma \ref{ressmooth} that $f_{d_0w}|_{\{x_i=0\hspace{0.1cm} \forall i\notin J'\}}$ defines a hypersurface with an isolated singularity in the affine subspace $\{x_i=0\hspace{0.1cm} \forall i\notin J'\}$. By Lemma \ref{1/2}, we have 
\begin{equation}\label{inew1}
\frac{d_0\#(J')}{2}\geq \sum_{i\in J'} w_i.
\end{equation}
By Lemma \ref{exinj}, we must have $\#(J')\geq \#(J)+s_J$ thus 
\begin{equation}\label{inew2}
\#(J')\leq 2\#(J')-\#(J)-s_J
\end{equation}
For each $0\leq \upsilon\leq d_J-1$, we set $J'_\upsilon:=\{1\leq i\leq n\mid \frac{\upsilon w_i}{d_J}\in\ZZ\}$. Then we have $J'\subset J'_\upsilon$ for all $0\leq\upsilon\leq d_J-1$. Let $i\notin J'_\upsilon$ then $i\notin J'$. Since $d_0\geq\kappa(w)$ we have $d_0\geq\lcm(d_J,w_i)$. Thus, there are $a,b\in \NN_{\geq 1}$ such that $ad_J=bw_i\leq d_0$. Since $d_J\nmid \upsilon w_i$ we have 
$$0<\frac{\upsilon w_i}{d_J}-\bigg\lfloor\frac{\upsilon w_i}{d_J}\bigg\rfloor= \frac{\upsilon a}{b}-\bigg\lfloor\frac{\upsilon a}{b}\bigg\rfloor\geq \frac{1}{b}\geq \frac{w_i}{d_0}.$$
Therefore, we have
\begin{equation}\label{inew3}
d_0\left(\frac{\upsilon w_i}{d_J}-\bigg\lfloor\frac{\upsilon w_i}{d_J}\bigg\rfloor\right)\geq w_i
\end{equation}
if $i\notin J'_\upsilon$ and $0\leq \upsilon\leq d_J-1$. This implies
\begin{equation}\label{key1}
\sum_{i\notin J_\upsilon'}\frac{w_i}{d_0}\leq \sum_{i\notin J_\upsilon'}\left(\frac{\upsilon w_i}{d_J}-\bigg\lfloor\frac{\upsilon w_i}{d_J}\bigg\rfloor\right)
\end{equation}
for all $0\leq \upsilon\leq d_J-1$.
By combining (\ref{inew1}), (\ref{inew2}, (\ref{key1}), for each $0\leq \upsilon\leq d_J-1$, we have
\begin{align*}
&\epsilon_{J}(\upsilon)+\frac{|w|}{d_0}-n+\frac{\#(J)+s_{J}}{2}\\=&\left(\sum_{i\notin J'_\upsilon}\left(1-\frac{\upsilon w_i}{d_J}+\bigg\lfloor \frac{\upsilon w_i}{d_J}\bigg\rfloor\right)\right)-\left( n-\frac{\#(J)+s_{J}}{2}\right)+\frac{1}{d_0}\sum_{1\leq i\leq n} w_i\\
=&\left(\sum_{i\notin J'_\upsilon}\left(-\frac{\upsilon w_i}{d_J}+\bigg\lfloor \frac{\upsilon w_i}{d_J}\bigg\rfloor\right)\right)-\left(\#(J'_\upsilon)-\frac{\#(J)+s_{J}}{2}\right)+\frac{1}{d_0}\sum_{1\leq i\leq n} w_i\\
=& \left(\sum_{i\notin J'_\upsilon}\left(-\frac{\upsilon w_i}{d_J}+\bigg\lfloor \frac{\upsilon w_i}{d_J}\bigg\rfloor\right)\right)-\left(\#(J')-\frac{\#(J)+s_{J}}{2}\right)-\left(\#(J'_\upsilon)-\#(J')\right)+\frac{1}{d_0}\sum_{1\leq i\leq n} w_i\\
\leq&\left(\sum_{i\notin J'_\upsilon}\left(-\frac{\upsilon w_i}{d_J}+\bigg\lfloor \frac{\upsilon w_i}{d_J}\bigg\rfloor\right)\right)-\frac{\#(J')}{2}-\left(\#(J'_\upsilon)-\#(J')\right)+\frac{1}{d_0}\sum_{1\leq i\leq n} w_i\\
\leq &\frac{1}{d_0}\left(-\left(\sum_{i\notin J'_\upsilon}w_i\right)-\left(\sum_{i\in J'} w_i\right)-\left(\sum_{i\in J'_\upsilon\setminus J'}d_0\right)+\left(\sum_{1\leq i\leq n}w_i\right)\right)\\
=& \frac{1}{d_0}\sum_{i\in J'_\upsilon\setminus J'}(w_i-d_0)\\
\leq& 0.
\end{align*}

Now, we suppose that $F_J=0$. Then $s_J=\#(J)$. By using Lemma \ref{exinj}, there is an injection $\varsigma_J:J\to \{1,...,n\}\setminus J$ such that $d_J\mid (d_0-w_{\varsigma_J(i)})$ and $d_0\geq w_i+w_{\varsigma_J(i)}$ for all $i\in J$. Let $0\leq \upsilon\leq d_J-1$.  We set $J'_\upsilon=\{1\leq i\leq n\mid \frac{\upsilon w_i}{d_J}\in \ZZ\}$ and $J'=J\cup \varsigma_J(J)$. Then $\#(J')=2\#(J)$ and $J\subset J'_\upsilon$. Suppose that $d_J\mid d_0\upsilon$. Then we have $J'\subset J'_{\upsilon}$. Note that if $i\notin J'_\upsilon$ then (\ref{inew3}) still holds for $J,i,\upsilon$. Therefore, (\ref{key1}) is still valid for the current case. Thus, we have
\begin{align*}
&\epsilon_{J}(\upsilon)+\left(\frac{\upsilon}{d_J}+\frac{1}{d_0}\left(1-\delta_{J\upsilon}-\bigg\lfloor\frac{d_0\upsilon}{d_J}\bigg\rfloor\right)\right)|w|-n+\#(J)\\
=&\left(\sum_{i\notin J'_\upsilon}\left(1-\frac{\upsilon w_i}{d_J}+\bigg\lfloor \frac{\upsilon w_i}{d_J}\bigg\rfloor\right)\right)+\frac{|w|}{d_0}-\left(n-\#(J)\right)\\
=&\frac{1}{d_0}d_0\left(\left(\sum_{i\notin J'_\upsilon}\left(-\frac{\upsilon w_i}{d_J}+\bigg\lfloor \frac{\upsilon w_i}{d_J}\bigg\rfloor\right)\right)-\left(\#(J'_\upsilon)-\#(J')\right)-\left(\#(J')-\#(J)\right)+\frac{1}{d_0}\sum_{1\leq i\leq n} w_i\right)\\
\end{align*}
\begin{align*}
=&\frac{1}{d_0}\left(\left(\sum_{i\notin J'_\upsilon}d_0\left(-\frac{\upsilon w_i}{d_J}+\bigg\lfloor \frac{\upsilon w_i}{d_J}\bigg\rfloor\right)\right)-\left(\sum_{i\in J'_\upsilon\setminus J'}d_0\right)-\left(\sum_{i\in J}d_0\right)+\left(\sum_{1\leq i\leq n} w_i\right)\right)\\
\leq &\frac{1}{d_0}\left(-\left(\sum_{i\notin J'_\upsilon}w_i\right)-\left(\sum_{i\in J'_\upsilon\setminus J'}d_0\right)-\left(\sum_{i\in J} \left(w_i+w_{\varsigma_J(i)}\right)\right)+\left(\sum_{1\leq i\leq n}w_i\right)\right)\\
=& \frac{1}{d_0}\sum_{i\in J'_\upsilon\setminus J'}(w_i-d_0)\\
\leq& 0.
\end{align*}
Suppose that $d_J\nmid d_0\upsilon$ then we have $\varsigma_J(J)\cap J'_\upsilon=\emptyset$ and
\begin{equation}\label{inew4}
0<\frac{\upsilon}{d_J}+\frac{1}{d_0}\left(1-\delta_{J\upsilon}-\bigg\lfloor\frac{d_0\upsilon}{d_J}\bigg\rfloor\right)=\frac{1}{d_0}\left(\frac{d_0\upsilon}{d_J}-\bigg\lfloor \frac{d_0\upsilon}{d_J}\bigg\rfloor\right)< \frac{1}{d_0}.
\end{equation}
Let $i\notin J'_\upsilon$ then $d_J\nmid \upsilon w_i$. We set $\gcd(w_i,d_J)=a_i, \gcd(d_J,d_0)=b_i$. Then we can write $w_i=a_ic_i, d_J=a_im_i=b_ie_i, d_0=b_iu_i$ for positive integers $c_i,m_i,e_i,u_i$ such that $\gcd(c_i,m_i)=\gcd(e_i,u_i)=1$. Then we have $\frac{\upsilon w_i}{d_J}=\frac{\upsilon c_i}{m_i}$, $\frac{w_i}{d_0}=\frac{a_ic_i}{b_iu_i}$, $\frac{\upsilon d_0}{d_J}=\frac{\upsilon u_i}{e_i}$. Since $d_J\nmid \upsilon d_0$ and $d_J\nmid \upsilon w_i$, we have
$$\frac{\upsilon w_i}{d_J}-\bigg\lfloor\frac{\upsilon w_i}{d_J}\bigg\rfloor= \frac{\upsilon c_i}{m_i}-\bigg\lfloor\frac{\upsilon c_i}{m_i}\bigg\rfloor\geq \frac{1}{m_i}$$
and 
$$\frac{\upsilon d_0}{d_J}-\bigg\lfloor\frac{\upsilon d_0}{d_J}\bigg\rfloor=\frac{\upsilon u_i}{e_i}-\bigg\lfloor\frac{\upsilon u_i}{e_i}\bigg\rfloor\leq \frac{e_i-1}{e_i}.$$
On the other hand, we have $$\lcm(w_i,d_J)=\frac{w_id_J}{\gcd(w_i,d_J)}=\frac{a_ic_id_J}{a_i}=c_id_J.$$ 
By the condition $d_0\geq \kappa(w)$, we have $d_0\geq\lcm(w_i,d_J)=c_id_J$. Therefore, we have 
\begin{equation}\label{inew5}
\frac{w_i}{d_0}\left(\frac{\upsilon d_0}{d_J}-\bigg\lfloor\frac{\upsilon d_0}{d_J}\bigg\rfloor\right)\leq \frac{a_ic_i(e_i-1)}{b_iu_ie_i}=\frac{a_ic_i(e_i-1)}{u_ia_im_i}=\frac{c_i(e_i-1)b_i}{u_ib_im_i}<\frac{c_id_J}{d_0m_i}\leq\frac{1}{m_i}\leq \frac{\upsilon w_i}{d_J}-\bigg\lfloor\frac{\upsilon w_i}{d_J}\bigg\rfloor
\end{equation}
for all $i\notin J'_\upsilon$. Hence,
\begin{equation}\label{key2}\sum_{i\notin J'_\upsilon\cup\varsigma_J(J)}^{d_J\nmid \upsilon(d_0-w_i)}\left(\frac{w_i}{d_0}\left(\frac{d_0\upsilon}{d_J}-\bigg\lfloor \frac{d_0\upsilon}{d_J}\bigg\rfloor\right)-\frac{\upsilon w_i}{d_J}+\bigg\lfloor \frac{\upsilon w_i}{d_J}\bigg\rfloor\right)\leq 0.
\end{equation}
If  $d_J\mid \left(\upsilon d_0-\upsilon w_{i}\right)$, we have 
\begin{equation}\label{inew6}
\frac{d_0\upsilon}{d_J}-\bigg\lfloor \frac{d_0\upsilon}{d_J}\bigg\rfloor=\frac{w_{i}\upsilon}{d_J}-\bigg\lfloor \frac{w_{i}\upsilon}{d_J}\bigg\rfloor.
\end{equation}
By (\ref{inew4}), (\ref{key2}), (\ref{inew6}) and the conditions $d_0\geq w_i+w_{\varsigma_J(i)}$, $d_J\mid (d_0-w_{\varsigma_J(i)})$ for all $i\in J$, we have 
\begin{align*}
&\epsilon_{J}(\upsilon)+\left(\frac{\upsilon}{d_J}+\frac{1}{d_0}\left(1-\delta_{J\upsilon}-\bigg\lfloor\frac{d_0\upsilon}{d_J}\bigg\rfloor\right)\right)|w|-n+\#(J)\\
=&\left(\sum_{i\notin J'_\upsilon}\left(1-\frac{\upsilon w_i}{d_J}+\bigg\lfloor \frac{\upsilon w_i}{d_J}\bigg\rfloor\right)\right)+ \frac{1}{d_0}\left(\frac{d_0\upsilon}{d_J}-\bigg\lfloor \frac{d_0\upsilon}{d_J}\bigg\rfloor\right)|w|-n+\#(J)\\
=&\left(\sum_{i\notin J'_\upsilon}\left(\frac{w_i}{d_0}\left(\frac{d_0\upsilon}{d_J}-\bigg\lfloor \frac{d_0\upsilon}{d_J}\bigg\rfloor\right)-\frac{\upsilon w_i}{d_J}+\bigg\lfloor \frac{\upsilon w_i}{d_J}\bigg\rfloor\right)\right)+ \left(\sum_{i\in J'_\upsilon}\frac{w_i}{d_0}\left(\frac{d_0\upsilon}{d_J}-\bigg\lfloor \frac{d_0\upsilon}{d_J}\bigg\rfloor\right)\right)-\#(J'_\upsilon)+\#(J)\\
=&\left(\sum_{i\notin J'_\upsilon\cup\varsigma_J(J)}\left(\frac{w_i}{d_0}\left(\frac{d_0\upsilon}{d_J}-\bigg\lfloor \frac{d_0\upsilon}{d_J}\bigg\rfloor\right)-\frac{\upsilon w_i}{d_J}+\bigg\lfloor \frac{\upsilon w_i}{d_J}\bigg\rfloor\right)\right)+\left(\sum_{i\in J'_\upsilon\setminus J}\frac{w_i}{d_0}\left(\frac{d_0\upsilon}{d_J}-\bigg\lfloor \frac{d_0\upsilon}{d_J}\bigg\rfloor\right)\right)\\
\end{align*}
\begin{align*}
&-\#(J'_\upsilon)+\#(J)+\left(\sum_{i\in J}\left(\frac{w_i+w_{\varsigma(i)}}{d_0}\left(\frac{d_0\upsilon}{d_J}-\bigg\lfloor \frac{d_0\upsilon}{d_J}\bigg\rfloor\right)-\frac{\upsilon w_{\varsigma(i)}}{d_J}+\bigg\lfloor \frac{\upsilon w_{\varsigma(i)}}{d_J}\bigg\rfloor\right)\right)\\
=&\left(\sum_{i\notin J'_\upsilon\cup\varsigma_J(J)}\left(\frac{w_i}{d_0}\left(\frac{d_0\upsilon}{d_J}-\bigg\lfloor \frac{d_0\upsilon}{d_J}\bigg\rfloor\right)-\frac{\upsilon w_i}{d_J}+\bigg\lfloor \frac{\upsilon w_i}{d_J}\bigg\rfloor\right)\right)+\left(\sum_{i\in J'_\upsilon\setminus J}\left(\frac{w_i}{d_0}\left(\frac{d_0\upsilon}{d_J}-\bigg\lfloor \frac{d_0\upsilon}{d_J}\bigg\rfloor\right)-1\right)\right)\\&+\left(\frac{d_0\upsilon}{d_J}-\bigg\lfloor \frac{d_0\upsilon}{d_J}\bigg\rfloor\right)\sum_{i\in J}\left(\frac{w_i+w_{\varsigma(i)}}{d_0}-1\right)\\
=&\left(\sum_{i\notin J'_\upsilon\cup\varsigma_J(J)}^{d_J\nmid \upsilon(d_0-w_i)}\left(\frac{w_i}{d_0}\left(\frac{d_0\upsilon}{d_J}-\bigg\lfloor \frac{d_0\upsilon}{d_J}\bigg\rfloor\right)-\frac{\upsilon w_i}{d_J}+\bigg\lfloor \frac{\upsilon w_i}{d_J}\bigg\rfloor\right)\right)+\left(\sum_{i\in J'_\upsilon\setminus J}\left(\frac{w_i}{d_0}\left(\frac{d_0\upsilon}{d_J}-\bigg\lfloor \frac{d_0\upsilon}{d_J}\bigg\rfloor\right)-1\right)\right)\\&+\left(\sum_{i\notin J'_\upsilon\cup\varsigma_J(J)}^{d_J\mid \upsilon(d_0-w_i)}\left(\frac{w_i}{d_0}\left(\frac{d_0\upsilon}{d_J}-\bigg\lfloor \frac{d_0\upsilon}{d_J}\bigg\rfloor\right)-\frac{\upsilon w_i}{d_J}+\bigg\lfloor \frac{\upsilon w_i}{d_J}\bigg\rfloor\right)\right)+\left(\frac{d_0\upsilon}{d_J}-\bigg\lfloor \frac{d_0\upsilon}{d_J}\bigg\rfloor\right)\sum_{i\in J}\left(\frac{w_i+w_{\varsigma(i)}}{d_0}-1\right)\\
<& \left(\sum_{i\in J'_\upsilon\setminus J}\left(\frac{w_i}{d_0}-1\right)\right)+ \left(\frac{d_0\upsilon}{d_J}-\bigg\lfloor \frac{d_0\upsilon}{d_J}\bigg\rfloor\right)\left(\left(\sum_{i\notin J'_\upsilon\cup\varsigma_J(J)}^{d_J\mid \upsilon(d_0-w_i)}\left(\frac{w_i}{d_0}-1\right)\right)+ \left( \sum_{i\in J}\left(\frac{w_i+w_{\varsigma(i)}}{d_0}-1\right)\right)\right)\\
\leq& 0.
\end{align*}
This completes the proof.
\end{proof}

\subsection{Proof of Main Theorem 2}
We will prove the following  theorem that generalize and strengthen  Main Theorem \ref{mthm2}.
\begin{thm}\label{expthm} Let $K,w,d_0,d,f,\kappa(w)$ be as above.
Suppose that $d_0\geq \kappa(w)$. Let $M_1$ be as in Lemma \ref{exfin}. Let $L$ be a a local field over $\cO_K$ of residue field characteristic at least $M_1$. Then, there is a constant $c$ depending only on $n,w,d_0$ such that
\begin{enumerate}
\item The inequality 
$$|E_{f,\textbf{1}_{\varpi_L\cO_L^n},\psi}|\leq cq_L^{-\frac{|w|m_\psi}{d_0}}$$
 holds for all non-trivial additive characters $\psi$ of $L$.
\item The inequality $$|E_{f_{d_0w},\Phi_{L,Y},\psi}|\leq cq_L^{-\frac{|w|m_\psi}{d_0}}$$
 holds for all subschemes $Y\subset \AA_{\cO_K}^n$ and all  additive characters  $\psi$ of $L$ of conductor at least $2$.
Moreover, when $Y=\AA_{\cO_K}^n$, this inequality holds for all non-trivial additive characters $\psi$ of $L$. 
\end{enumerate}
\end{thm}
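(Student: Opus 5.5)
For part (1), I would start from Corollary \ref{estm5}. Any non-trivial additive character of conductor $m\geq 1$ can be written as $\psi=\psi'_{a\varpi_L^{-m}}$ with $\psi'$ of conductor $0$ and $a\in\cO_L^{\times}$. If $m_\psi\leq 0$, the trivial bound $|E|\leq q_L^{-n}$ already suffices, because $|w|m_\psi/d_0\leq 0$. So assume $m=m_\psi\geq1$. Corollary \ref{estm5} gives
\[
\Bigl|E_{f,\textbf{1}_{\varpi_L\cO_L^n},\psi}-q_L^{-\frac{m|w|}{d_0}}\sum_{J\neq\emptyset,\ \upsilon_{Jm}\ \textnormal{exists}}q_L^{\lambda(J,m)}H_{J,L,\Psi,f_{d_0w},a}\Bigr|\leq c_1q_L^{-\frac{m|w|}{d_0}}.
\]
It therefore remains to show that each term $q_L^{\lambda(J,m)}|H_J|$ is bounded by a constant depending only on $n,w,d_0$. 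There are at most $2^n$ such terms.

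I split into two cases. If $F_J\neq0$ (in particular whenever $\overline{Z}_{J1L}\neq\emptyset$), then $d_J\mid d_0$, so $\lambda(J,m)=-n+\epsilon_J(\upsilon_{Jm})+|w|/d_0$. Lemma \ref{exfin} gives $|H_J|\leq c_2q_L^{(\#J+s_J)/2}$, and the first part of Lemma \ref{ubound} (which uses $d_0\geq\kappa(w)$) shows the resulting exponent is $\leq0$. The subtle point is when $F_J\neq0$ but $\overline{Z}_{J1L}=\emptyset$. Then $\overline f_{d_0w}$ vanishes identically on the torus stratum, so $H_J=(q_L-1)^{\#J}$. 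Since $F_J$ is nonzero over $\CC$ but vanishes mod $p$ on all of the torus, this is excluded for $p$ large: $F_J$ primitive-nonzero and $q_L>\deg$ forces a nonvanishing point. I would absorb this into $M_1$. If $F_J=0$, then $H_J=(q_L-1)^{\#J}\leq q_L^{\#J}$, and the second inequality of Lemma \ref{ubound} gives $\lambda(J,m)+\#J\leq 0$. Combining the cases yields (1) with $c=c_1+2^n\max(c_2,1)$.

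For part (2), I would decompose $\Phi_{L,Y}=\Phi_{L,Y}\cdot\textbf{1}_{\varpi_L\cO_L^n}+\sum_{\bar x\in Y(k_L)\setminus\{0\}}\textbf{1}_{x+\varpi_L\cO_L^n}$. The first piece is bounded by part (1) applied to $f_{d_0w}$ in place of $f$; this is allowed because $f_{d_0w}$ itself satisfies all the hypotheses, with $d=d_0$. For $\bar x\neq0$ there are two sub-cases:
\begin{itemize}
\item if $\bar x$ is not a critical point of $\overline{f}_{d_0w}$, the standard stationary-phase (Hensel) argument gives vanishing of the integral over $x+\varpi_L\cO_L^n$ once $m_\psi\geq2$;
\item by Lemma \ref{almostpp} and the isolated singularity at the origin, for $p\geq M_1$ there are no nonzero critical points mod $p$ at all.
\end{itemize}
Hence only the origin contributes, which gives the bound for $m_\psi\geq 2$. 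For $Y=\AA^n$ the sum over all residues is the full integral over $\cO_L^n$. For $m_\psi=1$ it equals $q_L^{-n}$ times a finite-field sum of $\overline{f}_{d_0w}$, which is bounded by Deligne / Lemma \ref{exfin} with $J=\{1,\dots,n\}$ and $s_J=0$ by $c_2q_L^{-n/2}\leq c_2q_L^{-|w|/d_0}$ via Lemma \ref{1/2}. For $m_\psi\leq0$ the trivial bound applies.

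The main obstacle I expect is the case bookkeeping in part (1). One has to check that every $J$ appearing in Corollary \ref{estm5} falls under one of the two inequalities of Lemma \ref{ubound}, with the correct formula for $\lambda(J,m)$. In particular, the mismatch between ``$F_J\neq0$'' and ``$\overline{Z}_{J1L}\neq\emptyset$'' needs care, and I would handle it by enlarging $M_1$.
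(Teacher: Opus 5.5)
Your proposal is correct and is essentially the paper's own argument. It uses the trivial bound for small conductor, then Corollary \ref{estm5} combined with Lemma \ref{exfin} and the two cases ($F_J\neq0$ or $F_J=0$) of Lemma \ref{ubound} for $m_\psi\geq1$, and finally the stationary-phase reduction of $\Phi_{L,Y}$ to $\textbf{1}_{\varpi_L\cO_L^n}$ for $m_\psi\geq2$, using that $\overline{f}_{d_0w}$ has no nonzero critical points.

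Your explicit treatment of the case $F_J\neq0$, $\overline{Z}_{J1L}=\emptyset$ by enlarging $M_1$ makes precise a point the paper leaves implicit. Two small precisions are worth noting. First, that step needs $q_L-1$ to exceed the degree of $F_J$ in each variable (e.g.\ $p>d_0+1$), not merely $q_L>\deg F_J$, as $x^{q_L-1}-y^{q_L-1}$ shows. Second, for $m_\psi=1$ the full sum over $k_L^n$ equals $1+\sum_{J\neq\emptyset}H_J$, so it should be bounded by that same case split together with Lemma \ref{exinj}, rather than by Lemma \ref{exfin} for $J=\{1,\dots,n\}$ alone.
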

\begin{proof} 
Let $M_1$ be as in Lemma \ref{exfin} and $L$ be a local field over $\cO_K$ of residue field characteristic at least $M_1$. By Lemma \ref{1/2}, we have 
\begin{equation}\label{1/22}
nd_0>\frac{nd_0}{2}\geq \sum_{1\leq i\leq n}w_i=|w|.
\end{equation}
Thus, for all non-trivial additive characters $\psi$ of $L$ of conductor $m_\psi\leq 1$, we have 
$$|E_{f,\textbf{1}_{\varpi_L\cO_L^n},\psi}|\leq q_L^{-n}\leq q_L^{-m_\psi\frac{|w|}{d_0}}.$$
Moreover, it follows from Proposition \ref{zetaex} and Theorems \ref{explicit}, \ref{explicit1} that
$$E_{f,\textbf{1}_{\varpi_L\cO_L^n},\psi}=E_{f_{d_0w},\textbf{1}_{\varpi_L\cO_L^n},\psi}$$
for all additive characters $\psi$ of $L$ of conductor at least $1$. Therefore, 
it remains to work with the case that $f=f_{d_0w}$.

Now, suppose that $f=f_{d_0w}$ then the origin $(0,...,0)$ is the only critical point of $f$ in $\AA_{\CC}^n$. Let $Y$ be a subscheme of $\AA_{\cO_K}^n$.  It follows from \cite[Remark 4.5.3]{DenefBour} and the definition of $M_1$ that if  $\varpi_L\cO_L^n\nsubseteq\Supp(\Phi_{L,Y})$ then $E_{f,\Phi_{L,Y},\psi}=0$ for all additive characters $\psi$ of $L$ of conductor at least $2$. Moreover, if $\varpi_L\cO_L^n\subset\Supp(\Phi_{L,Y})$ then
\begin{equation}\label{reduto1}E_{f,\Phi_{L,\AA_{\cO_K}^n},\psi}=E_{f,\textbf{1}_{\varpi_L\cO_L^n},\psi}=E_{f,\Phi_{L,Y},\psi}
\end{equation}
for all additive characters $\psi$ of $L$ of conductors $m_\psi\geq 2$. Thus, we only need to prove the theorem for $Y=\AA_{\cO_K}^n$.

Suppose that $Y=\AA_{\cO_K}^n$ then $\Phi_{L,Y}=\textbf{1}_{\cO_L^n}$. It is obvious that 
\begin{equation}\label{m=01}|E_{f,\textbf{1}_{\cO_L^n},\psi}|\leq 1\leq q_L^{-\frac{m_\psi|w|}{d_0}}
\end{equation}
for all non-trivial additive characters $\psi$ of $L$ of conductor $m_{\psi}\leq 0$. By the definition of $M_1$, the origin $(0,...,0)$ is the only critical point of $\overline{f}:=f\mod \varpi_L\cO_L\in k_L[x_1,...,x_n]$ in $\AA_{\overline{k}_L}^n$. If  $\psi$ is a character of $L$ of conductors $m_\psi=1$ then $\Psi( x \mod \varpi_L\cO_L):=\psi(x)$ is a non-trivial character of $k_L$ and 
$$E_{f,\textbf{1}_{\cO_L^n},\psi}=q_L^{-n}\sum_{x\in k_L^n}\Psi(\overline{f}(x)).$$
Let $c_2$ be as in Lemma \ref{exfin},  then $c_2$ depends only on $n,w,d_0$ and 
$$q_L^{-n}\bigg|\sum_{x\in k_L^n}\Psi(\overline{f}(x))\bigg|\leq c_2q_L^{\frac{-n}{2}}.$$
This and (\ref{1/22}) imply 
\begin{equation}\label{m=11}\bigg|E_{f,\textbf{1}_{\cO_L^n},\psi}\bigg|\leq c_2q_L^{-\frac{|w|}{d_0}}
\end{equation}
for all additive characters $\psi$ of $L$ of conductor $1$. By  (\ref{m=01}) and (\ref{m=11}), to prove the theorem for $f=f_{d_0w}$ and $Y=\AA_{\cO_K}^n$, it remains to prove Conjecture \ref{conjex} for $f_{d_0w}$ and $Y=\AA_{\cO_K}^n$. Therefore, we can combine (\ref{reduto1}), Corollary \ref{estm5}, Lemmas \ref{exfin}  and \ref{ubound} to complete our proof. 
\end{proof}

\begin{conj}\label{conjw}Let $w=(w_1,...,w_n)\in \NN_{\geq 1}^n$ and $g\in\CC[x_1,...,x_n]$ be a $w$-weighted homogeneous polynomial of $w$-degree $d_0\geq 1$. Suppose that $g$ defines a hypersurface with an isolated singularity in $\AA_{\CC}^n$. Then we have 
$$\sum_{1\leq i\leq n, e\nmid w_i, e\nmid d_0-w_i}\left(\frac{w_i}{e}-\bigg\lfloor\frac{w_i}{e}\bigg\rfloor\right)\geq \sum_{1\leq i\leq n, e\nmid w_i, e\nmid d_0-w_i}\left(\frac{d_0}{e}+\bigg\lfloor 1-\frac{d_0}{e}\bigg\rfloor\right)\frac{w_i}{d_0}$$
for all positive integers $e$.
\end{conj}

\begin{prop}Let $w=(w_1,...,w_n)\in \NN_{\geq 1}^n$ such that $\gcd(w_1,...,w_n)=1$ and $g\in\cO_K[x_1,...,x_n]$ be a $w$-weighted homogeneous polynomial of degree $d_0\geq 1$. Suppose that $g$ defines a hypersurface with an isolated singularity in $\AA_\CC^n$. If Conjecture \ref{conjw} holds for $g$ then Conjecture \ref{conjex} holds for $g$. 
\end{prop}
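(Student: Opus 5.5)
The plan is to run the proof of Theorem \ref{expthm} again, replacing the hypothesis $d_0\geq\kappa(w)$ by Conjecture \ref{conjw}. In that proof, $d_0\geq \kappa(w)$ is used only in Lemma \ref{ubound}. Everything else carries over unchanged: the reduction to $f=f_{d_0w}$ and $Y=\AA_{\cO_K}^n$ via (\ref{reduto1}), the trivial bounds for small conductor from (\ref{m=01}) and (\ref{m=11}), and the asymptotic formula of Corollary \ref{estm5} combined with the finite-field bound of Lemma \ref{exfin}. So it suffices to prove the two inequalities of Lemma \ref{ubound} for every non-empty $J$ and every $0\leq \upsilon\leq d_J-1$, with Conjecture \ref{conjw} as the input. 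Then $|E_{g,\textbf{1}_{\cO_L^n},\psi}|\leq c\,q_L^{-|w|m_\psi/d_0}$ for $p_L\geq M_1$. Since the real part $-|w|/d_0$ of the candidate pole is attained (Theorem \ref{smallp} and the explicit formulas), this exponent is $\moi_K(g,Y)$, which gives Conjecture \ref{conjex}; the factor $m_\psi^{n-1}$ is not even needed.

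I would first treat the case $F_J\neq 0$. Here $d_J\mid d_0$, so $\delta_{J\upsilon}=0$. The steps (\ref{inew1}) and (\ref{inew2}) never used $\kappa(w)$, so they still hold. The only place $\kappa(w)$ entered was (\ref{inew3})/(\ref{key1}), namely the bound $\sum_{i\notin J'_\upsilon}w_i/d_0\leq\sum_{i\notin J'_\upsilon}\{\upsilon w_i/d_J\}$.

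I would obtain this from Conjecture \ref{conjw} applied with $e=d_J/\gcd(\upsilon,d_J)$. Then $\{\upsilon w_i/d_J\}$ is a multiple of $1/e$, and it is at least $\{w_i/e\}$ after reordering the residues by multiplication by the unit $\upsilon/\gcd(\upsilon,d_J)$ modulo $e$. I expect to need the symmetric trick $\upsilon\mapsto d_J-\upsilon$, as in the usual Steenbrink spectrum arguments, to handle this.

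Because $e\mid d_J\mid d_0$, the condition $e\nmid d_0-w_i$ is equivalent to $e\nmid w_i$. Also $\frac{d_0}{e}+\lfloor1-\frac{d_0}{e}\rfloor=1$. So the conjectured inequality reads exactly $\sum_{e\nmid w_i}\{w_i/e\}\geq\sum_{e\nmid w_i}w_i/d_0$, which is the required replacement of (\ref{key1}). After that the chain of inequalities in the proof of Lemma \ref{ubound} goes through verbatim.

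Next comes the case $F_J=0$. When $d_J\mid d_0\upsilon$, the argument is identical to the one above. When $d_J\nmid d_0\upsilon$, the term $\frac1{d_0}(\frac{d_0\upsilon}{d_J}-\lfloor\frac{d_0\upsilon}{d_J}\rfloor)|w|$ appears. The proof of Lemma \ref{ubound} splits the indices $i\notin J'_\upsilon$ according to whether $d_J\mid\upsilon(d_0-w_i)$; the indices with $d_J\mid\upsilon(d_0-w_i)$ are handled by (\ref{inew6}) and the pairing $\varsigma_J$ from Lemma \ref{exinj}, neither of which needs $\kappa(w)$. The remaining indices require (\ref{key2}), which was derived from (\ref{inew5}) using $d_0\geq\lcm(w_i,d_J)$.

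Taking again $e=d_J/\gcd(\upsilon,d_J)$, the weight $\frac{d_0}{e}+\lfloor1-\frac{d_0}{e}\rfloor=\{d_0/e\}$ whenever $e\nmid d_0$. After the same unit rescaling, this equals the factor $\{d_0\upsilon/d_J\}$, so Conjecture \ref{conjw} gives exactly the summed version of (\ref{key2}) over the indices with $e\nmid w_i$ and $e\nmid d_0-w_i$, which is the form in which it is used.

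The main obstacle is this rescaling step. The conjecture is stated with $w_i/e$ and $d_0/e$, whereas Lemma \ref{ubound} involves $\upsilon w_i/d_J$ and $\upsilon d_0/d_J$, and multiplying by a unit modulo $e$ permutes fractional parts differently for different $i$. The inequality is only needed summed over $i$, not termwise, so I would first try to show that the set of $\upsilon$ with a given $\gcd(\upsilon,d_J)$ can be handled by applying the conjecture to the weight vector $(\upsilon' w_i \bmod e)$. Failing that, I would weaken the claim to what the summed identity actually needs, possibly by also averaging over $\upsilon$ in the geometric sums of Lemmas \ref{estm2}--\ref{estm4}, where only the worst $\upsilon_{Jm}$ matters.
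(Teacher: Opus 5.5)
Your reduction is the same as the paper's. Everything in the proof of Theorem \ref{expthm} survives except the two places, (\ref{key1}) and (\ref{key2}) in Lemma \ref{ubound}, where $d_0\geq\kappa(w)$ was used, and these must be derived from Conjecture \ref{conjw}. That derivation is the whole content of the proposition, and your version of it does not work.

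Applying the conjecture to the original weight $w$ with $e=d_J/\gcd(\upsilon,d_J)$ gives an inequality about $\{w_i/e\}$ and $\{d_0/e\}$. But (\ref{key1}) and (\ref{key2}) concern $\{\upsilon' w_i/e\}$ and $\{\upsilon' d_0/e\}$, where $\upsilon'=\upsilon/\gcd(\upsilon,d_J)$. Multiplication by the unit $\upsilon'$ modulo $e$ permutes residue classes, not the multiset $\{w_i \bmod e\}_i$, so neither side is preserved:
\begin{itemize}
\item termwise, $\{\upsilon' w_i/e\}\geq\{w_i/e\}$ fails (take $e=5$, $\upsilon'=2$, $w_i=3$: you get $1/5<3/5$);
\item the claimed equality $\{d_0/e\}=\{\upsilon d_0/d_J\}$ fails for the same reason;
\item there is no reason for the summed inequality to transfer.
\end{itemize}
Your fallbacks do not close this gap. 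Averaging over $\upsilon$ is not available: in Corollary \ref{estm5}, every $\upsilon\in\{0,\dots,d_J-1\}$ occurs as $\upsilon_{Jm}$ for infinitely many $m$, so Lemma \ref{ubound} is needed for each $\upsilon$ separately. The vector $(\upsilon' w_i \bmod e)_i$ is not a weight vector for which $g$ is homogeneous, and it may have zero entries, so the conjecture says nothing about it.

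The missing observation is that Conjecture \ref{conjw} is stated for arbitrary $w\in\NN_{\geq1}^n$, not only primitive ones. For $1\leq\upsilon\leq d_J-1$, the polynomial $g$ is $\upsilon w$-weighted homogeneous of degree $\upsilon d_0$, with the same isolated singularity. The paper applies the conjecture directly to the weight $\upsilon w$ with $e=d_J$. Then:
\begin{itemize}
\item the ratio $\upsilon w_i/(\upsilon d_0)=w_i/d_0$ is unchanged;
\item $\frac{\upsilon d_0}{d_J}+\lfloor 1-\frac{\upsilon d_0}{d_J}\rfloor$ equals $1$ if $d_J\mid\upsilon d_0$, and equals $\{\upsilon d_0/d_J\}$ otherwise.
\end{itemize}
When $d_J\mid\upsilon d_0$, the condition $d_J\nmid\upsilon(d_0-w_i)$ is equivalent to $i\notin J'_\upsilon$, which gives (\ref{key1}) verbatim. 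When $d_J\nmid\upsilon d_0$, you get (\ref{key2}) after noting that
\[
\{i: d_J\nmid\upsilon w_i,\ d_J\nmid\upsilon(d_0-w_i)\}=\{i\notin J'_\upsilon\cup\varsigma_J(J): d_J\nmid\upsilon(d_0-w_i)\},
\]
because $d_J\mid d_0-w_i$ for $i\in\varsigma_J(J)$. No unit rescaling or symmetry $\upsilon\mapsto d_J-\upsilon$ is needed.
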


\begin{proof}
As seen in the proof of Theorem \ref{expthm},  Conjecture \ref{conjex} for $g$ is proved  if we can show an unconditional version of Lemma \ref{ubound}. Moreover, the condition $d_0\geq \kappa(w)$ in Lemma \ref{ubound} is only used to show Inequality (\ref{key1}) (when $d_J\mid d_0\upsilon$) and Inequality (\ref{key2}) (when $d_J\nmid d_0\upsilon$). In fact,  Inequalities (\ref{key1}) and (\ref{key2}) are consequences of Conjecture \ref{conjw}. To see this,  let $J$ be a non-empty subset of $\{1,...,n\}$ and $0\leq \upsilon \leq d_J-1$. If $d_J\mid \upsilon d_0$, we use Conjecture \ref{conjw} for $g$, the weight $\upsilon w=(\upsilon w_1,...,\upsilon w_n)$, $e=d_J$ to have 
$$\sum_{i, d_J\nmid \upsilon w_i}\left(\frac{\upsilon w_i}{d_J}-\bigg\lfloor\frac{\upsilon w_i}{d_J}\bigg\rfloor\right)\geq \sum_{i, d_J\nmid \upsilon w_i}\frac{w_i}{d_0}.$$
If $d_J\nmid \upsilon d_0$, we use Conjecture \ref{conjw} for $g$, the weight $\upsilon w=(\upsilon w_1,...,\upsilon w_n)$, $e=d_J$ to have 
$$\sum_{i, d_J\nmid w_i, d_J\nmid d-w_i}\left(\frac{\upsilon w_i}{d_J}-\bigg\lfloor\frac{\upsilon w_i}{d_J}\bigg\rfloor\right)\geq\sum_{i, d_J\nmid w_i, d_J\nmid d_0-w_i} \left(\frac{\upsilon d_0}{d_J}-\bigg\lfloor\frac{\upsilon d_0}{d_J}\bigg\rfloor\right)\frac{w_i}{d_0}.$$
On the other hand, with the notations in the proof of Lemma \ref{ubound}, we have 
$$\{i\in \{1,...,n\}\mid d_J \nmid \upsilon w_i, d_J\nmid \upsilon (d_0-w_i)\}= \{ i\notin J'_{\upsilon}\cup \varsigma_J(J)\mid  d_J\nmid \upsilon(d_0-w_i)\}.$$ Thus, if Conjecture \ref{conjw} holds for $g$ then Inequalities (\ref{key1}) and (\ref{key2}) are valid. Therefore, Conjecture \ref{conjex} for $g$ is a consequence of Conjecture \ref{conjex} for $g$ as explained above. 
\end{proof}

\bibliographystyle{amsplain}
\bibliography{anbib}

\def\cprime{$'$}
\providecommand{\bysame}{\leavevmode\hbox to3em{\hrulefill}\thinspace}
\providecommand{\MR}{\relax\ifhmode\unskip\space\fi MR }
\providecommand{\MRhref}[2]{%
  \href{http://www.ams.org/mathscinet-getitem?mr=#1}{#2}
}
\providecommand{\href}[2]{#2}
\begin{thebibliography}{10}

\bibitem{ATW}
D.~Abramovich, M.~Temkin, and J.~W\l~odarczyk, \emph{Functorial embedded
  resolution via weighted blowings up}, Algebra Number Theory \textbf{18}
  (2024), no.~8, 1557--1587. \MR{4856603}

\bibitem{ArMO}
E.~Artal~Bartolo, J.~Mart\'in-Morales, and J.~Ortigas-Galindo,
  \emph{Intersection theory on abelian-quotient {$V$}-surfaces and {$\bf
  Q$}-resolutions}, J. Singul. \textbf{8} (2014), 11--30. \MR{3193225}

\bibitem{BBV}
G.~Blanco, N.~Budur, and R.~van~der Veer, \emph{Monodromy conjecture for
  semi-quasihomogeneous hypersurfaces}, Math. Nachr. \textbf{296} (2023),
  no.~4, 1394--1403. \MR{4604543}

\bibitem{Bosha}
A.~I. Borevich and I.~R. Shafarevich, \emph{Number theory}, Pure and Applied
  Mathematics, Vol. 20, Academic Press, New York-London, 1966, Translated from
  the Russian by Newcomb Greenleaf. \MR{0195803}

\bibitem{CAN}
W.~Castryck and K.~H. Nguyen, \emph{New bounds for exponential sums with a
  non-degenerate phase polynomial}, Journal de math\'ematiques pures et
  appliqu\'ees (2019), https://doi.org/10.1016/j.matpur.2019.01.015.

\bibitem{CDenSper}
R.~Cluckers, \emph{Igusa and {D}enef-{S}perber conjectures on nondegenerate
  {$p$}-adic exponential sums}, Duke Math. J. \textbf{141} (2008), no.~1,
  205--216.

\bibitem{CDenSperlocal}
\bysame, \emph{Exponential sums: questions by {D}enef, {S}perber, and {I}gusa},
  Trans. Amer. Math. Soc. \textbf{362} (2010), no.~7, 3745--3756.

\bibitem{CMN}
R.~Cluckers, M.~Musta\c{t}\u{a}, and K.~H. Nguyen, \emph{I{gusa}'{s}
  {conjecture} {for} {exponential} {sums}: {optimal} {estimates} {for}
  {nonrational} {singularities}}, Forum Math. Pi \textbf{7} (2019), e3.
  \MR{3987303}

\bibitem{CluckerNguyen}
R.~Cluckers and K.~H. Nguyen, \emph{Combining {I}gusa's conjectures on
  exponential sums and monodromy with semicontinuity of the minimal exponent},
  Algebra Number Theory \textbf{18} (2024), no.~7, 1275--1303. \MR{4757306}

\bibitem{Denefdegree}
J.~Denef, \emph{On the degree of {I}gusa's local zeta function}, American
  Journal of Mathematics \textbf{109} (1987), 991--1008.

\bibitem{Denef91}
\bysame, \emph{Local zeta functions and euler characteristics}, Duke Math. J.
  \textbf{63} (1991), no.~3, 713--721.

\bibitem{DenefBour}
\bysame, \emph{Report on {I}gusa's local zeta function}, S\'eminaire Bourbaki
  \textbf{Vol. 1990/91, Exp. No.730-744} (1991), 359--386, Ast\'erisque
  201-203, http://wis.kuleuven.be/algebra/denef.html$\sharp$D2.

\bibitem{DLJAG}
J.~Denef and F.~Loeser, \emph{Motivic {I}gusa zeta functions}, J. Algebraic
  Geom. \textbf{7} (1998), no.~3, 505--537.

\bibitem{DL02}
\bysame, \emph{Lefschetz numbers of iterates of the monodromy and truncated
  arcs}, Topology \textbf{41} (2002), no.~5, 1031--1040. \MR{1923998}

\bibitem{SGA3I}
Philippe Gille and Patrick Polo (eds.), \emph{Sch\'emas en groupes ({SGA} 3).
  {T}ome {I}. {P}ropri\'et\'es g\'en\'erales des sch\'emas en groupes},
  annotated ed., Documents Math\'ematiques (Paris) [Mathematical Documents
  (Paris)], vol.~7, Soci\'et\'e{} Math\'ematique de France, Paris, 2011,
  S\'eminaire de G\'eom\'etrie Alg\'ebrique du Bois Marie 1962--64. [Algebraic
  Geometry Seminar of Bois Marie 1962--64], A seminar directed by M. Demazure
  and A. Grothendieck with the collaboration of M. Artin, J.-E. Bertin, P.
  Gabriel, M. Raynaud and J-P. Serre. \MR{2867621}

\bibitem{GWZ}
M.~Groechenig, D.~Wyss, and P.~Ziegler, \emph{Mirror symmetry for moduli spaces
  of higgs bundles via p-adic integration}, Invent. Math. \textbf{221} (2020),
  505--596.

\bibitem{Haris}
S.~J. Haris, \emph{Number theoretical developments arising from the {S}iegel
  formula}, Bull. Amer. Math. Soc. (N.S.) \textbf{2} (1980), no.~3, 417--433.
  \MR{561526}

\bibitem{Harts}
R.~Hartshorne, \emph{Algebraic geometry}, Graduate Texts in Mathematics, No.
  52, Springer-Verlag, New York-Heidelberg, 1977. \MR{0463157}

\bibitem{Hir:Res}
H.~Hironaka, \emph{Resolution of singularities of an algebraic variety over a
  field of characteristic zero. {I}}, Ann. of Math. (2) \textbf{79} (1964),
  no.~1, 109--203.

\bibitem{IGUSA88}
Jun ichi Igusa, \emph{b-functions and p-adic integrals}, Algebraic Analysis
  (Masaki Kashiwara and Takahiro Kawai, eds.), Academic Press, 1988,
  pp.~231--241.

\bibitem{Igusa1}
J.~Igusa, \emph{Complex powers and asymptotic expansions {I}}, Journal f{\"u}r
  die reine und angewandte Mathematik \textbf{268/269} (1974), 110--130.

\bibitem{Igusa2}
\bysame, \emph{Complex powers and asymptotic expansions {II}}, Journal f{\"u}r
  die reine und angewandte Mathematik \textbf{278/279} (1975), 307--321.

\bibitem{Igusa3}
\bysame, \emph{Lectures on forms of higher degree (notes by {S}. {R}aghavan)},
  Lectures on mathematics and physics, Tata institute of fundamental research,
  vol.~59, Springer-Verlag, 1978.

\bibitem{LMVV}
E.~Le\'on-Cardenal, J.~Mart\'in-Morales, W.~Veys, and J.~Viu-Sos, \emph{Motivic
  zeta functions on {$\Bbb{Q}$}-{G}orenstein varieties}, Adv. Math.
  \textbf{370} (2020), 107192, 34. \MR{4100755}

\bibitem{Lichtin2}
B.~Lichtin, \emph{On a conjecture of {I}gusa {II}}, Amer. J. Math. \textbf{138}
  (2016), no.~1, 201--249. \MR{3462885}

\bibitem{Loeasym}
F.~Loeser, \emph{Une estimation asymptotique du nombre de solutions
  approch\'ees d'une \'equation {$p$}-adique}, Invent. Math. \textbf{85}
  (1986), no.~1, 31--38. \MR{842046}

\bibitem{Loemono}
\bysame, \emph{Fonctions d'{I}gusa {$p$}-adiques et polyn\^omes de
  {B}ernstein}, Amer. J. Math. \textbf{110} (1988), no.~1, 1--21. \MR{926736}

\bibitem{Marker}
D.~Marker, \emph{Model theory: an introduction}, Graduate Texts in Mathematics,
  vol. 217, Springer-Verlag, 2002.

\bibitem{Nguyennsd}
K.~H. Nguyen, \emph{On a uniform bound for exponential sums modulo $p^m$ for
  deligne polynomials}, arXiv:2111.11898 (2021).

\bibitem{MoiIde}
\bysame, \emph{Exponential sums and motivic oscillation index of arbitrary
  ideals and their applications}, Journal de Mathématiques Pures et
  Appliquées (2026), 103846.

\bibitem{NguyenLW}
\bysame, \emph{Uniform estimate of lang-weil type for counting points of
  schemes over finite rings}, http://arxiv.org/abs/2610.01748 (2026).

\bibitem{NguyenVeys}
K.~H. Nguyen and W.~Veys, \emph{On the motivic oscillation index and bound of
  exponential sums modulo {$p^m$} via analytic isomorphisms}, J. Math. Pures
  Appl. (9) \textbf{157} (2022), 211--242. \MR{4351077}

\bibitem{Perr}
M.~Perret, \emph{On the number of points of some varieties over finite fields},
  Bull. London Math. Soc. \textbf{35} (2003), no.~3, 309--320. \MR{1960941}

\bibitem{Sebagiso}
J.~Sebag, \emph{Jet schemes of quasi-homogeneous hypersurfaces and motivic
  monodromy conjecture for isolated quasi-homogeneous hypersurface
  singularities}, Bull. Belg. Math. Soc. Simon Stevin \textbf{28} (2021),
  no.~5, 689--708. \MR{4489194}

\bibitem{VeysZ}
W.~Veys and W.~A. Z{\'u}{\~n}iga-Galindo, \emph{Zeta functions for analytic
  mappings, log-principalization of ideals, and {N}ewton polyhedra}, Trans.
  Amer. Math. Soc. \textbf{360} (2008), no.~4, 2205--2227.

\bibitem{Wood-Yasuda-I}
Melanie~Machett Wood and Takehiko Yasuda, \emph{Mass formulas for local
  {G}alois representations and quotient singularities. {I}: a comparison of
  counting functions}, Int. Math. Res. Not. IMRN (2015), no.~23, 12590--12619.
  \MR{3431631}

\bibitem{JWright}
James Wright, \emph{On a conjecture of {I}gusa in two dimensions}, Amer. J.
  Math. \textbf{142} (2020), no.~4, 1193--1238. \MR{4124118}

\bibitem{Yas}
T.~Yasuda, \emph{The wild {M}c{K}ay correspondence and {$p$}-adic measures}, J.
  Eur. Math. Soc. (JEMS) \textbf{19} (2017), no.~12, 3709--3734. \MR{3730512}

\end{thebibliography}
\end{document}